\newcommand{\T}{\mathbb{T}} 
\newcommand{\C}{\mathbb{C}} 
\newcommand{\D}{\mathbb{D}} 
\newcommand{\Z}{\mathbb{Z}}
\newcommand{\E}{\mathbb{E}}
\newcommand{\cD}{\overline{\D}} 
\newcommand{\ip}[2]{\langle #1, #2 \rangle}
\newcommand{\mcP}{\mathcal{P}}
\newcommand{\mcT}{\mathcal{T}}
\newcommand{\rev}[1]{\accentset{\leftarrow}{#1}}
\newcommand{\mcK}{\mathcal{K}}
\newcommand{\mcL}{\mathcal{L}}
\newcommand{\mcE}{\mathcal{E}}
\newcommand{\mcF}{\mathcal{F}}
\newcommand{\mcH}{\mathcal{H}}
\newcommand{\eAR}{\mathrm{eAR}}
\newcommand{\Cset}{\mathbb{C}}
\newcommand{\Nset}{\mathbb{N}}
\newcommand{\Zset}{\mathbb{Z}}
\numberwithin{equation}{section}
\newtheorem{theorem}{Theorem}[section]
\newtheorem{prop}[theorem]{Proposition}
\newtheorem{corollary}[theorem]{Corollary}
\newtheorem{lemma}[theorem]{Lemma}
\theoremstyle{definition}
\newtheorem{definition}[theorem]{Definition}
\newtheorem{remark}[theorem]{Remark}
\title{Polynomials with no zeros on a face of the
  bidisk}
\author[J.~S.~Geronimo]{Jeffrey~S.~Geronimo}
\thanks{JG partially supported by Simons Foundation Grant \#210169.}
\address{JG, School of Mathematics, Georgia Institute of Technology,
Atlanta, GA 30332--0160, USA}
\email{geronimo@math.gatech.edu}
\author[P.~Iliev]{Plamen~Iliev}
\thanks{PI partially supported by NSF grant DMS-0901092.}
\address{PI, School of Mathematics, Georgia Institute of Technology,
Atlanta, GA 30332--0160, USA}
\email{iliev@math.gatech.edu}
\author[G.~Knese]{Greg~Knese}
\address{GK, University of Alabama\\ Department of Mathematics
  \\ Tuscaloosa, AL 35487-0350}
\email{geknese@bama.ua.edu}
\thanks{GK partially supported by NSF grant DMS-1048775}
\date{January 14, 2013}
\begin{document}
\maketitle
\tableofcontents
\section{Introduction}\label{INTR}
 This article is concerned with harmonic analysis and
  moment problems as motivated by prediction theory and connections to
  analytic function theory and operator theory, continuing a tradition
  of classic works such as Helson-Lowdenslager \cite{HL58},
  Helson-Szeg\H{o} \cite{HS60}, and Wiener-Masani \cite{WM57}.  Most
  of these works are concerned with harmonic analysis on the unit
  circle and function theory on the unit disk.  In this article, we
  work in the setting of the two-torus or
  bi-circle. Helson-Lowdenslager \cite{HL58} was perhaps the first
  paper to pin down which aspects of harmonic analysis on the circle
  extend in a straightforward way to the bi-circle.  Factorization of
  positive trigonometric polynomials is one area that most certainly
  does not extend in a straightforward way from one variable to two,
  and this topic serves as a good starting point to motivate the rest
  of the paper.

The classical Fej\'er-Riesz lemma states that a non-negative
trigonometric polynomial $t(\theta)$ in one variable can be factored
as
\[
t(\theta) = |p(e^{i\theta})|^2
\]
where $p \in \C[z]$ is a polynomial with no zeros in the unit disk $\D
= \{z: |z|<1\}$.  While this is one of the simplest factorization
results it is useful in signal processing, trigonometric moment
problems, and wavelets.  It is also a prototype for more advanced and
important factorization results, such as Szeg\H{o}'s theorem. A simple
degrees of freedom argument shows that this result cannot be extended
without conditions to two variables. In recent years, progress has
been made in extending this result to two variables. First in
Geronimo-Woerdeman \cite{GW04} a characterization was given of
positive bivariate trigonometric polynomials $t$ that can be factored
as
\begin{equation} \label{tfactored}
t(\theta,\phi) = |p(e^{i\theta},e^{i\phi})|^2
\end{equation}
where $p\in \C[z,w]$ is stable, i.e.\ has no zeros in the closed
bidisk $\cD^2 = \cD\times \cD$.  The characterization is in terms of
trigonometric moments of the measure
\[
\frac{d\theta d\phi}{(2\pi)^2 t(\theta,\phi)} 
\]
on $[0,2\pi]^2$, and the necessary and sufficient conditions for the
characterization come from studying measures on $\T^2 = (\partial
\D)\times (\partial \D)$ of the form
\[
\frac{|dz||dw|}{(2\pi)^2 |p(z,w)|^2} 
\]
where $p$ is a polynomial. These are called Bernstein-Szeg\H{o}
measures. In one variable, such measures play a natural
  role since they can be used to match a finite sequence of moments of
  a given positive Borel measure in an ``entropy maximizing''
  way---see Landau \cite{hL87} or Simon \cite{bS05}.

Surprisingly, the development of the above result passes through a
sums of squares formula related to $p$ which yields a famous
inequality of And\^{o} from multivariable operator theory and in turn
yields Agler's Pick interpolation theorem for bounded analytic
functions on the bidisk. This connection is described in Cole-Wermer
\cite{CW99} and Knese \cite{gK08}.  The Hilbert space geometry
approach of \cite{gK08} made it possible to extend the
characterization to the setting where $p$ has no zeros on the open
bidisk $\D^2$ in \cite{gKAPDE}.  

In another direction, an investigation was begun in \cite{GW07} of
orthogonal polynomials associated with bivariate measures supported on
the bicircle constructed using the lexicographical or reverse
lexicographical ordering and the recurrence formulas associated with
these polynomials were developed. As in the one variable case a
spectral theory type result was proved relating the vanishing of
certain coefficients in the recurrence formulas to the existence of a
Fej\'er-Riesz type factorization and of a Bernstein-Szeg\H{o} measure
with $p$ a stable polynomial. Recently in \cite{GI12}, this viewpoint
yielded extensions of the above results to the problem of
characterizing positive bivariate trigonometric polynomials that can
be factored as in \eqref{tfactored} where now $p \in \C[z,w]$ has no
zeros on a \emph{closed face of the bidisk}. A closed face of the
bidisk refers to either $\T\times \cD$ or $\cD \times \T$.  This
result is significantly more difficult because much of the analyticity
of $1/p$ is lost. However the moments can still be computed using the
 one variable residue theorem. 
 Furthermore the factorization is in general not of the
 Helson-Lowdenslager type \cite{HL58} which would give a rational
 function rather than a polynomial factorization. Special
 consideration was given when the trigonometric polynomial
 $t(\theta,\phi)=|p(z,w)|^2=|q(z,w)|^2$ where $p(z,w)\ne 0$ for
 $|z|=1,\ |w|\le 1$ whereas $q(z,w)\ne 0$ for $|w|=1,\ |z|\le 1$, for
 which a spectral theory result analogous to the characterization of
 the Bernstein-Szeg\H{o} measures on the circle was shown to hold.

 In this article we refine, extend, and give a more complete picture
 of the results in \cite{GI12}; the case where $p$ has no zeros on a
 closed face of the bidisk mentioned above. In particular we emphasize
 that positive linear forms $\mcT$ on bivariate Laurent polynomials of
 bounded degree which can be represented as a Bernstein-Szeg\H{o}
 measure as above with $p(z,w)\ne 0$ for $|z|=1,\ |w|\le1$ can be
 characterized in two different ways by using $\mcT$ to define an
 inner product on polynomials: (1) a matrix condition involving
 certain natural truncated shift operators, and (2) the existence of a
 special orthogonal decomposition of spaces of polynomials we call
 {\em the split-shift orthogonality condition}.  The ``matrix
 condition'' is easier to verify (and matches the condition presented
 in \cite{GI12} when we choose an appropriate basis) while the
 ``split-shift orthogonality condition'' provides more information
 about the geometry of the spaces involved as well as the polynomial
 $p$.  In particular, this latter condition is key to proving a
 generalization of the sums of squares formula alluded to above.  A
 subtle fact is that the spaces involved in the split-shift condition
 are in general not unique but are in one-to-one correspondence with
 Fej\'er-Riesz type factorizations of the positive trigonometric
 polynomial $t$.  Furthermore given $p$ we present an explicit
 description of these spaces in terms of the decomposition of $p(z,0)$
 as a product of stable and unstable factors.  This makes it possible
 to characterize a whole stratification of factorizations of $t$ as
 $|p|^2$ where $p$ has no zeros on $\T \times \cD$ and $p(z,0)$ has a
 specified number of zeros in $\D$.  The case where $p(z,0)$ has no
 zeros in $\D$ recovers the Geronimo-Woerdeman characterization
 result, and the case where $p(z,0)$ has all zeros in $\D$ results in
 a related characterization of when $t=|p|^2$ where $p$ has no zeros
 in $(\C\setminus \D)\times \cD$.  In between these two extremes we
 can characterize when $t=|p|^2$ where the zero set of $p$ in $\D
 \times \C$ has a specified number of sheets over $z \in \D$ sitting
 in $\D\times \D$ (and a complementary number of sheets sitting in $\D
 \times (\C\setminus \cD)$).

We proceed as follows. In Section \ref{NSR} we introduce the notation
used throughout the paper and state the main theorems. In Section
\ref{basic} we derive basic orthogonality relations associated with
Bernstein-Szeg\H{o} measures.  In Section \ref{BSCiSSC} we show that a
Bernstein-Szeg\H{o} measure with $p(z,w)\ne 0$ for $|z|=1,\ |w|\le1$
implies the split-shift condition using a decomposition of $p(z,0)$
into stable and unstable factors.  In Section \ref{sec:SStoBS} we show
that the split-shift condition implies the existence of a
Bernstein-Szeg\H{o} measure of the type given above. Next in Section
\ref{se:MC} the matrix condition mentioned above is shown to be
equivalent to the split-shift condition.
In Section \ref{shiftsplits} we describe all $p$ that give rise to the
same positive bivariate trigonometric polynomial. In Section
\ref{constructp} we show how to construct $p$ from the moments
associated with the positive linear form. In Section
\ref{applications} we apply the previous results to solve the problem
when an extended bivariate autoregressive model has a causal or
acausal solution. Also in this section we give necessary and
sufficient conditions in terms of moments when a bivariate Borel
measure supported on the bicircle is a Bernstein-Szeg\H{o} measure
with $p$ nonzero for $|z|=1,\ |w|\le 1$.  Finally in Section \ref{GDV}
we adapt ideas from \cite{aK89} to give a second proof of our
generalized sum of squares formula which should be of independent
interest, while we also consider ``generalized distinguished
varieties'' and apply an argument of adapted from \cite{gKdv} to
obtain a sum of squares formula for polynomials associated with these
varieties. This allows us to obtain a determinantal representation of
the polynomial giving rise to the variety. \emph{Distinguished
  varieties} were introduced in \cite{AMdv} and play an important role
in multivariable operator theory and function theory on the
bidisk. Our determinantal representation generalizes one of the main
theorems of \cite{AMdv}.

\section{Notation and statement of results}\label{NSR}
We denote spaces of Laurent polynomials by
$$
\mcL_{j,k} = \vee\{z^sw^t:-j\leq s \leq j, -k\leq t \leq k\},
$$ 
where $\vee$ denotes the complex linear span of a set, and we
denote spaces of polynomials by
$$ 
\mcP_{j,k} := \vee\{z^sw^t:0\leq s \leq j, 0\leq t \leq k\},
$$ 
where $j,k\in\Z_{+}$. In some parts of the paper, the spaces
$\mcL_{j,k}$ and $\mcP_{j,k}$ appear naturally within the context of
the Hilbert space $L^{2}(\T^2,\mu)$, where $\mu$ is a positive Borel
measure on $\T^2$, in which case we may use also $j,k=\infty$ by
considering the closed linear spans above.

A linear form $\mcT:\mcL_{n,m}\rightarrow\C$ is said to be
\emph{positive} if
\[
\mcT(f(z,w)\bar{f}(1/z,1/w)) > 0
\]
for every nonzero $f \in \mcP_{n,m}$, where $\bar{f}(z,w) =
\overline{f(\bar{z},\bar{w})}$. With $\mcT$ we define an inner product
on the space $\mcP_{n,m}$, via
\[
\ip{f}{g}_{\mcT} = \mcT(f(z,w)\bar{g}(1/z,1/w)) \qquad f,g \in \mcP_{n,m}.
\]
Let us write $\mcH_{\mcT}$ for the finite dimensional Hilbert space
$(\mcP_{n,m}, \ip{\cdot}{\cdot}_{\mcT})$.

For $(k,l)\in\Z_{+}^2$ where the inner product above is defined 
we denote the following orthogonal complements:
\begin{align}
\mcE_{k,l}^1 &= \mcP_{k,l} \ominus w \mcP_{k,l-1},\label{E1}\\
\mcF_{k,l}^1 &= \mcP_{k,l} \ominus \mcP_{k,l-1},\label{F1}\\
\mcE_{k,l}^2 &= \mcP_{k,l} \ominus z \mcP_{k-1,l},\label{E2}\\
\mcF_{k,l}^2 &= \mcP_{k,l} \ominus  \mcP_{k-1,l}.\label{F2}
\end{align}

We will often employ the anti-unitary reflection operator
$\rev{\cdot}$ 
$$g(z,w) \mapsto \rev{g}(z,w):= z^{k}w^l\bar{g}(1/z,1/w)$$ which in
this case we say is applied at the degree $(k,l)$.  This degree will
usually be clear from context or explicitly stated.  For example,
applying this operator at degree $(k,l)$ to the spaces $\mcF_{k,l}^1$
and $\mcF_{k,l}^2$ we see that
\[
\mcE_{k,l}^1 = \rev{\mcF}_{k,l}^1 \text{ and } \mcE_{k,l}^2 =
\rev{\mcF}_{k,l}^2,
\]
since the operator is an anti-unitary in $\mcH_{\mcT}$.

\begin{definition}\label{SHC}
A positive linear form $\mcT$ on $\mcL_{n,m}$ satisfies the
\emph{split-shift orthogonality condition} if there exist subspaces of
polynomials $\mcK_1, \mcK_2 \subset \mcH_{\mcT}$ such that
\begin{enumerate}
\item $\mcE_{n-1,m}^1 = \mcK_1\oplus \mcK_2$
\item $\mcK_1 \perp z\mcK_2$ and 
\item $\mcK_1, z\mcK_2 \subset \mcE_{n,m}^1$.
\end{enumerate}
\end{definition}

The point of conditions (2) and (3) is that they imply $\mcK_1 \oplus
z\mcK_2 \subset \mcE_{n,m}^1$.  This condition actually characterizes
positive linear forms coming from a Bernstein-Szeg\H{o} measure.  What
is interesting is that this condition can also be expressed using a
simple matrix condition.

To present the matrix condition let us define three operators
\[
\begin{aligned}
A &= P_{w\mcE_{n,m-1}^2} M_z : \mcE_{n-1,m}^1 \to w\mcE_{n,m-1}^2 \\
B &= P_{\mcE_{n-1,m}^1}: w \mcF_{n,m-1}^2 \to \mcE_{n-1,m}^1\\
T &= P_{\mcE_{n-1,m}^1} M_z : \mcE_{n-1,m}^1 \to \mcE_{n-1,m}^1
\end{aligned}
\]
where $M_z$ is multiplication by $z$ and $P_{\mcH}$ represents
orthogonal projection onto a subspace $\mcH \subset \mcH_{\mcT}$.
Notice that $T$ is just truncation of multiplication by $z$ to
$\mcE_{n-1,m}^1$. 

\begin{theorem} \label{mainthm}
Let $\mcT$ be a positive linear form on $\mcL_{n,m}$.  The following
are equivalent.
\begin{enumerate}
\item (Bernstein-Szeg\H{o} condition) There exists $p \in \C[z,w]$
  with no zeros on $\T\times \cD$ and degree at most $(n,m)$ such that
\begin{equation}\label{BSmoments}
\mcT(z^jw^k) = \int_{\T^2} z^jw^k
\frac{|dz||dw|}{(2\pi)^2|p(z,w)|^2} \qquad |j|\leq n, |k| \leq m.
\end{equation}

\item (Split-shift condition) $\mcT$ satisfies the split-shift
  orthogonality condition.

\item (Matrix condition) The invariant subspace of $T$ generated by
  the range of $B$ is contained in the kernel of $A$.  More
  concretely,
\begin{equation} \label{mc}
AT^j B = 0 \text{ for } j = 0,1,\dots, n-1.
\end{equation}
\end{enumerate}

\end{theorem}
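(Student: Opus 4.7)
The plan is to prove $(2)\Leftrightarrow(3)$ by a purely Hilbert-space argument and then $(1)\Leftrightarrow(2)$ using a stable/unstable factorization of $p(z,0)$ together with a Christoffel--Darboux type identity. A preliminary fact I would establish first, in Section \ref{basic}, is that multiplication by $z$ is an isometric embedding $\mcP_{n-1,m}\hookrightarrow\mcP_{n,m}$ in the $\mcT$-inner product, and similarly for $w$. Indeed, for $f,g\in\mcP_{n-1,m}$,
\[
\ip{zf}{zg}_{\mcT} = \mcT\bigl(zf(z,w)\,z^{-1}\bar g(1/z,1/w)\bigr) = \ip{f}{g}_{\mcT},
\]
and this single identity is the engine behind every subsequent orthogonality manipulation.

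For $(2)\Leftrightarrow(3)$ I would translate each clause of Definition \ref{SHC} into an intrinsic condition inside $\mcE_{n-1,m}^1$. Using the orthogonal decomposition $\mcP_{n,m-1} = z\mcP_{n-1,m-1}\oplus\mcE_{n,m-1}^2$ together with the isometry above, one checks that $zf\in\mcE_{n,m}^1$ for $f\in\mcE_{n-1,m}^1$ is equivalent to $Af=0$, so clause (3) of Definition \ref{SHC} forces $\mcK_2\subset\ker A$. Dually, the decomposition $\mcP_{n,m-1} = \mcP_{n-1,m-1}\oplus\mcF_{n,m-1}^2$ gives $\mcK_1\subset\mcE_{n,m}^1$ iff $\mcK_1\perp\mathrm{ran}(B)$, and $\mcK_1\perp z\mcK_2$ reduces to $\mcK_1\perp T\mcK_2$ because the remaining components of $z\mcK_2$ live in $w\mcP_{n-1,m-1}\oplus\mcF_{n,m}^2$, which is already orthogonal to $\mcK_1\subset\mcP_{n-1,m}$. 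Combining these, any admissible $\mcK_2$ must be $T$-invariant and satisfy $\mathrm{ran}(B)\subset\mcK_2\subset\ker A$; conversely, given \eqref{mc}, taking $\mcK_2$ to be the smallest $T$-invariant subspace containing $\mathrm{ran}(B)$, with $\mcK_1 := \mcE_{n-1,m}^1\ominus\mcK_2$, satisfies all three clauses. Since $\dim\mcE_{n-1,m}^1 = n$, Cayley--Hamilton reduces $T$-invariance to the finite check $AT^jB=0$ for $j=0,\dots,n-1$, matching \eqref{mc}.

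For $(1)\Rightarrow(2)$, I would factor $p(z,0) = c\,p_s(z)p_u(z)$ into stable and anti-stable factors and use the one-variable residue calculus, available because only $\T\times\cD$ is excluded from the zero set of $p$, to exhibit concrete $\mcK_1,\mcK_2$ attached respectively to $p_s$ and $p_u$; each clause of Definition \ref{SHC} would be verified by pushing $z$-contours across $\T$ using whichever of the two factors is analytic on the relevant side of $\T$.

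The main obstacle is the converse $(2)\Rightarrow(1)$: from the split one must manufacture a polynomial $p$ of bidegree $(n,m)$ with $p\ne 0$ on $\T\times\cD$ and verify that $\mcT$ reproduces from $|p|^{-2}\,|dz|\,|dw|/(2\pi)^2$ on all of $\mcL_{n,m}$. My plan is to use orthonormal bases of $\mcK_1$ and $z\mcK_2$, together with a distinguished vector spanning $\mcE_{n,m}^1\ominus(\mcK_1\oplus z\mcK_2)$, to assemble a Christoffel--Darboux type sum-of-squares identity whose leading term is the candidate $p$; a fiberwise Fej\'er--Riesz argument in $z\in\T$ then handles the absence of zeros on $\T^2$, while the split-shift orthogonality is precisely what prevents zeros from sliding off $\T$ into $\cD$ in the $w$-variable for each fixed $z\in\T$. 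Matching $\mcT$ with the Bernstein--Szeg\H{o} integral across all of $\mcL_{n,m}$ then follows from the orthogonality relations that $p$ inherits against $\mcE_{n,m}^1$ in both inner products, extended to the whole of $\mcL_{n,m}$ by the reflection operator $\rev{\cdot}$ and multiplication-by-$w$ isometries.
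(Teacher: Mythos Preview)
Your outline matches the paper's architecture: $(2)\Leftrightarrow(3)$ via exactly the Hilbert-space reductions you describe (Propositions \ref{sstomc} and \ref{mctoss}), $(1)\Rightarrow(2)$ via the stable/unstable factorization $p(z,0)=a(z)b(z)$ and residue computations (Sections \ref{basic}--\ref{BSCiSSC}, Theorem \ref{BStoSS}), and $(2)\Rightarrow(1)$ via a Christoffel--Darboux sum-of-squares identity built from the split (Theorem \ref{aglerdecomps}).

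Where your sketch is thin is the endgame of $(2)\Rightarrow(1)$. First, the paper does not use a ``fiberwise Fej\'er--Riesz'' argument to exclude zeros of the split-poly on $\T\times\cD$; it specializes the identity $p\bar p - w\bar\eta\,\rev{p}\,\bar{\rev{p}} = (1-w\bar\eta)E_m^2 + (1-z\bar\zeta)(\cdots)$ to $z=\zeta\in\T$ and then invokes Lemma \ref{Einv}: the matrix polynomial $E_m^2(z)$ attached to $\mcE_{n,m}^2$ is invertible for every $z\in\cD$, a fact that holds for \emph{any} positive form, not a consequence of the split-shift hypothesis. A hypothetical zero forces $E_m^2(z,w;z,\eta)\equiv 0$ in $\eta$, contradicting that invertibility. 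Your phrase ``split-shift orthogonality is precisely what prevents zeros from sliding'' misattributes the mechanism. Second, your moment-matching step (``orthogonality relations that $p$ inherits \ldots extended by reflection and $w$-isometries'') does not obviously determine all of $\mcT$. The paper's route (Corollary \ref{SStoBS}) is: the sum-of-squares identity pins down $E_m^2(z)E_m^2(z)^*$ on $\T$ from $p$ alone; a Liouville argument on $E_2(z)^{-1}E_1(z)$ upgrades equality of these Gram factors for two forms sharing the same split-poly to equality of the kernels $E_m^2$; and a short lemma shows $E_m^2$ alone reconstructs $K_{n,m}$ and hence $\mcT$. You should either recover this $E_m^2$/Liouville route or make precise an alternative that actually reaches all moments in $\mcL_{n,m}$.
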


This theorem is a more geometric formulation of the results in
Geronimo-Iliev \cite{GI12}.  In particular, the coordinate free
formulation of condition (3) makes it possible to give a
straightforward proof of the equivalence of (2) and (3) in
Propositions \ref{sstomc} and \ref{mctoss}---the original proof in
\cite{GI12} involves some non-trivial linear algebra.  Of greater
significance, however, is \emph{our emphasis on the split-shift
  condition and the rather complete knowledge it provides of the
  geometry of Bernstein-Szeg\H{o} measures of the above type.}  A
version of the split-shift condition was recognized as an important
stepping stone in \cite{GI12}, but at that time it was not clear how
to construct the spaces involved directly with Hilbert space
geometry---this question was explicitly raised as \cite{GI12}*{Remark
  5.3}.
The approach developed here resolves this.

\begin{theorem} \label{bssplits}
Let $\mcT$ be a positive linear form on $\mcL_{n,m}$ satisfying the
Bernstein-Szeg\H{o} condition of Theorem \ref{mainthm} with polynomial
$p(z,w)$ having no zeros on $\T\times \cD$ and degree at most
$(n,m)$. Then
\[
\mcK_1 = P_{\mcE_{n-1,m}^1}\{g(z) a(z): g \in \C[z], \deg g < \deg b\}
\]
\[
\mcK_2 = P_{\mcE_{n-1,m}^1}\{g(z) b(z): g \in \C[z], \deg g < n-\deg b\},
\]
satisfy the split-shift condition. Here $p(z,0)=a(z)b(z)$ where $a\in
\C[z]$ has no zeros in $\cD$ and $b\in \C[z]$ has all zeros in $\D$.
\end{theorem}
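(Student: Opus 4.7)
The plan is to verify the three conditions of Definition \ref{SHC} for the explicit spaces $\mcK_1, \mcK_2$ separately: condition (1), which is essentially algebraic, and conditions (2) and (3), which rely on analytic properties of the Bernstein-Szeg\H{o} measure. For (1), I would first show that $P := P_{\mcE_{n-1,m}^1}$ restricts to a linear isomorphism $\mcP_{n-1,0} \to \mcE_{n-1,m}^1$: both spaces have dimension $n$, and the algebraic decomposition $\mcP_{n-1,m} = \mcP_{n-1,0} \dotplus w\mcP_{n-1,m-1}$ (obtained by expanding in powers of $w$) gives $\mcP_{n-1,0}\cap w\mcP_{n-1,m-1} = \{0\}$, so $P$ is injective on $\mcP_{n-1,0}$. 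Since $a$ has no zeros in $\cD$ while $b$ has all zeros in $\D$, $\gcd(a,b)=1$, and B\'ezout yields the vector-space direct sum $\mcP_{n-1,0} = \{g a: \deg g<\deg b\} \dotplus \{h b : \deg h<n-\deg b\}$: the intersection is trivial because $ga = -hb$ combined with $\gcd(a,b)=1$ and the degree bound on $g$ forces $g=0$, and dimensions add to $n$. Transporting through $P$ yields $\mcE_{n-1,m}^1 = \mcK_1 \dotplus \mcK_2$, which is condition (1).

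Conditions (2) and (3) form the analytic heart. The main tool is the integral representation
\[
\langle f, g \rangle_{\mcT} = \int_{\T^2} \frac{f(z,w)\,\overline{g(z,w)}}{|p(z,w)|^2}\, d\sigma,
\]
combined with two stability facts: (a) $1/p(z,\cdot)$ is analytic in $|w| \leq 1$ for each $|z|=1$, by the hypothesis on $p$; (b) both $a$ and $\rev{b}$ are stable polynomials in $z$ (the former by assumption, the latter because $b$ has all zeros in $\D$), so $1/a$ and $1/\rev{b}$ are analytic in $|z| \leq 1$. For $\mcK_1 \subset \mcE_{n,m}^1$, we must show that $\pi := P(ga)$ is $\mcT$-orthogonal to $z^n w^j$ for $j=1,\ldots,m$, which are precisely the extra monomials by which $w\mcP_{n,m-1}$ exceeds $w\mcP_{n-1,m-1}$. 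A one-variable residue evaluation in $w$ enabled by (a) reduces $\langle \pi, z^n w^j\rangle_{\mcT}$ to a contour integral in $z$ whose integrand is $g(z)a(z)$ times a quotient involving $1/a$; the degree bound $\deg g < \deg b$ then forces the resulting $z$-Fourier coefficient to vanish. The statement $z\mcK_2 \subset \mcE_{n,m}^1$ is proven by a parallel argument with $\rev{b}$ in place of $a$, and condition (2), $\mcK_1 \perp z\mcK_2$, follows from a double-residue computation that combines both stability inputs.

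The principal obstacle is the implicit description of $\pi = ga - r$: the correction term $r \in w\mcP_{n-1,m-1}$ is determined by an $\mcT$-orthogonality condition rather than by an explicit formula, so a direct evaluation of $\langle \pi, z^n w^j\rangle_{\mcT}$ requires controlling $\langle r, z^n w^j\rangle_{\mcT}$. The cleanest workaround is to establish the stronger identity $P_{\mcE_{n-1,m}^1}(ga) = P_{\mcE_{n,m}^1}(ga)$ whenever $\deg g < \deg b$, which would place $\mcK_1$ inside $\mcE_{n,m}^1$ automatically. This amounts to showing that $ga - P_{\mcE_{n,m}^1}(ga)$ has no $z^n$-component in its $w$-expansion, and it is here that the degree hypothesis $\deg g < \deg b$ must be combined with the basic orthogonality relations of Section \ref{basic} (in particular $p \in \mcE_{n,m}^1$ and its consequences for $\rev{p}$) to produce the required vanishing.
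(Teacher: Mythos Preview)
Your argument for condition (1) has a genuine gap: the split-shift definition requires $\mcE_{n-1,m}^1 = \mcK_1 \oplus \mcK_2$ to be an \emph{orthogonal} decomposition in $\mcH_{\mcT}$, not merely the vector-space direct sum $\mcK_1 \dotplus \mcK_2$ that B\'ezout and the injectivity of $P$ give you. The orthogonality $\mcK_1 \perp \mcK_2$ is in fact the deepest part of the theorem, and it does not follow from (2) and (3) either. In the paper's proof one does not attempt to verify $P_{\mcE_{n-1,m}^1}(ga) \perp P_{\mcE_{n-1,m}^1}(hb)$ directly; instead one defines $\mcK_2 := \mcE_{n-1,m}^1 \ominus \mcK_1$ (so orthogonality is free) and then proves that this orthogonal complement coincides with your explicit space $P_{\mcE_{n-1,m}^1}\{hb:\deg h<n-\deg b\}$. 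That identification goes through a reflection: one introduces the analogous space $\mcL_1$ built from $\rev{p}$ and shows $\mcK_2 = \rev{\mcL}_1$, using the parametrized Christoffel--Darboux kernels $L_\eta, G_\eta$ of Section~\ref{basic} to compare orthogonal decompositions of infinite-dimensional pieces of $L^2(d\sigma/|p|^2)$.

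Your sketch for (2) and (3) is in the right spirit, but the ``workaround'' you propose---showing $P_{\mcE_{n-1,m}^1}(ga)=P_{\mcE_{n,m}^1}(ga)$---is essentially the content of Lemmas~\ref{lem:Jperp} and~\ref{lem:PP}: one must show that the projection of $az^j$ onto $w\mcP_{\infty,m-1}$ already lands in $w\mcP_{n-1,m-1}$, and the proof of that again uses the $L_\eta$ kernels rather than a bare residue in $w$ followed by a residue in $z$. The difficulty you flag about the implicit correction term $r$ is real, and the resolution is to pass to the full $L^2$ space so that $r$ becomes the projection onto an \emph{infinite}-dimensional shift-invariant subspace, where the orthogonality relations of Section~\ref{basic} apply cleanly.
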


More explicitly, if we form the span of the following projections of
one variable polynomials
\[
z^ia(z) - P_{w\mcP_{n-1,m}} z^i a(z) \text{ for } 0\leq i < \deg b
\]
and 
\[
z^i b(z) - P_{w\mcP_{n-1,m}} z^i b(z) \text{ for } 0 \leq i < n-\deg b
\]
then the resulting subspaces satisfy all the orthogonality conditions
in the split-shift definition. 

Why should we emphasize the abstract looking split-shift condition in
the first place?  One answer to this is that the spaces in the
split-shift condition appear naturally in the following sum of
(hermitian) squares result that ends up being an important by-product
of our work.

\begin{theorem} \label{sosthm}
 Suppose $p \in \C[z,w]$ has no zeros on $\T\times \cD$ and
  $\deg p = (n,m)$.  Define $\rev{p}(z,w) = z^nw^m
  \overline{p(1/\bar{z},1/\bar{w})}$.  Then, there exist polynomials
  $A_1,\dots, A_m, B_1,\dots, B_{n_1}, C_1,\dots, C_{n_2} \in \C[z,w]$
  such that
\[
\begin{aligned}
&|p(z,w)|^2 - |\rev{p}(z,w)|^2 \\
&= (1-|w|^2)\sum_{j=1}^{m} |A_j(z,w)|^2 +
  (1-|z|^2)\left(\sum_{j=1}^{n_1} |B_j(z,w)|^2 - \sum_{j=1}^{n_2}
  |C_j(z,w)|^2\right)
\end{aligned}
\]
where $n_2$ is the number of zeros of $p(z,0)$ in $\D$ and $n_1 = n -
n_2$.  The same result holds if $p$ has no zeros in $\T\times \D$ and
no factors in common with $\rev{p}$.
\end{theorem}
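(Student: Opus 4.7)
The plan is to leverage the equivalence in Theorem~\ref{mainthm} together with the explicit split-shift subspaces from Theorem~\ref{bssplits}, and then derive the identity by reproducing-kernel computations in the Hilbert space $\mcH_\mcT$ associated with the Bernstein-Szeg\H{o} linear form $\mcT$ of $p$. Under the hypothesis that $p$ has no zeros on $\T \times \cD$, the linear form with moments~\eqref{BSmoments} is positive (a standard one-variable residue calculation in $w$ for each $|z|=1$), so I may invoke both theorems at the outset to obtain the decomposition $\mcE_{n-1,m}^1 = \mcK_1 \oplus \mcK_2$ with $\mcK_1 \perp z\mcK_2$ inside $\mcE_{n,m}^1$ and the dimension counts $\dim \mcK_1 = \deg b = n_2$ and $\dim \mcK_2 = n_1$.

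The $(1-|w|^2)\sum_{j=1}^m |A_j|^2$ portion is the more tractable half and mimics the stable-case template. Because no zeros of $p$ lie on $\T \times \cD$, for each fixed $z \in \T$ the polynomial $w \mapsto p(z,w)$ is stable in $w$. Applying an iterated one-variable Christoffel-Darboux argument in the $w$-variable (with $z$ a parameter) produces polynomials $A_1,\ldots,A_m$ of bidegree at most $(n,m-1)$ such that
\[
R(z,w) := |p(z,w)|^2 - |\rev{p}(z,w)|^2 - (1-|w|^2)\sum_{j=1}^m |A_j(z,w)|^2
\]
vanishes identically on $\T \times \T$, and with a careful choice of $A_j$ (obtained by Gram-Schmidt inside a suitable $w$-shift-invariant subspace of $\mcH_\mcT$) the polynomial $R$ is divisible by $(1-|z|^2)$, yielding $R = (1-|z|^2) S(z,w)$ for some polynomial-valued $S$ that remains to be identified.

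The identification of $S$ as a signed sum of squares is where the split-shift decomposition is essential and constitutes the heart of the argument. Choose orthonormal bases $\{b_1,\ldots,b_{n_1}\}$ of $\mcK_2$ and $\{c_1,\ldots,c_{n_2}\}$ of $\mcK_1$ in $\mcH_\mcT$. Using the orthogonality $\mcK_1 \perp z\mcK_2$, the inclusion $\mcK_1 \oplus z\mcK_2 \subset \mcE_{n,m}^1$, and the explicit description of $\mcK_1,\mcK_2$ from Theorem~\ref{bssplits}, one derives a signed Christoffel-Darboux identity along the $z$-direction: polynomials built from the ``anti-stable'' factor $b$ (spanning $\mcK_2$) contribute with a positive sign because multiplication by $z$ preserves $\mcH_\mcT$-norms on them (these are the $B_j$), while polynomials built from the stable factor $a$ (spanning $\mcK_1$) contribute with a negative sign, reflecting the fact that the reflections of $a$ cannot be multiplied by $z$ isometrically (these are the $C_j$). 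Matching the resulting identity against the $w$-direction formula from the previous paragraph and comparing coefficients of $|z|^{2j}$, $|w|^{2k}$ yields $S = \sum_{j=1}^{n_1}|B_j|^2 - \sum_{j=1}^{n_2}|C_j|^2$ with the claimed counts.

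The main obstacle is the $z$-direction signed telescoping: since $p$ is not globally stable in $z$, the naive Szeg\H{o} recurrence fails and must be replaced by an abstract orthogonal decomposition inside the finite-dimensional space $\mcE_{n-1,m}^1$, with careful bookkeeping of which subspace contributes positively and which negatively. The final clause of the theorem (no zeros in $\T \times \D$, no factors in common with $\rev{p}$) is handled by an approximation argument: set $p_\epsilon(z,w) := p(z,(1-\epsilon)w)$ for small $\epsilon > 0$, which has no zeros on $\T \times \cD$ because any zeros of $p$ on $\T \times \T$ are pushed outside $\cD$; apply the main result to $p_\epsilon$ and pass to the limit $\epsilon \to 0$. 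The no-common-factor hypothesis ensures the limiting $A_j, B_j, C_j$ are nondegenerate, while the identity $p_\epsilon(z,0) = p(z,0)$ preserves the counts $n_1, n_2$ along the deformation.
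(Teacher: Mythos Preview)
Your overall strategy---work in $L^2(1/|p|^2\,d\sigma)$, invoke the split-shift decomposition, expand reproducing kernels over orthonormal bases, and treat the weaker hypothesis by the dilation $p_\epsilon(z,w)=p(z,(1-\epsilon)w)$---is the paper's. What is missing is the one structural fact that makes the computation go through cleanly: $p$ is itself the split-poly, so that $\mcE_{n,m}^1 = \C p \oplus \mcK_1 \oplus z\mcK_2$ (this is Theorem~\ref{BStoSS}, strictly more than the statement of Theorem~\ref{bssplits} you cite). Reflecting gives $\mcF_{n,m}^1 = \C\rev p \oplus z\rev{\mcK}_1 \oplus \rev{\mcK}_2$. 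The paper then writes the reproducing kernels of $\mcE_{n,m}^1$, $\mcF_{n,m}^1$, $\mcE_{n-1,m}^1$, $\mcF_{n-1,m}^1$ in terms of $p\bar p$, $K_1$, $K_2$ and their reflections, feeds them into the elementary identity $E_j^1 - F_j^1 = (1-w\bar\eta)K_{j,m-1}$, and subtracts to obtain in a single step
\[
p\bar p - \rev p\,\bar{\rev p} = (1-w\bar\eta)E_{m-1}^2 + (1-z\bar\zeta)(\rev K_2 - K_1),
\]
where $E_{m-1}^2$ is the reproducing kernel of $\mcE_{n,m-1}^2$. The $A_j$ are then an orthonormal basis of $\mcE_{n,m-1}^2$, the $B_j$ of $\rev{\mcK}_2$ (not $\mcK_2$), and the $C_j$ of $\mcK_1$.

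Your proposed two-stage route---first absorb the $(1-|w|^2)$ term via a parametrized one-variable Christoffel--Darboux, then identify the remainder $S$---has two concrete gaps. First, a polynomial $R$ vanishing on $\T^2$ lies only in the ideal generated by $1-|z|^2$ and $1-|w|^2$, and you give no mechanism (``a careful choice of $A_j$'') that forces it into the principal ideal $(1-|z|^2)$; the paper never separates the two directions and so never faces this issue. Second, your sign heuristic---``multiplication by $z$ preserves $\mcH_\mcT$-norms on $\mcK_2$''---is vacuous, since $M_z$ is an isometry on all of $L^2(1/|p|^2\,d\sigma)$; the signs in the formula arise instead from which of $\mcK_j$ or $\rev{\mcK}_j$ carries the $z\bar\zeta$ factor in the kernel decompositions of $\mcE_{n,m}^1$ versus $\mcF_{n,m}^1$.
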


The different sums of squares terms can be constructed from important
subspaces of $L^2(\frac{|dz||dw|}{|p|^2} )$ : the $A_j$ form an
orthonormal basis of $\mcE_{n,m-1}^2$, the $B_j$ form an orthonormal
basis of $\rev{\mcK}_2$ (the reflection of $\mcK_2$), and the $C_j$
form an orthonormal basis of $\mcK_1$. (See Theorem
\ref{aglerdecomps}.)  This formula illustrates how natural are  the
spaces in the split-shift condition, and it also reproves some
important formulas as special cases.

When $n_2=0$, $p$ is stable and we get the Cole-Wermer type of sum of
squares formula \cite{CW99} which can be used to prove Agler's Pick
interpolation theorem on the bidisk; see also \cite{GW04},
\cite{gK08}, \cite{GIK12}, \cite{aK89}, \cite{BSV}. The exact numbers
of squares involved in this case turned out to be important in recent
work on extending L\"owner's theory of matrix monotone functions to
two variables in Agler-McCarthy-Young \cite{AMY}.  When $m=0$
(i.e. $p$ does not depend on $w$) we get a decomposition which readily
implies part of the Schur-Cohn method for counting the roots of a
polynomial inside and outside the unit circle.
 
The case of $p$ with merely no zeros on $\T\times \D$ can be derived
from a limiting argument as in \cite{gK08}.  We give a second proof of
the sum of squares formula using ideas of Kummert \cite{aK89} in
Section \ref{sec:kummert}.  This proof should be of independent
interest and has the advantage of working directly for all cases.  See
Section \ref{detreps} for an application of the formula to proving a
determinantal representation for a class of curves generalizing the
\emph{distinguished varieties} of Agler-McCarthy \cite{AMdv}.

Now that we see that the split-shift condition is natural, we get into
a deeper discussion of  Theorem \ref{mainthm}, and
its extensions.  From the maximum entropy principle there are at most
finitely many $p(z,w)$ for which the Bernstein-Szeg\H{o} condition
holds.  Theorem \ref{bssplits} therefore gives one particular way to
construct $\mcK_1, \mcK_2$ in the definition of split-shift and this
way is uniquely determined by the choice of $p$.  However, since a
trigonometric polynomial factored as $|p|^2$ can potentially be
factored in more than one such way---roughly speaking these
polynomials can be obtained from one another by permuting the factors
in $|p(z,w)|^2$ which depend only on $z$---each such factorization
will yield spaces as in the split-shift condition via the above
theorem.  We prove that these are all the possible split-shift
decompositions corresponding to $|p(z,w)|^2$.  See Proposition
\ref{allshiftsplits}.

While each choice of $p$ in the factorization of $t=|p|^2$ yields a
canonically associated pair of spaces $\mcK_1, \mcK_2$ in the
split-shift condition, the matrix condition naturally gives rise to
two canonical choices for such pairs.

\begin{theorem} \label{matrixsplits}
Let $\mcT$ be a positive linear form on $\mcL_{n,m}$ satisfying the
matrix condition of Theorem \ref{mainthm}.  Then, $(\mcK_1,\mcK_2) =
(\mcE_{n-1,m}^1\ominus \mathcal{B}, \mathcal{B})$ satisfies the
split-shift condition where
\[
\mathcal{B} = \vee \{ T^jB f: f \in w \mcF_{n,m-1}^2, j=0,1, \dots\}.
\]

Similarly, $(\mcK_1,\mcK_2) = (\mathcal{A},\mcE_{n-1,m}^1\ominus
\mathcal{A})$ satisfies the split-shift condition where
\[
\mathcal{A} = \vee \{(T^*)^{j} A^* f: f \in w \mcE^{2}_{n,m-1},
j=0,1,\dots\}.
\]
If $(\mcK_1',\mcK_2')$ is any other pair satisfying the split-shift
condition, then $\mathcal{A} \subset \mcK_1'$ and $\mathcal{B} \subset
\mcK_2'$.
\end{theorem}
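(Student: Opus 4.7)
The plan is to check the split-shift axioms for each of the two canonical pairs directly, then deduce the minimality statement. The central observation is that since $T$ acts on the $n$-dimensional space $\mcE_{n-1,m}^1$, Cayley--Hamilton together with the matrix condition $AT^jB=0$ for $0\le j<n$ implies $A$ vanishes on the whole $T$-invariant subspace $\mathcal{B}$; dually $B^*$ vanishes on the whole $T^*$-invariant subspace $\mathcal{A}$. Three further tools will be used throughout: the identity $\langle Tg,h\rangle=\langle zg,h\rangle$ valid whenever $h\in\mcE_{n-1,m}^1$, the fact that $M_z$ and $M_w$ act as isometries on $\mcH_{\mcT}$, and the two orthogonal decompositions
\[
w\mcP_{n,m-1} \;=\; w\mcP_{n-1,m-1}\oplus w\mcF_{n,m-1}^2 \;=\; wz\mcP_{n-1,m-1}\oplus w\mcE_{n,m-1}^2.
\]

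For the pair $(\mcK_1,\mcK_2)=(\mcE_{n-1,m}^1\ominus\mathcal{B},\mathcal{B})$, condition (1) is tautological. For (2), write $zg=Tg+(zg-Tg)$: the second piece is orthogonal to $\mcE_{n-1,m}^1$, while $Tg\in\mathcal{B}$ by $T$-invariance, so $\langle f,zg\rangle=0$ when $f\in\mcK_1$. For (3), use the first decomposition to verify $\mcK_1\subset\mcE_{n,m}^1$: orthogonality to $w\mcP_{n-1,m-1}$ is automatic, while for $h\in w\mcF_{n,m-1}^2$ one computes $\langle f,h\rangle=\langle f,Bh\rangle=0$ since $Bh\in\mathrm{range}\,B\subset\mathcal{B}$. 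Use the second decomposition to verify $z\mathcal{B}\subset\mcE_{n,m}^1$: $\langle zg,wzh\rangle=\langle g,wh\rangle=0$ since $g\in\mcE_{n-1,m}^1\perp w\mcP_{n-1,m-1}$, and $\langle zg,f\rangle=\langle Ag,f\rangle=0$ for $f\in w\mcE_{n,m-1}^2$ because $A|_{\mathcal{B}}=0$. The dual pair $(\mathcal{A},\mcE_{n-1,m}^1\ominus\mathcal{A})$ is handled by the parallel argument using $\langle f,zg\rangle=\langle T^*f,g\rangle$ and $T^*$-invariance of $\mathcal{A}$ for (2), the vanishing $B^*|_{\mathcal{A}}=0$ for $\mathcal{A}\subset\mcE_{n,m}^1$, and the inclusion $\mathrm{range}\,A^*\subset\mathcal{A}$, which forces $\mcE_{n-1,m}^1\ominus\mathcal{A}\subset\ker A$ and delivers the remaining half of (3).

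For minimality, let $(\mcK_1',\mcK_2')$ be any split-shift pair. Condition (2) gives $\langle Tg,h\rangle=\langle zg,h\rangle=0$ for $g\in\mcK_2'$, $h\in\mcK_1'$, so $\mcK_2'$ is $T$-invariant; condition (3) via the first decomposition forces $\mcK_1'\perp w\mcF_{n,m-1}^2$, hence $\mathrm{range}\,B\subset\mcE_{n-1,m}^1\ominus\mcK_1'=\mcK_2'$, and $T$-invariance then yields $\mathcal{B}\subset\mcK_2'$. The symmetric argument using condition (3) via the second decomposition (which gives $z\mcK_2'\perp w\mcE_{n,m-1}^2$, hence $\mathrm{range}\,A^*\subset\mcK_1'$) together with $T^*$-invariance of $\mcK_1'$ yields $\mathcal{A}\subset\mcK_1'$. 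The only real obstacle is bookkeeping: condition (3) conceals two distinct orthogonality requirements, one naturally paired with $B$ via the first decomposition of $w\mcP_{n,m-1}$ and one naturally paired with $A$ via the second, and untangling these is precisely what identifies $\mathcal{A}$ and $\mathcal{B}$ as the extremal choices.
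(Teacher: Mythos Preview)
Your proof is correct and follows essentially the same approach as the paper: the paper establishes the first pair and the minimality of $\mathcal{B}$ in Proposition~\ref{mctoss} and Proposition~\ref{sstomc} respectively, using exactly your decompositions of $w\mcP_{n,m-1}$ and the $T$-invariance argument, then remarks that the dual statements for $\mathcal{A}$ follow by a ``similar argument.'' Your write-up is slightly more self-contained in that you spell out the dual case explicitly via $T^*$-invariance and $\ker A\supset\mcK_2'$, whereas the paper also links $\mathcal{A}$ and $\mathcal{B}$ to the Bernstein--Szeg\H{o} data through Theorems~\ref{ABdecomp} and~\ref{BAformulas}.
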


To be clear, $T^*: \mcE^{1}_{n-1,m} \to \mcE^{1}_{n-1,m}$ is given by
$P_{\mcE^{1}_{n-1,m}} M_{1/z}$ and $A^*: w \mcE^{2}_{n,m-1} \to
\mcE^{1}_{n-1,m}$ is given by $P_{\mcE^{1}_{n-1,m}} M_{1/z}$.   See
Theorems \ref{ABdecomp} and \ref{BAformulas} where we also show how
the spaces in Theorem \ref{matrixsplits} relate to those in Theorem
\ref{bssplits}.   

Theorems \ref{bssplits} and \ref{matrixsplits} directly show how the
Bernstein-Szeg\H{o} condition and the matrix condition yield the
split-shift condition.  On the other hand, if the split-shift
condition holds, $\mcK_1\oplus z\mcK_2$ has co-dimension one in
$\mcE_{n,m}^1$ and we shall show that the Bernstein-Szeg\H{o}
condition holds using any unit norm element $p$ in the one dimensional
space $\mcE_{n,m}^1\ominus (\mcK_1\oplus z\mcK_2)$.  A sum of squares
result related to Theorem \ref{sosthm} ends up being crucial here.  In
Section \ref{constructp}, we describe a simple procedure for
constructing $p$ from the moments $\mcT(z^jw^k)$ once we know the
split-shift condition holds.

Our emphasis on the split-shift condition permits several interesting
refinements that were not evident before.  Notice that if $p$ does not
vanish on $\T\times \cD$, then the argument principle shows that the
number of zeros of $p(\cdot, w)$ in $\D$ will be constant as $w$
varies in $\cD$. Thus it is possible to prove a ``stratified'' version
of Theorem~\ref{mainthm}, where we characterize factorizations
involving $p$ with no zeros in $\T\times \cD$ such that $p(z,0)$ has a
specified number of zeros in $\D$.  See the end of Section
\ref{shiftsplits} for the proof of the following corollary.

\begin{corollary} \label{corstrat}
Let $\mcT$ be a positive linear form on $\mcL_{n,m}$ and
let $0\leq d \leq n$.  The following are equivalent.
\begin{enumerate}
\item (Bernstein-Szeg\H{o} condition) There exists $p \in \C[z,w]$
  with no zeros on $\T\times \cD$, degree at most $(n,m)$, and where
  $p(z,0)$ has $d$ zeros in $\D$ such that
\[
\mcT(z^jw^k) = \int_{\T^2} z^jw^k
\frac{|dz||dw|}{(2\pi)^2|p(z,w)|^2} \qquad |j|\leq n, |k| \leq m.
\]

\item (Split-shift condition) $\mcT$ satisfies the split-shift
  orthogonality condition where $\mcK_1$ has dimension $d$.

\item (Matrix condition) The invariant subspace of $T$ generated by
  the range of $B$ is contained in the kernel of $A$, and 
\[
\dim \mathcal{A} \leq d \leq n- \dim \mathcal{B}.
\]
Note $\mathcal{A}$ and $\mathcal{B}$ are as in Theorem
\ref{matrixsplits}.
\end{enumerate}

\end{corollary}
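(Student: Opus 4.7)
Since Theorem~\ref{mainthm} already establishes the unstratified equivalences, the task reduces to tracking the dimension parameter $d$ through each implication.

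For (1)$\Rightarrow$(2), factor $p(z,0)=a(z)b(z)$ with $\deg b=d$ and apply Theorem~\ref{bssplits} to produce a split-shift pair $(\mcK_1,\mcK_2)$. Since $\dim\mcE^{1}_{n-1,m}=n$ and $\mcE^{1}_{n-1,m}=\mcK_1\oplus\mcK_2$, it suffices to verify $\dim\mcK_1=d$. The description in Theorem~\ref{bssplits} presents $\mcK_1$ as the image under $P_{\mcE^{1}_{n-1,m}}$ of $\Span\{a,za,\dots,z^{d-1}a\}$; a nontrivial linear dependence among the projections would force $\bigl(\sum c_i z^i\bigr)a(z)\in w\mcP_{n-1,m-1}$, which is impossible since any polynomial in $z$ alone lying in $w\mcP_{n-1,m-1}$ must vanish identically. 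Hence $\dim\mcK_1=d$ and $\dim\mcK_2=n-d$. For (2)$\Rightarrow$(3), Theorem~\ref{mainthm} yields the matrix condition itself, while the inequalities $\dim\mathcal{A}\le d\le n-\dim\mathcal{B}$ follow from Theorem~\ref{matrixsplits}, which asserts that every split-shift pair $(\mcK_1,\mcK_2)$ satisfies $\mathcal{A}\subset\mcK_1$ and $\mathcal{B}\subset\mcK_2$.

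The substance of the corollary lies in (3)$\Rightarrow$(1). Apply the unstratified Theorem~\ref{mainthm} to produce \emph{some} polynomial $p_0$ realizing the moments by a Bernstein-Szeg\H{o} measure; its zero count $\#\{p_0(z,0)=0\text{ in }\D\}$ need not equal $d$. Proposition~\ref{allshiftsplits} classifies all admissible $p$ with $|p|^2=|p_0|^2$: they are obtained from $p_0$ by swapping individual roots $\alpha$ of its pure $z$-factor with their reflections $1/\bar\alpha$ across the unit circle, using the identity $|z-\alpha|=|\alpha||z-1/\bar\alpha|$ valid on $\T$. By Theorem~\ref{bssplits}, each admissible $p$ in this family yields a split-shift whose $\dim\mcK_1$ equals the number of zeros of $p(z,0)$ in $\D$. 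The two canonical split-shifts of Theorem~\ref{matrixsplits} realize the extreme values $\dim\mathcal{A}$ and $n-\dim\mathcal{B}$, and each single-root swap changes the zero count by exactly one, so every integer $d\in[\dim\mathcal{A},\,n-\dim\mathcal{B}]$ is achieved by some $p$ whose $p(z,0)$ has precisely $d$ zeros in $\D$. Feeding this $p$ back into the Bernstein-Szeg\H{o} representation finishes the proof.

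\textbf{Main obstacle.} The delicate implication is (3)$\Rightarrow$(1); the other two are dimensional bookkeeping once Theorems~\ref{bssplits} and~\ref{matrixsplits} are in hand. One must combine the classification in Proposition~\ref{allshiftsplits} with the explicit description of $\mathcal{A}$ and $\mathcal{B}$ in Theorem~\ref{matrixsplits} to verify that the extremal root-counts arising from single-root reflections coincide exactly with $\dim\mathcal{A}$ and $n-\dim\mathcal{B}$, and that the full interval of intermediate integers is covered without gaps.
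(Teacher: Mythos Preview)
Your proposal is correct and follows essentially the same cycle of implications as the paper's proof: (1)$\Rightarrow$(2) via Theorem~\ref{bssplits} (the paper cites the equivalent Theorem~\ref{BStoSS}), (2)$\Rightarrow$(3) via the inclusions $\mathcal{A}\subset\mcK_1$, $\mathcal{B}\subset\mcK_2$ from Theorem~\ref{matrixsplits} (the paper uses the slightly sharper $\mathcal{A}\subset\mcK_1\subset\mathcal{A}\oplus\mcK_0$ from Theorems~\ref{ABdecomp} and~\ref{BAformulas}, but only the dimension inequalities are needed), and (3)$\Rightarrow$(1) via Proposition~\ref{allshiftsplits}. Your linear-independence check for $\dim\mcK_1=d$ in (1)$\Rightarrow$(2) is a useful detail the paper leaves implicit.

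One small point of care in your (3)$\Rightarrow$(1): the phrase ``each single-root swap changes the zero count by exactly one'' is correct for the finite roots of the pure $z$-factor $q$, but if $\deg q<n_0$ (which Proposition~\ref{allshiftsplits} does not exclude) then swapping only finite roots reaches at most $\deg g_2+\deg q<n_0+\deg g_2=n-\dim\mathcal{B}$. The classification in Proposition~\ref{allshiftsplits} covers this gap because the condition $q=q_1\rev{q}_2$ with $\rev{q}_2$ reflected at $\deg q_2$ permits $q_2$ to carry extra factors of $z$ (zeros at the origin), which is exactly how the extremal split-poly $\rev{q}g$ with $\rev{q}$ reflected at degree $n_0$ arises. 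So the ``root swap'' picture should be understood to include swapping a root at $\infty$ with one at $0$; with that reading your argument is complete and matches the paper's appeal to Proposition~\ref{allshiftsplits}.
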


  In particular, the case $d=0$ yields the Geronimo-Woerdeman result
  (as well as much simpler looking conditions).  In this case, the
  split-shift condition merely says
\begin{equation} \label{GWss}
z\mcE^1_{n-1,m} \subset \mcE^1_{n,m}.
\end{equation}
The matrix condition in this case implies $\mathcal{A} = \{0\}$ which
implies $A = 0$.  Since the range of $A$ is $P_{w\mcE^2_{n,m-1}} z
\mcE^1_{n-1,m}$, this means $w\mcE^2_{n,m-1} \perp z\mcE^1_{n-1,m}$,
which is equivalent to \eqref{GWss} because of the orthogonal
decomposition
\[
\mcE^1_{n,m} \oplus w \mcE^{2}_{n,m-1} = z\mcE^{1}_{n-1,m} \oplus
\mcE^2_{n,m}.
\]
By performing the reflection operation, $w\mcE^2_{n,m-1} \perp
z\mcE^1_{n-1,m}$ is equivalent to $\mcF^{1}_{n-1,m} \perp
\mcF^{2}_{n,m-1}$.  
 
\begin{corollary}[Geronimo-Woerdeman \cite{GW04}]
Let $\mcT$ be a positive linear form on $\mcL_{n,m}$.  There exists $p
\in \C[z,w]$ with no zeros on $\cD^2$ and degree at most $(n,m)$ such
that
\[
\mcT(z^jw^k) = \int_{\T^2} z^jw^k
\frac{|dz||dw|}{(2\pi)^2|p(z,w)|^2} \qquad |j|\leq n, |k| \leq m,
\]
if and only if
\[
\mcF^{1}_{n-1,m} \perp
\mcF^{2}_{n,m-1}.
\]
\end{corollary}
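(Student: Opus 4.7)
The plan is to derive this corollary as the $d=0$ specialization of Corollary~\ref{corstrat}. My first step is to verify that a polynomial $p$ of degree at most $(n,m)$ has no zeros on $\cD^2$ if and only if it has no zeros on $\T\times\cD$ and $p(z,0)$ has no zeros in $\D$. The nontrivial direction uses the argument principle: since $p$ is zero-free on $\T\times\cD$, the number of zeros of $p(\cdot,w)$ in $\D$ depends continuously on $w\in\cD$, hence is constant, and it equals zero at $w=0$. So the Bernstein-Szeg\H{o} condition stated in the corollary is precisely the $d=0$ Bernstein-Szeg\H{o} condition of Corollary~\ref{corstrat}.

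Specializing the split-shift condition to $\dim\mcK_1=0$ forces $\mcK_1=\{0\}$ and $\mcK_2=\mcE_{n-1,m}^1$, so conditions (1) and (2) of Definition~\ref{SHC} become vacuous while (3) reduces to $z\mcE_{n-1,m}^1\subset\mcE_{n,m}^1$, which I will denote $(\ast)$. To rewrite $(\ast)$ in the stated form, I combine the two orthogonal decompositions $\mcP_{n,m}=\mcE_{n,m}^1\oplus w\mcP_{n,m-1}=z\mcP_{n-1,m}\oplus\mcE_{n,m}^2$ with the further splittings $w\mcP_{n,m-1}=w\mcE_{n,m-1}^2\oplus zw\mcP_{n-1,m-1}$ and $z\mcP_{n-1,m}=z\mcE_{n-1,m}^1\oplus zw\mcP_{n-1,m-1}$. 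Taking orthogonal complements of $zw\mcP_{n-1,m-1}$ on both sides yields
\[
\mcE_{n,m}^1\oplus w\mcE_{n,m-1}^2 \;=\; z\mcE_{n-1,m}^1\oplus \mcE_{n,m}^2.
\]
Inside this common space $z\mcE_{n-1,m}^1$ is automatically orthogonal to $\mcE_{n,m}^2$, so $(\ast)$ is equivalent to $z\mcE_{n-1,m}^1\perp w\mcE_{n,m-1}^2$.

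Finally I apply the anti-unitary reflection operator at degree $(n,m)$. For $f\in\mcF_{n-1,m}^1$ the element $z^nw^m\bar f(1/z,1/w)$ equals $z$ times the reflection of $f$ at degree $(n-1,m)$, producing an arbitrary element of $z\mcE_{n-1,m}^1$; the parallel computation with an extra factor of $w$ shows that reflection at $(n,m)$ sends $\mcF_{n,m-1}^2$ onto $w\mcE_{n,m-1}^2$. Since anti-unitaries preserve orthogonality in $\mcH_{\mcT}$, the relation $z\mcE_{n-1,m}^1\perp w\mcE_{n,m-1}^2$ is equivalent to $\mcF_{n-1,m}^1\perp\mcF_{n,m-1}^2$, completing the proof. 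The only delicate point is bookkeeping the degree at which the reflection operator is applied so that the extra factors of $z$ and $w$ are absorbed into the correct exponents; once that is set up, the rest is essentially a diagram chase through the subspace decompositions above.
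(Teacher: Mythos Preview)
Your proof is correct and follows essentially the same route as the paper: specialize Corollary~\ref{corstrat} to $d=0$, note that the split-shift condition collapses to $z\mcE_{n-1,m}^1\subset\mcE_{n,m}^1$, use the orthogonal decomposition $\mcE_{n,m}^1\oplus w\mcE_{n,m-1}^2 = z\mcE_{n-1,m}^1\oplus\mcE_{n,m}^2$ to rewrite this as $z\mcE_{n-1,m}^1\perp w\mcE_{n,m-1}^2$, and then reflect at degree $(n,m)$ to obtain $\mcF_{n-1,m}^1\perp\mcF_{n,m-1}^2$. You supply more detail than the paper on two points---the argument-principle verification that ``no zeros on $\cD^2$'' matches the $d=0$ hypothesis, and the derivation of the decomposition by cancelling $zw\mcP_{n-1,m-1}$---but the logical skeleton is identical.
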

To use the language of \cite{LL12}, the last condition can be neatly
phrased as saying $\mcP_{n-1,m}$ and $\mcP_{n,m-1}$ intersect at right
angles.

As in \cite{GI12}, Theorem \ref{mainthm} allows us to characterize
when a positive two variable trigonometric polynomial can be factored
as $|p(z,w)|^2$ on $\T^2$ where $p$ has no zeros in $\T\times \cD$.

\begin{theorem} \label{frthm}
Suppose $t(z,w) = \sum_{j=-n}^{n} \sum_{k=-m}^{m} t_{jk} z^j w^k > 0$
for $(z,w) \in \T^2$.  Then, there exists $p \in \C[z,w]$ of degree at
most $(n,m)$ with no zeros on $\T\times\cD$ such that $t = |p|^2$ on
$\T^2$ if and only if the positive linear form $\mcT$ on $\mcL_{n,m}$
\[
\mcT(z^jw^k) = \int_{\T^2} z^jw^k \frac{|dz||dw|}{(2\pi)^2 t(z,w)}
\qquad |j|\leq n, |k| \leq m
\]
satisfies the split-shift condition. 
\end{theorem}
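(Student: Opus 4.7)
The plan is to reduce Theorem \ref{frthm} directly to Theorem \ref{mainthm}, using the elementary observation that matching the Fourier coefficients of $1/t$ and $1/|p|^2$ on the rectangle $|j|\le n,\,|k|\le m$ already forces $t=|p|^2$ pointwise on $\T^2$. The forward direction is immediate: if $t=|p|^2$ on $\T^2$ with $p$ of degree at most $(n,m)$ and nonvanishing on $\T\times \cD$, then substituting $t=|p|^2$ in the definition of $\mcT$ shows that $\mcT$ satisfies the Bernstein--Szeg\H{o} condition of Theorem \ref{mainthm} with this same $p$, and the split-shift condition then follows from the equivalence (1)$\Leftrightarrow$(2) of Theorem \ref{mainthm}.

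For the converse I would first note that $\mcT$ is positive on $\mcL_{n,m}$ because $t>0$ on $\T^2$. Applying the implication (2)$\Rightarrow$(1) of Theorem \ref{mainthm} produces $p\in\C[z,w]$ of degree at most $(n,m)$ with no zeros on $\T\times\cD$ such that
\[
\int_{\T^2} z^j w^k \,\frac{|dz||dw|}{(2\pi)^2\, t} \;=\; \int_{\T^2} z^j w^k \,\frac{|dz||dw|}{(2\pi)^2\,|p|^2}, \qquad |j|\le n,\ |k|\le m.
\]
To upgrade this moment match to pointwise equality, I would use that both $t$ and $|p(z,w)|^2 = p(z,w)\bar{p}(1/z,1/w)$ are elements of $\mcL_{n,m}$; expanding each in its (finite) Fourier series, multiplying by $1/t$ or by $1/|p|^2$, and exchanging sum with integral yields
\[
\int_{\T^2} \frac{|p|^2}{t}\, \frac{|dz||dw|}{(2\pi)^2} \;=\; 1 \;=\; \int_{\T^2} \frac{t}{|p|^2}\, \frac{|dz||dw|}{(2\pi)^2}.
\]
Since $t$ and $|p|^2$ are strictly positive on $\T^2$, Cauchy--Schwarz applied to $\sqrt{t}/|p|$ and $|p|/\sqrt{t}$ gives $1 \le 1\cdot 1 = 1$, so equality holds in Cauchy--Schwarz. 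This forces $t/|p|^2$ to be constant on $\T^2$, and the second identity above pins the constant to $1$, giving $t \equiv |p|^2$ on $\T^2$.

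I do not anticipate a real obstacle, since both directions use Theorem \ref{mainthm} as a black box and the remaining step is a short integration. The only points that require a moment's care are that $|p|^2$, viewed as a function on $\T^2$, genuinely extends to an element of $\mcL_{n,m}$ (so that the moment agreement applies to all of its Fourier coefficients), which uses only $\deg p \le (n,m)$; and that $p$ is nonvanishing on $\T^2\subset \T\times\cD$, so the division by $|p|^2$ inside the integrals is harmless.
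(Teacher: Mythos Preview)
Your proof is correct and essentially identical to the paper's: both directions invoke the equivalence of the Bernstein--Szeg\H{o} and split-shift conditions, and the converse is completed by the same Cauchy--Schwarz argument applied to $\sqrt{t}/|p|$ and $|p|/\sqrt{t}$, using the moment match on $\mcL_{n,m}$ together with $t,|p|^2\in\mcL_{n,m}$ to see that $\int t/|p|^2\,d\sigma=\int |p|^2/t\,d\sigma=1$. The paper phrases the converse in terms of the split-poly developed in Section~\ref{sec:SStoBS} rather than citing Theorem~\ref{mainthm} as a black box, but the content is the same.
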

See the end of Section \ref{sec:SStoBS} for a proof of this theorem. 

We say a finite, positive Borel measure $\mu$ on $\T^2$ is
\emph{non-degenerate} if
$$\int_{\T^2}|f|^2d\mu>0,$$ for every nonzero polynomial $f \in
\C[z,w]$.  We next turn to the problem of characterizing which such
measures $\mu$ on $\T^2$ are of the form
\begin{equation} \label{BSform}
\frac{1}{|p|^2} d\sigma
\end{equation}
where $p \in \C[z,w]$ has no zeros in $\T\times \cD$ and degree at
most $(n,m)$; $d\sigma$ denotes normalized Lebesgue measure on
$\T^2$. 
 
Some necessary conditions turn out to be
\begin{equation} \label{someneccond}
\mcE_{n,M}^2=\mcE^2_{n+j,M} 
\end{equation}
for $M\geq m-1$ and $j\geq 0$.  These conditions are most likely not
sufficient though.  

Surprisingly, in \cite{GI12}, it was noticed that conditions
\eqref{someneccond} combined with the analogous conditions obtained by
interchanging the roles of $z$ and $w$ characterize when $\mu$ has the
form
\[
\frac{1}{|p(z,w)q(1/z,w)|^2} d\sigma
\]
where $p,q \in \C[z,w]$ have no zeros in $\cD^2$.  

We now provide the following necessary and sufficient conditions for
$\mu$ to have the form \eqref{BSform}.  Define the following one
dimensional spaces 
\begin{equation}\label{HM}
\mcH_{M} := \mcP_{2n,M}\ominus \vee\{z^jw^k: 0\leq j\leq 2n, 0\leq k
\leq M, (j,k)\ne(n,0)\}.
\end{equation}

\begin{theorem}
Let $d\mu$ be a non-degenerate, finite, positive Borel measure on
$\T^2$.  There exists $p\in \C[z,w]$ of degree at most $(n,m)$ with no
zeros on $\T\times \cD$ such that
\[
d\mu = \frac{d\sigma}{|p|^2}
\]
if and only if 
\[
\mcE_{n,M}^2 = \mcE_{n+j,M}^2 
\text{ and } \mcH_{m} = \mcH_{m+j}
\]
for $M\geq m-1$ and $j\geq 0$.
\end{theorem}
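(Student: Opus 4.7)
The plan is to prove both directions of the equivalence: necessity by direct computation using the Bernstein-Szeg\H{o} structure, and sufficiency first by using Theorem \ref{mainthm} to produce $p$ from the truncated moments, then by propagating the moment identity from $\mcL_{n,m}$ to all moments via the given stability conditions.

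\emph{Necessity.} Assume $d\mu = d\sigma/|p|^2$ with $p \in \C[z,w]$, $\deg p \leq (n,m)$, and $p \neq 0$ on $\T \times \cD$. For $\mcE_{n,M}^2 = \mcE_{n+j,M}^2$, I would construct a basis of $\mcE_{n,M}^2$ from reflections $w^l\rev{p}$ at degree $(n,M)$ together with auxiliary polynomials coming from the stable/unstable factorization $p(z,0) = a(z)b(z)$ of Theorem \ref{bssplits}, and show each is orthogonal to $z^iw^k$ for $i \geq 1,\ 0 \leq k \leq M$ by residue calculus in $w$, using that $1/p(z,\cdot)$ is analytic in $|w|\leq 1$ for $z \in \T$. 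For $\mcH_m = \mcH_{m+j}$, I would introduce $g := z^n|p|^2 \in L^2(d\mu)$ and note the identity
\[
\int g\, \overline{z^i w^k}\, d\mu \;=\; \int z^{n-i} w^{-k}\, d\sigma \;=\; \delta_{i,n}\delta_{k,0};
\]
thus $P_{\mcP_{2n,M}}g$ spans $\mcH_M$, and a residue computation forces this projection to lie in $\mcP_{2n,m}$ for every $M \geq m$.

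\emph{Sufficiency.} Assume both stability conditions hold. Let $\mcT:\mcL_{n,m}\to\C$ be defined by $\mcT(z^jw^k) = \int z^jw^k\, d\mu$; positivity follows from non-degeneracy of $\mu$. The crucial step is to verify the split-shift (equivalently, matrix) condition of Theorem \ref{mainthm} for $\mcT$. The hypothesis $\mcE_{n,m-1}^2 = \mcE_{n+j,m-1}^2$ gives an $m$-dimensional subspace of $\mcP_{n,m-1}$ orthogonal under $\mcT$ to every $z^iw^k$ with $i \geq 1,\ 0 \leq k \leq m-1$; combined with $\mcH_m = \mcH_M$ (which constrains the middle monomial $z^n$) this pins down subspaces $\mcK_1,\mcK_2 \subset \mcE_{n-1,m}^1$ satisfying Definition \ref{SHC}. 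Theorem \ref{mainthm} then produces $p \in \C[z,w]$ of degree at most $(n,m)$ with no zeros on $\T\times\cD$, such that $\int z^jw^k\, d\mu = \int z^jw^k\, d\sigma/|p|^2$ for $|j|\leq n,\ |k|\leq m$. To extend this to $\mu = d\sigma/|p|^2$ as measures, set $d\nu := d\sigma/|p|^2$; by necessity $\nu$ satisfies the same stability conditions, and since $\mu$ and $\nu$ share all $\mcL_{n,m}$-moments they have the same spanning element $h_m$ of $\mcH_m$. The orthogonality $\int h_m\, z^{-i}w^{-k}\, d\mu = 0$ for all $(i,k)\ne(n,0)$ with $0\leq i\leq 2n,\ k\geq 0$ is a linear recursion determining all moments with $w$-index $>m$ from lower ones, and the parallel recursion from $\mcE_{n,M}^2 = \mcE_{n+j,M}^2$ handles $z$-indices; identical recursions with identical initial data force $\mu = \nu$.

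The main obstacle is the translation of the given stability conditions---about $\mcE^2$-complements of $z\mcP$ and a one-dimensional complement at the middle monomial $z^n$---into the split-shift/matrix condition on $\mcT$, which lives in $\mcE_{n-1,m}^1$ and encodes $z$-shifts. The $\mcH$-condition is particularly subtle because it sits in the middle of $\mcP_{2n,M}$ rather than on its $\mcE^1$-boundary; it must be repackaged as a constraint on the range of the operator $B$ and on the invariant subspaces of $T$ from Theorem \ref{mainthm}. Once this translation is in hand, the concluding moment-propagation is bookkeeping with Christoffel--Darboux-type recursions.
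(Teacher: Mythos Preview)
Your proposal has two genuine gaps, both in the sufficiency direction.

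First, you never actually establish the split-shift / matrix condition for $\mcT$; you say the $\mcE$-stability together with the $\mcH$-condition ``pins down subspaces $\mcK_1,\mcK_2$'' and you flag this as the main obstacle, but you misidentify the ingredients. In the paper the matrix condition $A_N T_N^k B_N=0$ is obtained from the $\mcE$-stability \emph{alone}, with no appeal to $\mcH_m$: one embeds the computation into the larger space $\mcP_{N+k,m}\ominus w\mcP_{N-1,m-1}$, uses $z\mcF_{N+j,m}^2=\mcF_{N+j+1,m}^2$ (a consequence of $\mcE$-stability) to telescope the product $A_N T_N^k B_N$, and arrives at $P_{w\mcE_{N,m-1}^2}M_zP_{w\mcF_{N+k,m-1}^2}(\cdots)=0$. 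Your suggestion that $\mcH_m=\mcH_M$ constrains the range of $B$ is pointing in the wrong direction; the $\mcH$-condition plays no role at this stage.

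Second, and more seriously, your moment-propagation step is circular. After a single application of Theorem~\ref{mainthm} at level $(n,m)$ you only know that $\mu$ and $\nu=d\sigma/|p|^2$ agree on $\mcL_{n,m}$. But $\mcH_m$ lives in $\mcP_{2n,m}$ and is determined by moments in $\mcL_{2n,m}$, so you cannot conclude that $\mu$ and $\nu$ share the same spanning element $h_m$ of $\mcH_m$; without this, your $w$-recursion never gets started. The paper avoids this by applying the matrix condition at \emph{every} level $N\geq n$ (not just $N=n$), obtaining polynomials $p_N$ whose Bernstein--Szeg\H{o} measures match $\mu$ on $\mcL_{N,m}$, and then using Theorem~\ref{aglerdecomps} together with the $\mcE$-stability to show all $p_N$ coincide; this yields agreement on the whole strip $\{|k|\leq m\}$. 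Only then is the $\mcH$-condition invoked: the argument is repeated with $m$ replaced by each $M\geq m$, producing $p_M$, and $\mcH_m=\mcH_M$ forces $p_M$ and $p_m$ to be unimodular multiples of one another. Your single-level application followed by ad hoc recursions does not substitute for this two-parameter bootstrap.
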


This theorem is proved in Section \ref{fullmeasure}, and in
\ref{concretefullmeasure} it is expressed concretely in terms of the
moments of $\mu$.  In Section \ref{autoreg}, we discuss the close
connection of our main theorem, Theorem \ref{mainthm}, to
autoregressive filters as was done in \cite{GW04}.

\section{Basic orthogonalities of Bernstein-Szeg\H{o}
  measures} \label{basic} 
The next two sections are occupied with proving that the
Bernstein-Szeg\H{o} condition implies the split-shift condition in
Theorem \ref{mainthm}, which is the content of Theorem \ref{BStoSS}.
The approach is an extension of \cite{GIK12}.

Let $p \in \C[z,w]$ and assume $p(z,w)\ne 0$ for $(z,w) \in \T\times
\cD$.  Let $\deg p \leq (n,m)$, $\rev{p}(z,w) = z^n w^m
\overline{p(1/\bar{z},1/\bar{w})}$.  Let $d\sigma$ denote normalized
Lebesgue measure on $\T^2$.  We use $d\sigma(z) = |dz|/(2\pi)$ or
$d\sigma(w) = |dw|/(2\pi)$ to denote normalized Lebesgue measure on
$\T$ using the variable $z$ or $w$.  We use $\ip{\cdot}{\cdot}$ for
the inner product in $L^2(1/|p|^2d\sigma, \T^2)$ and $\vee$ to
denote closed linear span in both $L^2(\T^2)$ and
$L^2(1/|p|^2d\sigma)$. This is legitimate as $L^2(1/|p|^2d\sigma)$ is
homeomorphic to $L^2(\T^2)$ since $|p|$ is bounded above and below on
$\T^2$. The next lemma shows that $p$ and $\rev{p}$ are orthogonal to
all monomials in half planes.

\begin{lemma} \label{lem:p}
In $L^2(\frac{1}{|p|^2} d\sigma)$,
\[
p \perp z^jw^k 
\]
for $j\in \Z, k \geq 1$ and  
\[
\rev{p} \perp z^jw^k
\]
for $j \in \Z, k <m$.
Also,
\[
\vee\{z^jp: j \in \Z\} = \vee\{z^j w^k: j \in \Z, 0 \leq k \leq m\}
\ominus \vee\{z^j w^k: j \in \Z, 1 \leq k \leq m\}.
\]
\end{lemma}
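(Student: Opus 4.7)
The plan is to reduce the weighted inner products to ordinary contour integrals via the identity
\[
\frac{1}{|p(z,w)|^2} = \frac{z^n w^m}{p(z,w)\,\rev{p}(z,w)} \qquad \text{on } \T^2,
\]
which is a direct consequence of $\overline{p(z,w)} = z^{-n}w^{-m}\rev{p}(z,w)$ on $\T^2$. Under this substitution the inner products become integrals of rational functions whose $w$-poles are confined to either $\D$ or $\{|w|>1\}$, and the orthogonalities should fall out from the mean value property on $\cD$.

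First, for $p\perp z^j w^k$ with $k\geq 1$, the inner product rewrites as $\int_{\T^2} z^{n-j}w^{m-k}/\rev{p}(z,w)\,d\sigma$. I will fix $z\in\T$ and substitute $u=1/w$ in the inner $w$-integral to convert it to $\int_{\T} u^{k}/(z^n \bar{p}(1/z,u))\,d\sigma(u)$. Since $p$ is zero-free on $\T\times\cD$, a direct check shows $\bar{p}(1/z,\cdot)$ has no zeros in $\cD$ for $z\in\T$ (these would correspond to zeros of $p(z,\cdot)$ in $\cD$), so $u\mapsto 1/\bar{p}(1/z,u)$ is analytic in $\cD$. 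Multiplied by $u^k$ which vanishes at $0$ for $k\geq 1$, the integrand is analytic in $\cD$ and vanishes at the origin, so the mean value property forces the integral to be $0$.

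Next, for $\rev{p}\perp z^j w^k$ with $k<m$, the same reduction will yield $\int_{\T^2} z^{n-j}w^{m-k}/p(z,w)\,d\sigma$. For fixed $z\in\T$, $1/p(z,\cdot)$ is analytic in $\cD$ (again since $p\neq 0$ on $\T\times\cD$) and $w^{m-k}$ vanishes at $w=0$ to order $m-k\geq 1$, so the inner $w$-integral vanishes by the mean value property.

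Finally, for the span identity, abbreviate $C:=\vee\{z^j p\}$, $A:=\vee\{z^j w^k:0\leq k\leq m\}$, $B:=\vee\{z^j w^k:1\leq k\leq m\}$. The first orthogonality, combined with the fact that multiplication by $z^j$ is unitary on $L^2((1/|p|^2)d\sigma)$, gives $C\perp B$; and $C\subseteq A$ is immediate since $p\in A$. It then remains to show $A\subseteq C+B$, for then $A=C\oplus B$ and $C=A\ominus B$. Given $f\in A$, I will expand $f=\sum_{k=0}^m f_k(z)w^k$ with $f_k\in L^2(\T,d\sigma(z))$ and define $\alpha:=f_0/p(\cdot,0)$; since $p(z,0)$ is a polynomial in $z$ bounded away from $0$ on $\T$ (using $(z,0)\in\T\times\cD$), $\alpha$ lies in $L^2(\T)$, and approximating $\alpha$ by trigonometric polynomials in $z$ places $\alpha p$ in $C$. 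The remainder $b:=f-\alpha p$ lies in $A$ with vanishing $w^0$-coefficient, so $b\in B$, and $f=\alpha p+b\in C+B$. The main obstacle will be justifying this $L^2$-decomposition rigorously, as it hinges on the uniform lower bound on $|p(z,0)|$ over $\T$ coming from the zero-free hypothesis.
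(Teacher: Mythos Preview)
Your proof is correct and rests on the same two ingredients as the paper's: holomorphy of $w\mapsto 1/p(z,w)$ on $\cD$ for each fixed $z\in\T$, and the nonvanishing of $p(\cdot,0)$ on $\T$.  For the orthogonalities the paper is more direct: since $\overline{p}/|p|^2=1/p$, one has $\langle z^jw^k,p\rangle=\int_{\T^2} z^jw^k/p\,d\sigma$ immediately, and the inner $w$-integral vanishes for $k\geq 1$ by holomorphy of $1/p(z,\cdot)$ in $\cD$; your route via $\rev{p}$ and the substitution $u=1/w$ reaches the same integral in different coordinates.  For the span identity your constructive decomposition $f=(f_0/p(\cdot,0))\,p + b$ is dual to the paper's argument, which instead shows that any $f\in A\ominus B$ orthogonal to every $z^jp$ must satisfy $f(z,0)/p(z,0)\equiv 0$ (all its Fourier coefficients in $z$ vanish), forcing $f\in B$ and hence $f=0$.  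Both arguments hinge on the same lower bound $\inf_{\T}|p(\cdot,0)|>0$.
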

\begin{proof}
\[
\ip{z^jw^k}{p} = \int_{\T} z^j \int_{\T}\frac{w^k}{p(z,w)} d\sigma(w) d\sigma(z) = 0
\]
for $k\geq 1$ since $1/p(z,w)$ is holomorphic in $w \in \cD$ when $z
\in \T$. The proof for $\rev{p}$ is similar.

For the final part, we have just shown the inclusion $\subset$.  On
the other hand, if $f \in \vee\{z^j w^k: j \in \Z, 0 \leq k \leq m\}
\ominus \vee\{z^j w^k: j \in \Z, 1 \leq k \leq m\}$ and $f \perp z^j
p$ for all $j\in \Z$, then
\[
0 = \int_{\T^2} \frac{f(z,w)z^{-j}}{p(z,w)} d\sigma = \int_{\T}
\frac{f(z,0)z^{-j}}{p(z,0)} d\sigma(z)
\]
for all $j \in \Z$ implies $f(z,0)/p(z,0) = 0$ for a.e. $z \in \T$.
(Note that $f(z,0)$ should be interpreted as $\sum_{j \in \Z}
\hat{f}(j,0)z^j$ in $L^2$.)  Therefore, $f(z,0) = 0$ which implies $f
\in \vee\{z^j w^k: j \in \Z, 1 \leq k \leq m\}$ making $f$ orthogonal
to itself.  So, $f=0$.
\end{proof}

Define 
\[
J_{\eta}(z,w) = z^n
\frac{p(z,w)\overline{p(1/\bar{z},\eta)}}{1-w\bar{\eta}}
\]
\[
H_{\eta}(z,w) = z^n \frac{\rev{p}(z,w)
  \overline{\rev{p}(1/\bar{z},\eta)}}{1-w\bar{\eta}}.
\]
By the previous lemma, $H_\eta \perp \vee\{z^jw^k: j \in \Z, k<m\}$
for $\eta \in \D$ and $J_{\eta} \perp \vee\{z^jw^k: j \in \Z, k\geq
0\}$ for $|\eta| > 1$ since for $(z,w) \in \T^2$
\[
J_{\eta}(z,w) = \frac{-\bar{w}}{\bar{\eta}} z^n
\frac{p(z,w)\overline{p(1/\bar{z},\eta)}}{1-\bar{w}/\bar{\eta}}.
\]

Define
\[
L_{\eta} (z,w) = L(z,w;\eta) = z^n
\frac{p(z,w)\overline{p(1/\bar{z},\eta)} - \rev{p}(z,w)
  \overline{\rev{p}(1/\bar{z},\eta)}}{1-w\bar{\eta}} = J_{\eta}(z,w) - H_{\eta}(z,w)
\]
which is a polynomial in $(z,w,\bar{\eta})$ of degree $(2n,m-1,m-1)$.  
Notice that
\begin{equation}\label{Lsymm}
z^{2n}(w\bar{\eta})^{m-1}
\overline{L(1/\bar{z},1/\bar{w};1/\bar{\eta})} = \bar{\eta}^{m-1}
\rev{L}_{1/\bar{\eta}}(z,w) = L_{\eta}(z,w).
\end{equation}

Similarly we define
\[
G_{\eta}(z,w) = G(z,w;\eta) = J_{\eta}(z,w) - w\bar{\eta}
H_{\eta}(z,w)
\]
which is a polynomial in $(z,w,\bar{\eta})$ of degree $(2n,m,m)$. Note
that the reflection symmetry for $L_{\eta}(z,w)$ implies the following
symmetry for $G_{\eta}(z,w)$
\begin{equation}\label{Gsymm}
z^{2n}(w\bar{\eta})^{m}
\overline{G_{1/\bar{\eta}}(1/\bar{z},1/\bar{w})} = G_{\eta}(z,w).
\end{equation}

Up to factors of $z^n$, $L_{\eta}(z,w)$ and $G_{\eta}(z,w)$ are
parametrized one-variable Christoffel-Darboux kernels. In the next few
lemmas, the orthogonality properties of $p$ and $\rev{p}$ are used to
obtain orthogonality properties on pieces of these kernels.

\begin{lemma}
If $f \in L^2$ and $\text{supp}(\hat{f}) \subset \Z\times \Z_{+}$,
then in $L^2(1/|p|^2 d\sigma)$
\[
\ip{f}{J_{\eta}} = \sum_{k\geq 0} \hat{f}(n,k) \eta^k
\]
for $\eta \in \D$.  In particular, $J_{\eta} \perp \vee \{z^j w^k: k
\geq 0, j \ne n\}$ for $\eta \in \D$.
\end{lemma}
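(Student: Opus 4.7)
The plan is to expand $\langle f, J_{\eta} \rangle$ in $L^2(1/|p|^2 d\sigma)$ directly, cancel a factor of $|p|^2$ against one copy of $\overline{p(z,w)}$ coming from $\overline{J_\eta}$, do the $w$-integral by Cauchy's formula, and then read off the result by Fourier expansion in $z$.

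More concretely, first I would write
\[
\langle f, J_\eta \rangle
= \int_{\T^2} \frac{f(z,w)\,\overline{J_\eta(z,w)}}{|p(z,w)|^2}\,d\sigma
= \int_{\T^2} \frac{f(z,w)\,\bar z^{n}\,p(1/\bar z, \eta)}{p(z,w)\,(1-\bar w\eta)}\,d\sigma,
\]
where the simplification comes from $|p|^{-2}=(p\overline{p})^{-1}$ and from cancelling $\overline{p(z,w)}$. On $\T$, $\bar z = 1/z$, so $p(1/\bar z,\eta)=p(z,\eta)$. Also $\frac{1}{1-\bar w \eta}=\frac{w}{w-\eta}$ on $\T$, so
\[
\langle f, J_\eta \rangle = \int_{\T} \bar z^n\, p(z,\eta)\left(\int_{\T}\frac{f(z,w)}{p(z,w)}\,\frac{dw}{2\pi i (w-\eta)}\right) d\sigma(z).
\]

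Second, I would evaluate the inner $w$-integral. Since $\hat f$ is supported in $\Z\times\Z_+$, for a.e.\ $z\in\T$ the slice $f(z,\cdot)$ lies in $H^2(\D)$; and $1/p(z,\cdot)$ is bounded and holomorphic on $\cD$ because $p$ has no zeros on $\T\times\cD$. Hence $f(z,\cdot)/p(z,\cdot)\in H^2(\D)$, and the Cauchy integral formula (i.e.\ the reproducing property of $H^2$) gives
\[
\int_{\T}\frac{f(z,w)}{p(z,w)(w-\eta)}\,\frac{dw}{2\pi i} = \frac{f(z,\eta)}{p(z,\eta)}
\]
for $\eta\in\D$. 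The $p(z,\eta)$ factors then cancel, leaving
\[
\langle f, J_\eta \rangle = \int_{\T} f(z,\eta)\,z^{-n}\,d\sigma(z).
\]

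Finally, I would expand $f(z,\eta)=\sum_{j\in\Z, k\geq 0}\hat f(j,k)z^j\eta^k$; orthogonality of the $z^j$ on $\T$ picks out only the $j=n$ terms, yielding $\sum_{k\geq 0}\hat f(n,k)\eta^k$, which is the claimed formula. The in particular statement is immediate: for $f=z^j w^k$ with $j\ne n$ and $k\geq 0$, every coefficient $\hat f(n,k')$ vanishes, so $\langle z^jw^k, J_\eta\rangle=0$. The main subtlety is justifying the Cauchy integral step in the $L^2$ setting, but this is routine once one observes that $|p|$ is bounded above and below on $\T^2$, so the $H^2(\D)$-reproducing identity for $f(z,\cdot)/p(z,\cdot)$ applies for a.e.\ $z\in\T$ and Fubini legitimizes the iterated integration.
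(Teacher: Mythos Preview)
Your proof is correct and is essentially the same as the paper's own argument: both compute the inner product by cancelling $\overline{p(z,w)}$ against one factor of $|p|^2$, then apply the Cauchy/Poisson reproducing identity in the $w$-variable to reduce to $\int_\T f(z,\eta)\bar z^n\,d\sigma(z)$, and finally read off the $j=n$ Fourier coefficients. Your write-up simply makes explicit the $H^2$ justification and the Fubini step that the paper leaves implicit.
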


\begin{proof}
\[
\begin{aligned}
\ip{f}{J_{\eta}} &= \iint_{\T^2} \frac{f(z,w)}{p(z,w)} \frac{\bar{z}^n
  p(z,\eta)}{1-\bar{w}\eta} d\sigma(w) d\sigma(z) \\
&= \int_{\T} \frac{f(z,\eta)}{p(z,\eta)} p(z,\eta) \bar{z}^n
d\sigma(z) \\
&= \sum_{k\geq 0} \hat{f}(n,k) \eta^k.
\end{aligned}
\]
\end{proof}

\begin{lemma} \label{lem:L}
In $L^2(1/|p|^2 d\sigma)$, for all $\eta \in \C$
\[
L_{\eta} \perp \vee\{z^jw^k: j\ne n, 0\leq k < m\}
\]
and for $f \in \vee\{z^jw^k: j\in \Z, 0\leq k < m\}$
\[
\ip{f}{L_{\eta}} = \sum_{k=0}^{m-1} \hat{f}(n,k) \eta^k.
\]
\end{lemma}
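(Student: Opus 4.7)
The plan is to decompose $L_\eta = J_\eta - H_\eta$ and handle the two summands separately, first restricting to $\eta \in \D$ and then extending by an analytic continuation argument in $\eta$. For $\eta \in \D$, the preceding lemma gives
\[
\ip{f}{J_\eta} = \sum_{k \geq 0} \hat{f}(n,k)\, \eta^k
\]
for any $f \in L^2(1/|p|^2 d\sigma)$ with $\text{supp}(\hat{f}) \subset \Z \times \Z_{+}$. If in addition $f \in \vee\{z^j w^k : j \in \Z,\ 0 \leq k < m\}$, only the summands with $0 \leq k < m$ survive, so $\ip{f}{J_\eta} = \sum_{k=0}^{m-1} \hat{f}(n,k)\, \eta^k$.

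Second, for $\eta \in \D$ the orthogonality $H_\eta \perp \vee\{z^jw^k : j \in \Z,\ k < m\}$, noted immediately after the definitions of $J_\eta$ and $H_\eta$, yields $\ip{f}{H_\eta} = 0$ for every $f \in \vee\{z^j w^k : j \in \Z,\ 0 \leq k < m\}$. Subtracting gives the desired identity
\[
\ip{f}{L_\eta} = \ip{f}{J_\eta} - \ip{f}{H_\eta} = \sum_{k=0}^{m-1} \hat{f}(n,k)\, \eta^k, \qquad \eta \in \D.
\]
Taking $f = z^j w^k$ with $j \ne n$ and $0 \leq k < m$ gives the orthogonality statement on $\D$ as a special case, since then $\hat{f}(n,k') = 0$ for every $k'$.

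To extend from $\eta \in \D$ to all $\eta \in \C$, I would use that $L_\eta(z,w)$ is a polynomial in $\bar{\eta}$ of degree at most $m-1$, so $\overline{L_\eta(z,w)}$ is a polynomial in $\eta$ of degree at most $m-1$; integrating against $f/|p|^2$ shows that $\eta \mapsto \ip{f}{L_\eta}$ is a polynomial in $\eta$ of degree at most $m-1$. Since this polynomial agrees on the open set $\D$ with the polynomial $\sum_{k=0}^{m-1} \hat{f}(n,k)\, \eta^k$, the two polynomials agree identically, which gives both conclusions of the lemma for every $\eta \in \C$. The proof presents no real obstacle: the substantive work is contained in the previous lemma (for $J_\eta$) and in the observation $H_\eta \perp \vee\{z^jw^k : k<m\}$ already recorded above, and this lemma is essentially a bookkeeping combination of those facts together with the polynomial-in-$\eta$ argument for analytic continuation.
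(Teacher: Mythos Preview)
Your proof is correct and follows essentially the same approach as the paper: decompose $L_\eta = J_\eta - H_\eta$, use the preceding lemma for $J_\eta$ and the recorded orthogonality of $H_\eta$ to get the formula for $\eta\in\D$, then extend to all $\eta\in\C$ by the polynomial-in-$\eta$ argument. Your write-up is just a more detailed version of the paper's terse proof.
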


\begin{proof}
Note $L_\eta = J_{\eta} - H_{\eta}$.  As $f \perp H_{\eta}$ and
$\ip{f}{J_{\eta}} = \sum_{k=0}^{m-1} \hat{f}(n,k) \eta^k$, we see that
the desired formula holds for $\eta \in \D$.  Since both sides are
polynomials in $\eta$, the formula holds for all $\eta \in \C$.
\end{proof}

\begin{corollary} 
In $L^2(1/|p|^2 d\sigma)$
\[
\begin{aligned}
\vee\{L_{\eta}: \eta \in \D\} = & \mcP_{2n,m-1} \ominus(\mcP_{n-1,m-1}
\vee z^{n+1}\mcP_{n-1,m-1})\\ 
=& \vee\{z^j w^k: j\in \Z, 0\leq k<m\} \\ &\ominus \vee\{z^jw^k: j\ne n,
0\leq k < m\}.
\end{aligned}
\]
\end{corollary}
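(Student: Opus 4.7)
The plan is to show that all three spaces in the corollary coincide with $\vee\{L_\eta: \eta \in \D\}$ by a common dimension count of $m$. First, each $L_\eta$ is a polynomial in $\mcP_{2n,m-1}$, and Lemma \ref{lem:L} gives $L_\eta \perp z^jw^k$ for all $j \ne n$, $0 \le k < m$. The monomial bases of $\mcP_{n-1,m-1}$ and $z^{n+1}\mcP_{n-1,m-1}$ consist of such $z^jw^k$ with $j\in[0,n-1]\cup[n+1,2n]$, so $L_\eta$ lies in the middle space $(A) := \mcP_{2n,m-1} \ominus (\mcP_{n-1,m-1} \vee z^{n+1}\mcP_{n-1,m-1})$; the orthogonality taken for all $j \ne n$ equally shows $L_\eta \in (B) := \vee\{z^jw^k: j\in\Z, 0 \le k<m\} \ominus \vee\{z^jw^k: j\ne n, 0 \le k<m\}$.

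Next I would compute $\dim \vee\{L_\eta: \eta \in \D\} = m$ by expanding $L_\eta(z,w) = \sum_{k=0}^{m-1} M_k(z,w)\bar\eta^k$ with $M_k \in \mcP_{2n,m-1}$. This expansion is valid since $1-w\bar\eta$ divides the numerator and the resulting quotient has $\bar\eta$-degree at most $m-1$. Applying Lemma \ref{lem:L} with $f = z^nw^j$ gives $\ip{z^nw^j}{L_\eta} = \eta^j$, and conjugate-linearity in the second slot of $\ip{\cdot}{\cdot}$ yields $\sum_k \eta^k \ip{z^nw^j}{M_k} = \eta^j$; matching coefficients gives the biorthogonality $\ip{z^nw^j}{M_k} = \delta_{jk}$. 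This forces $\{M_0,\ldots,M_{m-1}\}$ linearly independent, and a Vandermonde argument using $m$ distinct $\eta_i\in\D$ recovers each $M_k$ from the $L_{\eta_i}$'s, so $\vee\{L_\eta: \eta \in \D\} = \Span\{M_0,\ldots,M_{m-1}\}$ has dimension exactly $m$.

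For the other two spaces, $\dim(A) = m$ follows from a basis count: the disjoint monomial bases of $\mcP_{n-1,m-1}$ and $z^{n+1}\mcP_{n-1,m-1}$ contribute $2nm$ independent directions, while $\mcP_{2n,m-1}$ has dimension $(2n+1)m$. For $\dim(B) = m$, use that both $|p|^2$ and $1/|p|^2$ are bounded on $\T^2$ so $L^2(d\sigma/|p|^2)$ and $L^2(d\sigma)$ have equivalent norms; both spaces appearing in $(B)$ are then closed subspaces of $L^2(d\sigma)$ with explicit Fourier-support descriptions, and the quotient is spanned by the $m$ linearly independent cosets $[z^nw^k]$ for $0 \le k<m$, so the Bernstein-Szeg\H{o} orthogonal complement $(B)$ has Hilbert-space dimension $m$. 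Combining $\vee\{L_\eta\} \subset (A) \cap (B)$ with $\dim \vee\{L_\eta\} = \dim(A) = \dim(B) = m$ forces all three to be equal. The one non-routine point is the last dimension count: one must carefully invoke the norm equivalence to reduce the infinite-codimensional Hilbert-space orthogonal complement to a finite-dimensional quotient identified by Fourier support.
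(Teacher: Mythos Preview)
Your proof is correct and complete. The difference from the paper's argument is one of strategy: you establish $\vee\{L_\eta\} \subset (A)\cap(B)$ and then force equality by computing $\dim\vee\{L_\eta\}=\dim(A)=\dim(B)=m$, whereas the paper avoids dimension counting altogether. After noting (as you do) that each $L_\eta$ lies in both orthogonal complements, the paper takes any $f$ in $(A)$ or $(B)$ with $f\perp L_\eta$ for all $\eta$ and applies the formula $\ip{f}{L_\eta}=\sum_{k=0}^{m-1}\hat f(n,k)\eta^k$ from Lemma~\ref{lem:L} to conclude $\hat f(n,k)=0$ for $0\le k<m$; this forces $f$ into the space being subtracted, hence $f\perp f$ and $f=0$. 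Your route has the virtue of being entirely elementary (Vandermonde, basis counting, codimension invariance under equivalent norms), and your biorthogonality $\ip{z^nw^j}{M_k}=\delta_{jk}$ is a nice explicit observation. The paper's route is shorter because the same inner-product formula that gives the inclusion also gives the reverse inclusion, so no separate dimension computation is needed---in particular the ``non-routine'' codimension argument for $(B)$ that you flag is bypassed entirely.
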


\begin{proof}
We have already shown the $L_{\eta}$'s are in the orthogonal
complements on the right.  On the other hand, if any $f$ (in either
orthogonal complement space) is orthogonal to $L_{\eta}$ for all
$\eta$, then $\hat{f}(n,k) = 0$ for $0\leq k<m$, implying $f=0$.
\end{proof}

The next Proposition (see also Corollary \ref{cor:ehe0}) shows that
certain orthogonal subspaces are mapped into each other by
multiplication by $z$.

\begin{prop} \label{Lprop}
In $L^2(1/|p|^2 d\sigma)$
\[
\begin{aligned}
&\mcP_{\infty, m-1} \ominus \mcP_{n-1,m-1}= \vee\{z^j L_{\eta}: j \geq 0, \eta \in \D\}\\
=& \vee\{z^j w^k: j\in \Z, 0\leq k<m\} \ominus \vee \{z^jw^k:  j<n,
0\leq k<m\}\\
\end{aligned}
\]
\[
\begin{aligned}
&\mcP_{\infty, m} \ominus \mcP_{n-1,m}= \vee\{z^j G_{\eta}: j \geq 0, \eta \in \D\}\\
=& \vee\{z^j w^k: j\in \Z, 0\leq k\leq m\} \ominus \vee \{z^jw^k:  j<n,
0\leq k\leq m\}.
\end{aligned}
\]



\end{prop}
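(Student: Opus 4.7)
The plan is to prove both pairs of equalities by showing each side coincides with the middle span $\vee\{z^jL_\eta : j\geq 0,\,\eta\in\D\}$ (resp.\ $\vee\{z^jG_\eta : j\geq 0,\,\eta\in\D\}$). As a preliminary, since $|p|$ is bounded away from $0$ and $\infty$ on $\T^2$, the norms on $L^2(d\sigma)$ and $L^2(1/|p|^2 d\sigma)$ are equivalent, so every closed span defined above is the same set of functions in both Hilbert spaces. Consequently $f \in \mcP_{\infty,m-1}$ iff its Fourier coefficients $\hat f(s,t)$ (relative to $d\sigma$) vanish outside $\{s\geq 0\}\times\{0,\ldots,m-1\}$; analogously $f\in B := \vee\{z^jw^k : j\in\Z,\ 0\leq k<m\}$ iff $\hat f$ is supported in $\Z\times\{0,\ldots,m-1\}$, and $f\in A := \vee\{z^jw^k : j<n,\ 0\leq k<m\}$ iff $\hat f$ is supported in $\{s<n\}\times\{0,\ldots,m-1\}$.

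For the forward inclusions in both equalities of the first display, observe that each $z^jL_\eta$ is a polynomial of $w$-degree at most $m-1$, hence lies in $\mcP_{\infty,m-1}\subset B$. For any $z^aw^b$ with $a<n$ and $0\leq b<m$, the identity $\ip{z^jL_\eta}{z^aw^b} = \ip{L_\eta}{z^{a-j}w^b}$ combined with $a-j<n$ (so $a-j\ne n$) and Lemma \ref{lem:L} gives $L_\eta\perp z^{a-j}w^b$. This yields $z^jL_\eta\perp \mcP_{n-1,m-1}$ and $z^jL_\eta\perp A$ simultaneously.

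For the reverse inclusions, suppose $f$ lies in either $\mcP_{\infty,m-1}\ominus\mcP_{n-1,m-1}$ or $B\ominus A$, and $f\perp z^jL_\eta$ for every $j\geq 0$ and $\eta\in\D$. Since $z^{-j}f\in B$, Lemma \ref{lem:L} yields
\[
0 = \ip{z^{-j}f}{L_\eta} = \sum_{k=0}^{m-1} \hat f(n+j,k)\,\eta^k\quad\text{for every }\eta\in\D,
\]
so $\hat f(n+j,k) = 0$ for all $j\geq 0$ and $0\leq k\leq m-1$. Intersecting with the Fourier support of $\mcP_{\infty,m-1}$ (resp.\ $B$) leaves $\hat f$ supported in $\{0,\ldots,n-1\}\times\{0,\ldots,m-1\}$ (resp.\ $\{s<n\}\times\{0,\ldots,m-1\}$), placing $f\in\mcP_{n-1,m-1}$ (resp.\ $f\in A$). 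Combined with $f$ already being orthogonal to that space, $f=0$.

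The second pair of equalities is handled by the same argument once a $G_\eta$-analog of Lemma \ref{lem:L} is established: for any $f$ with Fourier support in $\Z\times\{0,\ldots,m\}$,
\[
\ip{f}{G_\eta} = \sum_{k=0}^m \hat f(n,k)\,\eta^k.
\]
This follows from $G_\eta = J_\eta - w\bar\eta H_\eta$: the $J_\eta$ piece gives the stated sum by the earlier formula for $\ip{f}{J_\eta}$ (truncated since $f$ has $w$-support up to $m$), while $\ip{f}{w\bar\eta H_\eta} = \eta\ip{\bar w f}{H_\eta}$ vanishes because $\bar w f$ has Fourier support in $\Z\times\{-1,\ldots,m-1\}$, a set to which $H_\eta$ is orthogonal by Lemma \ref{lem:p}. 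The main obstacle throughout is the careful bookkeeping between the weighted $L^2(1/|p|^2 d\sigma)$-closures and the $d\sigma$-indexed Fourier coefficients; the equivalence of norms observed at the outset resolves this cleanly.
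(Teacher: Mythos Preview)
Your proof is correct and follows essentially the same approach as the paper's: inclusion of $z^jL_\eta$ in the orthogonal complements via Lemma~\ref{lem:L} (the paper cites the Corollary instead, which is equivalent content), and the reverse inclusion by the Fourier-support argument showing any $f$ orthogonal to all $z^jL_\eta$ must lie in the smaller space and hence be zero. Your explicit treatment of the $G_\eta$ case, which the paper dismisses with ``similar,'' and your remark on the equivalence of the $L^2(d\sigma)$ and $L^2(1/|p|^2\,d\sigma)$ norms are welcome clarifications; note that the orthogonality $H_\eta\perp\vee\{z^jw^k:j\in\Z,\ k<m\}$ you invoke is stated in the paragraph following the definition of $H_\eta$ as a consequence of Lemma~\ref{lem:p}, rather than in the lemma itself.
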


\begin{proof} 
By the Corollary, $z^jL_{\eta}$ is in the orthogonal complement spaces
for all $j \geq 0, \eta \in \D$.  On the other hand, if anything in
these orthogonal complements is orthogonal to $z^jL_{\eta}$ for all
$j\geq 0, \eta \in \D$, then such an element will have no Fourier
support in the set $\{(j,k): j \geq n, 0\leq k < m\}$ and will be
orthogonal to itself.

We get similar decompositions when we use $G_{\eta}$ instead of
$L_{\eta}$ and allow $k=m$.
\end{proof}

If we apply the anti-unitary reflection operation $\rev{\cdot}$ at
degree $(n-1,m-1)$ we get other useful decompositions
\begin{multline}
\vee\{z^jw^k: j<n, 0\leq k <m \} \ominus \mcP_{n-1,m-1} =
\vee\{z^{j-n}L_{\eta}: j<0, \eta \in \D\} \\ \label{Lpropr}
=\vee\{z^jw^k: j\in \Z, 0\leq k < m\}\ominus \vee\{z^jw^k: j\geq 0,
0\leq k < m\}.
\end{multline}
Multiplication of the above equation by $z$ gives the following 
important consequence which provides
necessary conditions for the full measure characterization in Section
\ref{fullmeasure}.

\begin{corollary} \label{cor:neccond}
In $L^2(1/|p|^2 d\sigma)$,
\[
\mcE_{n,M}^2 \perp z\mcP_{\infty,M}
\]
for all $M\geq m-1$.
\end{corollary}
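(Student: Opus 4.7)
My plan is to multiply both sides of \eqref{Lpropr} by $z$, exploiting the fact that $|z|=1$ on $\T^2$ makes $M_z$ unitary on $L^2(\frac{1}{|p|^2}d\sigma)$ and hence preserves orthogonal decompositions. This gives
\[
\vee\{z^j w^k : j \leq n,\ 0 \leq k < m\} \ominus z\mcP_{n-1,m-1} = \vee\{z^j w^k : j \in \Z,\ 0 \leq k < m\} \ominus z\mcP_{\infty,m-1}.
\]

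For the base case $M = m-1$, I would observe that $\mcE_{n,m-1}^2 = \mcP_{n,m-1} \ominus z\mcP_{n-1,m-1}$ embeds into the left-hand side: the inclusion $\mcP_{n,m-1}\subset \vee\{z^j w^k: j \leq n,\ 0 \leq k < m\}$ is obvious, and orthogonality to $z\mcP_{n-1,m-1}$ is built into the definition of $\mcE_{n,m-1}^2$. Equating LHS with RHS then immediately yields $\mcE_{n,m-1}^2 \perp z\mcP_{\infty,m-1}$. For $M = m$ the same template will work if I start from the $G_\eta$-version of Proposition \ref{Lprop}, reflect it at degree $(n-1,m)$ to produce the $m$-level analogue of \eqref{Lpropr}, multiply by $z$, and apply the analogous embedding to conclude $\mcE_{n,m}^2 \perp z\mcP_{\infty,m}$.

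The main obstacle is arbitrary $M > m$, since the kernels $L_\eta$ and $G_\eta$ only cover $w$-degrees up to $m-1$ and $m$ respectively. I would address this in one of two ways: either (i) construct higher-order Christoffel--Darboux-type kernels by combining $w^j L_\eta$ for $0 \leq j \leq M-m+1$ with suitable translates of $p$ to span the missing $w$-layers, producing an $M$-level version of \eqref{Lpropr}, and then repeating the multiplication-by-$z$ step and the same embedding argument for $\mcE_{n,M}^2$; or (ii) induct on $M$, using unitarity of $M_w$ together with the finite-range recurrence in $k$ on the Fourier coefficients of $1/|p|^2$ (of length at most $2m+1$, coming from the $w$-degree $\leq m$ of $p$) to propagate the orthogonality from $M-1$ to $M$. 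Either route reduces the general case to the core mechanism already in place for $M = m-1$.
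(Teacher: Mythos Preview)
Your handling of $M=m-1$ is correct and matches the paper's core step: multiply \eqref{Lpropr} by $z$ and use that $\mcE^{2}_{n,m-1}\subset \vee\{z^jw^k:j\le n,\ 0\le k<m\}\ominus z\mcP_{n-1,m-1}$. Your $M=m$ argument via the $G_\eta$ analogue would also go through.

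The gap is the passage to general $M>m$. Both of your suggested routes are substantially more work than necessary, and neither is spelled out enough to be convincing as stated. Route (i) would require you to rebuild analogues of Lemmas \ref{lem:p} and \ref{lem:L}, Proposition \ref{Lprop}, and the reflection identity \eqref{Lpropr} for your ad hoc higher-degree kernels; you give no indication of what ``combining $w^jL_\eta$ with suitable translates of $p$'' actually produces or why it has the right reproducing properties. Route (ii) is vaguer still: unitarity of $M_w$ does not by itself relate $\mcE^{2}_{n,M}$ to $\mcE^{2}_{n,M+1}$, and the appeal to a finite-range recurrence on Fourier coefficients is not connected to the orthogonality you want.

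The paper's observation that you are missing is a one-liner that makes both of your proposals unnecessary: since $\deg p\le(n,m)\le(n,M+1)$ for every $M\ge m-1$, you may simply regard $p$ as a polynomial of degree at most $(n,M+1)$. All of Section~\ref{basic}, in particular \eqref{Lpropr}, then holds verbatim with $m$ replaced by $M+1$, and your own base-case argument immediately yields $\mcE^{2}_{n,M}\perp z\mcP_{\infty,M}$. No new kernels and no induction are required.
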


We get this for all $M\geq m-1$ simply because we can view $p$ as a
polynomial of degree at most $(n,M+1)$ for any $M\geq m-1$. 
Similarly, we obtain the following corollary.
\begin{corollary} \label{cor:ehe0}
In $L^2(1/|p|^2 d\sigma)$,
\[
\mcP_{\infty, M} \ominus \mcP_{n,M}=z(\mcP_{\infty, M} \ominus \mcP_{n-1,M}),
\]
for all $M\geq m-1$.
\end{corollary}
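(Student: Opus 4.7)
The plan is to mimic the proof of Corollary \ref{cor:neccond} and derive the statement by multiplying a description of one of the orthogonal complements appearing in Proposition \ref{Lprop} by the unitary operator $M_z$, and then recognizing the result as a second instance of the same Proposition (with the degree parameter in $z$ bumped up by one).

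First I would reduce to the range $M\ge m-1$ being covered by viewing $p$ as a polynomial of degree at most $(n,M+1)$, exactly as the excerpt does for Corollary \ref{cor:neccond}. With that convention I may apply Proposition \ref{Lprop} (the $L_\eta$ version, or equivalently the statement with $m$ replaced by $M+1$) to obtain
\[
\mcP_{\infty, M} \ominus \mcP_{n-1,M}
 = \vee\{z^j w^k: j \in \Z,\ 0 \leq k \leq M\}
   \ominus \vee\{z^j w^k: j < n,\ 0 \leq k \leq M\}.
\]
Since multiplication by $z$ is unitary on $L^2(1/|p|^2 d\sigma)$ (because $|z|=1$ on $\T$), it preserves orthogonal complements, and shifting the index $j\mapsto j+1$ in both spans yields
\[
z\bigl(\mcP_{\infty, M} \ominus \mcP_{n-1,M}\bigr)
 = \vee\{z^j w^k: j \in \Z,\ 0 \leq k \leq M\}
   \ominus \vee\{z^j w^k: j \leq n,\ 0 \leq k \leq M\}.
\]

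Next I would apply Proposition \ref{Lprop} a \emph{second} time, now viewing $p$ as a polynomial of degree at most $(n+1,M+1)$ (permissible because the hypothesis $p\ne 0$ on $\T\times\cD$ is insensitive to the nominal bound on $\deg p$). That produces exactly
\[
\mcP_{\infty, M} \ominus \mcP_{n,M}
 = \vee\{z^j w^k: j \in \Z,\ 0 \leq k \leq M\}
   \ominus \vee\{z^j w^k: j < n+1,\ 0 \leq k \leq M\},
\]
whose right-hand side coincides with the one obtained by shifting. Combining the two identities gives the claimed equality $\mcP_{\infty, M} \ominus \mcP_{n,M}=z(\mcP_{\infty, M} \ominus \mcP_{n-1,M})$.

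I expect no genuine obstacle here; the one point that merits care is the legitimacy of applying Proposition \ref{Lprop} with the enlarged degree $(n+1,M+1)$, which depends only on the non-vanishing condition $p\ne 0$ on $\T\times\cD$ and not on the exact degree of $p$. This ``absorb extra degree into $p$'' trick is the same device already used in the excerpt to extend Corollary \ref{cor:neccond} from $M=m-1$ to all $M\ge m-1$, so the argument is entirely in the spirit of the preceding corollary.
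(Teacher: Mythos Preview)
Your proposal is correct and follows essentially the same approach as the paper: the paper's own proof consists only of the word ``Similarly,'' referring back to the device just used for Corollary~\ref{cor:neccond}, namely applying Proposition~\ref{Lprop} together with the observation that $p$ may be regarded as having degree at most $(n', M+1)$ for any $n'\ge n$ and $M\ge m-1$. You have simply spelled out that one-line argument in full detail, including the second application of Proposition~\ref{Lprop} with the $z$-degree bumped to $n+1$, which is exactly the intended mechanism.
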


\section{Bernstein-Szeg\H{o} condition implies split-shift condition}\label{BSCiSSC}
Using the same setup as the previous section, we now delve into the
more refined orthogonalities necessary to prove that the
Bernstein-Szeg\H{o} condition implies the split-shift condition in
Theorem \ref{mainthm}.
Write $p(z,0) = a(z) b(z)$ where $a$ has no zeros in $\cD$ and $b$ has
all zeros in $\D$.  Let $\beta := \deg b$ and $\rev{b}(z) = z^\beta
\overline{b(1/\bar{z})}$.

\begin{lemma} \label{lem:Jperp}
In $L^2(1/|p|^2 d\sigma)$, for $\eta \in \D$, we have
\[
a z^j \perp w J_{\eta}
\]
for all $j< \beta$.

For $|\eta| > 1$,
\[
w^m \rev{b} z^j \perp H_{\eta}
\]
for all $j<n-\beta$.
\end{lemma}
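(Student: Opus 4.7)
The plan is to reduce each inner product in $L^2(1/|p|^2 d\sigma)$ to a one-variable Cauchy integral in $w$ evaluated by residues, and then observe that the resulting $z$-integral is a Fourier coefficient forced to vanish by the location of the zeros of $b$ (respectively $\rev{a}$). Write $\alpha = \deg a$ and recall $\alpha + \beta \leq n$, with $a$ nonvanishing on $\cD$ and $b$ having all zeros in $\D$.

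For $\ip{az^j}{wJ_\eta}$ with $\eta \in \D$, I would expand the inner product and simplify using $\overline{p(z,w)} = z^{-n}w^{-m}\rev{p}(z,w)$ on $\T^2$ together with $\bar w/(1-\bar w\eta) = 1/(w-\eta)$ (from $\bar w = 1/w$). This reduces the $w$-integral to $\int_\T d\sigma(w)/(p(z,w)(w-\eta))$, which for $z\in\T$ and $\eta \in \D$ is evaluated by residues at the two poles $w=0$ and $w=\eta$ inside $\cD$ (both accessible because $p(z,\cdot)$ is zero-free on $\cD$), yielding $\frac{1}{\eta}(1/p(z,\eta) - 1/p(z,0))$. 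After factoring $p(z,0) = a(z)b(z)$, the inner product splits into
\[
\frac{1}{\eta}\int_\T a(z)z^{j-n}\,d\sigma(z) \;-\; \frac{1}{\eta}\int_\T \frac{z^{j-n}p(z,\eta)}{b(z)}\,d\sigma(z).
\]
The first term is (up to a constant) the $(n-j)$-th Fourier coefficient of $a$, which vanishes for $n-j > \alpha$, guaranteed by $j < \beta \leq n-\alpha$. For the second, the key point is that $b$ has all zeros in $\D$, so $1/b$ on $\T$ has Fourier support in $\{z^{-k}: k\geq \beta\}$; multiplying by the $z$-polynomial $z^{j-n}p(z,\eta)$ of degree at most $n$ pushes the support into $\{l: l\leq j-\beta\}$, which misses the constant term precisely when $j<\beta$.

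For $\ip{w^m\rev{b} z^j}{H_\eta}$ with $|\eta|>1$, a parallel computation using $\overline{\rev{p}(z,w)} = z^{-n}w^{-m}p(z,w)$ on $\T^2$ reduces matters to $\iint \frac{w^{m+1}\rev{b}(z)z^{j-n}\rev{p}(z,\eta)}{(w-\eta)\rev{p}(z,w)}\,d\sigma$. Since $\rev{p}(z,\cdot)$ has all its zeros in $\D$ for $z\in\T$ (mirroring the hypothesis on $p$), I would substitute $u=1/w$ to put the inner integral into the more tractable form
\[
\frac{1}{z^n}\int_\T \frac{d\sigma(u)}{(1-\eta u)\,\bar p(1/z,u)},\qquad \bar p(x,y) := \overline{p(\bar x,\bar y)},
\]
so that $\bar p(1/z,\cdot)$ is zero-free on $\cD$ for $z\in\T$. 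The only poles of this $u$-integrand in $\cD$ are $u=0$ and $u=1/\eta$. Using the reflection identity $\rev{p}(z,\eta) = z^n\eta^m\,\bar p(1/z,1/\eta)$, the residue contribution at $u=1/\eta$ collapses to $-\eta^m\int_\T \rev{b}(z) z^{j-n}\,d\sigma(z)$, the $(n-j)$-th Fourier coefficient of $\rev{b}$, which vanishes for $j<n-\beta = n-\deg\rev{b}$. The residue at $u=0$, after using $\overline{a(z)b(z)} = \rev{a}(z)\rev{b}(z)/z^{\alpha+\beta}$ on $\T$ to invert $\bar p(1/z,0)$ and expanding $\bar p(1/z,1/\eta) = \sum_l z^{-l}q_l(1/\eta)$, reduces to a weighted sum of integrals of $z^{j-n+\alpha+\beta-l}/\rev{a}(z)$; since $\rev{a}$ has all zeros in $\D$, $1/\rev{a}$ on $\T$ has Fourier support in $\{z^{-k}: k\geq \alpha\}$, pushing each integrand's Fourier support into $\{\leq j-n+\beta-l\}$, and $j<n-\beta$ forces $l<0$, so no term contributes.

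The main obstacle I anticipate is simply bookkeeping the three flavors of reflection/conjugation ($\rev{p}$, $\bar p$, and $\overline{p(z,w)}|_{\T^2}$) and the degrees $\alpha,\beta$ satisfying $\alpha+\beta\leq n$ (not necessarily equality). The residue computations and Fourier-support reasoning are routine; both parts of the lemma ultimately rest on the same principle, namely that a polynomial with all zeros in $\D$ has a reciprocal on $\T$ whose Fourier support lies in sufficiently negative powers of $z$, and that the hypotheses $j<\beta$ and $j<n-\beta$ are precisely what is needed to push the relevant Fourier coefficients past zero.
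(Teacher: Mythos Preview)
Your proposal is correct. For the first claim your argument is essentially identical to the paper's: both reduce the $w$-integral by residues at $w=0$ and $w=\eta$ to $\frac{1}{\eta}\bigl(1/p(z,\eta)-1/p(z,0)\bigr)$, then kill the two resulting $z$-integrals using $n-j>\deg a$ and the Fourier-support/analyticity coming from $b$ having all zeros in $\D$ (the paper phrases this last step by conjugating to $1/\rev{b}$, analytic in $\cD$, which is the same observation).

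For the second claim your route differs from the paper's. The paper conjugates the inner product at the outset, which turns $\rev{p}/|p|^2$ back into $1/p$; the $w$-integral is then again a residue computation at $w=0$ and $w=1/\bar\eta$, and the two $z$-terms die because $b$ is a polynomial of degree $\beta$ and $1/a$ is analytic in $\cD$. You instead keep $\rev{p}$ in the denominator and substitute $u=1/w$, landing on $\bar p(1/z,\cdot)$, which is zero-free on $\cD$; your residue at $u=1/\eta$ collapses (via $\rev{p}(z,\eta)=z^n\eta^m\bar p(1/z,1/\eta)$) to the same Fourier coefficient of $\rev{b}$, and your residue at $u=0$ is handled by the Fourier support of $1/\rev{a}$. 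Both arguments are the same computation in disguise: conjugating the inner product is equivalent to your change of variable $u=1/w$, and the pair $(a,\rev{a})$ plays for you the role $(a,1/a)$ plays in the paper. The paper's version is a bit shorter because conjugation avoids introducing $\bar p$ and $\rev{a}$ as separate objects, but your approach is equally valid and the bookkeeping you flag (tracking $\alpha+\beta\leq n$ rather than equality) is handled correctly.
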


\begin{proof}
Observe that for $j < \beta$ and $0<|\eta|<1$, $\ip{az^j}{wJ_{\eta}}$
equals
\[
\begin{aligned}
&\iint_{\T^2} \frac{a(z)z^j}{p(z,w)}
\frac{\bar{w}\bar{z}^n p(z,\eta)}{1-\bar{w}\eta} \frac{dw}{2\pi i w}
d\sigma(z) 
= \int_{\T}
z^{j-n} a(z)p(z,\eta) \int_{\T} \frac{dw}{2\pi i p(z,w)(w-\eta)w} d\sigma(z) \\ 
&\qquad= \int_{\T} z^{j-n} a(z) p(z,\eta)\left(\frac{1}{\eta p(z,\eta)} - \frac{1}{\eta
  p(z,0)} \right) d\sigma \\
&\qquad= \frac{1}{\eta} \left(\int_{\T}\bar{z}^{n-j} a(z) d\sigma(z) - \int_{\T}
\frac{z^{j-n}p(z,\eta)}{b(z)} d\sigma(z)\right) \\
&\qquad= -\frac{1}{\eta} \overline{\int_{\T}
  \frac{z^{n+\beta-j}\overline{p(z,\eta)} } {\rev{b}(z)} d\sigma(z)} = 0
\end{aligned}
\]
since $n-j > \deg a$.  The proof is easier when $\eta = 0$.

For $|\eta|>1$, $\overline{\ip{w^m\rev{b} z^j}{H_{\eta}}}$ equals
\[
\begin{aligned}
\ip{w^m\rev{b}z^j}{z^n\frac{\rev{p} \overline{\rev{p}(z,\eta)}}{1-w\bar{\eta}} }^* &= 
 \int_{\T} z^{n-\beta-j}b \bar{\eta}^m p(z,1/\bar{\eta}) \int_{\T}
\frac{1}{p(z,w)\bar{\eta}(1/\bar{\eta}-w)} \frac{dw}{2\pi i w} d\sigma(z) \\
&= \int_{\T} z^{n-\beta-j} b \bar{\eta}^{m} p(z,1/\bar{\eta})\left(
-\frac{1}{p(z,1/\bar{\eta})} +\frac{1}{p(z,0)} \right)
d\sigma(z) \\
&= -\int_{\T} z^{n-\beta-j}b \bar{\eta}^m d\sigma(z) + \int_{\T}
\bar{\eta}^m z^{n-\beta-j}
\frac{p(z,1/\bar{\eta})}{a(z)} d\sigma(z) \\
&= 0  
\end{aligned}
\]
for $n-\beta > j$.
\end{proof}

\begin{lemma} \label{lem:pperp}
In $L^2(1/|p|^2 d\sigma)$, if $f \in \vee \{z^j: j \geq 0\}$, then $f
\perp z^k p$ for all $k \geq 0$ if and only if $f(z) = a(z) q(z)$
where $q \in \C[z]$ has degree less than $\beta$.

If $f \in \vee\{w^m z^j: j \geq 0\}$, then $f \perp z^k \rev{p}$ for
all $k \geq 0$ if and only if $f(z,w) = w^m\rev{b}(z) q(z)$ where $q \in
\C[z]$ has degree less than $n-\beta$.
\end{lemma}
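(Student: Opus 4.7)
The plan is to reduce each biconditional to a one-variable Hardy-space statement on $\T$ by evaluating the $w$-integral in the inner product first.

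For the first assertion, the key is that $1/p(z,w)$ is holomorphic in $w\in\cD$ for each $z\in\T$, so Cauchy's formula gives $\int_\T d\sigma(w)/p(z,w) = 1/p(z,0) = 1/(a(z)b(z))$. Substituting into $\langle f, z^k p\rangle$ produces
\[
\langle f, z^k p\rangle = \int_\T \bar z^k f(z)/(a(z)b(z))\, d\sigma(z),
\]
the $k$-th Fourier coefficient of $f/(ab)$ on $\T$. So orthogonality to $\{z^k p:k\geq 0\}$ is equivalent to $f/(ab) \in \overline{zH^2}$, i.e.\ $f/(ab)$ has only strictly negative Fourier coefficients. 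From here I would exploit the factorization: since $a$ has no zeros in $\cD$, $1/a\in H^\infty$ and $g:=f/a\in H^2$; using $\rev{b}(z) = z^\beta\overline{b(z)}$ on $\T$ together with $\rev{b}$ outer (so $\rev{b}\cdot H^2 = H^2$), the space $b\cdot \overline{zH^2}$ collapses on $\T$ to $z^{\beta-1}\overline{H^2}$, whose elements have Fourier support in $\Z_{\leq\beta-1}$. Intersecting with $H^2$ forces $g$ to be a polynomial of degree less than $\beta$. The converse is immediate: for $f=aq$ with $\deg q<\beta$, $f/(ab) = q/b$ is a proper rational function with all poles in $\D$ and vanishing at $\infty$, hence has only strictly negative Fourier modes.

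For the second assertion, I would evaluate the $w$-integral by contour deformation to $\infty$. For $z\in\T$, the polynomial $\rev{p}(z,\cdot)$ has all its $w$-zeros in $\D$ (since $p(z,\cdot)$ has all its zeros in $|w|>1$), and its leading $w$-coefficient equals the one-variable reverse of $p(\cdot,0)$, namely $\rev{a}(z)\rev{b}(z)$. Thus $w^m/\rev{p}(z,w)$ is analytic on $\{|w|\geq 1\}\cup\{\infty\}$ with value $1/(\rev{a}\rev{b})(z)$ at infinity, and the mean-value equality gives $\int_\T w^m/\rev{p}(z,w)\, d\sigma(w) = 1/(\rev{a}(z)\rev{b}(z))$. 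Using also $\overline{\rev{p}(z,w)} = z^{-n}w^{-m}p(z,w)$ on $\T^2$, writing $f=w^m g(z)$ reduces $\langle f, z^k\rev{p}\rangle$ to the $k$-th Fourier coefficient of $g/(\rev{a}\rev{b})$. Since $\rev{b}$ has no zeros in $\cD$ and $\rev{a}$ has all zeros in $\D$ with $\deg\rev{a} = n-\beta$, the analysis of the first part applies verbatim with $(a,b,\beta)$ replaced by $(\rev{b},\rev{a}, n-\beta)$, yielding $g=\rev{b}\cdot q$ with $\deg q<n-\beta$.

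The main obstacle I anticipate is the careful bookkeeping under reversal: verifying that $\rev{b}$ is outer, that $\rev{a}$ has all zeros in $\D$, and that the leading $w$-coefficient of $\rev{p}$ is exactly $\rev{(p(\cdot,0))} = \rev{a}\rev{b}$, so that the second half genuinely reduces to the first. With these pieces in place, both halves are governed by the same one-variable identity pinning down polynomials of bounded degree as $H^2\cap z^{\beta-1}\overline{H^2}$.
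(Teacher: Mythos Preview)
Your proposal is correct and follows essentially the same route as the paper. Both arguments first integrate out $w$ to reduce the orthogonality condition to the vanishing of the nonnegative Fourier coefficients of $f/p(z,0)$ on $\T$, and then use reflection together with the stable/unstable factorization $p(z,0)=a(z)b(z)$ to force $f$ into the claimed form. Your packaging via $g:=f/a\in H^2$, $b\cdot\overline{zH^2}=z^{\beta-1}\overline{H^2}$ (using that $\rev{b}$ is outer), and hence $g\in H^2\cap z^{\beta-1}\overline{H^2}=\mathcal{P}_{\beta-1}$ is a tidy reformulation of the paper's computation, which instead shows $z^{n-1}\overline{f}\in H^2$ to get that $f$ is a polynomial of degree at most $n-1$ and then reads off $\rev{f}=\rev{a}\,\rev{b}\,g$ to conclude $\rev{a}\mid\rev{f}$. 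For the second assertion, your contour-to-infinity evaluation of $\int_{\T} w^m/\rev{p}(z,w)\,d\sigma(w)$ lands on the same identity the paper would reach by conjugation, and the substitution $(a,b,\beta)\mapsto(\rev{b},\rev{a},n-\beta)$ is exactly the ``similar proof'' the paper alludes to.
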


\begin{proof}
If $z^k p \perp f \in \vee\{z^j: j \geq 0\}$ for all $k\geq 0$ then
\[
 0 = \iint_{\T^2} \bar{z}^k \frac{f(z)}{p(z,w)} d\sigma(w)d\sigma(z) =
 \int_{\T} \frac{\bar{z}^k f(z)}{p(z,0)} d\sigma(z)
\]
for all $k \geq 0$ implies $f(z)/p(z,0) =\bar{z}
\overline{g(z)}$ for $g \in H^2(\T)=\vee \{z^j: j \geq 0\}$.  Then, $f(z) =
p(z,0)\bar{z}\overline{g(z)}$ and so 
\[
z^{n-1} \overline{f(z)} = z^n\overline{p(z,0)} g(z) \in H^2
\]
implies $f(z)$ is a polynomial of degree at most $n-1$.  In addition,
$\rev{f} = \rev{a} \rev{b} g$ implies $\rev{a}$ divides $\rev{f}$. (We
reflect $b$ at degree $\beta$ and $a$ at degree $n-\beta$.)  So,
$\rev{f} = \rev{a} h$ where $h \in \C[z]$ has degree less than
$\beta$.  Finally, $f = a q$ where $q \in \C[z]$ has degree less than
$\beta$.

For the converse, let $f = a q$.  Then,
\[
\begin{aligned}
&\iint_{\T^2} \frac{\bar{z}^k f(z)}{p(z,w)} d\sigma(w)d\sigma(z) =
\int_{\T} \frac{\bar{z}^kf(z)}{p(z,0)}d\sigma(z) 
= \int_{\T} \frac{\bar{z}^k q(z)}{b(z)} d\sigma(z) \\
&\qquad= \overline{\int_{\T}\frac{z^{k+\beta} \overline{q(z)}}{\rev{b}(z)}
    d\sigma(z)} 
= \overline{\int_{\T} \frac{z^{k+1} \rev{q}(z)}{\rev{b}(z)} d\sigma(z)} = 0
\end{aligned} 
\]
for all $k \geq 0$.

The proof of the second part is very similar.
\end{proof}

Define
\[
\mcK:= \mcP_{\infty,m} \ominus w\mcP_{\infty,m-1}
\]
\[
\mcL := \mcP_{\infty,m} \ominus \mcP_{\infty,m-1}.
\]

Notice
\[
\vee\{z^jp: j\geq 0\} \subset  \mcK
\]
and
\[
\vee\{z^j\rev{p}:j \geq 0\} \subset \mcL.
\]  

Let $P_0$ denote orthogonal projection onto $\mcP_{\infty,m-1}$ and
$P_0^{\perp} = I-P_0$.

Let $P_1$ denote orthogonal projection onto $w\mcP_{\infty,m-1}$ and
$P_1^{\perp} = I-P_1$.

The next two lemmas and corollary construct the spaces $\mcK_1$ and
$\mcK_2$ in the split-shift condition in Definition~\ref{SHC}.

\begin{lemma} \label{lem:KL}
In $L^2(1/|p|^2 d\sigma)$
\[
\mcK \ominus \vee\{z^jp: j \geq 0\} = P_1^{\perp}(\vee\{az^j: 0\leq j<\beta\})
\]
\[
\mcL \ominus \vee\{z^j \rev{p}: j \geq 0\} = P_0^{\perp}
(\vee\{w^m\rev{b}z^j: 0 \leq j< n-\beta\}).
\]

\end{lemma}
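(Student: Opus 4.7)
The plan is to prove each equality by establishing both inclusions, with the two equalities handled by parallel arguments. Focus first on
\[
\mcK \ominus \vee\{z^jp: j \geq 0\} = P_1^{\perp}(\vee\{az^j: 0\leq j<\beta\}).
\]
For the inclusion $\supset$, take $v$ in $V := \vee\{az^j:0\leq j<\beta\}$. Then $P_1^\perp v$ lies in $\mcK$ by definition of $P_1^\perp$; since $z^kp \in \mcK$ gives $P_1(z^kp)=0$, we have $\ip{P_1^\perp v}{z^kp} = \ip{v}{z^kp}$, and this vanishes for all $k\geq 0$ by Lemma~\ref{lem:pperp}, because $v$ is of the form $aq$ with $\deg q<\beta$.

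The reverse inclusion is the substantive step. Given $f = \sum_{k=0}^{m} f_k(z)w^k \in \mcK \ominus \vee\{z^jp:j\geq 0\}$, I would compute $\ip{f}{z^kp}$ directly. Using $\overline{z^kp}/|p|^2 = \bar z^k/p$ on $\T^2$ and the expansion $1/p(z,w) = \sum_{l\geq 0} e_l(z) w^l$ (valid since $1/p$ is holomorphic in $w \in \cD$ for $z \in \T$), the integration in $w$ collapses to $k'=l=0$, giving
\[
\ip{f}{z^kp} = \int_{\T} f_0(z)\,\bar z^k/p(z,0)\,d\sigma(z).
\]
This is exactly the inner product $\ip{f_0}{z^k p(\cdot,0)}$ in $L^2(1/|p(z,0)|^2\,d\sigma,\T)$, so vanishing for all $k\geq 0$ together with the one-variable case of Lemma~\ref{lem:pperp} (which is essentially what the proof of that lemma reduces to) yields $f_0 = aq$ with $\deg q < \beta$, hence $f_0 \in V$. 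It remains to identify $f$ with $P_1^\perp f_0$: the difference
\[
f - P_1^\perp f_0 = (f - f_0) + P_1 f_0
\]
lies in $w\mcP_{\infty,m-1}$ (both summands do) and simultaneously lies in $\mcK$ (both $f$ and $P_1^\perp f_0$ do, the latter because $f_0 \in \mcP_{\infty,m}$). Since $\mcK$ is the orthogonal complement of $w\mcP_{\infty,m-1}$ inside $\mcP_{\infty,m}$, this intersection is trivial, forcing $f = P_1^\perp f_0$.

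The second equality follows the same blueprint, but the computation of $\ip{f}{z^k\rev p}$ requires unwinding reflections and conjugations, and I expect this bookkeeping to be the main obstacle. On $\T^2$ one has $\overline{\rev p} = z^{-n}w^{-m}p$ and $\bar p(z,0) = z^{-n}\rev p_0(z)$ with $\rev p_0 = \rev a\,\rev b$; a parallel $w$-integration argument reduces the condition $\ip{f}{z^k\rev p} = 0$ for all $k\geq 0$ to
\[
\int_{\T} f_m(z)\,\bar z^k/\rev p_0(z)\,d\sigma(z) = 0 \quad \text{for all } k\geq 0.
\]
Applying the one-variable version of Lemma~\ref{lem:pperp} to $\rev p_0$ (the roles of the ``stable'' and ``unstable'' factors are now played by $\rev a$ and $\rev b$ respectively, and $\rev b$ is nonzero on $\cD$) produces $f_m = \rev b\,q$ with $\deg q < n-\beta$, so that $v := w^m f_m$ lies in $V' := \vee\{w^m\rev b z^j: 0\leq j<n-\beta\}$. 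The trivial-intersection argument $\mcL \cap \mcP_{\infty,m-1} = \{0\}$, applied to $f - P_0^\perp v = (f - v) + P_0 v$, then gives $f = P_0^\perp v$. The $\supset$ direction of the second equality is again immediate from Lemma~\ref{lem:pperp} combined with $z^k\rev p \in \mcL$.
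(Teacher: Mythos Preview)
Your proof is correct and follows essentially the same route as the paper's. The only difference is the order of the two main steps for the inclusion $\subset$: the paper first observes (from $f\in\mcK$ alone) that $f = P_1^\perp f(z,0)$, and then applies Lemma~\ref{lem:pperp} as a black box to $f(z,0)$ using that $P_1 f(z,0)\in w\mcP_{\infty,m-1}\perp z^jp$; you instead compute $\ip{f}{z^kp}$ directly to isolate $f_0$ (thereby redoing the integral computation already inside the proof of Lemma~\ref{lem:pperp}) and only afterwards verify $f=P_1^\perp f_0$ via the trivial-intersection argument. Both orderings are fine; the paper's is marginally cleaner because it avoids repeating the $w$-integration that Lemma~\ref{lem:pperp} has already packaged.
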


\begin{proof}
Let $f \in \mcK$ and $f \perp z^jp$ for all $j\geq 0$.  Write $f(z,w)
= f(z,0) - wg(z,w)$ where $g\in \mcP_{\infty,m-1}$ and notice that $P_1f = 0 =
P_1(f(z,0)) - wg(z,w)$ so that $f = f(z,0) - P_1f(z,0) =
P_1^{\perp}f(z,0)$.  Since $P_1f(z,0) \perp z^j p$ for all $j \geq 0$,
we see that $f(z,0) \perp z^j p$ for all $j \geq 0$.  By Lemma
\ref{lem:pperp}, $f(z,0) = a(z) q(z)$ where $\deg q < \beta$.  This
shows the inclusion $\subset$.

On the other hand, $P_1^{\perp}(a z^j) = a z^j - P_1 a z^j \in \mcK$,
$P_1(a z^j) \perp z^k p$ for all $k\geq 0$, and $a z^j \perp z^kp$ for
all $k\geq 0$ by Lemma \ref{lem:pperp}.

The second equation has a similar proof.

\end{proof}

\begin{lemma} \label{lem:PP}
In $L^2(1/|p|^2 d\sigma)$
\[
P_1^{\perp}(\vee\{az^j: 0\leq j<\beta\}) \subset \mcP_{n-1,m}
\]
\[
P_0^{\perp}(\vee\{w^m\rev{b} z^j: 0\leq j < n-\beta\}) \subset \mcP_{n-1,m}.
\]

\end{lemma}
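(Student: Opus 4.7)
The lemma contains two symmetric claims; I focus on the first, and the second will follow by the same argument with $\rev{p}$, $\rev{b}$, $w^m$, and with the roles of $J_\eta/H_\eta$ and of $|\eta|<1$/$|\eta|>1$ swapped.

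First, for $0 \leq j < \beta$, the polynomial $az^j$ has $z$-degree at most $\deg a + (\beta-1)$; since $p(z,0) = ab$ has degree at most $n$, $\deg a \leq n - \beta$, so $\deg(az^j) \leq n-1$ and $az^j \in \mcP_{n-1,0} \subset \mcP_{n-1,m}$. Consequently, $P_1^{\perp}(az^j) = az^j - P_1(az^j)$ lies in $\mcP_{n-1,m}$ if and only if $P_1(az^j) \in w\mcP_{n-1,m-1}$. Since multiplication by $w$ is unitary on $L^2(1/|p|^2 d\sigma)$, this containment is equivalent to $az^j$ being orthogonal (in the weighted inner product) to $w(\mcP_{\infty,m-1} \ominus \mcP_{n-1,m-1})$.

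To verify this orthogonality, apply Corollary \ref{cor:ehe0} with $M = m-1$: setting $V := \mcP_{\infty,m-1} \ominus \mcP_{n-1,m-1}$ and $W := \mcP_{n,m-1} \ominus \mcP_{n-1,m-1}$, we obtain $V \ominus zV = W$ and hence the wandering-subspace decomposition $V = \bigoplus_{k \geq 0} z^k W$. By Proposition \ref{Lprop} and the corollary preceding it, $V$ is the closed span of $\{z^k L_\eta : k \geq 0,\ \eta \in \D\}$, so the required orthogonality follows once we show $\langle az^j, wz^k L_\eta \rangle = 0$ for every $k \geq 0$ and $\eta \in \D$.

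Writing $L_\eta = J_\eta - H_\eta$ and using that multiplication by $z$ is unitary together with Lemma \ref{lem:Jperp} (which gives $az^{j'} \perp wJ_\eta$ for every $j' < \beta$, with no lower bound on $j'$), we get
\[
\langle az^j, wz^k J_\eta \rangle = \langle az^{j-k}, wJ_\eta \rangle = 0,
\]
since $j - k \leq j < \beta$. The problem therefore reduces to showing $\langle az^j, wz^k H_\eta \rangle = 0$ for $k \geq 0$ and $\eta \in \D$, or equivalently $\langle az^{j-k}, wH_\eta \rangle = 0$.

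The main obstacle is this last identity. I expect it to be proved by a direct residue computation in the spirit of the proof of Lemma \ref{lem:Jperp}. Using the on-$\T^2$ identity $\overline{\rev p(z,w)} = z^{-n}w^{-m} p(z,w)$ together with the definition of $H_\eta$, the inner integral over $w$ becomes a contour integral whose integrand (after cancellation of $p/\overline{p}$ factors) has all its poles in $\cD$; the residues reorganize into an integral over $z$ whose integrand involves $a(z)/\rev{b}(z)$, and vanishing follows because $a$ has no zeros in $\cD$ and $\rev b$ has all zeros outside $\cD$, so the resulting one-variable integrand in $z$ is holomorphic in a neighborhood of $\cD$ for the relevant range of $z$-powers $z^{j-k}$ with $0 \leq j < \beta$ and $k \geq 0$. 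The second inclusion is then handled by the symmetric reflection argument with the roles of $a$ and $\rev{b}$, and of $J_\eta$ and $H_\eta$, interchanged.
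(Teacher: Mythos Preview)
Your reduction is exactly the paper's: show $P_1(az^j)\in w\mcP_{n-1,m-1}$ by proving $\langle az^j, wz^kL_\eta\rangle=0$ for $k\ge 0$, $\eta\in\D$, then split $L_\eta=J_\eta-H_\eta$ and invoke Lemma~\ref{lem:Jperp} for the $J_\eta$ piece. (The detour through Corollary~\ref{cor:ehe0} and a wandering decomposition is unnecessary; Proposition~\ref{Lprop} already identifies $\mcP_{\infty,m-1}\ominus\mcP_{n-1,m-1}$ with $\vee\{z^kL_\eta:k\ge0,\eta\in\D\}$.)

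Where you go astray is in calling $\langle az^{j-k}, wH_\eta\rangle=0$ the ``main obstacle'' and proposing a residue computation. This orthogonality is immediate and is exactly how the paper disposes of it: right after the definitions of $J_\eta$ and $H_\eta$ it is recorded that for $\eta\in\D$ one has $H_\eta\perp\vee\{z^jw^k:j\in\Z,\ k<m\}$ (a direct consequence of Lemma~\ref{lem:p}, since $H_\eta$ is a holomorphic-in-$\eta$ combination of shifts of $\rev{p}$). Hence $wH_\eta\perp\vee\{z^jw^k:j\in\Z,\ k\le m\}$, and in particular $wH_\eta\perp az^{j-k}$ because the latter has $w$-degree $0$. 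No contour integral is needed.

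For the second inclusion the symmetry you describe is correct, and the paper carries it out by evaluating $\langle P_0(w^m\rev{b}z^j), z^kL_\eta\rangle$ for $|\eta|>1$: there $J_\eta\perp w^m z^{j-k}\rev{b}$ is the trivial piece (again from the remarks following the definitions), while $\langle w^m z^{j-k}\rev{b}, H_\eta\rangle=0$ comes from Lemma~\ref{lem:Jperp}. Since both sides are polynomial in $\bar\eta$, vanishing for $|\eta|>1$ gives vanishing for all $\eta$.
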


\begin{proof}
For $0\leq j<\beta$, $P_1^{\perp}(a z^j) = a z^j - P_1(a z^j)$.  Clearly,
$a z^j \in \mcP_{n-1,m}$, so the main thing to show is that
$P_1(a z^j)\in \mcP_{n-1,m}$.

For $k\geq 0$, $\eta \in \D$
\[
\begin{aligned}
\ip{P_1(a z^j)}{wz^k L_{\eta}} &= \ip{a z^j}{wz^k L_{\eta}}\qquad  \text{
  since } wz^kL_{\eta} \in w\mcP_{\infty,m-1}\\
&= \ip{a z^{j-k}}{w L_{\eta}} \\
&=  \ip{a z^{j-k}}{wJ_{\eta}} \qquad \text{ since }a z^{j-k} \perp
 wH_{\eta} \text{ when } |\eta| <1 \\
&=0
\end{aligned}
\]
by Lemma \ref{lem:Jperp}. On the other hand, since $f =
\bar{w}P_1(a z^j)$ is an element of $\mcP_{\infty, m-1}$
\[
\ip{P_1(a z^{j})}{wz^k L_{\eta}} = \ip{f\bar{z}^k}{L_{\eta}} =
\sum_{t=0}^{m-1} \hat{f}(n+k,t) \eta^t \equiv 0
\]
by Lemma \ref{lem:L}.
So, $\hat{f}(j,k) = 0$ for $j\geq n$ and $k\in \Z$.  This shows
$P_1(a z^j) = wf \in \mcP_{n-1,m}$.

For the second inclusion, the main thing to show is $f = P_0
(w^m\rev{b}z^j) \in \mcP_{n-1,m}$ for $0\leq j < n-\beta$.  For $\eta \in
\C$,
\[
\ip{f}{z^k L_{\eta}} = \sum_{t=0}^{m-1} \hat{f}(n+k,t)\eta^{t}
\]
on one hand, while for $|\eta| >1$
\[
\begin{aligned}
\ip{f}{z^k L_{\eta}} &= \ip{w^m \rev{b}z^j}{z^kL_{\eta}} \\
&= \ip{w^m z^{j-k}\rev{b}}{-H_{\eta}} \text{ since } J_{\eta} \perp
w^m z^{j-k}\rev{b} \text{ when } |\eta| >1 \\
&= 0 
\end{aligned}
\]
by Lemma \ref{lem:Jperp} since $j-k < n-\beta$.
This implies $\hat{f}(n+k,t) = 0$ for $k\geq 0$ and $0\leq t< m$ as
desired.

\end{proof}

Set
\[
\mcK_1 = \mcK \ominus \vee\{z^jp: j \geq 0\}.
\]
\[
\mcL_1 = \mcL \ominus \vee\{z^j \rev{p}: j \geq 0\}.
\]


\begin{corollary} \label{cor:Ksubset}
In $L^2(1/|p|^2 d\sigma)$
\[
\begin{aligned}
\mcK_1 &= P_{\mcE_{n-1,m}^1} (\vee\{a z^j: 0\leq j < \beta\}) \subset
\mcE_{n,m}^1,\\
\mcL_1 &= P_{\mcF_{n-1,m}^1} (\vee\{w^m\rev{b}z^j: 0\leq j < n-\beta\})
\subset \mcF_{n,m}^1,\\
\rev{\mcL}_1 &= P_{\mcE_{n-1,m}^1} (\vee\{bz^j: 0\leq j < n-\beta\}) \subset
\bar{z} \mcE_{n,m}^1.
\end{aligned}
\]
\end{corollary}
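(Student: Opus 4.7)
The three identities are organizational consequences of Lemmas~\ref{lem:KL} and~\ref{lem:PP} combined with careful bookkeeping about which orthogonal complement is being projected onto. My plan is to first establish the containment in $\mcE^1_{n-1,m}$ (respectively $\mcF^1_{n-1,m}$), then upgrade $P_1^{\perp}$ and $P_0^{\perp}$ to the genuine $\mcE^1$, $\mcF^1$ projections by verifying that the discarded parts live in the corresponding orthogonal complement, and finally derive the $\rev{\mcL}_1$ statement by applying the anti-unitary reflection.

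For the $\mcK_1$ statement, Lemma~\ref{lem:KL} gives $\mcK_1=P_1^{\perp}(\vee\{az^j:0\le j<\beta\})$ and Lemma~\ref{lem:PP} places this inside $\mcP_{n-1,m}$. Combined with $\mcK_1\subset\mcK\perp w\mcP_{\infty,m-1}$, this immediately yields $\mcK_1\subset\mcP_{n-1,m}\ominus w\mcP_{n-1,m-1}=\mcE^1_{n-1,m}$, and also $\mcK_1\subset\mcP_{n,m}\ominus w\mcP_{n,m-1}=\mcE^1_{n,m}$ since $\mcK_1\perp w\mcP_{n,m-1}\subset w\mcP_{\infty,m-1}$. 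To identify $P_1^{\perp}(az^j)$ with $P_{\mcE^1_{n-1,m}}(az^j)$, I would argue that the ``error term'' $P_1(az^j)=az^j-P_1^{\perp}(az^j)$ lies in both $w\mcP_{\infty,m-1}$ (by definition) and $\mcP_{n-1,m}$ (since $az^j$ and $P_1^{\perp}(az^j)$ both do), and the intersection of these two closed subspaces equals $w\mcP_{n-1,m-1}$ by comparing Fourier supports in the equivalent space $L^2(\T^2)$. Since $w\mcP_{n-1,m-1}\perp\mcE^1_{n-1,m}$, this gives the desired identification.

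For the $\mcL_1$ statement I would run the identical argument with $(P_0,\mcP_{\infty,m-1},\mcP_{n,m-1})$ in place of $(P_1,w\mcP_{\infty,m-1},w\mcP_{n,m-1})$, noting that $w^m\rev{b}z^j\in\mcP_{n-1,m}$ exactly when $0\le j<n-\beta$, so that $P_0(w^m\rev{b}z^j)\in\mcP_{\infty,m-1}\cap\mcP_{n-1,m}=\mcP_{n-1,m-1}$ lies in $(\mcF^1_{n-1,m})^{\perp}$. For $\rev{\mcL}_1$, I would apply the anti-unitary reflection at degree $(n-1,m)$ to the identity for $\mcL_1$. Since $\rev{\mcF}^1_{n-1,m}=\mcE^1_{n-1,m}$ and a direct computation gives $\rev{(w^m\rev{b}(z)z^j)}=z^{n-1-\beta-j}b(z)$ at this degree, the indexing $0\le j<n-\beta$ translates bijectively to $0\le k<n-\beta$, yielding the stated formula. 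The inclusion $\rev{\mcL}_1\subset\bar{z}\mcE^1_{n,m}$ instead comes from reflecting $\mcL_1\subset\mcF^1_{n,m}$ at the higher degree $(n,m)$, where reflection sends $\mcF^1_{n,m}$ to $\mcE^1_{n,m}$; the two reflections differ by a factor of $z$, so the reflection at $(n-1,m)$ lands in $\bar{z}\mcE^1_{n,m}$.

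There is no genuine obstacle here once Lemmas~\ref{lem:KL} and~\ref{lem:PP} are in hand; the entire argument is bookkeeping. The one step that requires mild care is the intersection identity $w\mcP_{\infty,m-1}\cap\mcP_{n-1,m}=w\mcP_{n-1,m-1}$ (and its $P_0$ analogue), which I would justify by the equivalence of $L^2(\frac{1}{|p|^2}d\sigma)$ and $L^2(\T^2)$ so that both subspaces are described by their monomial supports. The other potential source of confusion is keeping track of the degree at which reflection is applied in the $\rev{\mcL}_1$ step; I would explicitly flag the difference between reflecting at $(n-1,m)$ (to match the form of the projection) versus at $(n,m)$ (to match the ambient space $\mcE^1_{n,m}$).
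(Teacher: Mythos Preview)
Your proposal is correct and follows essentially the same approach as the paper: invoke Lemmas~\ref{lem:KL} and~\ref{lem:PP} to land in $\mcP_{n-1,m}$, observe that the error term $P_1(az^j)$ lies in $w\mcP_{\infty,m-1}\cap\mcP_{n-1,m}=w\mcP_{n-1,m-1}$ so that $P_1^{\perp}$ coincides with $P_{\mcE^1_{n-1,m}}$ on these vectors, and then handle $\rev{\mcL}_1$ by reflecting at degree $(n-1,m)$. Your treatment is in fact more explicit than the paper's on the inclusion $\rev{\mcL}_1\subset\bar z\,\mcE^1_{n,m}$, where you correctly track the discrepancy between reflecting at $(n-1,m)$ versus $(n,m)$.
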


\begin{proof}
By Lemmas \ref{lem:KL} and \ref{lem:PP}, $\mcK_1$ is contained in both
$\mcE_{n-1,m}^1$ and $\mcE_{n,m}^{1}$.  Let $P_{n-1,m-1}^1$ denote
orthogonal projection onto $w\mcP_{n-1,m-1}$.  For any $f = az^j -
P_1(az^j)$ we know $f \in \mcP_{n-1,m}$ for $0\leq j< \beta$, and so
we see that $P_1(az^j) = P_{n-1,m-1}^1 P_1(az^j) =
P_{n-1,m-1}^1(az^j)$.  Therefore, $f = az^j - P_{n-1,m-1}^1(az^j) =
P_{\mcE_{n-1,m}^1}(az^j)$, which proves $\mcK_1 = P_{\mcE_{n-1,m}^1}
(\vee\{a z^j: 0\leq j < \beta\})$.

The second set of equations has a similar proof.  The last set of
equations follows from the second set by taking the reflection
operation $\rev{\cdot}$ at the degree $(n-1,m)$.
\end{proof}

Let $\mcK_2 := \mcE_{n-1,m}^1 \ominus \mcK_1$
so that 
\[
\mcE_{n-1,m}^1 = \mcK_1 \oplus \mcK_2.
\]

Similarly, define $\mcL_2$ so that
\[
\mcF_{n-1,m}^1 = \mcL_1\oplus \mcL_2.
\]

The next lemma gives a different characterization of the spaces
$\mcK_2$ and $\mcL_2$.

\begin{lemma}
In $L^2(1/|p|^2 d\sigma)$
\begin{multline}
\vee\{z^jp:j \geq 0\} \oplus \vee\{wz^j L_{\eta}: j\geq 0, \eta \in
\D\}\\ \label{eq:pforward}
= \mcK_2 \oplus \vee\{z^j G_{\eta}: j\geq 0, \eta \in
\D\}
\end{multline}
\begin{multline}
\vee\{z^j\rev{p}:j \geq 0\} \oplus \vee\{z^j L_{\eta}: j\geq 0, \eta \in
\D\}\\ \label{eq:revpback}
= \mcL_2 \oplus \vee\{z^j G_{\eta}: j\geq 0, \eta \in
\D\}.
\end{multline}
\end{lemma}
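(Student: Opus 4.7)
The plan is to show that each equation asserts the coincidence of two subspaces of $\mcP_{\infty,m}\subset L^2(1/|p|^2 d\sigma)$, and to verify this by computing the orthogonal complement of both sides in $\mcP_{\infty,m}$. This reduces the lemma to bookkeeping with orthogonal direct sums, with Proposition~\ref{Lprop} supplying the structural identities for the spans of $L_\eta$'s and $G_\eta$'s.

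For \eqref{eq:pforward} I would first use Proposition~\ref{Lprop} to rewrite
\[
\vee\{z^j G_\eta:j\geq0,\eta\in\D\}=\mcP_{\infty,m}\ominus\mcP_{n-1,m},
\]
\[
\vee\{wz^jL_\eta:j\geq0,\eta\in\D\}=w\bigl(\mcP_{\infty,m-1}\ominus\mcP_{n-1,m-1}\bigr),
\]
the second line using that multiplication by $w$ is a unitary on $L^2(1/|p|^2 d\sigma)$ (since $|w|=1$ on $\T^2$), so $w$ can be pulled across the orthogonal-complement operation. Both direct sums in \eqref{eq:pforward} are genuinely orthogonal: on the left, $\vee\{z^jp\}\subset\mcK\perp w\mcP_{\infty,m-1}$ by Lemma~\ref{lem:p}; on the right, $\mcK_2\subset\mcE_{n-1,m}^1\subset\mcP_{n-1,m}$. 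Next I would split $\mcP_{\infty,m}$ in two ways:
\[
\mcP_{\infty,m}=\vee\{z^jp\}\oplus\mcK_1\oplus w\mcP_{n-1,m-1}\oplus w(\mcP_{\infty,m-1}\ominus\mcP_{n-1,m-1}),
\]
which comes from $\mcP_{\infty,m}=\mcK\oplus w\mcP_{\infty,m-1}$ combined with $\mcK=\vee\{z^jp\}\oplus\mcK_1$, and
\[
\mcP_{\infty,m}=\mcK_1\oplus\mcK_2\oplus w\mcP_{n-1,m-1}\oplus(\mcP_{\infty,m}\ominus\mcP_{n-1,m}),
\]
which comes from $\mcP_{n-1,m}=\mcE_{n-1,m}^1\oplus w\mcP_{n-1,m-1}=\mcK_1\oplus\mcK_2\oplus w\mcP_{n-1,m-1}$. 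In each decomposition the LHS (respectively RHS) of \eqref{eq:pforward} is the sum of two of the four summands, and in both cases the remaining two summands are $\mcK_1\oplus w\mcP_{n-1,m-1}$, which yields the identity.

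The argument for \eqref{eq:revpback} is the mirror image, using $\mcP_{\infty,m}=\mcL\oplus\mcP_{\infty,m-1}$ in place of $\mcP_{\infty,m}=\mcK\oplus w\mcP_{\infty,m-1}$, together with $\mcL=\vee\{z^j\rev p\}\oplus\mcL_1$ and $\mcP_{n-1,m}=\mcL_1\oplus\mcL_2\oplus\mcP_{n-1,m-1}$ (the latter from the definition of $\mcF_{n-1,m}^1$). Both orthogonal complements in $\mcP_{\infty,m}$ then collapse to $\mcL_1\oplus\mcP_{n-1,m-1}$. I do not anticipate any substantive obstacle: the proof is entirely a verification of orthogonal direct sums once Proposition~\ref{Lprop} is invoked. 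The only point requiring care is the use of $M_w$ as an isometry of $L^2(1/|p|^2 d\sigma)$, needed so that $w\mcP_{n-1,m-1}$ and $w(\mcP_{\infty,m-1}\ominus\mcP_{n-1,m-1})$ really split $w\mcP_{\infty,m-1}$ orthogonally.
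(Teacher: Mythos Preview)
Your proof is correct and follows essentially the same approach as the paper. The only cosmetic difference is that the paper decomposes $\mcP_{\infty,m}\ominus w\mcP_{n-1,m-1}$ (respectively $\mcP_{\infty,m}\ominus\mcP_{n-1,m-1}$) in two ways and cancels $\mcK_1$ (respectively $\mcL_1$), whereas you decompose all of $\mcP_{\infty,m}$ and cancel $\mcK_1\oplus w\mcP_{n-1,m-1}$ (respectively $\mcL_1\oplus\mcP_{n-1,m-1}$); the underlying orthogonal decompositions invoked are identical.
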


\begin{proof}
Now, 
\[
\begin{aligned}
&\mcP_{\infty, m} \ominus w \mcP_{n-1,m-1} = (\mcP_{\infty,m} \ominus
  w\mcP_{\infty,m-1}) \oplus w(\mcP_{\infty,m-1} \ominus \mcP_{n-1,m-1})\\
=& \mcK \oplus \vee\{wz^j L_{\eta}: j\geq 0, \eta \in \D\} \\
=& \mcK_1 \oplus \vee\{z^jp:j \geq 0\} \oplus \vee\{wz^j L_{\eta}: j\geq 0, \eta \in \D\}
\end{aligned}
\]
by Proposition \ref{Lprop}.  The same set is equal to 
\[
\mcE_{n-1,m}^1 \oplus \vee\{z^j G_{\eta}: j\geq 0, \eta \in \D\} =
\mcK_1 \oplus \mcK_2 \oplus \vee\{z^j G_{\eta}: j\geq 0, \eta \in \D\}
\]
and after canceling $\mcK_1$ we obtain \eqref{eq:pforward}.
The proof for $\mcL_2$ follows along the same lines by considering $\mcP_{\infty, m} \ominus \mcP_{n-1,m-1}$. 
\end{proof}

\begin{lemma}
In $L^2(1/|p|^2 d\sigma)$
\begin{multline} \label{eq:pback}
\vee\{z^j p: j<0\} \oplus \vee\{wz^{j-n} L_{\eta}: j<0, \eta \in \D\} 
\\
=\rev{\mcL}_2 \oplus \vee\{z^{j-n} G_{\eta}: j< 0, \eta \in
\D\}
\end{multline}
where $\rev{\mcL}_2$ is obtained by reflecting $\mcL_2$ at degree
$(n-1,m)$ and 
\begin{multline} \label{eq:backandforth}
\vee\{z^jp: j \in \Z\} \oplus \vee\{wz^{j-n} L_{\eta}: j<0, \eta \in \D\}
\oplus \vee\{wz^j L_{\eta}: j\geq 0, \eta \in
\D\} \\
= \vee\{z^{j-n} G_{\eta}: j< 0, \eta \in
\D\} \oplus \vee\{z^{j} G_{\eta}: j\geq 0, \eta \in
\D\} \oplus \mcE_{n-1,m}^1.
\end{multline}
\end{lemma}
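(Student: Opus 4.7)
The plan is to derive \eqref{eq:pback} by applying the reflection at degree $(n-1,m)$ to the already-established identity \eqref{eq:revpback}, and to derive \eqref{eq:backandforth} by showing that both sides equal the same concrete subspace of $L^2(1/|p|^2 d\sigma)$, namely $V\ominus w\mcP_{n-1,m-1}$, where $V=\vee\{z^jw^k:j\in\Z,\,0\le k\le m\}$.

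For \eqref{eq:pback}: the map $\rev{\cdot}$ at degree $(n-1,m)$ is anti-unitary on $L^2(1/|p|^2 d\sigma)$ because $|z^{n-1}w^m|=1$ on $\T^2$, so it preserves orthogonal direct sums. I will identify the reflection of each summand appearing in \eqref{eq:revpback}. A direct polynomial-coefficient conjugation calculation gives $\rev{(z^j\rev p)}=z^{-1-j}p$ at degree $(n-1,m)$, so $\vee\{z^j\rev p:j\ge 0\}$ maps to $\vee\{z^jp:j<0\}$; the space $\rev{\mcL}_2$ is by definition the reflection of $\mcL_2$ at this degree; and for the $L_\eta$- and $G_\eta$-spans I will use the subspace descriptions in Proposition~\ref{Lprop}, since the reflection of $\mcP_{\infty,m-1}\ominus\mcP_{n-1,m-1}$ at $(n-1,m)$ yields $w\vee\{z^jw^k:j<n,\,0\le k<m\}\ominus w\mcP_{n-1,m-1}$, which by \eqref{Lpropr} multiplied through by the unitary $w$ equals $\vee\{wz^{j-n}L_\eta:j<0,\,\eta\in\D\}$, and the parallel reflection of $\mcP_{\infty,m}\ominus\mcP_{n-1,m}$ delivers $\vee\{z^{j-n}G_\eta:j<0,\,\eta\in\D\}$. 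Assembling these four identifications produces \eqref{eq:pback}.

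For \eqref{eq:backandforth}: setting $V'=\vee\{z^jw^k:j\in\Z,\,0\le k<m\}$, the key intermediate identities are
\[
\vee\{z^jL_\eta:j\ge 0\}\oplus \vee\{z^{j-n}L_\eta:j<0\}=V'\ominus\mcP_{n-1,m-1}
\]
and its $G_\eta$-analogue
\[
\vee\{z^jG_\eta:j\ge 0\}\oplus \vee\{z^{j-n}G_\eta:j<0\}=V\ominus\mcP_{n-1,m}.
\]
The orthogonality of the two $L_\eta$-summands follows from Proposition~\ref{Lprop}: the first is weighted-orthogonal to $\vee\{z^jw^k:j<n,\,0\le k<m\}$, and the second lies set-theoretically inside this monomial span because $z^{j-n}L_\eta$ has $z$-powers in $[j-n,j+n]\subset(-\infty,n-1]$ when $j<0$. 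The equality then follows since any $f\in V'\ominus\mcP_{n-1,m-1}$ weighted-orthogonal to both summands is weighted-orthogonal to $\mcP_{\infty,m-1}+\vee\{z^jw^k:j<n,\,0\le k<m\}=V'$, hence zero. Multiplying by the unitary $w$ and then adjoining $\vee\{z^jp:j\in\Z\}$, which by Lemma~\ref{lem:p} is the weighted-orthogonal complement of $\vee\{z^jw^k:1\le k\le m\}$ in $\vee\{z^jw^k:0\le k\le m\}$, gives the left-hand side of \eqref{eq:backandforth} equal to $V\ominus w\mcP_{n-1,m-1}$. The same style of argument yields the $G_\eta$-identity, and adjoining $\mcE^1_{n-1,m}=\mcP_{n-1,m}\ominus w\mcP_{n-1,m-1}$ (orthogonal to $V\ominus\mcP_{n-1,m}$ because $\mcE^1_{n-1,m}\subset\mcP_{n-1,m}$) produces the right-hand side of \eqref{eq:backandforth} equal to $V\ominus w\mcP_{n-1,m-1}$ as well.

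The main obstacle is the weighted-$L^2$ orthogonality of the two $L_\eta$-spans and of the two $G_\eta$-spans; this does not follow from their $L^2(\T^2)$ monomial supports but does follow from the weighted orthogonal-complement characterizations already encoded in Proposition~\ref{Lprop} and \eqref{Lpropr}, combined with the observation that each span lies inside the very monomial span that the other is weighted-orthogonal to. As a by-product, comparing \eqref{eq:backandforth} with the direct sum of \eqref{eq:pforward} and \eqref{eq:pback} will identify $\mcK_2\oplus\rev{\mcL}_2$ with $\mcE^1_{n-1,m}$ (equivalently, $\rev{\mcL}_1=\mcK_2$), though this identity is not needed up front in the proof of the lemma.
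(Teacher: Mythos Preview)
Your proposal is correct and follows essentially the same approach as the paper: derive \eqref{eq:pback} by applying the anti-unitary reflection at degree $(n-1,m)$ to \eqref{eq:revpback}, and derive \eqref{eq:backandforth} by decomposing $\vee\{z^jw^k:j\in\Z,\,0\le k\le m\}\ominus w\mcP_{n-1,m-1}$ in two different ways via Proposition~\ref{Lprop}, \eqref{Lpropr}, and Lemma~\ref{lem:p}. The only cosmetic difference is that the paper identifies the reflected $L_\eta$- and $G_\eta$-spans elementwise via the symmetries \eqref{Lsymm} and \eqref{Gsymm}, whereas you identify them via the subspace descriptions from Proposition~\ref{Lprop} and \eqref{Lpropr} (and its $G_\eta$-analogue); both routes are valid and equivalent.
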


\begin{proof}
The first part follows from applying the reverse operation
$\rev{\cdot}$ at the degree $(n-1,m)$ in \eqref{eq:revpback} and by using \eqref{Lsymm} and \eqref{Gsymm}. 

The second part comes from decomposing 
\[
\vee\{z^jw^k: j \in \Z, 0\leq k \leq m\} \ominus w\mcP_{n-1,m-1} 
\]
in two different ways.  By Proposition \ref{Lprop}, equation \eqref{Lpropr} and Lemma \ref{lem:p} it equals
\[
\begin{aligned}
&\vee\{z^jw^k: j \in \Z, 0\leq k \leq m\} \ominus \vee\{z^jw^k: j\in \Z,
  1\leq k\leq m\} \\
 &\oplus \vee\{wz^{n-j}L_{\eta}: j<0,\eta \in \D\}
  \oplus \vee\{wz^jL_{\eta}: j\geq 0, \eta \in \D\} \\
 =& \vee\{z^jp: j
  \in \Z\} \oplus \vee\{wz^{j-n} L_{\eta}: j<0, \eta \in \D\} \oplus
  \vee\{wz^j L_{\eta}: j\geq 0, \eta \in \D\},
\end{aligned}
\]
while it also equals the right hand side of \eqref{eq:backandforth}.
\end{proof}

\begin{theorem} \label{BStoSS}
Assume $p \in \C[z,w]$ has no zeros in $\T\times\cD$ and degree at
most $(n,m)$.  In $L^2(1/|p|^2 d\sigma)$, for
\[
\mcK_1 = P_{\mcE_{n-1,m}^1} (\vee\{a z^j: 0\leq j < \beta\})
\]
\[
\rev{\mcL}_1 = P_{\mcE_{n-1,m}^1} (\vee\{bz^j: 0\leq j < n-\beta\})
\]
we have
\[
\mcE_{n-1,m}^1 = \mcK_1 \oplus \rev{\mcL}_1
\]
and
\[
\mcE_{n,m}^1 = \mcK_1 \oplus z\rev{\mcL}_1 \oplus \C p.
\]
Consequently, the split-shift orthogonality condition holds for a
positive linear form associated to a Bernstein-Szeg\H{o} measure with
$p$ having no zeros in $\T\times \cD$.

\end{theorem}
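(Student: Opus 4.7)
The plan is to combine the three orthogonal decompositions \eqref{eq:pforward}, \eqref{eq:pback}, and \eqref{eq:backandforth} from the preceding lemmas; the key intermediate identifications are $\mcK_2 = \rev{\mcL}_1$ and $\rev{\mcL}_2 = \mcK_1$. Once these are in hand both claimed decompositions fall out quickly, and the split-shift condition follows immediately.

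For the first decomposition $\mcE_{n-1,m}^1 = \mcK_1 \oplus \rev{\mcL}_1$, I would add \eqref{eq:pforward} (for $j\geq 0$) to \eqref{eq:pback} (for $j<0$), observing that the sum of the two left-hand sides is precisely the orthogonal sum appearing on the left-hand side of \eqref{eq:backandforth}. Matching with the right-hand side of \eqref{eq:backandforth} and cancelling the common orthogonal $G_\eta$-spans $\vee\{z^jG_\eta:j\geq 0\}$ and $\vee\{z^{j-n}G_\eta:j<0\}$ yields
\[
\mcE_{n-1,m}^1 = \mcK_2 \oplus \rev{\mcL}_2
\]
as an orthogonal decomposition. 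Since $\mcK_2 = \mcE_{n-1,m}^1 \ominus \mcK_1$ and $\rev{\mcL}_2 = \mcE_{n-1,m}^1 \ominus \rev{\mcL}_1$ by definition, this forces $\mcK_1 = \rev{\mcL}_2$ and $\mcK_2 = \rev{\mcL}_1$, giving the first claim. The principal bookkeeping point is to check that combining the $j\geq 0$ and $j<0$ pieces really produces an orthogonal sum; this is immediate because each piece sits inside one of the orthogonal summands already displayed on the left-hand side of \eqref{eq:backandforth}.

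For the second decomposition $\mcE_{n,m}^1 = \mcK_1 \oplus z\rev{\mcL}_1 \oplus \C p$, containment of each summand in $\mcE_{n,m}^1$ is Corollary \ref{cor:Ksubset} and Lemma \ref{lem:p}, and $\mcK_1 \perp p$ is built into the definition of $\mcK_1$. The heart of the argument is the two remaining orthogonalities $\mcK_1 \perp z\rev{\mcL}_1$ and $\C p \perp z\rev{\mcL}_1$, and this is where I expect the main work to lie. Using \eqref{eq:pforward} together with the identification $\rev{\mcL}_1 = \mcK_2$, any $v \in \rev{\mcL}_1$ can be written $v = h - g$ with $h \in \vee\{z^jp:j\geq 0\} \oplus \vee\{wz^jL_\eta:j\geq 0\}$ and $g \in \vee\{z^jG_\eta:j\geq 0\}$. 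Multiplying by $z$, which is unitary on $L^2(1/|p|^2\, d\sigma)$, and invoking Corollary \ref{cor:ehe0} to identify $\vee\{z^jG_\eta:j\geq 1\} = \mcP_{\infty,m}\ominus \mcP_{n,m}$, one lands in
\[
zh \in \vee\{z^jp:j\geq 1\} \oplus w\mcP_{\infty,m-1}, \qquad zg \in \mcP_{\infty,m}\ominus \mcP_{n,m}.
\]
Both $\mcK_1$ (perpendicular to $\vee\{z^jp:j\geq 0\}$ and to $w\mcP_{\infty,m-1}$ by its very construction, and contained in $\mcP_{n,m}$) and $p$ (by orthonormality of $\{z^jp\}_{j\in\Z}$, Lemma \ref{lem:p}, and $p \in \mcP_{n,m}$) annihilate both $zh$ and $zg$, hence $zv$.

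A dimension count closes everything: from the first decomposition $\dim\mcK_1 + \dim\rev{\mcL}_1 = n$, so the pairwise-orthogonal subspaces $\mcK_1, z\rev{\mcL}_1, \C p$ have dimensions summing to $n+1 = \dim\mcE_{n,m}^1$ and therefore span it. Finally, taking $\mcK_2 := \rev{\mcL}_1$ verifies Definition~\ref{SHC}: condition~(1) is the first decomposition, condition~(2) is the orthogonality $\mcK_1 \perp z\rev{\mcL}_1$ just established, and condition~(3) is the containment $\mcK_1,\, z\rev{\mcL}_1 \subset \mcE_{n,m}^1$.
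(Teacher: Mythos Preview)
Your argument is correct and, for the first decomposition, identical to the paper's: combine \eqref{eq:pforward} and \eqref{eq:pback}, match against \eqref{eq:backandforth}, cancel the $G_\eta$-spans to get $\mcE_{n-1,m}^1=\mcK_2\oplus\rev{\mcL}_2$, and conclude $\mcK_2=\rev{\mcL}_1$.

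For the second decomposition the paper organizes things a bit more economically. Instead of verifying the orthogonalities $\mcK_1\perp z\mcK_2$ and $p\perp z\mcK_2$ by hand and then counting dimensions, it decomposes the single space
\[
\mcP_{\infty,m}\ominus\vee\{z^jw^k:0\le j\le n,\ 1\le k\le m\}
\]
in two ways---once as $\mcE_{n,m}^1\oplus z\vee\{z^jG_\eta:j\ge 0\}$, once as $\C p\oplus\mcK_1\oplus z\bigl(\vee\{z^jp:j\ge 0\}\oplus\vee\{wz^jL_\eta:j\ge 0\}\bigr)$ and then applying \eqref{eq:pforward} to the shifted piece---and cancels the common summand $z\vee\{z^jG_\eta:j\ge 0\}$. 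This yields $\mcE_{n,m}^1=\C p\oplus\mcK_1\oplus z\mcK_2$ with all orthogonalities built in. Your direct approach works (and note that since $\mcK_2$ already lies in the left side of \eqref{eq:pforward}, you may simply take $g=0$, so the $zg$ part of your check is vacuous), but the paper's packaging avoids the case analysis by never leaving the realm of orthogonal decompositions.
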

 
\begin{proof} 
By Corollary \ref{cor:Ksubset}, it is enough to prove
\[
\rev{\mcL}_1 = \mcK_2
\]
and
\[
\mcE_{n,m}^1 = \mcK_1 \oplus z\mcK_2 \oplus \C p.
\]

The direct sum of the left sides of \eqref{eq:pforward} and
\eqref{eq:pback} yields the left side of \eqref{eq:backandforth}.  So,
the direct sum of the corresponding right hand sides are equal which
means
\[
\begin{aligned}
&\mcK_2 \oplus \vee\{z^j G_{\eta}: j\geq 0, \eta \in
\D\} \oplus \rev{\mcL}_2 \oplus \vee\{z^{j-n} G_{\eta}: j< 0, \eta \in
\D\} \\
=& \vee\{z^{j-n} G_{\eta}: j< 0, \eta \in
\D\} \oplus \vee\{z^{j} G_{\eta}: j\geq 0, \eta \in
\D\} \oplus \mcE_{n-1,m}^1.
\end{aligned}
\]
Therefore, $\mcE_{n-1,m}^1 = \mcK_2 \oplus \rev{\mcL}_{2}$. But,
$\mcE_{n-1,m}^1 = \rev{\mcF}_{n-1,m}^{1} = \rev{\mcL}_{1} \oplus
\rev{\mcL}_{2}$ and so $\mcK_2 = \rev{\mcL}_1$.

We know $\C p, \mcK_1 \subset \mcE_{n,m}^1$ by Lemma \ref{lem:p} and
Corollary \ref{cor:Ksubset}.  By definition of $\mcK_1$ we know
$\mcK_1 \perp p$.  

Now, using Corollary \ref{cor:ehe0}, we see that 
\[
\vee\{z^jw^k: j\geq 0, 0\leq k \leq m\} \ominus \vee\{z^jw^k: 0\leq j
\leq n, 1\leq k \leq m\}
\]
decomposes into
\[
\begin{aligned}
&\mcK \oplus zw(\mcP_{\infty,m-1} \ominus \mcP_{n-1,m-1})\\
 =&
\vee\{z^jp: j \geq 0\} \oplus \mcK_1 \oplus \vee\{zwz^j L_{\eta}: j
\geq 0, \eta \in \D\}\\
=& \C p \oplus \mcK_1 \oplus z(\vee\{z^jp:j\geq 0\}) \oplus z(\vee\{wz^j L_{\eta}: j
\geq 0, \eta \in \D\}) \\
=& \C p \oplus \mcK_1 \oplus z(\mcK_2 \oplus \vee\{z^jG_{\eta}:j\geq
0, \eta \in \D\}) \text{ by \eqref{eq:pforward} } \\
=&  \C p \oplus \mcK_1 \oplus z\mcK_2 \oplus z(\vee\{z^jG_{\eta}:j\geq
0, \eta \in \D\})
\end{aligned}
\]
but it also decomposes into
\[
\mcE_{n,m}^1 \oplus z(\vee\{z^jG_{\eta}:j\geq
0, \eta \in \D\})
\]
and therefore
\[
\mcE_{n,m}^1 = \C p \oplus \mcK_1 \oplus z\mcK_2.
\]
\end{proof}

\section{Split-shift condition implies Bernstein-Szeg\H{o} condition} \label{sec:SStoBS}

The goal now is to prove that the split-shift condition (see Definition \ref{SHC}) implies that
$\mcT$ can be represented using a Bernstein-Szeg\H{o} measure whose
associated polynomial has no zeros on $\T\times \cD$.

We call the pair $(\mcK_1,\mcK_2)$ a \emph{shift-split} of
$\mcE_{n,m}^1$.  By dimensional considerations $\mcE_{n,m}^1
\ominus(\mcK_1\oplus z\mcK_2)$ will be one dimensional, and therefore
of the form $\C p$ for some unit norm $p$.  We shall call $p$ a
\emph{split-poly} associated to the shift-split.  The point now will be
to prove that a split-poly $p$ has no zeros on $\T \times \cD$ and along
the way we will prove some interesting formulas for $p$ (which will
also give formulas for an arbitrary $p$ with no zeros on $\T\times
\cD$ since we can apply our formulas to $1/|p|^2 d\sigma$).

There may be more than one shift-split of $\mcE_{n,m}^1$, but we shall
see that each split-poly is associated to one shift-split.  We will
provide a description of all split-polys (and hence all shift-splits via
the previous section) in Section \ref{shiftsplits}.
  
 Let $K_{j,k}$ be the reproducing kernel for $\mcP_{j,k}$ in
 $\mcH_{\mcT}$. Namely, for $(\zeta,\eta) \in \C^2$,
 $(K_{j,k})_{(\zeta,\eta)}(\cdot,\cdot) =
 K_{j,k}(\cdot,\cdot;\zeta,\eta)$ is the unique element of
 $\mcP_{j,k}$ such that
\[
\ip{f}{(K_{j,k})_{(\zeta,\eta)}}_{\mcT} = f(\zeta,\eta)
\]
for all $f \in \mcP_{j,k}$.

\begin{remark} \label{RKremark}
We shall use some standard facts about reproducing kernels of
polynomials on $\T^2$. See Section 3 of \cite{gK08}.

\begin{enumerate}
\item The reproducing kernel of an orthogonal direct sum is the sum of
  the reproducing kernels. 

\item Shifting a subspace by $z$ (resp. $w$) multiplies the
  reproducing kernel by $z\bar{\zeta}$ (resp. $w\bar{\eta}$).  

\item The ``reflection'' $\rev{\cdot}$ of a subspace ``reflects'' the
  reproducing kernel.
\end{enumerate}

On this last point, if $\mcH$ is a subspace of polynomials of degree
at most $(j,k)$ and $H$ is its reproducing kernel, the subspace
\[
\rev{\mcH} := \{ z^jw^k\bar{f}(1/z,1/w): f \in \mcH\}
\]
has reproducing kernel
\[
\rev{H}(z,w;\zeta,\eta) := (z\bar{\zeta})^j(w\bar{\eta})^k
H(1/\bar{\zeta},1/\bar{\eta}; 1/\bar{z}, 1/\bar{w}).
\]
The degree $(j,k)$ at which we reflect will either be mentioned
explicitly or will be the maximal degree of the elements of the
subspace.

\end{remark}

Using these manipulations we get the following formulas.
 \[
 \begin{aligned}
 E_{j}^1 &= K_{j,m} - w\bar{\eta} K_{j,m-1} = \text{ the reproducing kernel
 for } \mcE_{j,m}^1\\
 F_{j}^1 &= \rev{E}_{j}^1 = K_{j,m} - K_{j,m-1} = \text{ the reproducing
 kernel for } \mcF_{j,m}^1\\
 E_{k}^2 &= K_{n,k} - z\bar{\zeta} K_{n-1,k} = \text{ the
 reproducing kernel for } \mcE_{n,k}^2\\
 F_{k}^2 &= \rev{E}_{k}^2 = K_{n,k} - K_{n-1,k} = \text{ the
 reproducing kernel for } \mcF_{n,k}^2.
 \end{aligned} 
 \]
 For example, the first formula follows from the orthogonal
 decomposition
\[
\mcP_{j,m} = w\mcP_{j,m-1} \oplus \mcE_{j,m}^1.
\]

 We record some basic formulas which do not require any special
 orthogonality conditions.  In fact, they are just the result of
 manipulating the equations above.

 \begin{lemma} \label{lem:subtract} We have
 \[
 E_{j}^1(z,w;\zeta,\eta) - F_{j}^1(z,w;\zeta,\eta) =
 (1-w\bar{\eta})K_{j,m-1}(z,w;\zeta,\eta)
 \]
 \[
 E_{k}^2(z,w;\zeta,\eta) - F_{k}^2(z,w;\zeta,\eta) =
 (1-z\bar{\zeta})K_{n-1,k}(z,w;\zeta,\eta).
 \]
  \end{lemma}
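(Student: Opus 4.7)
The plan is very short: this lemma is just a direct algebraic consequence of the four identities recorded immediately before its statement, so I would simply perform the subtraction.

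For the first identity, I would start from the listed formulas
\[
E_{j}^1(z,w;\zeta,\eta) = K_{j,m}(z,w;\zeta,\eta) - w\bar{\eta}\, K_{j,m-1}(z,w;\zeta,\eta)
\]
and
\[
F_{j}^1(z,w;\zeta,\eta) = K_{j,m}(z,w;\zeta,\eta) - K_{j,m-1}(z,w;\zeta,\eta),
\]
subtract, and observe the $K_{j,m}$ term cancels, leaving $(1-w\bar{\eta})K_{j,m-1}(z,w;\zeta,\eta)$, exactly as claimed. For the second identity I would do the same with the listed formulas for $E_{k}^2$ and $F_{k}^2$: subtracting $F_{k}^2 = K_{n,k} - K_{n-1,k}$ from $E_{k}^2 = K_{n,k} - z\bar{\zeta}\,K_{n-1,k}$ cancels the $K_{n,k}$ term and gives $(1-z\bar{\zeta})K_{n-1,k}(z,w;\zeta,\eta)$.

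There is no real obstacle here; as the paper itself comments, the lemma is being recorded for later use and requires no orthogonality condition beyond the basic orthogonal decompositions $\mcP_{j,m} = w\mcP_{j,m-1}\oplus\mcE_{j,m}^1$ and $\mcP_{j,m} = \mcP_{j,m-1}\oplus\mcF_{j,m}^1$ (and the analogous ones in the other variable) which were already used to derive the reproducing-kernel formulas above. The only thing worth double-checking is that the two reproducing kernels $E_j^1$ and $F_j^1$ are defined using the same $K_{j,m-1}$ (so that the subtraction really does collapse cleanly), which is immediate from the definitions.
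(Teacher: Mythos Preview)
Your proposal is correct and matches the paper's approach exactly: the paper gives no separate proof of this lemma, remarking only that these formulas ``are just the result of manipulating the equations above,'' which is precisely the subtraction you carry out.
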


 If $\{E_0(z,w),\dots,E_{m}(z,w)\}$ is an orthonormal basis for
 $\mcE_{n,m}^2$ then we write
 \[
 E^2_m(z,w) =
 (E_0(z,w), \dots, E_{m}(z,w)) = (1,w,\dots, w^m) E_m^2(z)
 \]
 for an appropriate $(m+1)\times (m+1)$ matrix polynomial $E^2_m(z)$.  
 Then,
 \[
 E_{m}^2(z,w;\zeta,\eta) = E_{m}^2(z,w) E_m^2(\zeta,\eta)^* =
 (1,w,\dots, w^m) E_m^2(z) E_m^2(\zeta)^* (1,\eta,\dots, \eta^m)^*.
 \]


 \begin{lemma} \label{Einv}
 The matrix polynomial $E^2_m(z)$  is invertible for all $z \in \cD$.
 \end{lemma}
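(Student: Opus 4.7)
The plan is to translate invertibility of $E^2_m(z_0)$ at a point $z_0\in\cD$ into a statement about polynomials in $\mcE^2_{n,m}$ and then rule out the bad case by a reflection-and-Cauchy-Schwarz argument. A vector $c\in\C^{m+1}$ lies in $\ker E^2_m(z_0)$ exactly when the linear combination $g(z,w):=\sum_j c_j E_j(z,w)\in\mcE^2_{n,m}$ satisfies $(1,w,\ldots,w^m)E^2_m(z_0)c\equiv 0$ as a polynomial in $w$, i.e.\ $g(z_0,w)=0$ for every $w$. So the lemma is equivalent to the following claim: \emph{if $g\in\mcE^2_{n,m}$ vanishes on $\{z_0\}\times\C$ for some $z_0\in\cD$, then $g=0$.}

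Assume $g\neq 0$ and write $g(z,w)=(z-z_0)h(z,w)$ with $h\in\mcP_{n-1,m}$, $h\neq 0$. Apply the anti-unitary reflection at degree $(n,m)$: since $\mcE^2_{n,m}=\rev{\mcF}^2_{n,m}$, $\rev{g}\in\mcF^2_{n,m}=\mcP_{n,m}\ominus\mcP_{n-1,m}$. A direct computation gives
\[
\rev{g}(z,w)=(1-\bar{z}_0 z)\,\tilde{h}(z,w),
\]
where $\tilde{h}$ denotes the reflection of $h$ at degree $(n-1,m)$, so $\tilde{h}\in\mcP_{n-1,m}$. Since $\rev{g}\perp\mcP_{n-1,m}$, in particular $\rev{g}\perp\tilde{h}$, so
\[
0=\ip{\rev{g}}{\tilde{h}}_{\mcT}=\|\tilde{h}\|^2-\bar{z}_0\ip{z\tilde{h}}{\tilde{h}}_{\mcT}.
\]

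The key observation is that the reflection is anti-unitary on $\mcP_{n-1,m}$ at degree $(n-1,m)$, giving $\|\tilde{h}\|=\|h\|$, while $z\tilde{h}(z,w)=z^n w^m \bar{h}(1/z,1/w)$ is the reflection of $h$ at degree $(n,m)$, which is anti-unitary on $\mcP_{n,m}$, so $\|z\tilde{h}\|=\|h\|$ as well. Combined with Cauchy--Schwarz applied to $\|\tilde{h}\|^2=|z_0|\,|\ip{z\tilde{h}}{\tilde{h}}_{\mcT}|$, this yields $\|h\|\le|z_0|\,\|h\|$, forcing $|z_0|\ge 1$, so $z_0\in\T$.

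The remaining case $z_0\in\T$ is handled by the equality case of Cauchy--Schwarz: equality $|\ip{z\tilde{h}}{\tilde{h}}_{\mcT}|=\|z\tilde{h}\|\,\|\tilde{h}\|$ forces $z\tilde{h}$ and $\tilde{h}$ to be linearly dependent in the Hilbert space $\mcH_{\mcT}$. Since the $\mcT$-inner product is positive definite on $\mcP_{n,m}$, this is an equality of polynomials $z\tilde{h}=\lambda\tilde{h}$; comparing $z$-degrees (the left side has degree $\deg_z\tilde{h}+1$, the right side has degree at most $\deg_z\tilde{h}$) forces $\tilde{h}=0$, hence $h=0$ and $g=0$, a contradiction. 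The main obstacle in this argument is really bookkeeping: making sure that each reflection is taken at the correct degree so that the isometry $\|z\tilde{h}\|=\|h\|$ holds and that the positive definiteness of $\mcT$ on $\mcP_{n,m}$ converts the Hilbert-space linear dependence into a genuine polynomial identity.
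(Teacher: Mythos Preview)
Your proof is correct and rests on the same underlying orthogonality as the paper's, but the paper's execution is shorter and avoids the separate boundary analysis. The paper works directly in $\mcE^2_{n,m}$ rather than reflecting to $\mcF^2_{n,m}$: writing your $g=(z-z_0)h$ with $h\in\mcP_{n-1,m}$, one has $g\perp zh$ by the very definition of $\mcE^2_{n,m}=\mcP_{n,m}\ominus z\mcP_{n-1,m}$, and $g-zh=-z_0h$. Since multiplication by $z$ is isometric for the $\mcT$-inner product, the Pythagorean theorem gives
\[
|z_0|^2\|h\|^2=\|g-zh\|^2=\|g\|^2+\|zh\|^2=\|g\|^2+\|h\|^2,
\]
hence $\|g\|^2=(|z_0|^2-1)\|h\|^2$, which forces $|z_0|>1$ immediately. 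Your route---reflect, pair with $\tilde h$, apply Cauchy--Schwarz, then treat the equality case $|z_0|=1$ via a degree argument---recovers the same conclusion but in more steps; the reflection is not really buying anything, since the orthogonality $\rev{g}\perp\tilde h$ you use is exactly $g\perp zh$ after applying the anti-unitary $\rev{\cdot}$.
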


 \begin{proof} 
 Suppose $E^2_m(z_0)$ is singular for some $z_0 \in \C$ and choose
 nonzero $v \in \C^{m+1}$ such that $E^2_m(z_0)v = 0$.  Then,
 \[
 f(z,w) = E^2_m(z,w) v = (1,w,\dots, w^m) E_m^2(z) v
 \]
 is in $\mcE_{n,m}^2$, and $f(z_0,w) = 0$ for all $w$.  So, $f(z,w) =
 (z-z_0)g(z,w)$ for some $g \in \mcP_{n-1,m}$.  Since $f \perp z g$ we
 have
 \[
 \|f-zg\|^2 = \|f\|^2 + \|g\|^2 = |z_0|^2 \|g\|^2.
 \]
 Then, $\|f\|^2 = \|v\|^2 = (|z_0|^2-1)\|g\|^2$ which implies $|z_0| >
 1$.
 \end{proof}

 Let $K_1$ be the reproducing kernel for $\mcK_1$, and let $K_2$ be
 the reproducing kernel for $\mcK_2$.  When $p$ has unit norm, the
 reproducing kernel for $\C p$ is $p(z,w)\overline{p(\zeta,\eta)}$ but
 we will simply write $p\bar{p}$.

 \begin{lemma} \label{lem:OC} If $(K_1,K_2)$ is a shift-split of
   $\mcE_{n,m}^1$ with split-poly $p$ then
 \[
 E_{n-1}^1 = K_1 + K_2
 \]
 \[
 E_{n}^1 = K_1 + z\bar{\zeta} K_2 + p\bar{p}
 \]
 \[
 F_{n-1}^1 = \rev{K}_1 + \rev{K}_2
 \]
 \[
 F_{n}^1 = z\bar{\zeta} \rev{K}_1 + \rev{K}_2 + \rev{p}\bar{\rev{p}}
 \]
where the kernels $\rev{K}_1$ and $\rev{K}_2$ are reflected at the
degree $(n-1,m)$.
 \end{lemma}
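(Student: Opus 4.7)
The plan is to derive all four kernel identities directly from the orthogonal decompositions implied by the shift-split, combined with the elementary kernel manipulations recorded in Remark~\ref{RKremark} (kernel of a direct sum, kernel after shifting by $z$, kernel of a reflected subspace).

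The first identity $E_{n-1}^1 = K_1 + K_2$ is immediate from the definitional decomposition $\mcE_{n-1,m}^1 = \mcK_1 \oplus \mcK_2$ together with item (1) of Remark~\ref{RKremark}. For the second identity, I would invoke the split-shift hypothesis: $\mcK_1 \perp z\mcK_2$ and both subspaces sit inside $\mcE_{n,m}^1$, so $\mcK_1 \oplus z\mcK_2$ is an orthogonal subspace of $\mcE_{n,m}^1$. By the dimensional count cited just before the lemma, its orthogonal complement in $\mcE_{n,m}^1$ is one-dimensional, spanned by the unit vector $p$. Hence
\[
\mcE_{n,m}^1 = \mcK_1 \oplus z\mcK_2 \oplus \C p,
\]
and adding reproducing kernels, using item (2) of Remark~\ref{RKremark} to supply the factor $z\bar{\zeta}$ on the kernel of $\mcK_2$, yields $E_n^1 = K_1 + z\bar{\zeta} K_2 + p\bar{p}$.

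The third identity comes from applying the anti-unitary reflection $\rev{\cdot}$ at degree $(n-1,m)$ to $\mcE_{n-1,m}^1 = \mcK_1 \oplus \mcK_2$; this gives $\mcF_{n-1,m}^1 = \rev{\mcK}_1 \oplus \rev{\mcK}_2$, and item (3) of Remark~\ref{RKremark} then produces $F_{n-1}^1 = \rev{K}_1 + \rev{K}_2$. For the fourth identity, I would apply reflection at degree $(n,m)$ to the decomposition of $\mcE_{n,m}^1$ above. The only real computation is tracking how reflection at $(n,m)$ interacts with the extra factor of $z$: if $f \in \mcK_1$ has degree at most $(n-1,m)$, then $z^n w^m \bar{f}(1/z,1/w) = z \cdot \bigl(z^{n-1} w^m \bar{f}(1/z,1/w)\bigr)$, so reflecting $\mcK_1$ at $(n,m)$ produces $z \rev{\mcK}_1$ (with reflection now taken at $(n-1,m)$); conversely, for $f \in \mcK_2$, reflection of $zf$ at $(n,m)$ equals $\rev{f}$ at $(n-1,m)$, so reflecting $z\mcK_2$ at $(n,m)$ produces $\rev{\mcK}_2$ at $(n-1,m)$. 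Therefore $\mcF_{n,m}^1 = z\rev{\mcK}_1 \oplus \rev{\mcK}_2 \oplus \C\rev{p}$, and summing reproducing kernels (with item (2) supplying $z\bar{\zeta}$ on $\rev{K}_1$) finishes the proof.

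I do not expect any serious obstacle here; the lemma is essentially a bookkeeping exercise in the reproducing-kernel calculus. The only subtlety is keeping the degrees at which reflections are taken consistent with the statement, in particular verifying that the reflection-versus-shift-by-$z$ calculation in the fourth identity exchanges the roles of $\mcK_1$ and $\mcK_2$ in the natural way.
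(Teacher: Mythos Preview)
Your proposal is correct and follows exactly the paper's approach: the paper's proof is the single sentence ``These all follow from Remark~\ref{RKremark} and the definition of shift-split,'' and you have simply spelled out the details of that argument. The extra care you take with the degree at which reflection is applied in the fourth identity is precisely the bookkeeping the paper leaves implicit.
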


\begin{proof}
 These all follow from Remark \ref{RKremark} and the definition of
 shift-split.
\end{proof}

 \begin{theorem} \label{aglerdecomps}
 If $(K_1,K_2)$ is a shift-split of $\mcE_{n,m}^1$ with split-poly $p$
 then
 \[
 \begin{aligned}
 p\bar{p} - \rev{p}\bar{\rev{p}} &=
 (1-w\bar{\eta})E_{m-1}^2 + (1-z\bar{\zeta})(\rev{K}_2 - K_1) \\
 &= (1-w\bar{\eta}) F_{m-1}^2 + (1-z\bar{\zeta})(K_2 - \rev{K}_1) \\
 &= (1-w\bar{\eta}) F_{m-1}^2 + (1-z\bar{\zeta})(\rev{K}_2 - K_1) +
 (1-z\bar{\zeta})(1-w\bar{\eta})K_{n-1,m-1}
 \end{aligned}
 \]
and
 \[
 \begin{aligned}
 p\bar{p} - w\bar{\eta} \rev{p}\bar{\rev{p}} &=
 (1-w\bar{\eta})E_{m}^2 + (1-z\bar{\zeta})(w\bar{\eta}\rev{K}_2 - K_1) \\
 &= (1-w\bar{\eta}) F_{m}^2 + (1-z\bar{\zeta})(K_2 - w\bar{\eta}\rev{K}_1) \\
 &= (1-w\bar{\eta}) F_{m}^2 + (1-z\bar{\zeta})(w\bar{\eta}\rev{K}_2 - K_1) +
 (1-z\bar{\zeta})(1-w\bar{\eta})K_{n-1,m}.
 \end{aligned}
 \]

 \end{theorem}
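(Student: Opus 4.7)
The plan is to derive all six identities purely algebraically from three ingredients already available: the shift-split expressions of Lemma~\ref{lem:OC}, the subtraction identities of Lemma~\ref{lem:subtract}, and the orthogonal decomposition $\mcP_{n,k} = z\mcP_{n-1,k}\oplus \mcE_{n,k}^2$ at the reproducing-kernel level, i.e.\ $K_{n,k} = z\bar\zeta K_{n-1,k} + E_k^2$. No new orthogonality input is required; the split-shift hypothesis enters only through Lemma~\ref{lem:OC}.

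To isolate $p\bar p - \rev p\,\bar{\rev p}$ I would compute $E_n^1 - F_n^1$ in two ways. Lemma~\ref{lem:OC} gives
\[
E_n^1 - F_n^1 = (p\bar p - \rev p\,\bar{\rev p}) + (K_1 - \rev K_2) - z\bar\zeta(\rev K_1 - K_2),
\]
while Lemma~\ref{lem:subtract} gives $E_n^1 - F_n^1 = (1-w\bar\eta)K_{n,m-1}$. Equating and using $K_{n,m-1} = z\bar\zeta K_{n-1,m-1}+E_{m-1}^2$ yields
\[
p\bar p - \rev p\,\bar{\rev p} = (1-w\bar\eta)E_{m-1}^2 + (\rev K_2 - K_1) - z\bar\zeta(K_2 - \rev K_1) + (1-w\bar\eta)z\bar\zeta K_{n-1,m-1}.
\]
The key cleanup identity is the shift-split consequence
\[
(1-w\bar\eta)K_{n-1,m-1} = E_{n-1}^1 - F_{n-1}^1 = (K_1+K_2) - (\rev K_1+\rev K_2),
\]
which follows from Lemma~\ref{lem:OC} together with Lemma~\ref{lem:subtract}. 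Substituting and collecting reproduces the first formula. The second and third lines then follow mechanically: swapping $E_{m-1}^2\leftrightarrow F_{m-1}^2$ via $E_{m-1}^2 - F_{m-1}^2 = (1-z\bar\zeta)K_{n-1,m-1}$ moves a term of the form $(1-z\bar\zeta)(1-w\bar\eta)K_{n-1,m-1}$ between the two sides, and re-absorbing it using the boxed identity above converts $(\rev K_2 - K_1)$ into $(K_2 - \rev K_1)$ as needed.

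The second block of three formulas is obtained by the same argument with $E_n^1 - w\bar\eta F_n^1$ replacing $E_n^1 - F_n^1$. A direct computation gives $E_n^1 - w\bar\eta F_n^1 = (1-w\bar\eta)K_{n,m}$, and Lemma~\ref{lem:OC} gives
\[
E_n^1 - w\bar\eta F_n^1 = (p\bar p - w\bar\eta \rev p\,\bar{\rev p}) + (K_1 - w\bar\eta \rev K_2) - z\bar\zeta(w\bar\eta\rev K_1 - K_2).
\]
Using $K_{n,m} = z\bar\zeta K_{n-1,m}+E_m^2$ together with the analogous shift-split identity
\[
(1-w\bar\eta)K_{n-1,m} = E_{n-1}^1 - w\bar\eta F_{n-1}^1 = (K_1+K_2) - w\bar\eta(\rev K_1+\rev K_2),
\]
produces the first line of the second block, and the other two lines follow by swapping $E_m^2 \leftrightarrow F_m^2$ with $E_m^2 - F_m^2 = (1-z\bar\zeta)K_{n-1,m}$ and re-absorbing, exactly as before.

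The main obstacle is purely bookkeeping: keeping track of which kernels are reflected at degree $(n-1,m)$ versus $(n,m)$, and of which factors of $(1-z\bar\zeta)$ and $(1-w\bar\eta)$ are being shuffled between the $E^2/F^2$ and the $K_i/\rev K_i$ terms. Once the two boxed identities for $(1-w\bar\eta)K_{n-1,m-1}$ and $(1-w\bar\eta)K_{n-1,m}$ are recognized as the engine of the argument, the six formulas are really two, each stated in three equivalent algebraic forms.
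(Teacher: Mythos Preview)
Your proposal is correct and follows essentially the same approach as the paper: both proofs combine the shift-split kernel expressions of Lemma~\ref{lem:OC} with the subtraction identities of Lemma~\ref{lem:subtract} and the decomposition $K_{n,k}=z\bar\zeta K_{n-1,k}+E_k^2$, and the core computation is the same elimination of $K_{n-1,m-1}$ (resp.\ $K_{n-1,m}$) between the $(n-1)$-level and $n$-level identities. The paper presents this as a direct subtraction of the two displayed equations, while you present it as a substitution of your ``boxed'' identity into the expanded $K_{n,m-1}$; these are the same manipulation organized slightly differently, and your account of how the remaining lines follow by swapping $E^2\leftrightarrow F^2$ via Lemma~\ref{lem:subtract} is exactly what the paper means by ``similar arguments give the remaining formulas.''
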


 \begin{proof}
 Combining Lemmas \ref{lem:subtract} and \ref{lem:OC}, we get
 \[
 z\bar{\zeta}(K_1+K_2 - (\rev{K}_1 + \rev{K}_2)) =
 (1-w\bar{\eta})z\bar{\zeta} K_{n-1,m-1}
 \]
 and
 \[
 K_1 + z\bar{\zeta}K_2 + p\bar{p} -(z\bar{\zeta}\rev{K}_1 + \rev{K}_2 +
 \rev{p}\bar{\rev{p}})
 = (1-w\bar{\eta})K_{n,m-1}.
 \]
 Subtract these two formulas to get
 \[
 (1-z\bar{\zeta})(K_1-\rev{K}_2) +p\bar{p} - \rev{p}\bar{\rev{p}} =
 (1-w\bar{\eta}) E_{m-1}^2
 \]
 which rearranges to get the first desired formula.  Similar arguments
 give the remaining formulas.
 \end{proof}

If $\mcT$ comes from a Bernstein-Szeg\H{o} measure $1/|p|^2 d\sigma$
where $p$ has no zeros on $\T\times \cD$, then Theorem \ref{BStoSS}
implies $p$ is a split-poly and then the above theorem immediately
implies Theorem \ref{sosthm}, the sum of squares theorem from the
introduction since reproducing kernels can be written as a sum of
squares of an orthonormal basis.  In general, the sum of squares
formula implies a split-poly has no zeros on $\T\times \cD$.

 \begin{corollary} If $p$ is a split-poly, then $p$ has no
   zeros on $\T\times \cD$.

 \end{corollary}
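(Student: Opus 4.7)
The plan is to restrict the second sum-of-squares identity of Theorem \ref{aglerdecomps} to the slice $z=\zeta=z_0\in\T$ (where the $(1-z\bar\zeta)$ term disappears) and to combine the resulting one-variable kernel identity with the strict positivity coming from Lemma \ref{Einv}. Fix $z_0\in\T$, set $q(w)=p(z_0,w)$, and use $1/\bar z_0=z_0$ to write $\rev{p}(z_0,w)=z_0^n\rev{q}(w)$ where $\rev{q}(w)=w^m\overline{q(1/\bar w)}$ is the degree-$m$ reflection. Then the second identity of Theorem \ref{aglerdecomps} at $z=\zeta=z_0$ reduces to
\[
q(w)\overline{q(\eta)}-w\bar\eta\,\rev{q}(w)\overline{\rev{q}(\eta)}=(1-w\bar\eta)K(w,\eta),\qquad K(w,\eta):=E_m^2(z_0,w;z_0,\eta).
\]

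Next I would verify $K(w,w)>0$ for every $w\in\C$. Using the matrix factorization $E_m^2(z,w)=(1,w,\dots,w^m)E_m^2(z)$ recalled just before Lemma \ref{Einv}, one has
\[
K(w,w)=(1,w,\dots,w^m)\,E_m^2(z_0)E_m^2(z_0)^*\,(1,\bar w,\dots,\bar w^m)^T.
\]
Since $z_0\in\T\subset\cD$, Lemma \ref{Einv} makes $E_m^2(z_0)$ invertible, so $E_m^2(z_0)E_m^2(z_0)^*$ is strictly positive definite, and the nonzero column vector $(1,\bar w,\dots,\bar w^m)^T$ forces $K(w,w)>0$.

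Finally I would rule out a zero $w_0\in\cD$ of $q$. For $w_0\in\D$, substituting $w=\eta=w_0$ into the kernel identity gives
\[
-|w_0|^2\,|\rev{q}(w_0)|^2=(1-|w_0|^2)K(w_0,w_0);
\]
the left side is nonpositive, the right side is strictly positive, a contradiction. For $w_0\in\T$, one has $\rev{q}(w_0)=w_0^m\overline{q(w_0)}=0$ whenever $q(w_0)=0$, so setting $w=w_0$ and letting $\eta$ vary makes the left side of the identity vanish identically; cancelling the nontrivial polynomial factor $1-w_0\bar\eta$ then forces $K(w_0,\eta)\equiv 0$ in $\bar\eta$, contradicting $K(w_0,w_0)>0$. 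The $\T\times\T$ case is the main obstacle, since the pointwise diagonal identity collapses to $0=0$ there; the remedy is to retain the bivariate kernel identity and pair it with strict positivity on the diagonal.
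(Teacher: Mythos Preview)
Your argument is correct and follows essentially the same route as the paper: restrict the second identity of Theorem~\ref{aglerdecomps} to $z=\zeta=z_0\in\T$, then combine the resulting one-variable Christoffel--Darboux identity with the invertibility of $E_m^2(z_0)$ from Lemma~\ref{Einv}. The only cosmetic difference is that you split into the cases $w_0\in\D$ and $w_0\in\T$, whereas the paper handles both at once by first reading off $w\rev{p}(z,w)=0$ from the diagonal identity (the inequality $-|w\rev{p}|^2=(1-|w|^2)E_m^2\ge0$ forces this for every $w\in\cD$) and then letting $\eta$ vary to kill $E_m^2(z_0,w;z_0,\eta)$ identically, which contradicts invertibility exactly as in your $\T$-case.
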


 \begin{proof}
 We use the second set of formulas in Theorem \ref{aglerdecomps}.
 Suppose $p(z,w) = 0$ for some $(z,w) \in \T\times \cD$.  Setting $z =
 \zeta \in \T$ and $w=\eta \in \cD$ we have
 \[
 |p(z,w)|^2 - |w|^2|\rev{p}(z,w)|^2 = - |w\rev{p}(z,w)|^2 = 
 (1-|w|^2) E_{m}^{2}(z,w;z,w) \geq 0
 \]
 which shows $w\rev{p}(z,w) = 0$.  Then, for arbitrary $\eta \in \C$ we
 have
 \[
 0 = E_{m}^2(z,w;z,\eta) = E_{m}^2(z,w) E_{m}^2(z,\eta)^* = (1,w,\dots,
 w^m)E_{m}^2(z) E_{m}^2(z)^* (1,\eta,\dots,
 \eta^m)^*
 \]
 which implies
 \[
 0 = (1,w,\dots,w^m)E_{m}^2(z) E_{m}^2(z)^*
 \]
 contradicting the fact that $E_m^2(z)$ is invertible from Lemma
 \ref{Einv}.
 \end{proof}

We can now prove the split-shift orthogonality condition implies the
Bernstein-Szeg\H{o} condition in Theorem \ref{mainthm}.

 \begin{corollary} \label{SStoBS}
 Suppose two positive linear forms $\mcT_1$ and $\mcT_2$ both satisfy
 the split-shift condition with the same split-poly $p$.  Then, $\mcT_1
 = \mcT_2$ and the linear forms agree with the linear form associated
 with the measure $1/|p|^2d\sigma$.
 \end{corollary}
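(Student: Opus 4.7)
The plan is to identify the common value of $\mcT_1$ and $\mcT_2$ with the explicit Bernstein--Szeg\H{o} linear form attached to $p$. First I would invoke the preceding corollary to conclude that $p$ has no zeros on $\T\times\cD$, so that $d\mu_p := \frac{|dz|\,|dw|}{(2\pi)^2|p|^2}$ is a finite positive Borel measure on $\T^2$ and induces a positive linear form $\mcT_0$ on $\mcL_{n,m}$ by $\mcT_0(f) := \int_{\T^2} f\,d\mu_p$. By Theorem~\ref{BStoSS}, $\mcT_0$ satisfies the split-shift condition, and the proof of that theorem exhibits $p$ (after unit normalization in $\mcH_{\mcT_0}$) as the element spanning the one-dimensional space $\mcE_{n,m}^1\ominus(\mcK_1\oplus z\mcK_2)$, so $p$ is the split-poly of $\mcT_0$.

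Given this reduction, it suffices to prove that any positive linear form $\mcT$ on $\mcL_{n,m}$ satisfying the split-shift condition with split-poly $p$ must equal $\mcT_0$. Since the moments $\mcT(z^jw^k)$ for $|j|\le n,|k|\le m$ are determined by the inner product on $\mcP_{n,m}$, equivalently by the reproducing kernel $K_{n,m}$ of $\mcP_{n,m}$ in $\mcH_\mcT$, my strategy is to show that $K_{n,m}$ is determined by $p$ alone.

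To this end I would apply Theorem~\ref{aglerdecomps} in $\mcH_\mcT$ to get the identity
\[
p\bar p - w\bar\eta\,\rev{p}\,\bar{\rev{p}} = (1-w\bar\eta)E_m^2 + (1-z\bar\zeta)(w\bar\eta\rev{K}_2 - K_1),
\]
together with the dual $F_m^2$-version; the left-hand side depends only on $p$. Next I would combine these with the formulas from Lemma~\ref{lem:OC}, in particular $E_n^1 = K_1 + z\bar\zeta K_2 + p\bar p$ and the analogous formula for $F_n^1$, and iterate the recursion $K_{n,k} = w\bar\eta K_{n,k-1} + E_n^1$ upward from the one-variable base case at $k=0$. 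This inductively expresses $K_{n,m}$ in terms of $p$ and the lower-order kernels.

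The hard part will be the uniqueness step: proving that the Agler-type splitting of $p\bar p - w\bar\eta\,\rev{p}\,\bar{\rev{p}}$ into a piece divisible by $(1-w\bar\eta)$ and a piece divisible by $(1-z\bar\zeta)$ pins down $E_m^2$, $K_1$, $K_2$ (and their reflections) uniquely. The intended handle for this is to use both forms of the decomposition in Theorem~\ref{aglerdecomps} together with Lemma~\ref{lem:subtract} (which relates $E_m^2$ and $F_m^2$ by a $(1-z\bar\zeta)$-factor times the lower kernel $K_{n-1,m-1}$) and the known degree bounds and positivity of the reproducing kernels, inducting on $m$ to force all ingredients to coincide with those of $\mcT_0$. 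Once this is done, $K_{n,m}^\mcT = K_{n,m}^{\mcT_0}$, hence $\mcT = \mcT_0$ as desired.
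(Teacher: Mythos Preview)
Your overall reduction is the same as the paper's: show that the reproducing kernel $K_{n,m}$ (equivalently $E_m^2$) is determined by $p$ alone. However, the step you label the ``hard part'' is a genuine gap, and the tools you list will not close it. An identity of the form
\[
p\bar p - w\bar\eta\,\rev{p}\,\bar{\rev{p}} \;=\; (1-w\bar\eta)\,E \;+\; (1-z\bar\zeta)\,F
\]
does \emph{not} uniquely determine $E$ and $F$: one may always move a term $(1-z\bar\zeta)(1-w\bar\eta)G$ from one side to the other without violating any degree bound, so ``degree bounds and positivity'' in the vague sense you indicate are insufficient. Your proposed induction on $m$ also has no clear inductive step, since the split-poly changes with $m$; and the recursion $K_{n,k}=w\bar\eta K_{n,k-1}+E^1_{n,k}$ requires the kernels $E^1_{n,k}$ for \emph{every} $k\le m$, not just $k=m$, so it does not reduce the problem.

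The paper supplies the missing mechanism, and it is a one-variable spectral factorization argument. Restricting Theorem~\ref{aglerdecomps} to $z=\zeta\in\T$ kills the $(1-z\bar\zeta)$ term and shows that $E_m^2(z)E_m^2(z)^*$ on $\T$ is determined by $p$ alone; hence if $E_1(z),E_2(z)$ are the matrix polynomials arising from $\mcT_1,\mcT_2$, then $E_1(z)E_1(z)^*=E_2(z)E_2(z)^*$ for $z\in\T$. Now Lemma~\ref{Einv} (stability of $E_m^2(z)$ on $\cD$) lets one write $E_2(z)^{-1}E_1(z)=\bar E_2(1/z)^t\bar E_1(1/z)^{-t}$, with the left side analytic on $\cD$ and the right side analytic on $\C\setminus\D$; by Liouville this is a constant unitary, so the two kernels $E_m^2$ agree. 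Finally, $E_m^2$ determines $F_m^2$ by reflection, hence $K_{n-1,m}$ by Lemma~\ref{lem:subtract}, and then $K_{n,m}=E_m^2+z\bar\zeta K_{n-1,m}$. This is the step your outline is missing.
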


 \begin{proof} It is enough to show the reproducing kernels $K_{n,m}$
   are the same for both forms.  We can form a matrix polynomial
   $E_m^2(z)$ corresponding to each form $\mcT_1$ and $\mcT_2$, say
   $E_1(z)$ and $E_2(z)$ (just in this proof; we will not use this
   notation elsewhere).  Using Theorem \ref{aglerdecomps} for $z=\zeta
   \in \T$ and arbitrary $w,\eta \in \C$ we get $E_1(z)E_1(z)^* =
   E_2(z)E_2(z)^*$ for $z \in \T$ using arguments similar to the
   previous proof.  Then,
$$E_2^{-1}(z) E_1(z)=\bar{E}_2(1/z)^t \bar{E}_1^{-1}(1/z)^t.$$ By
Lemma \ref{Einv}, the left hand side is analytic for $|z|\leq 1$,
while the right hand side is analytic for $|z|\geq 1$. By Liouville's
theorem $E_2^{-1}(z) E_1(z)=V$ is a constant unitary matrix.  This in
turn implies the reproducing kernels $E_{m}^2(z,w;\zeta,\eta)$ for
$\mcT_1$, $\mcT_2$ are the same.  By the next lemma, we may conclude
that $\mcT_1 = \mcT_2$.
 \end{proof}

\begin{lemma} The inner product in $\mcH_{\mcT}$ is determined by the
  reproducing kernel $E_{m}^2$.
\end{lemma}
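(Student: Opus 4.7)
The plan is to show the stronger statement that $E_m^2$ determines the full reproducing kernel $K_{n,m}$ of $\mcH_\mcT$; since the reproducing kernel of a finite-dimensional RKHS determines its inner product, the lemma follows immediately.

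First I would recall the two identities displayed just after Remark~\ref{RKremark},
\[
E_m^2 \;=\; K_{n,m} - z\bar\zeta\, K_{n-1,m}, \qquad F_m^2 \;=\; K_{n,m} - K_{n-1,m},
\]
coming from the orthogonal decompositions $\mcP_{n,m} = \mcE_{n,m}^2 \oplus z\mcP_{n-1,m}$ and $\mcP_{n,m} = \mcF_{n,m}^2 \oplus \mcP_{n-1,m}$ combined with the shift rule for reproducing kernels. Eliminating $K_{n-1,m}$—subtract $z\bar\zeta$ times the second identity from the first—produces the Christoffel-Darboux-type relation
\[
(1 - z\bar\zeta)\, K_{n,m}(z,w;\zeta,\eta) \;=\; E_m^2(z,w;\zeta,\eta) \;-\; z\bar\zeta\, F_m^2(z,w;\zeta,\eta).
\]

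Next I would use the fact that $F_m^2$ is the reflection of $E_m^2$: since $\mcF_{n,m}^2 = \rev{\mcE}_{n,m}^2$ at degree $(n,m)$, the rule in Remark~\ref{RKremark} provides the explicit formula
\[
F_m^2(z,w;\zeta,\eta) \;=\; (z\bar\zeta)^n (w\bar\eta)^m\, E_m^2\!\left(\tfrac{1}{\bar\zeta}, \tfrac{1}{\bar\eta};\, \tfrac{1}{\bar z}, \tfrac{1}{\bar w}\right),
\]
so $F_m^2$ is entirely encoded in $E_m^2$ as a polynomial. By Lemma~\ref{lem:subtract}, $E_m^2 - F_m^2 = (1-z\bar\zeta)K_{n-1,m}$, and rearranging gives $E_m^2 - z\bar\zeta F_m^2 = (1-z\bar\zeta)(K_{n-1,m} + F_m^2)$, which shows the right-hand side of the displayed CD identity is divisible by $1-z\bar\zeta$ in the polynomial ring. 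Hence $K_{n,m}$ is uniquely recovered as the polynomial quotient, and this quotient depends only on $E_m^2$.

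Finally, $K_{n,m}$ determines the inner product on $\mcH_\mcT$ in the usual way: the collection $\{K_{n,m}(\cdot,\cdot;\zeta,\eta)\}$ spans $\mcP_{n,m}$, so any $g \in \mcP_{n,m}$ can be written as a finite linear combination $g = \sum_j c_j K_{n,m}(\cdot,\cdot;\zeta_j,\eta_j)$, and then the reproducing property yields $\ip{f}{g}_\mcT = \sum_j \bar c_j f(\zeta_j,\eta_j)$ for every $f \in \mcP_{n,m}$. The argument is essentially a two-variable Christoffel-Darboux manipulation; the only mild subtlety—the divisibility of $E_m^2 - z\bar\zeta F_m^2$ by $1-z\bar\zeta$—is handled precisely by Lemma~\ref{lem:subtract}, so there is no real obstacle.
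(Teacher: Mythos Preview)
Your proof is correct and follows essentially the same route as the paper: both use the reflection identity to recover $F_m^2$ from $E_m^2$, then Lemma~\ref{lem:subtract} and the decomposition $K_{n,m} = E_m^2 + z\bar\zeta K_{n-1,m}$ to rebuild $K_{n,m}$. Your Christoffel--Darboux presentation packages these steps slightly differently---deriving $(1-z\bar\zeta)K_{n,m} = E_m^2 - z\bar\zeta F_m^2$ first and then checking divisibility---but the underlying identities and logic are identical.
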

\begin{proof}
Notice $F_{m}^2(z,w;\zeta,\eta) = (z\bar{\zeta})^n(w\bar{\eta})^m
E_{m}^2(1/\bar{\zeta}, 1/\bar{\eta}; 1/\bar{z}, 1/\bar{w})$.  So,
$E_{m}^2$ determines $F_{m}^2$.  By Lemma \ref{lem:subtract},
$E_{m}^2$ determines $K_{n-1,m}$ and since $K_{n,m} = E_{m}^2 +
z\bar{\zeta} K_{n-1,m}$ we see that $E_{m}^2$ determines $K_{n,m}$ as
well.
\end{proof}

We can now prove Theorem \ref{frthm}.

\begin{proof}[Proof of Theorem \ref{frthm}]
We already know that if $t=|p|^2$ then the split-shift condition
holds.  On the other hand, if the split-shift condition holds with
split-poly $p$, then $p$ has no zeros on $\T\times \cD$ and the form
corresponding to $1/|p|^2d\sigma$ agrees with the form $\mcT$.
Then, by Cauchy-Schwarz
\[
1 = \left(\int_{\T^2} \frac{\sqrt{t}}{|p|} \frac{|p|}{\sqrt{t}} d\sigma\right)^2
\leq \int_{\T^2} \frac{t}{|p|^2} d\sigma \int_{\T^2} \frac{|p|^2}{t}
d\sigma
= \int_{\T^2} \frac{t}{t}d\sigma \int_{\T^2} \frac{|p|^2}{|p|^2}
d\sigma = 1
\]
since the forms agree. Since we have equality in our application of
Cauchy-Schwarz, it is not hard to see $t = |p|^2$.  
\end{proof}

\section{The matrix condition}\label{se:MC}

The abstract flavor of the split-shift orthogonality condition makes
it difficult to check.  This section is devoted to showing it is
equivalent to checking that a number of natural operators vanish.

The ``matrix condition'' \eqref{mc} from Theorem \ref{mainthm} can be viewed as saying the smallest invariant subspace
of $T$ containing the range of $B$ is contained in the kernel of $A$.

\begin{prop} \label{sstomc}
If a positive linear form $\mcT$ satisfies the split-shift condition
with shift-split $(\mcK_1,\mcK_2)$, then the matrix condition
\eqref{mc} holds and
\begin{align}
\vee\{ (T^*)^jA^*f: f \in w \mcE_{n,m-1}^2, j=0,1,\dots\} &\subset
\mcK_1 \label{K1inclusion} \\
\vee\{ T^jBf: f \in w \mcF_{n,m-1}^2, j=0,1,\dots\} &\subset
\mcK_2. \label{K2inclusion} 
\end{align}
\end{prop}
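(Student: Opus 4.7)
\medskip
\noindent\emph{Proof proposal.} The approach is to verify both inclusions together with the matrix condition by a sequence of one-line orthogonality computations, extracting everything from conditions~(2) and (3) of Definition~\ref{SHC}. The only background fact needed beyond the split-shift hypotheses is the tautological orthogonality $\mcE_{n,m}^{1} \perp w\mcP_{n,m-1}$ coming from the decomposition $\mcP_{n,m} = w\mcP_{n,m-1} \oplus \mcE_{n,m}^{1}$ (and its level-$(n-1,m)$ analogue).

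The first goal will be to establish the chain
\[
\text{range}(B)\subset \mcK_{2},\qquad T\mcK_{2}\subset\mcK_{2},\qquad \mcK_{2}\subset\ker A,
\]
which together handle \eqref{K2inclusion} and \eqref{mc}. For the first, condition~(3) gives $\mcK_{1}\subset\mcE_{n,m}^{1}\perp w\mcP_{n,m-1}\supset w\mcF_{n,m-1}^{2}$, so for $f\in w\mcF_{n,m-1}^{2}$ and $k_{1}\in\mcK_{1}\subset\mcE_{n-1,m}^{1}$,
\[
\ip{Bf}{k_{1}}_{\mcT} = \ip{P_{\mcE_{n-1,m}^{1}}f}{k_{1}}_{\mcT} = \ip{f}{k_{1}}_{\mcT} = 0,
\]
so $Bf\in\mcE_{n-1,m}^{1}\ominus\mcK_{1}=\mcK_{2}$. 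For $T$-invariance of $\mcK_{2}$, if $g\in\mcK_{2}$ and $k_{1}\in\mcK_{1}$, condition~(2) gives $\ip{Tg}{k_{1}}_{\mcT}=\ip{zg}{k_{1}}_{\mcT}=0$, and since $Tg\in\mcE_{n-1,m}^{1}$ this forces $Tg\in\mcK_{2}$. Finally, $\mcK_{2}\subset\ker A$ because condition~(3) yields $z\mcK_{2}\subset\mcE_{n,m}^{1}\perp w\mcP_{n,m-1}\supset w\mcE_{n,m-1}^{2}$, whence $Ag=P_{w\mcE_{n,m-1}^{2}}(zg)=0$ for $g\in\mcK_{2}$.

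Chaining these three facts gives $T^{j}Bf\in\mcK_{2}$ for every $j\geq 0$ and $f\in w\mcF_{n,m-1}^{2}$, which is \eqref{K2inclusion}; applying $A$ yields $AT^{j}B=0$ for all $j\geq 0$, which is the matrix condition \eqref{mc} (the restriction $j\leq n-1$ being automatic since $\dim\mcE_{n-1,m}^{1}=n$). Inclusion \eqref{K1inclusion} is the dual statement and will be handled by exactly the same scheme applied to the adjoints. Since $\mcK_{2}\subset\ker A$, for any $g\in w\mcE_{n,m-1}^{2}$ and $h\in\mcK_{2}$ we get $\ip{A^{*}g}{h}_{\mcT}=\ip{g}{Ah}_{\mcT}=0$, so the range of $A^{*}$ lies in $\mcE_{n-1,m}^{1}\ominus\mcK_{2}=\mcK_{1}$. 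For $T^{*}$-invariance of $\mcK_{1}$: for $k_{1}\in\mcK_{1}$ and $h\in\mcK_{2}$,
\[
\ip{T^{*}k_{1}}{h}_{\mcT} = \ip{k_{1}}{Th}_{\mcT} = \ip{k_{1}}{zh}_{\mcT} = 0
\]
by condition~(2), giving $T^{*}k_{1}\in\mcK_{1}$. Iterating delivers \eqref{K1inclusion}.

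I do not anticipate any real obstacle: once the projections in the definitions of $A$, $B$, $T$ are unpacked, each inclusion reduces to a single orthogonality check against conditions~(2) or (3). That this is so transparent is essentially the payoff of the coordinate-free reformulation compared with the matrix-level computation in~\cite{GI12}.
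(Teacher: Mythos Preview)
Your proof is correct and follows essentially the same approach as the paper: establish $\text{range}(B)\subset\mcK_2$, $T\mcK_2\subset\mcK_2$, and $\mcK_2\subset\ker A$ via the same one-line orthogonality checks against conditions~(2) and~(3), then dualize for \eqref{K1inclusion}. The only difference is cosmetic---you spell out the adjoint half explicitly, whereas the paper leaves it as ``similar.''
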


\begin{proof}
Let $(\mcK_1, \mcK_2)$ be a shift-split of $\mcE_{n,m}^1$.  Then,
$\mcE_{n-1,m}^1 = \mcK_1 \oplus \mcK_2$ and $\mcE_{n,m}^1 = \mcK_1
\oplus z\mcK_2 \oplus \C p$ where $p$ is the associated split-poly.

The strategy is to prove (1) the range of $B$ is contained in
$\mcK_2$, (2) $\mcK_2$ is an invariant subspace of $T$ ($T \mcK_2
\subset \mcK_2$), and (3) $A \mcK_2 = 0$.  This will imply that
\eqref{mc} holds as well as \eqref{K2inclusion}.

Since $\mcK_1 \subset \mcE_{n,m}^1$,  $\mcK_1 \perp w
\mcF_{n,m-1}^2$ and therefore for $g \in \mcK_1, f \in w \mcF_{n,m-1}^2$ 
\[
\ip{Bf}{g} = \ip{P_{\mcE_{n-1,m}^1} P_{w \mcF_{n,m-1}^2}f}{g} =
\ip{f}{ P_{w\mcF_{n,m-1}^2} g} = 0.
\]
So, the range of $B$ is orthogonal to $\mcK_1$ and therefore must be
contained in $\mcK_2$.

To show $T \mcK_2 \subset \mcK_2$, let $f_2 \in \mcK_2$ and $g_1 \in
\mcK_1$.  Since $z\mcK_2\perp \mcK_1$, we know $g_1 \perp z f_2$ and
therefore
\[
\ip{Tf_2}{g_1} = \ip{M_z f_2}{g_1} = 0.  \]
So, $T \mcK_2 \perp \mcK_1$ and thus $T\mcK_2
\subset \mcK_2$.

Finally, since $z\mcK_2 \subset \mcE_{n,m}^1 \perp w\mcE_{n,m-1}^2$,
we must have 
\[
A\mcK_2=  P_{w\mcE_{n,m-1}^2} M_z \mcK_2 = 0.
\]
The proof of \eqref{K1inclusion} is similar if we work with adjoints
of our operators.
\end{proof}

\begin{prop}\label{mctoss}
 Suppose $\mcT$ is a positive linear form on $\mcL_{n,m}$ satisfying
 the matrix condition \eqref{mc}.  Set
\[
\mcK_2 = \vee\{ T^jBf: f \in w \mcF_{n,m-1}^2, j=0,1,\dots\}
\]
and $\mcK_1 = \mcE_{n-1,m}^1 \ominus \mcK_2$. Then, $(\mcK_1, \mcK_2)$
is a shift-split of $\mcE_{n-1,m}^1$ and hence $\mcT$ satisfies the
split-shift orthogonality condition.
\end{prop}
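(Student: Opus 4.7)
The plan is to verify, one at a time, the three defining conditions of the split-shift orthogonality property in Definition \ref{SHC}. Condition (1), $\mcE_{n-1,m}^1 = \mcK_1 \oplus \mcK_2$, holds by construction since $\mcK_1$ is defined as the orthogonal complement of $\mcK_2$ in $\mcE_{n-1,m}^1$. So the real work is to show $\mcK_1,z\mcK_2 \subset \mcE_{n,m}^1$ and $\mcK_1 \perp z\mcK_2$.

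The key preparatory observation is that $\mcK_2$ is $T$-invariant (this is immediate from its definition as $\vee\{T^j B f\}$), and that $A\mcK_2 = \{0\}$. The latter follows from the matrix condition together with a Cayley--Hamilton argument: since $\dim \mcE_{n-1,m}^1 = n$, the assumption $AT^jB = 0$ for $j = 0,\dots, n-1$ propagates to all $j\geq 0$, so $A$ annihilates every $T^jBf$ and hence all of $\mcK_2$. I will also use that multiplication by $z$ is an isometry on polynomials with respect to $\ip{\cdot}{\cdot}_\mcT$, i.e.\ $\ip{zf}{zg}_\mcT = \ip{f}{g}_\mcT$, which follows from a direct computation using $\bar{(zg)}(1/z,1/w) = (1/z)\bar g(1/z,1/w)$.

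Next I would verify condition (3). For $h \in \mcK_2$, the equation $Ah = P_{w\mcE_{n,m-1}^2}(zh) = 0$ gives $zh \perp w\mcE_{n,m-1}^2$. Because $h \in \mcE_{n-1,m}^1 \subset \mcP_{n-1,m}$ satisfies $h \perp w\mcP_{n-1,m-1}$ and $M_z$ is an isometry, we also have $zh \perp zw\mcP_{n-1,m-1}$. Using the orthogonal decomposition
\[
w\mcP_{n,m-1} = zw\mcP_{n-1,m-1} \oplus w\mcE_{n,m-1}^2,
\]
we conclude $zh \in \mcE_{n,m}^1$, i.e.\ $z\mcK_2 \subset \mcE_{n,m}^1$. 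For $\mcK_1$, since $\mcK_1 \subset \mcE_{n-1,m}^1 \subset \mcP_{n,m}$ is already orthogonal to $w\mcP_{n-1,m-1}$, using $w\mcP_{n,m-1} = w\mcP_{n-1,m-1}\oplus w\mcF_{n,m-1}^2$ it suffices to show $\mcK_1 \perp w\mcF_{n,m-1}^2$. For $g\in w\mcF_{n,m-1}^2$, write $g = Bg + (g - Bg)$; the first piece lies in $\mcK_2$ (range of $B$), and the second lies in $(\mcE_{n-1,m}^1)^\perp$. For $f \in \mcK_1$ (which lies in $\mcE_{n-1,m}^1$ and is orthogonal to $\mcK_2$), both inner products vanish.

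Finally, for condition (2), take $g \in \mcK_1$ and $h \in \mcK_2$ and split
\[
\ip{g}{zh}_\mcT = \ip{g}{Th}_\mcT + \ip{g}{zh - Th}_\mcT.
\]
The first term vanishes because $Th \in \mcK_2$ by the $T$-invariance of $\mcK_2$, and $g \perp \mcK_2$. The second term vanishes because $zh - Th = (I - P_{\mcE_{n-1,m}^1})(zh)$ is orthogonal to $\mcE_{n-1,m}^1$, which contains $\mcK_1$. I do not anticipate a serious obstacle here; the only subtle point is recognizing that the matrix condition as written in \eqref{mc} (finitely many relations) is genuinely equivalent to the invariant-subspace formulation via the Cayley--Hamilton trick, and that the coordinate-free definitions of $A$, $B$, $T$ let us manipulate projections cleanly without any explicit basis calculation.
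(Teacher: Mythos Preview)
Your proof is correct and follows essentially the same approach as the paper's own argument: both use the Cayley--Hamilton observation to extend the matrix condition to all $j\geq 0$, then verify the three split-shift conditions by combining the $T$-invariance of $\mcK_2$, the vanishing of $A$ on $\mcK_2$, and the appropriate orthogonal decompositions of $w\mcP_{n,m-1}$. The only cosmetic difference is that you isolate ``$\mcK_2$ is $T$-invariant and $A\mcK_2=0$'' as preparatory facts, whereas the paper weaves these into the verification of each condition.
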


\begin{proof}
Notice that by the Cayley-Hamilton theorem we do not need to consider
all powers of $T$ in the definition of $\mcK_2$, so that
\[
\mcK_2 = \vee\{ T^jBf: j=0,1,\dots,n-1, f \in w \mcF_{n,m-1}^2\}
\]
and also $AT^j B = 0$ for $j=0,1,2,\dots$.  

We need to show $z\mcK_2 \perp \mcK_1$ and $z\mcK_2, \mcK_1 \subset
\mcE_{n,m}^1$.  

To prove $z\mcK_2 \perp \mcK_1$, simply note that for $f \in w
\mcF_{n,m-1}^2$, $g \in \mcK_1$, $j=0,1,2,\dots$, we have
\[
\ip{zT^jBf}{g} = \ip{T^{j+1}Bf}{g} = 0.
\]
The proves $z\mcK_2\perp \mcK_1$ since $z\mcK_2$ is spanned by
elements of the form $zT^j Bf$.

To show $z\mcK_2 \subset \mcE_{n,m}^1$, note that 
\[
z\mcK_2 \subset \mcP_{n,m} \ominus zw\mcP_{n-1,m-1} = \mcE_{n,m}^1
\oplus w\mcE_{n,m-1}^2
\]
and therefore it is enough to show $z\mcK_2 \perp w \mcE_{n,m-1}^2$.
So, for $f\in w\mcF_{n,m-1}^2$ and $g \in w\mcE_{n,m-1}^2$ we have
\[
\ip{z T^jBf}{g} = \ip{AT^j Bf}{g} = 0 \quad j=0,1,2\dots
\]
since $A = P_{w\mcE_{n,m-1}^2} M_z$ and $AT^jB=0$.  This proves
$z\mcK_2 \subset \mcE_{n,m}^1$.

Similarly, to show $\mcK_1 \subset \mcE_{n,m}^1$ it is enough to show
$\mcK_1 \perp w\mcF_{n,m-1}^2$, since $\mcK_1 \subset \mcP_{n,m}
\ominus w\mcP_{n-1,m-1}$.  Observe that for $f \in w \mcF_{n,m-1}^2$
and $g \in \mcK_1$, $Bf \in \mcK_2$ and so
\[
0 = \ip{Bf}{g} = \ip{f}{g}
\]
and therefore $w\mcF_{n,m-1}^2 \perp \mcK_1$.
\end{proof}

\section{Description of shift-splits and
  split-polys} \label{shiftsplits}

 If the split-shift condition holds for $\mcT$, then we have seen that
 $\mcT$ can be represented using moments of a measure $1/|p|^2
 d\sigma$ where $p \in \C[z,w]$ has no zeros in $\T \times \cD$ and
 $p$ has degree at most $(n,m)$.  The description of all such $p$ is
 essentially an algebra problem.

\begin{lemma} 
Let $t(z,w)$ be a two variable trigonometric polynomial which is
factorable as $|p(z,w)|^2$ where $p$ has degree at most $(n,m)$ and no
zeros in $\T \times \cD$.  

Then, there exists a $g \in \C[z,w]$ with no zeros in $\T\times \cD$
none of whose irreducible factors involve $z$ alone, and there exists
a stable polynomial $q \in \C[z]$ (no zeros on $\cD$) such that
\[
t(z,w) = |q(z)g(z,w)|^2 \text{ for } (z,w) \in \T^2
\]
Moreover, if $t(z,w) = |p_1(z,w)|^2$ where $p_1$ has degree at most
$(n,m)$ and no zeros on $\T\times \cD$, then there exist $q_1, q_2 \in
\C[z]$ such that $q = q_1 \rev{q}_2$ and
\[
p_1(z,w) = q_1(z)q_2(z) g(z,w)
\]
 \end{lemma}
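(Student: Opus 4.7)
The plan is to prove existence by a direct factorization of $p$, then to prove uniqueness via the polynomial identity $|p|^2 = |p_1|^2$ coupled with unique factorization in $\C[z,w]$. For existence, write $p(z,w) = r(z)\,s(z,w)$, where $r \in \C[z]$ collects all irreducible factors of $p$ depending only on $z$ and $s$ contains the rest. Since $p$ has no zeros on $\T\times\cD$, the factor $r$ has no zeros on $\T$ (otherwise $p(z_0,\cdot)\equiv 0$ for a root $z_0 \in \T$ of $r$) and $s$ has no zeros on $\T\times\cD$. I would then factor $r = q_1\,h$ with $q_1$ stable (zeros outside $\cD$) and $h$ anti-stable (all zeros in $\D$); since $|h(z)| = |\rev{h}(z)|$ on $\T$ and $\rev{h}$ has all zeros outside $\cD$, setting $g := s$ and $q := q_1\rev{h}$ (stable) gives $|qg|^2 = |r|^2|s|^2 = t$ on $\T^2$, and $g$ has the required properties.

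For uniqueness, the key is that $|p|^2 = |p_1|^2$ on $\T^2$ upgrades to the polynomial identity $p\,\rev{p} = p_1\,\rev{p}_1$ where reversals are taken at degree $(n,m)$. Writing $p_1 = r_1(z)\,s_1(z,w)$ in the same way, with $s_1$ having no $z$-only factors, the crucial step is identifying $s_1 = c\,g$ for some constant. This I would establish via the following coprimality lemma: \emph{if $f \in \C[z,w]$ is irreducible, depends nontrivially on $w$, and has no zeros on $\T\times\cD$, then $f$ and $\rev{f}$ are coprime.} If not, then $\rev{f} = c f$ by irreducibility of $f$, so the zero set $Z(f) \cap (\C^*)^2$ is invariant under $(z,w) \mapsto (1/\bar{z}, 1/\bar{w})$. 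Picking a generic $z_0 \in \T$, the polynomial $f(z_0,\cdot)$ has a root $w_0 \in \C^*$ necessarily satisfying $|w_0| > 1$ by hypothesis; then $(z_0, 1/\bar{w}_0)$ is a root of $f$ in $\T\times\D \subset \T\times\cD$, a contradiction.

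The same argument run in reverse shows that for each irreducible factor $g_i$ of $g$, the reverse $\rev{g}_i$ does have a zero in $\T\times\cD$. Hence any irreducible factor of $s_1$, being $w$-involving and dividing $g\rev{g}$ (the $w$-involving part of $p\rev{p}$), cannot equal any $\rev{g}_i$ and so must be a factor of $g$. A multiplicity count using $r_1\rev{r}_1\,s_1\rev{s}_1 = q\rev{q}\,g\rev{g}$ then forces $s_1 = c\,g$, which I absorb into $r_1$. Finally the residual one-variable identity $r_1\rev{r}_1 = q\rev{q}$, combined with the stable/anti-stable decomposition $r_1 = q_1 q_2$ ($q_1$ stable, $q_2$ anti-stable), gives $q = q_1\rev{q}_2$ by one-variable unique factorization, yielding $p_1 = q_1 q_2\,g$ as required. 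The hardest step is the coprimality lemma, which bootstraps the topological assumption ``no zeros on $\T\times\cD$'' into algebraic rigidity for the bivariate factor; the remaining degree and constant bookkeeping is routine once this is in hand.
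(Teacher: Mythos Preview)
Your proposal is correct, and the existence argument is essentially identical to the paper's. Your uniqueness argument, however, takes a genuinely different route.

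The paper fixes $z \in \T$ and argues analytically: from $h_1 g_1 \rev{h}_1 \rev{g}_1 = q g \rev{q} \rev{g}$ one forms the ratio
\[
\frac{h_1(z) g_1(z,w)}{q(z) g(z,w)} = \frac{\rev{q}(z) \rev{g}(z,w)}{\rev{h}_1(z) \rev{g}_1(z,w)},
\]
observes that the left side is holomorphic for $w\in\cD$ and the right side for $|w|\ge 1$, and concludes by a Liouville-type argument that this rational function of $w$ is constant. Setting $w=0$ then gives $g_1(z,w)g(z,0)=g(z,w)g_1(z,0)$, and the absence of $z$-only factors forces $g_1=cg$.

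Your approach is purely algebraic: you upgrade $|p|^2=|p_1|^2$ to the polynomial identity $p\rev{p}=p_1\rev{p}_1$, then invoke unique factorization in $\C[z,w]$ together with your coprimality lemma (an irreducible $w$-involving factor with no zeros on $\T\times\cD$ is coprime to its reverse). The lemma's proof is sound: if $\rev f = cf$, pick generic $z_0\in\T$, find a root $w_0$ of $f(z_0,\cdot)$ (necessarily $|w_0|>1$), and observe $(z_0,1/\bar w_0)\in\T\times\D$ is also a root. Dually, each $\rev{g}_i$ does have a zero in $\T\times\D$, so the $w$-involving factors of $s_1$ are forced to sit inside $g$, and the multiplicity match follows.

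What each approach buys: the paper's analytic argument is short and avoids any factor-by-factor bookkeeping, but it relies on the specific geometry of $\T\times\cD$ to split the $w$-plane. Your argument is more structural and would adapt to other settings where one has a ``stable/unstable'' dichotomy for irreducible factors; the price is the degree and multiplicity accounting you flag as routine (and it is, once one notes that irreducible $f$ with $w$-dependence satisfies $z\nmid f$, $w\nmid f$, hence $\deg\rev f=\deg f$ and $f\mid\rev f$ forces $\rev f=cf$). The one-variable endgame ($|r_1|^2=|q|^2$ on $\T$ with $q$ stable, hence $q$ equals $q_1\rev{q}_2$ up to a unimodular constant) is the same in both.
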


\begin{proof}  
Suppose $t=|p|^2$ as above.  We may factor $p(z,w) = h(z) g(z,w)$
where $g$ has no irreducible factors involving $z$ alone.  By the one
variable Fej\'er-Riesz lemma we can factor $|h|^2=|q|^2$ with $q$, a
stable one variable polynomial.  Then, $t(z,w) = |q(z)g(z,w)|^2$ on
$\T^2$. 

Now, if $t=|p_1|^2$ as above, then again $p_1(z,w) = h_1(z) g_1(z,w)$
where $g_1$ has no irreducible factors involving $z$ alone.  Now,
\[
h_1(z) g_1(z,w) \overline{h_1(z) g_1(z,w)} = h(z) g(z,w)
\overline{h(z) g(z,w)}
\]
on $\T^2$ which implies
\[
h_1(z) g_1(z,w) \rev{h}_1(z) \rev{g}_1(z,w) = q(z) g(z,w) \rev{q}(z) \rev{g}(z,w)
\]
on all of $\C^2$, when we reflect at appropriate degrees.
Then, for $z \in \T$
\[
\frac{h_1(z) g_1(z,w)}{q(z) g(z,w)} = \frac{\rev{q}(z)
  \rev{g}(z,w)}{\rev{h}_1(z) \rev{g}_1(z,w)}
\]
and the left side is holomorphic for all $w\in \cD$ and the right
side is holomorphic for $|w|\geq 1$ making the function entire and
rational in $w$. The same can be said for the reciprocal and this
forces the function to be constant in $w$.  So, for $z\in \T$
\[
\frac{h_1(z) g_1(z,w)}{q(z) g(z,w)} = \frac{h_1(z) g_1(z,0)}{q(z) g(z,0)}
\]
 and we see 
\[
g_1(z,w)g(z,0) = g(z,w)g_1(z,0).
\]
This extends to all $z \in \C$ and since $g$ and $g_1$ have no
irreducible factors involving $z$ alone, we may conclude they are
constant multiples of one another.  The constant can be absorbed into
the definition of $h_1$ so that $p_1(z,w)=h_1(z) g(z,w)$.  Then,
$|p_1|^2 = |p|^2$ on $\T^2$ implies that $|h_1|^2 = |q|^2$ on $\T$.
It is then elementary to show $h_1$ is obtained by flipping some of
the roots of $q$ to inside $\D$.
\end{proof}

\begin{lemma}
 If the split-shift orthogonality condition holds with a given
 split-poly $p$, then the spaces $\mcK_1$ and $\mcK_2$ are uniquely
 determined by $p$.
 \end{lemma}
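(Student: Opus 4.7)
The plan is to compare the reproducing kernels of the two candidate shift-splits in $\mcH_\mcT$ via Lemma~\ref{lem:OC}. Suppose $(\mcK_1, \mcK_2)$ and $(\mcK_1', \mcK_2')$ are both shift-splits of $\mcE_{n,m}^1$ sharing the same split-poly $p$, and let $K_1, K_2, K_1', K_2'$ denote the corresponding reproducing kernels in $\mcH_\mcT$. Applying Lemma~\ref{lem:OC} to each shift-split, and using the fact that $E_{n-1}^1$ and $E_n^1$ depend only on $\mcT$, I would obtain the two identities
\[
E_{n-1}^1 = K_1 + K_2 = K_1' + K_2'
\]
and
\[
E_n^1 = K_1 + z\bar\zeta K_2 + p\bar p = K_1' + z\bar\zeta K_2' + p\bar p.
\]
The crucial point is that the $p\bar p$ summand cancels in the second identity because both shift-splits produce the same split-poly $p$.

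Subtracting the two expressions within each identity yields $K_2 - K_2' = K_1' - K_1$ and $z\bar\zeta(K_2 - K_2') = K_1' - K_1$. Combining these gives $(1 - z\bar\zeta)(K_2 - K_2') = 0$ as a polynomial identity in $(z, w, \bar\zeta, \bar\eta)$. Since $1 - z\bar\zeta$ is a nonzero element of an integral domain, I conclude $K_2 = K_2'$. Because reproducing kernels uniquely determine their subspaces, this forces $\mcK_2 = \mcK_2'$; and then $K_1 = E_{n-1}^1 - K_2 = E_{n-1}^1 - K_2' = K_1'$ yields $\mcK_1 = \mcK_1'$ as well. No step presents a serious obstacle—the proof reduces to a short algebraic cancellation that crucially exploits the shared split-poly to eliminate the $p\bar p$ contribution.
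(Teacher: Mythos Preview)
Your proof is correct and follows essentially the same strategy as the paper: compare reproducing-kernel identities coming from two shift-splits that share the split-poly $p$, then cancel to obtain an identity of the form $(1-\text{monomial})\cdot(\text{kernel difference})=0$, forcing the kernel difference to vanish. The paper appeals to the formulas of Theorem~\ref{aglerdecomps} (involving $\rev{K}_2-K_1$ and $w\bar\eta\rev{K}_2-K_1$) and cancels via the factor $1-w\bar\eta$, whereas you go straight to the simpler identities of Lemma~\ref{lem:OC} and cancel via $1-z\bar\zeta$; your route is slightly more direct but structurally identical.
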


 \begin{proof}
 Looking at the formulas in Theorem \ref{aglerdecomps}, we see that
 since all of the $E$ or $F$ kernels are uniquely determined, the
 kernels $\rev{K}_2 - K_1$ and $w\bar{\eta}\rev{K}_2 - K_1$ are
 uniquely determined.  We see that $(1-w\bar{\eta})K_1$ is uniquely
 determined and so $K_1$ is uniquely determined.  A similar argument
 shows $K_2$ is uniquely determined.
 \end{proof}

We can now give a description of all possible shift-splits.

 \begin{prop} \label{allshiftsplits}
If the split-shift condition holds for a positive linear form $\mcT$
on $\mcL_{n,m}$, then there exists $g \in \C[z,w]$ with no zeros on
$\T\times \cD$ and no irreducible factors involving $z$ alone and
stable $q\in \C[z]$, such that $q(z)g(z,w)$ is a split-poly.  Write
$n_1:=\deg_z g$ and $n_0:= n-n_1$.  Every other split-poly is of the
form 
\[
q_1(z)q_2(z)g(z,w)
\]
where $\deg q_1q_2 \leq n_0$, $q(z) = q_1(z)\rev{q}_2(z)$, and
$\rev{q}_2$ is reflected at the degree of $q_2$.  The associated
shift-split $(\mcK_1, \mcK_2)$ is given by
\[
\mcK_1 = \vee P_{\mcE_{n-1,m}^1} \{ z^j q_1(z)g_1(z): 0\leq j < \deg
q_2 + \deg g_2\}
\]
\[
\mcK_2 = \vee P_{\mcE_{n-1,m}^1} \{ z^j q_2(z) g_2(z): 0\leq j < n- \deg
q_2 - \deg g_2\}
\]
where $g(z,0) = g_1(z)g_2(z)$ with $g_1$ having no zeros in $\cD$ and
$g_2$ having all zeros in $\D$.
\end{prop}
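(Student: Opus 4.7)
The plan is to combine three previously established results: Corollary \ref{SStoBS}, which identifies $\mcT$ as the moment form of $1/|p|^2 d\sigma$ for any split-poly $p$; the factorization lemma at the start of this section, which describes all polynomials of bidegree $\leq (n,m)$ with prescribed modulus on $\T^2$ and no zeros on $\T \times \cD$; and Theorem \ref{bssplits}, which reads the shift-split spaces off from the $w=0$ slice of a split-poly.

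I would first fix an initial split-poly $p_0$ for $\mcT$ and invoke Corollary \ref{SStoBS} to realize $\mcT$ as the form of $1/|p_0|^2 d\sigma$. The factorization lemma applied to $p_0$ then produces the factorization $|p_0|^2 = |qg|^2$ on $\T^2$ with $g \in \C[z,w]$ having no irreducible factors involving $z$ alone, no zeros on $\T \times \cD$, and with $q \in \C[z]$ stable. Setting $n_1 = \deg_z g$ and $n_0 = n - n_1$, the same lemma classifies every polynomial $p_1$ of degree $\leq (n,m)$ with no zeros on $\T \times \cD$ and $|p_1|^2 = |p_0|^2$ on $\T^2$ as $p_1 = q_1 q_2 g$ with $q = q_1 \rev{q}_2$; this automatically forces $\deg(q_1 q_2) \leq n_0$.

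Next I need to check both directions of the split-poly correspondence. Given $p_1 = q_1 q_2 g$, the polynomial has no zeros on $\T \times \cD$ because $g$ does not, $q_1$ divides the stable $q$, and $q_2$ has all zeros in $\D$ since $\rev{q}_2$ is a factor of $q$. Moreover $|p_1|^2 = |q_1 \rev{q}_2 g|^2 = |p_0|^2$ on $\T^2$ since $|q_2| = |\rev{q}_2|$ on $\T$. Hence the measure $1/|p_1|^2 d\sigma$ has the same bidegree $\leq (n,m)$ moments as $\mcT$, so Theorem \ref{BStoSS} applied to $p_1$ produces the decomposition $\mcE_{n,m}^1 = \mcK_1 \oplus z \mcK_2 \oplus \C p_1$, making $p_1$ a split-poly. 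Conversely, any split-poly $p_1$ for $\mcT$ yields another Bernstein-Szeg\H{o} representation of $\mcT$, and the Cauchy-Schwarz argument from the proof of Theorem \ref{frthm} forces $|p_1|^2 = |p_0|^2$ on $\T^2$; so the classification lemma applies and exhausts all split-polys.

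For the explicit description of $(\mcK_1, \mcK_2)$, I apply Theorem \ref{bssplits} to the split-poly $p_1 = q_1 q_2 g$. Evaluating at $w = 0$ gives $p_1(z, 0) = q_1(z) q_2(z) g_1(z) g_2(z)$, where $g(z,0) = g_1 g_2$ has $g_1$ with no zeros in $\cD$ and $g_2$ with all zeros in $\D$. Since $q_1$ is stable and $g_1$ has no zeros in $\cD$, the product $a = q_1 g_1$ has no zeros in $\cD$; similarly $b = q_2 g_2$ has all zeros in $\D$ with $\deg b = \deg q_2 + \deg g_2$. Substituting this pair $(a,b)$ into the formulas of Theorem \ref{bssplits} produces exactly the claimed spans for $\mcK_1$ and $\mcK_2$. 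The subtlest point in the plan is the uniqueness half of the factorization lemma, which guarantees no split-polys are missed; this is already handled there by a rational/Liouville-type argument, so I expect no serious obstacle beyond careful bookkeeping of degrees and factors.
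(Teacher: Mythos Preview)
Your proposal is correct and takes essentially the same approach as the paper, which does not write out a separate proof for this proposition but sets it up to follow directly from the two preceding lemmas combined with Corollary~\ref{SStoBS} and Theorem~\ref{BStoSS}/\ref{bssplits}. The only omission is that you should also cite the uniqueness lemma immediately preceding the proposition to justify that the shift-split produced by Theorem~\ref{bssplits} is \emph{the} associated shift-split for $p_1$, not merely \emph{a} shift-split.
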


Proposition \ref{mctoss} singles out the shift-split with minimal
$\mcK_2$ which would correspond to the split-poly $\rev{q}(z)g(z,w)$,
where $\rev{q}(z)$ is reflected at degree $n_0$.  The shift-split with
minimal $\mcK_1$ corresponds to split-poly $q(z)g(z,w)$.  This leads to
a canonical decomposition of $\mcE_{n-1,m}^1$ which does not depend on
a choice of shift-split. Let $\deg g = (n_1,m)$ and $n_0 := n-n_1$.

Define
\[
\mcK_{0} = \vee\{z^j g(z,w): 0\leq j< n_0\}
\]
\[
\mathcal{A} = \vee P_{\mcE_{n-1,m}^1} \{ z^j q(z)g_1(z): 0\leq j < \deg g_2\}
\]
\[
\mathcal{B} = \vee P_{\mcE_{n-1,m}^1} \{ z^j \rev{q}(z) g_2(z): 0\leq
j < n_1-\deg g_2 \}.
\]
\begin{theorem} \label{ABdecomp} 
Let $\mcT$ be a positive linear form on $\mcL_{n,m}$ satisfying the
split-shift condition. Then,
\[
\mcE_{n-1,m}^1 = \mcK_{0} \oplus\mathcal{A} \oplus \mathcal{B}.
\]
Both $(\mcK_0 \oplus \mathcal{A}, \mathcal{B})$ and $(\mathcal{A},
\mcK_0\oplus \mathcal{B})$ are shift-splits. If
  $(\mcK_1,\mcK_2)$ is any shift-split, then $\mathcal{A} \subset
  \mcK_1$ and $\mathcal{B} \subset \mcK_2$.
\end{theorem}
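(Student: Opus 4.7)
The plan is to apply Theorem~\ref{bssplits} to the two extreme split-polys supplied by Proposition~\ref{allshiftsplits}, namely $p_1 := q(z)g(z,w)$ (the case $q_1 = q$, $q_2 = 1$) and $p_2 := \rev{q}(z)g(z,w)$ (the case $q_1 = 1$, $q_2 = \rev{q}$). By Corollary~\ref{SStoBS}, $\mcT$ is the Bernstein--Szeg\H{o} form associated to $d\sigma/|p_1|^2$ as well as to $d\sigma/|p_2|^2$ (the two measures agree on $\T^2$ since $|q|^2 = |\rev{q}|^2$ there). For $p_1$, $p_1(z,0) = (qg_1)(z)\cdot g_2(z)$ is the stable/unstable factorization of Theorem~\ref{bssplits}, yielding the shift-split $(\mathcal{A}, \mcK_2^{(1)})$ with $\mcK_2^{(1)} = P_{\mcE_{n-1,m}^1}\{z^j g_2 : 0 \le j < n - \deg g_2\}$. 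For $p_2$, $p_2(z,0) = g_1(z)\cdot(\rev{q}g_2)(z)$ yields $(\mcK_1^{(2)}, \mathcal{B})$ with $\mcK_1^{(2)} = P_{\mcE_{n-1,m}^1}\{z^j g_1 : 0 \le j < n_0 + \deg g_2\}$.

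The heart of the argument is to show $\mcK_0 \subset \mcK_1^{(2)} \cap \mcK_2^{(1)}$. First, I would check $\mcK_0 \subset \mcE_{n-1,m}^1$: working in $L^2(d\sigma/|p_1|^2)$, for $z \in \T$ the function $1/g(z,\cdot)$ is holomorphic in $\cD$, so its complex conjugate has only non-positive Fourier coefficients in $w$. Consequently $\int_\T \bar{w}^k/\overline{g(z,w)}\,d\sigma(w) = 0$ for $k \ge 1$, and separating variables in the defining integral for $\langle z^i g, z^l w^k\rangle_{\mcT}$ gives $z^i g \perp w\mcP_{n-1,m-1}$ for $0 \le i < n_0$. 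Next, since $z^i g(z,w) - z^i g_1(z)g_2(z) = z^i\bigl(g(z,w) - g(z,0)\bigr)$ is divisible by $w$ with quotient in $\mcP_{n-1,m-1}$ (using $i < n_0$ and $\deg g = (n_1,m)$), we obtain $z^i g = P_{\mcE_{n-1,m}^1}(z^i g_1 g_2)$. Reading this product as $g_2 \cdot (z^i g_1)$ places it in the span of the generators of $\mcK_2^{(1)}$, and as $g_1 \cdot (z^i g_2)$ in the span of those of $\mcK_1^{(2)}$.

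The orthogonalities now follow mechanically. From the shift-split $(\mathcal{A}, \mcK_2^{(1)})$, $\mcK_0 \subset \mcK_2^{(1)} \perp \mathcal{A}$ gives $\mcK_0 \perp \mathcal{A}$, and symmetrically $\mcK_0 \perp \mathcal{B}$; the inclusion $\mathcal{A} \subset \mcK_1^{(2)}$ (which holds because $z^j q g_1 \in g_1 \cdot \vee\{1,\dots,z^{n_0 + \deg g_2 - 1}\}$ for $j < \deg g_2$) combined with $\mcK_1^{(2)} \perp \mathcal{B}$ gives $\mathcal{A} \perp \mathcal{B}$. The two shift-split identities force $\dim \mathcal{A} = \deg g_2$ and $\dim \mathcal{B} = n_1 - \deg g_2$, and since $\dim \mcK_0 = n_0$ and $\dim \mcE_{n-1,m}^1 = n$, the pairwise-orthogonal spaces fill $\mcE_{n-1,m}^1$, giving the direct-sum decomposition. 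Comparing with $\mathcal{A} \oplus \mcK_2^{(1)} = \mcK_1^{(2)} \oplus \mathcal{B} = \mcE_{n-1,m}^1$ then forces $\mcK_1^{(2)} = \mcK_0 \oplus \mathcal{A}$ and $\mcK_2^{(1)} = \mcK_0 \oplus \mathcal{B}$, so both $(\mcK_0 \oplus \mathcal{A},\mathcal{B})$ and $(\mathcal{A},\mcK_0 \oplus \mathcal{B})$ are shift-splits.

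For the minimality claim, Proposition~\ref{allshiftsplits} expresses a general shift-split $(\mcK_1, \mcK_2)$ in terms of a factorization $q = q_1 \rev{q}_2$. Then $z^j q g_1 = z^j q_1 \rev{q}_2 g_1 \in q_1 g_1 \cdot \vee\{1,\dots,z^{\deg q_2 + \deg g_2 - 1}\}$ for $j < \deg g_2$ gives $\mathcal{A} \subset \mcK_1$, while $\rev{q} = \rev{q}_1 q_2$ gives $z^j \rev{q} g_2 = z^j \rev{q}_1 q_2 g_2 \in q_2 g_2 \cdot \vee\{1,\dots,z^{n - \deg q_2 - \deg g_2 - 1}\}$ for $j < n_1 - \deg g_2$, hence $\mathcal{B} \subset \mcK_2$. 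The principal technical obstacle is the Fourier computation that places $\mcK_0$ inside $\mcE_{n-1,m}^1$; everything else is algebraic bookkeeping with generating sets, relying on the uniqueness of the shift-split associated to each split-poly (established in the previous section).
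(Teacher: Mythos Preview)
Your proof is correct and follows essentially the same approach as the paper's: both arguments identify the two extreme split-polys $qg$ and $\rev{q}g$, verify $\mcK_0 \subset \mcE_{n-1,m}^1$ via the holomorphicity of $1/g(z,\cdot)$, place $\mcK_0$ inside the ``large'' half of each of the two resulting shift-splits by the algebraic inclusion $z^j g_1 g_2 \in g_1\cdot\vee\{z^i\} \cap g_2\cdot\vee\{z^i\}$, and then count dimensions. Your minimality argument via $q = q_1\rev{q}_2$ is likewise the ``inspection of definitions'' the paper alludes to; one small imprecision is the identity $\rev{q} = \rev{q}_1 q_2$, which in general holds only up to a monomial factor $z^{n_0 - \deg q}$ (and a unimodular constant) when $\deg q_1 + \deg q_2 < n_0$, but this extra power of $z$ is harmless for the span inclusion you need.
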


\begin{proof}
It follows by inspection of definitions that $\mathcal{A}
  \subset \mcK_1$ and $\mathcal{B} \subset \mcK_2$ using $\mcK_1$ and
  $\mcK_2$ from Proposition \ref{allshiftsplits}.

Let $g\in \C[z,w]$ and $q\in \C[z]$ be as in the previous proposition.
In $L^2(1/|qg|^2d\sigma)$, $g \perp z^jw^{k+1}$ for $j\in \Z$ and
$k\geq 0$ because
\[
\ip{z^jw^{k+1}}{g} = \int_{\T} \frac{z^j}{|q(z)|^2} \int_{\T}
\frac{w^{k+1}}{g(z,w)} \frac{|dw| |dz|}{(2\pi)^2} = 0
\]
since $1/g(z,\cdot)$ is holomorphic.  Therefore, $z^jg(z,w) \in
\mcE_{n-1,m}^1$ for $0\leq j < n_0$ and we see
\[
\mcK_{0} = \vee P_{\mcE_{n-1,m}^1} \{ z^j g_1(z)g_2(z): 0\leq j <
n_0\} 
\]
is contained in 
\[
\vee P_{\mcE_{n-1,m}^1} \{z^j g_1(z): 0 \leq j < n_0+\deg g_2\}
\]
but this corresponds to $\mcK_1$ in the shift-split coming from the
split-poly $\rev{q}(z)g(z,w)$.  Hence, this space and $\mcK_0$ must be
orthogonal to the associated $\mcK_2$ which happens to be
$\mathcal{B}$.  Notice also that 
\[
\mathcal{A} \subset \vee P_{\mcE_{n-1,m}^1} \{z^j g_1(z): 0 \leq j <
n_0+\deg g_2\}.
\]

A similar argument shows $\mcK_0$ is orthogonal to $\mathcal{A}$, and
by dimension considerations
\[
\mcK_0 \oplus \mathcal{A} = \vee P_{\mcE_{n-1,m}^1} \{z^j g_1(z): 0 \leq j <
n_0+\deg g_2\}
\]
and again by dimension considerations
\[
\mcK_0 \oplus \mathcal{A} \oplus \mathcal{B} = \mcE_{n-1,m}^1.
\]

We already noted that $\mcK_0\oplus \mathcal{A}$ corresponds to
``$\mcK_1$'' in some shift-split.  Therefore, $(\mcK_0\oplus
\mathcal{A}, \mathcal{B})$ is a shift-split.  By a similar argument,
$(\mathcal{A}, \mcK_0 \oplus \mathcal{B})$ is a shift-split.

\end{proof}

Propositions \ref{sstomc} and \ref{mctoss} together show that the
invariant subspace of $T$ generated by the range of $B$ is the minimal
possible ``$\mcK_2$'' occurring in a shift-split.  We have already
computed the minimal $\mcK_2$, which is $\mathcal{B}$.  A similar
argument can be used for the minimal $\mcK_1$ which is
$\mathcal{A}$. This implies the following.

\begin{theorem} \label{BAformulas}
If the split-shift condition holds, 
\[
\mathcal{B} = \vee\{T^j Bf: f\in w \mcF_{n,m-1}^2, j=0,1,\dots,n-1\}
\]
and
\[
\mathcal{A} = \vee\{(T^*)^j A^* f: f \in w\mcE_{n,m-1}^2,
j=0,1,\dots, n-1\}
\]
where $A^* = P_{\mcE_{n-1,m}^1} M_{1/z}:w\mcE_{n,m-1}^2\to
\mcE_{n-1,m}^1$  and $T^* = P_{\mcE_{n-1,m}^1} M_{1/z}:\mcE_{n,m-1}^1\to
\mcE_{n-1,m}^1$. 
\end{theorem}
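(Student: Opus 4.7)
The plan is to exhibit $\mcS_B := \vee\{T^j Bf : f \in w\mcF_{n,m-1}^2, j \geq 0\}$ and $\mcS_A := \vee\{(T^*)^j A^* f : f \in w\mcE_{n,m-1}^2, j \geq 0\}$ as the \emph{minimal} possible $\mcK_2$ and $\mcK_1$ occurring in any shift-split, and then invoke Theorem \ref{ABdecomp}, which already identifies $\mathcal{B}$ and $\mathcal{A}$ as precisely those minimal spaces. The truncation $j=0,1,\dots,n-1$ comes for free by Cayley--Hamilton applied to $T$, resp.\ $T^*$, on the finite-dimensional space $\mcE_{n-1,m}^1$.

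For $\mathcal{B}=\mcS_B$ three ingredients combine cleanly. Proposition \ref{sstomc} (equation \eqref{K2inclusion}) gives $\mcS_B \subset \mcK_2$ for every shift-split $(\mcK_1,\mcK_2)$; applied to the shift-split $(\mcK_0\oplus\mathcal{A},\mathcal{B})$ from Theorem \ref{ABdecomp} this yields $\mcS_B \subset \mathcal{B}$. In the reverse direction, Proposition \ref{mctoss} exhibits $(\mcE_{n-1,m}^1 \ominus \mcS_B,\mcS_B)$ as itself a shift-split; feeding this into Theorem \ref{ABdecomp} yields $\mathcal{B}\subset\mcS_B$. Equality follows.

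For $\mathcal{A}=\mcS_A$ the same scheme applies, using the $\mcK_1$-inclusion \eqref{K1inclusion} of Proposition \ref{sstomc} against the shift-split $(\mathcal{A},\mcK_0\oplus\mathcal{B})$ to obtain $\mcS_A\subset\mathcal{A}$ immediately. The remaining inclusion demands the dual of Proposition \ref{mctoss}: one must verify that $(\mcS_A,\mcE_{n-1,m}^1\ominus\mcS_A)$ is itself a shift-split, and then Theorem \ref{ABdecomp} furnishes $\mathcal{A}\subset\mcS_A$. The three defining conditions unpack as follows. (i) $\mcS_A \perp z(\mcE_{n-1,m}^1\ominus\mcS_A)$ follows from the identity $\ip{(T^*)^j A^* f}{zg}_{\mcT} = \ip{(T^*)^{j+1}A^* f}{g}_{\mcT}$ and the definition of $\mcS_A^\perp$ inside $\mcE_{n-1,m}^1$. (ii) $\mcS_A \subset \mcE_{n,m}^1$ is equivalent, via the orthogonal decomposition $w\mcP_{n,m-1} = w\mcP_{n-1,m-1} \oplus w\mcF_{n,m-1}^2$, to $\mcS_A \perp w\mcF_{n,m-1}^2$; this is precisely the dualized matrix condition $B^*(T^*)^j A^* = 0$. (iii) $z(\mcE_{n-1,m}^1\ominus\mcS_A) \subset \mcE_{n,m}^1$ reduces by the same decomposition to $\mcE_{n-1,m}^1\ominus\mcS_A \subset \ker A$, which is forced by the $j=0$ term $A^* f$ in the definition of $\mcS_A$.

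The single technical burden is the dual of Proposition \ref{mctoss} carried out above, and the sticking point throughout is that one needs $M_z$ and $M_w$ to behave as isometries of $\mcH_{\mcT}$ where they are defined. This is indeed the case, since $\ip{zf}{zg}_{\mcT} = \mcT\bigl(zf \cdot z^{-1}\bar{g}(1/z,1/w)\bigr) = \ip{f}{g}_{\mcT}$ whenever $zf,zg \in \mcP_{n,m}$, and analogously for $w$. These isometries make the orthogonal decomposition $w\mcP_{n,m-1} = w\mcP_{n-1,m-1} \oplus w\mcF_{n,m-1}^2$ truly orthogonal in the $\mcT$-inner product, and they justify the identification $T^* = P_{\mcE_{n-1,m}^1} M_{1/z}$ as a genuine Hilbert-space adjoint. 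Once these are in hand, the verification of the dual shift-split is a symmetric replay of Proposition \ref{mctoss}, and the rest of the argument is bookkeeping.
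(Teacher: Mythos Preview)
Your proposal is correct and follows exactly the approach the paper indicates in the paragraph preceding Theorem \ref{BAformulas}: use Propositions \ref{sstomc} and \ref{mctoss} to identify $\mcS_B$ as the minimal $\mcK_2$ among all shift-splits, then match it with $\mathcal{B}$ via Theorem \ref{ABdecomp}, and run a parallel argument for $\mathcal{A}$. The paper leaves the $\mathcal{A}$-side to the phrase ``a similar argument,'' and you have correctly supplied what that argument is---namely the dual of Proposition \ref{mctoss} establishing that $(\mcS_A,\mcE_{n-1,m}^1\ominus\mcS_A)$ is itself a shift-split---with the three conditions verified in the right way.
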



We can now prove the stratified characterization of
Bernstein-Szeg\H{o} measures.

\begin{proof}[Proof of Corollary \ref{corstrat}]
Suppose $\mcT$ is a positive linear form on $\mcL_{n,m}$ given by 
\[
\mcT(z^jw^k) = \int_{\T^2} z^jw^k
\frac{|dz||dw|}{(2\pi)^2|p(z,w)|^2}, \qquad |j|\leq n, |k| \leq m,
\]
where $p \in \C[z,w]$ has no zeros in $\T\times \cD$, degree at most
$(n,m)$ and $p(z,0)$ has $d$ zeros in $\D$.  We write $p(z,0) = a(z)
b(z)$ where $b$ has all zeros in $\D$ and $a$ has no zeros in $\cD$.
By Theorem \ref{BStoSS}, $\mcT$ possesses a shift-split
$(\mcK_1,\mcK_2)$ where $\mcK_1$ has dimension $d=\deg b(z)$.

Next, supposing $\mcT$ possesses a split-shift $(\mcK_1,\mcK_2)$ where
$\mcK_1$ has dimension $d$, by Theorems \ref{ABdecomp} and
\ref{BAformulas} we have $\mathcal{A} \subset \mcK_1 \subset
\mathcal{A}\oplus \mcK_0$. Therefore,
\begin{equation} \label{dimbounds}
\dim \mathcal{A} \leq d \leq n - \dim \mathcal{B}.
\end{equation}

Finally, if $\mcT$ satisfies the matrix condition and
\eqref{dimbounds}, then we see from Proposition \ref{allshiftsplits}
that it is possible to choose $\mcK_1$ with dimension $d$ and the
corresponding split-poly $p$ has the desired property that $p(z,0)$
has $d$ roots in $\D$.
\end{proof}

\section{Construction of $p$ from Fourier coefficients}  \label{constructp}
In this section, it is useful to write $z=(z_1,z_2)$ for an element of
$\C^2$ as opposed to $(z,w)$, so that we can use multi-index notation
$z^u = z_1^{u_1} z_2^{u_2}$.

Supposing the split-shift condition does hold, how do we construct $p$
directly from the Fourier coefficients
\[
\mcT(z^{-u}) = c_u \text{ ?}
\]
In principle, one could construct $(\mcK_1,\mcK_2)$ and then produce
$p$ as an element of $\mcE_{n,m} \ominus (\mcK_1\oplus z\mcK_2)$;
however, this is quite involved.  In this section we describe a
simpler procedure assuming we already know that the shift-split
condition holds. 

First, we construct an orthonormal basis for $\mcE_{n,m}^2$. It helps
to use interval notation for subsets of integers as in $[0,n] =
\{0,\dots, n\}$. Let $S_j = [0,n]\times[0,m] \setminus\{(0,0),\dots,
(0,j-1)\}$, $S_0=[0,n]\times [0,m]$.  Let
\[
(\gamma^{(j)}_{u,v})_{u,v \in S_j} = (c_{v-u})^{-1}_{u,v\in S_j}
\]
and define
\[
\phi_j(z) = \sum_{v \in S_j} \gamma^{(j)}_{(0,j),v} z^v/ \sqrt{\gamma^{(j)}_{(0,j),(0,j)}}.
\]
Then, $\phi_0,\phi_1,\dots, \phi_m$ form an orthonormal basis for
$\mcE_{n,m}^2$.  To see this let $u \in S_{j+1}$ and $\gamma =
\sqrt{\gamma^{(j)}_{(0,j),(0,j)}}$.  We compute
\[
\ip{\phi_j}{z^u} = \sum_{v \in S_j} \gamma^{(j)}_{(0,j),v}
c_{u-v}/\gamma = \delta_{(0,j),u}/\gamma = 0
\]
since $(0,j) \notin S_{j+1}$ and $S_{j+1} \subset S_j$.  For $k>j$,
$\phi_k$ is a combination of $z^u$ with $u \in S_k \subset S_{j+1}$
and therefore $\phi_k \perp \phi_j$ for $k>j$.  Also,
\[
\ip{\phi_j}{\phi_j} = \sum_{v,u \in S_j} \gamma^{(j)}_{(0,j),v}c_{u-v}
\bar{\gamma}^{(j)}_{(0,j),u}/\gamma^2  = \sum_{u\in S_j}
\delta_{(0,j),u}\bar{\gamma}^{(j)}_{(0,j),u}/\gamma^2 =
\gamma^{(j)}_{(0,j),(0,j)}/\gamma^2 = 1.
\]

 The reproducing kernel for $\mcE_{n,m}^2$ is therefore
 $E_m^2(z;\zeta) = \sum_{j=0}^{m} \phi_j(z)\overline{\phi_j(\zeta)}$.

We assume $p(z_1,z_2)=q(z_1)g(z_1,z_2)$ with $q$ stable and $g$ has no
factors with $z_1$ alone and then show how to construct $g$ and $q$
using only the moments $c_u$.  Theorem \ref{aglerdecomps} proves
\[
 z_1^n E^2_m(z;1/\bar{z}_1,0)= p(z)z_1^n\bar{p}(1/z_1,0) = q(z_1)g(z)
 z_1^n \bar{q}(1/z_1) \bar{g}(1/z_1,0).
\]
From this we can calculate $g$ up to a constant multiple.  The key
point is that the product of all factors of the above polynomial that
involve $z_1$ alone will be the greatest common divisor of the
coefficients of powers of $z_2$.

Let us write
\[
z_1^n E^2_m(z;1/\bar{z}_1,0) =  \sum_{j=0}^{m} E_j(z_1) z_2^j,
\]
and then compute $Q = \gcd \{E_0,E_1,\dots, E_m\}$ using the Euclidean
algorithm.  Then, $Q(z_1) = C q(z_1)z_1^n \bar{q}(1/z_1)
\bar{g}(1/z_1,0)$ for some constant $C$.  This gives
\[
z_1^n E^2_m(z;1/\bar{z}_1,0)/Q(z_1) = g(z)
\]
possibly with a constant.  At this stage we look at the one variable
moment problem
\[
c_j = \mcT(z_1^{-j}g(z) \bar{g}(1/z_1,1/z_2)) = \int_{\T} z_1^{-j}
\frac{|dz_1|}{2\pi|q(z_1)|^2}
\]
for $|j|\leq n_0:=n-\deg_z g$.  Set
\[
\gamma_{j,k} = (c_{k-j})^{-1}_{j,k \in [0,n_0]}
\]
and then we can construct 
\[
q(z_1) = \sum_{j=0}^{n_0} \gamma_{0,j} z_1^j/\sqrt{\gamma_{0,0}}
\]
(up to a unimodular multiple) by one variable theory.    

Hence, we have constructed $p$ as $p(z) = q(z_1)g(z)$.

\section{Applications} \label{applications}

\subsection{Autoregressive filters} \label{autoreg}
A direct application of the above work is to two variable
autoregressive models \cite{rosen}. 

We consider (wide sense) stationary processes $X=(X_{u})_{u\in\Zset^2}
$ depending on two discrete variables defined on a fixed probability
space $( \Omega , {\mathcal A} , P )$.  We shall assume that $X$ is a
{\em zero mean} process, i.e.\! the means $E (X_{u} )$ are equal to
zero. Recall that the space $L^2 ( \Omega , {\mathcal A} , P )$ of
square integrable random variables endowed with the inner product
 $$ \langle X , Y \rangle := E ( X Y^* ) $$
 is a Hilbert space. A sequence $X = ( X_{m} )_{m \in \Zset^2}$ is called
 a {\em stationary process}
 on $\Zset^2$ if for $m,n \in \Zset^2$
 we have that $$
 E ( X_{m} X^*_{n } ) = E( X_{m+u} X_{n+u}^* ) =:
 R_X(m-n )
 , \ \hbox{\rm for all} \ u \in \Zset^2 . $$
 It is known that the function $R_X$, termed the {\em covariance
 function} of $X$,
 defines a {\em positive semi-definite function} on $\Zset ^2$, i.e.
 $$ \sum_{i,j=1}^k \alpha_i \bar{\alpha}_j R_X(r_i - r_j) \ge 0,$$ 
for
 all $k \in {\Nset}$, $\alpha_1 ,\dots,\alpha_k \in \Cset,
 r_1,\dots,r_k \in \Zset^2$ and Bochner's Theorem states that for such
 a function $R_X$ there is a positive regular bounded measure $\mu_X$
 defined for Borel sets on the torus $[0,2 \pi ]^2$ such that
 $$ R_X(u) = \int e^{-i \langle u,t\rangle } d \mu_X (t), $$ for all
 two tuples of integers $u$.  The measure $\mu_X$ is referred to as
 the {\em spectral distribution measure}\index{Spectral distribution}
 of the process $X$. The {\em spectral density}\index{Spectral
   density} $f_X (t)$ of the process $X$ is the spectral density of
 the absolutely continuous part of $\mu_X$, i.e.\! the absolutely
 continuous part of $\mu_X$ equals
 $$ f_X(t_1, t_2) \frac{dt_1 dt_2}{(2 \pi)^2} . $$

  Let $\tilde H=\{(k,l):-\infty<k<\infty, l>0\} \cup \{(k,0),k>0\}$ and let
  $\Lambda_{n,m}=\{(k,l):0\leq k\leq n, 0\leq l\leq m \}\subset \tilde H\cup \{ (0,0)
  \}$ be a finite set.  A zero-mean stationary stochastic process $X=
  (X_{u})_{u\in \Zset^2}$ is said to be {\it extended autoregressive}
  or $\eAR(n,m)$, if there exist complex numbers $a_{k} , k \in
  \Lambda_{n,m}$ with $a_{(i,0)}\ne0\ {\rm for\ some}\ 0\le i\le n$,
  so that for every $u$
 \begin{equation}
 \label{areq}
 \mathop{\sum\limits_{k\in\Lambda_{n,m} }}
  a_{k}X_{u-k}=\mathcal{E}_{u}
 ,\qquad u\in\Zset^2,
 \end{equation}
 where $\{\mathcal{E}_u\ : u \in\Zset^2 \}$ is a white noise zero mean
 process with variance 1.  The $\eAR(n,m)$ process is said to be {\it
   acausal (in z)} if there is a solution to equations \eqref{areq} of
 the form
 $$ X_{u} = \mathop{\sum\limits_{k\in \tilde H\cup\{
     (0,0),(-1,0),\ldots\}}} \phi_{k}\mathcal{E}_{u-k}, u \in \Zset^2
 ,
 $$ with $\mathop{\sum\limits_{k\in \tilde H \cup\{ (0,0),(-1,0)\ldots
     \}}}| \phi_{k}|<\infty$ and it is said to be {\it causal } if
 there is a solution of the form
$$ X_{u} = \mathop{\sum\limits_{k\in \tilde H\cup\{ (0,0)\} }}
 \phi_{k}\mathcal{E}_{u-k}, u \in \Zset^2 ,
 $$ with $\mathop{\sum\limits_{k\in \tilde H \cup\{ (0,0)\}}}|
 \phi_{k}|<\infty.$ From the general theory of autoregressive models
 it follows that if \eqref{areq} has a causal (acausal (in $z$))
 solution then
\begin{equation}\label{auto}
p(z,w)= \sum_{v\in\Lambda_{n,m}} a_{v} z^{v_1} w^{v_2}.
\end{equation}
is stable on $\bar \D^2$ ($\T \times\cD$).


The bivariate extended autoregressive ($\eAR$) model problem concerns
 the following.
 Given autocorrelation elements
 $$c_{k}=E(X_{0} X_{k}^{*}),\ k\in\Lambda_{n,m}-\Lambda_{n,m}$$
 determine, if possible, the coefficients $a_l, l \in \Lambda_{n,m}$
 of an acausal autoregressive filter representation. In \cite{GW04}
 necessary and sufficient conditions were given for the
 autocorrelation coefficients in order for the $\eAR(n,m)$ to have a
 causal solution. Here we give necessary and sufficient conditions in
 order for an $\eAR(n,m)$ model to have an acausal solution.
 
If we begin with a polynomial that is nonzero for $(z,w)\in
\T\times\cD$ then choosing the autoregressive filter coefficients as
in equation \eqref{auto} give an $\eAR(n,m)$ model whose Fourier
coefficients give a linear form that is positive and satisfies
conditions in Theorem~\ref{mainthm}. Conversely, we can use the
conditions in Theorem~\ref{mainthm} to characterize the existence of
acausal (in $z$) solution.

\begin{theorem}
 \label{autor}
 Given autocorrelation elements
 $c_{k,l},\ (k,l)\in\Lambda_{n,m}-\Lambda_{n,m}$ there exists an
 acausal (in $z$) solution to the $\eAR(n,m)$ problem if and only if the
 linear form $\mcT$ determined by the Fourier coefficients $c_{k}$ is
 positive and satisfies one of the equivalent conditions of Theorem
 \ref{mainthm}
 \end{theorem}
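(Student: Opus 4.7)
The plan is to identify the $\eAR$ coefficients $a_v$ with the coefficients of a polynomial $p(z,w)=\sum_{v\in\Lambda_{n,m}}a_v z^{v_1}w^{v_2}$, and to use the fact (noted just before the theorem) that an acausal-in-$z$ solution exists precisely when $p$ is nonzero on $\T\times\cD$. Once this link is made, the theorem reduces almost directly to Theorem \ref{mainthm}.

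For the forward direction, suppose an acausal-in-$z$ solution $X$ exists. By the discussion preceding the theorem, the polynomial $p$ has no zeros on $\T\times\cD$ and degree at most $(n,m)$. Writing $X_u=\sum_k\phi_k\mathcal{E}_{u-k}$ and inserting into \eqref{areq} forces the convolution identity $a*\phi=\delta$, so $\hat{\phi}=1/p$ as a Fourier series on $\T^2$. The spectral density of $X$ is consequently $|p|^{-2}$, giving
\[
c_k=\int_{\T^2}z^{-k_1}w^{-k_2}\frac{|dz||dw|}{(2\pi)^2|p(z,w)|^2}
\]
for $k\in\Lambda_{n,m}-\Lambda_{n,m}$. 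Thus the linear form defined by $\mcT(z^{-u})=c_u$ on $\mcL_{n,m}$ is exactly the Bernstein-Szeg\H{o} form associated with $p$; it is positive since $\mcT(f\bar{f}(1/z,1/w))=\int_{\T^2}|f|^2|p|^{-2}d\sigma>0$ for $f\ne 0$, and by Theorem \ref{mainthm} it satisfies all three equivalent conditions.

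For the backward direction, suppose $\mcT$ is positive and satisfies one of the equivalent conditions. Theorem \ref{mainthm} supplies $p\in\C[z,w]$ of degree at most $(n,m)$ with no zeros on $\T\times\cD$ whose Bernstein-Szeg\H{o} moments match the $c_u$. Let $a_v$ be the coefficients of $p$ indexed by $v\in\Lambda_{n,m}$; we have $a_{(i,0)}\ne 0$ for some $0\le i\le n$ because $p(z,0)\not\equiv 0$ (otherwise $p$ would vanish on $\T\times\{0\}\subset\T\times\cD$). Since $p$ is nonvanishing on the compact set $\T\times\cD$, continuity of roots gives an open $V\supset\T$ and $\delta>0$ with $p\ne 0$ on $V\times\{|w|<1+\delta\}$, so $1/p$ admits an absolutely summable Fourier--Taylor expansion
\[
\frac{1}{p(z,w)}=\sum_{l\ge 0,\,k\in\Z}\phi_{k,l}\,z^k w^l
\]
whose support is exactly $\tilde H\cup\{(k,0):k\le 0\}$. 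With $\{\mathcal{E}_u\}$ a unit-variance white noise, define $X_u:=\sum_k\phi_k\mathcal{E}_{u-k}$. Then $a*\phi$ computes the Fourier coefficients of $p\cdot(1/p)=1$, so \eqref{areq} holds, and Parseval yields $E(X_uX_{u-m}^*)=\int_{\T^2}z^{-m_1}w^{-m_2}|p|^{-2}d\sigma=c_m$ for $m\in\Lambda_{n,m}-\Lambda_{n,m}$, so $X$ is the desired acausal-in-$z$ solution.

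All the serious work is packaged inside Theorem \ref{mainthm}. What remains is bookkeeping: the identification of the Fourier support of $1/p$ with the acausal-in-$z$ set $\tilde H\cup\{(k,0):k\le 0\}=\{(k,l)\in\Z^2:l\ge 0\}$, plus standard bivariate AR spectral theory. The only delicate point is the absolute summability of the Fourier--Taylor expansion of $1/p$, and this is exactly where one uses the precise hypothesis ``no zeros on $\T\times\cD$'' (rather than merely ``no zeros in $\T\times\D$'').
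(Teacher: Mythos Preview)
Your proof is correct and follows exactly the approach the paper outlines in the paragraph immediately preceding the theorem; the paper itself does not give a formal proof but only sketches that one direction follows from general AR theory (acausal solution $\Rightarrow$ $p$ nonzero on $\T\times\cD$, hence Bernstein--Szeg\H{o} moments) and the converse from Theorem~\ref{mainthm}. Your write-up simply fills in the details the paper leaves implicit---the spectral density computation, the absolute summability of the Laurent--Taylor expansion of $1/p$, and the verification that the constructed process reproduces the given autocorrelations.
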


\begin{corollary} 
With the hypotheses of the above Theorem there exists a casual
solution to the $\eAR(n,m)$ problem if and if the linear form $\mcT$
determined by the Fourier coefficients $c_k$ is positive and $A=0$.
\end{corollary}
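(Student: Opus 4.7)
The strategy is to combine Theorem~\ref{autor} (which handles the acausal case) with the stratified Corollary~\ref{corstrat} specialized to $d=0$, which is exactly the Geronimo-Woerdeman stable case. Recall that from the general autoregressive theory, an acausal solution exists when the polynomial $p(z,w)$ of \eqref{auto} is non-vanishing on $\T\times\cD$, while a causal solution requires the stronger condition that $p$ has no zeros on all of $\cD^2$. By the argument principle, if $p$ has no zeros on $\T\times\cD$, then the number of zeros of $p(\cdot,w)$ in $\D$ is constant for $w\in\cD$, so stability of $p$ on $\cD^2$ is equivalent to $p(z,0)$ having no zeros in $\D$, i.e. to $d=0$ in Corollary~\ref{corstrat}.

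Given this reduction, I would first prove the forward direction. If the $\eAR(n,m)$ problem has a causal solution, then Theorem~\ref{autor} already yields that $\mcT$ is positive and the matrix condition of Theorem~\ref{mainthm} holds. The causal hypothesis forces the associated $p$ to be stable on $\cD^2$, so by the observation above we are in the $d=0$ stratum. Applying Corollary~\ref{corstrat}(3) with $d=0$ gives $\dim\mathcal{A}\le 0$, hence $\mathcal{A}=\{0\}$. By the formula from Theorem~\ref{BAformulas},
\[
\mathcal{A}=\vee\{(T^*)^j A^* f : f\in w\mcE^2_{n,m-1},\ j=0,1,\dots,n-1\},
\]
so in particular (taking $j=0$) $A^*f=0$ for every $f\in w\mcE^2_{n,m-1}$, which gives $A^*=0$ and therefore $A=0$.

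For the converse, suppose $\mcT$ is positive and $A=0$. Then trivially $AT^jB=0$ for every $j$, so the matrix condition of Theorem~\ref{mainthm} is satisfied and $\mathcal{A}=\{0\}$, meaning $\dim\mathcal{A}=0$. The inequality $\dim\mathcal{A}\le 0\le n-\dim\mathcal{B}$ of Corollary~\ref{corstrat}(3) holds for $d=0$, so that corollary produces a polynomial $p\in\C[z,w]$ of degree at most $(n,m)$ with no zeros on $\T\times\cD$ and with $p(z,0)$ having no zeros in $\D$, giving the Bernstein-Szeg\H{o} representation of $\mcT$. Invoking the argument principle as above, $p$ is in fact stable on all of $\cD^2$, so the associated autoregressive filter of \eqref{auto} is causal.

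The only potential obstacle is verifying that the moment-matching polynomial $p$ produced by Corollary~\ref{corstrat} is actually the polynomial whose coefficients solve the $\eAR$ recurrence, and that the constant term $a_{(0,0)}\ne 0$ so that \eqref{areq} makes sense. Both are already handled by the discussion preceding Theorem~\ref{autor}, which identifies the $\eAR$ coefficients with those of $p$ and guarantees $p(0,0)\ne 0$ from stability of $p$ on $\cD^2$. Hence no new work is required here.
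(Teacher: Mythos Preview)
Your proof is correct and aligns with the paper's intended reasoning. The paper does not give an explicit proof of this corollary; it is left as an immediate consequence of the discussion following Corollary~\ref{corstrat}, where the case $d=0$ is identified with the Geronimo--Woerdeman stable case and it is observed that ``the matrix condition in this case implies $\mathcal{A}=\{0\}$ which implies $A=0$.'' Your argument spells out exactly this route---invoking Corollary~\ref{corstrat} at $d=0$, using Theorem~\ref{BAformulas} to pass between $\mathcal{A}=\{0\}$ and $A=0$, and the argument principle to translate between ``$p(z,0)$ has no zeros in $\D$'' and stability on $\cD^2$---so there is nothing to add.
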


\subsection{Full measure characterization} \label{fullmeasure}
We now identify which measures $d\mu$ are of the form
\[
\frac{1}{|p(z,w)|^2}d\sigma
\]
where $p \in \C[z,w]$ has no zeros on $\T\times \cD$.

Corollary \ref{cor:neccond} provides necessary conditions which can be
encoded as
\begin{equation} \label{neccond}
\mcE_{n,M}^2 = \mcE_{n+j,M}^2
\end{equation}
for all $j\geq 0$ and $M\geq m-1$.  By performing the reflection
operation, it follows that
\[
z^j\mcF_{n,M}^2 = \mcF_{n+j,M}^2
\]
for all $j \geq 0, M\geq m-1$.

It turns out that conditions \eqref{neccond} for $M=m-1,m$ are
sufficient to show that the moments $\int z^jw^k d\mu$ agree with the
moments of a Bernstein-Szeg\H{o} measure when $j\in \Z$ and $|k|\leq
m$ (i.e. on a strip).  It is then another issue to prove that the
Bernstein-Szeg\H{o} measure obtained with a particular $m$ agrees with
other choices.

Fix $m$ and define $A_N = P_{w\mcE_{N,m-1}^2} M_zP_{\mcE_{N-1,m}^1}$,
$T_N=P_{\mcE_{N-1,m}^1} M_z P_{\mcE_{N-1,m}^1}$, and $B_N =
P_{\mcE_{N-1,m}^1}P_{w\mcF_{N,m-1}^2}$.

\begin{lemma}
Assume \eqref{neccond} holds for $j\geq 0$ and for  $M=m-1,m$. Then,
$A_N T_N^k B_N = 0$ for $N\geq n$, $k\geq 0$.   
\end{lemma}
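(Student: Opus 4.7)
The plan is to induct on $N \geq n$, leveraging the orthogonal identity
\[
\mcE_{N,m}^1 \oplus wE_1 \;=\; z\mcE_{N-1,m}^1 \oplus E_2
\]
valid under the assumption for all $N \geq n$, where $E_i := \mcE_{n,m-2+i}^2$ for $i=1,2$. Both sides equal $\mcP_{N,m}\ominus zw\mcP_{N-1,m-1}$, and the assumption forces $\mcE_{N,m-1}^2 = E_1$ and $\mcE_{N,m}^2 = E_2$, so the identification is legitimate. A key consequence: for $g \in \mcE_{N-1,m}^1$, the decomposition $zg = (zg - A_N g) + A_N g$ splits orthogonally with the first summand in $\mcE_{N,m}^1$ and the second in $wE_1$, so $A_N g = 0$ is equivalent to $zg \in \mcE_{N,m}^1$. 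By reflecting the assumption, $\mcF_{n+j,M}^2 = z^j\mcF_{n,M}^2$ for $M \in \{m-1,m\}$ and $j\geq 0$; moreover $\mcF_{n+j,M}^2 \perp \mcE_{n,M}^2$ for $j\geq 1$, since $\mcF_{n+j,M}^2 \perp \mcP_{n+j-1,M} \supset \mcE_{n,M}^2$. Hence $w\mcF_{n+j,m-1}^2 \perp wE_1$ for $j \geq 1$ and $\mcF_{n+j,m}^2 \perp \mcP_{n,m} \supset wE_1$.

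For the base case $N=n$, take $f \in w\mcF_{n,m-1}^2$. Since $f\perp w\mcP_{n-1,m-1}$, we have $f \in \mcE_{n-1,m}^1 \oplus \mcF_{n,m}^1$, so $f = B_n f + r$ with $r = P_{\mcF_{n,m}^1}(f)$. Then $zf \in w\mcF_{n+1,m-1}^2 \perp wE_1$ gives $P_{wE_1}(zf)=0$, hence $A_n B_n f = -P_{wE_1}(zr)$. Reflecting the key identity at level $n$ yields $\mcF_{n,m}^1 \oplus \mcF_{n,m-1}^2 = \mcF_{n-1,m}^1 \oplus \mcF_{n,m}^2$; decomposing $r = r_1 + r_2$ with $r_1 \in \mcF_{n-1,m}^1$ and $r_2 \in \mcF_{n,m}^2$, the preliminary gives $zr_2 \in \mcF_{n+1,m}^2 \perp wE_1$, so $P_{wE_1}(zr_2) = 0$. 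The remaining vanishing $P_{wE_1}(zr_1) = 0$ uses the coupling between $r_1$ and $r_2$ imposed by $r \in \mcF_{n,m}^1$, together with the assumption at $M=m-1$ controlling $wE_1$ against $z$-shifts of $\mcF_{n-1,m}^1$. The higher iterates $A_n T_n^k B_n = 0$ are then obtained by iterating the recursion $zg = T_n g + A_n g + R_n g$ (with $R_n g := P_{\mcF_{n,m}^1}(zg)$) for $g \in \mcE_{n-1,m}^1$.

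For the inductive step, assume $A_N T_N^k B_N = 0$ for all $k\geq 0$. By Proposition \ref{mctoss}, this yields a shift-split at level $(N,m)$ with $\mcK_2 := \vee_{k\geq 0}\{T_N^k B_N f : f \in w\mcF_{N,m-1}^2\}$ satisfying $z\mcK_2 \subset \mcE_{N,m}^1$. For $f \in w\mcF_{N+1,m-1}^2 = z\cdot w\mcF_{N,m-1}^2$, write $f = zf'$ and express $B_{N+1} f$ and its $T_{N+1}$-iterates as $z$-shifts of $T_N^j B_N f'$ (which lie in $\mcE_{N,m}^1$ by the induction hypothesis) plus controlled corrections from $\mcF$-pieces; the key identity at level $N+1$ and the induction hypothesis then combine to give $A_{N+1} T_{N+1}^k B_{N+1} = 0$. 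The main obstacle is the base case, specifically establishing $P_{wE_1}(zr_1) = 0$, where both halves of the assumption (at $M=m-1$ and $M=m$) are simultaneously required and the reflected key identities must be applied with care.
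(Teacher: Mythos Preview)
Your proposal is not a proof but an outline with the essential steps left unproved. You explicitly identify ``the main obstacle'' as establishing $P_{wE_1}(zr_1)=0$ in the base case, and then offer only the sentence ``uses the coupling between $r_1$ and $r_2$ \ldots together with the assumption at $M=m-1$'' in place of an argument. The same applies to the higher iterates $A_nT_n^kB_n$ and to the entire inductive step, where you say the pieces ``combine to give'' the conclusion without showing how. A referee cannot fill these in, and neither can I from what you have written. There is also a recurring slip: the orthogonal complement of $w\mcP_{n-1,m-1}$ inside $\mcP_{n,m}$ is $\mcE_{n-1,m}^1\oplus\mcF_{n,m}^2$, not $\mcE_{n-1,m}^1\oplus\mcF_{n,m}^1$; your decompositions $f=B_nf+r$ and $zg=T_ng+A_ng+R_ng$ should land in $\mcF^2$ spaces, not $\mcF^1$ spaces.

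More importantly, the induction on $N$ is working against you. The paper fixes $N\geq n$ and $k$ and argues directly. The key observation is that on the enlarged space $\mcP_{N+k,m}\ominus w\mcP_{N-1,m-1}=\mcE_{N-1,m}^1\oplus\bigoplus_{j=0}^k\mcF_{N+j,m}^2$ one has $P_{\mcE_{N-1,m}^1}M_zP_{\mcF_{N+j,m}^2}=0$ (since $z\mcF_{N+j,m}^2=\mcF_{N+j+1,m}^2$ under the hypothesis), which lets one replace $T_N$ by $P_{\mcE_{N-1,m}^1}M_zP$ with $P$ the projection onto the whole enlarged space. Tracking $B_Nf$ for $f\in w\mcF_{N,m-1}^2$ through $k$ applications of $PM_zP$ then shows the result lands in $w\mcF_{N+k,m-1}^2$, and a final $M_z$ sends this to $w\mcF_{N+k+1,m-1}^2\perp w\mcE_{N,m-1}^2$. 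This is a two-paragraph computation with no induction on $N$, no base case, and no appeal to Proposition~\ref{mctoss}. Your strategy of bootstrapping from the split-shift structure at level $N$ to level $N+1$ could perhaps be made to work, but it is far more intricate than necessary and you have not carried it through.
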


\begin{proof}
Let $P=P_1+P_2$ be the projection onto the space
\[
\mcP_{N+k,m} \ominus w\mcP_{N-1,m-1} = \mcE_{N-1,m}^{1}
\bigoplus_{j=0}^{k} \mcF_{N+j,m}^2 
\]
where $P_1,P_2$ are the projections onto $\mcE_{N-1,m}^{1},
\bigoplus_{j=0}^{k} \mcF_{N+j,m}^2$ respectively.  

Noting that $z\bigoplus_{j=0}^{k} \mcF_{N+j,m}^2 =
\bigoplus_{j=0}^{k}\mcF_{N+j+1,m}^2$ by \eqref{neccond}, we have
$P_1M_zP_2 = 0$.  Then, $T_N = P_1 M_z P_1 = P_1 M_z (P_1+P_2) =
P_1M_z P$.  Therefore, for $k\geq 1$ we have
\[
T_N^k = P_1 M_z (P M_z P)^{k-1}P.
\]
Similarly, $A_N = P_{w\mcE_{N,m-1}^2} M_z P_1 = P_{w\mcE_{N,m-1}^2} M_z
P$, so that
\[
A_N T_N^k = P_{w\mcE_{N,m-1}^2} M_z (PM_zP)^kP.
\]
Next, $P P_{w\mcF_{N,m-1}^2} = P_{w\mcF_{N,m-1}^2}$ while
\[
PM_zP_{w\mcF_{N+j,m-1}^2} = P_{w\mcF_{N+j+1,m}^2} M_z P_{w\mcF_{N+j,m-1}^2}
\]
 since $zw\mcF_{N+j,m-1}^2 = w\mcF_{N+j+1,m-1}^2$ by \eqref{neccond} so
 that inductively we have
\[
A_N T_N^k B_N = P_{w\mcE_{N,m-1}^2} M_z P_{w\mcF_{N+k,m-1}^2} \prod_{j=0}^{k-1} \left(M_z P_{w\mcF_{N+j,m-1}^2}\right)
\]
where the product is multiplied from right to left as $j$ goes from
$0$ to $k-1$ (if $k=0$, the product is $I$).  But, $P_{w\mcE_{N,m-1}^2}
M_z P_{w\mcF_{N+k,m-1}^2} = 0$ as $zw\mcF_{N+k,m-1}^2 =
w\mcF_{N+k+1,m-1}^2 \perp w\mcE_{N,m-1}^2$.  
\end{proof}

Therefore, assuming \eqref{neccond} for $M=m,m-1$ and $j\geq 0$, the
matrix condition holds for the positive linear form on
$\mcL_{N,m}$ for $N\geq n$. So for each $N$, there is a $p_N\in
\C[z,w]$ of degree at most $(N,m)$ with no zeros on $\T\times \cD$
such that the Bernstein-Szeg\H{o} measure for $p_N$ matches the
moments of $d\mu$ on $\mcL_{N,m}$.  We can further assume that each
$p_N$ has been normalized so that $p_N(z,w) = q_N(z) g_N(z,w)$ where
$q_N$ is stable in $z$ and $g_N$ has no factors involving $z$ alone.
By Theorem \ref{aglerdecomps} and \eqref{neccond}, if we set $z=\zeta
\in \T$, $w \in \C, \eta = 0$, we get
\[
p_n(z,w)\overline{p_n(z,0)} = p_{n+j}(z,w)\overline{p_{n+j}(z,0)}
\]
for $j\geq 0$.  This implies $g_n = g_{n+j}$ for each $j\geq 0$ (after
absorbing constants into $q$'s if necessary) and then by stability of
each $q_N$, $q_n = q_{n+j}$ for each $j\geq 0$.  Therefore, the
moments of $d\mu$ on the strip $\{z^jw^k: j \in \Z, |k|\leq m\}$ are
matched by those of $1/|p_n|^2 d\sigma$.

\begin{theorem}
Let $d\mu$ be a positive Borel measure.  
If
\[
\mcE_{n,m}^2 = \mcE_{n+j,m}^2 \quad \mcE_{n,m-1}^2 = \mcE_{n+j,m-1}^2
\]
for $j\geq 0$, then there exists $p\in \C[z,w]$ of degree at most
$(n,m)$ with no zeros on $\T\times \cD$ such that
\[
\int z^j w^k d\mu = \int \frac{z^jw^k}{|p(z,w)|^2} d\sigma
\]
for $j\in \Z$ and $|k|\leq m$.
\end{theorem}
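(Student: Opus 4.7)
The plan is to combine the preceding lemma with the main equivalence theorem and then use the sum-of-squares formula from Theorem \ref{aglerdecomps} to force compatibility of the Bernstein-Szeg\H{o} polynomials obtained at different truncation levels.

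First, I would observe that the hypothesis $\mcE^2_{n,M} = \mcE^2_{n+j,M}$ for $M = m-1,m$ and $j \geq 0$ implies by the lemma preceding the theorem that $A_N T_N^k B_N = 0$ for all $N \geq n$ and $k \geq 0$. Thus, for each $N\geq n$, the positive linear form $\mcT_N$ on $\mcL_{N,m}$ obtained by restricting the moments of $d\mu$ satisfies the matrix condition of Theorem \ref{mainthm}. Applying the main theorem at each level $N$ yields a polynomial $p_N \in \C[z,w]$ of degree at most $(N,m)$ with no zeros on $\T\times \cD$ such that $1/|p_N|^2 d\sigma$ matches the moments of $d\mu$ on $\mcL_{N,m}$.

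Next, using the factorization lemma from Section \ref{shiftsplits}, I would normalize each $p_N = q_N(z) g_N(z,w)$, with $q_N \in \C[z]$ stable and $g_N$ having no irreducible factors depending on $z$ alone. The key step is then to compare $p_n$ with $p_{n+j}$ for $j \geq 0$. Since $\mcT_n$ is the restriction of $\mcT_{n+j}$, the reproducing kernels for the common subspace $\mcE^2_{n,m} = \mcE^2_{n+j,m}$ agree, and the second set of formulas in Theorem \ref{aglerdecomps}, specialized to $\zeta = z \in \T$, $w \in \C$, $\eta = 0$, yields
\[
p_n(z,w) \overline{p_n(z,0)} = p_{n+j}(z,w) \overline{p_{n+j}(z,0)}.
\]
Since the factors $g_N$ have no pieces depending on $z$ alone, comparing both sides as polynomials in $w$ forces $g_n$ and $g_{n+j}$ to be constant multiples of one another (absorbing the constant into the $q$'s), and then stability of each $q_N$ together with the equality $|q_n|^2 = |q_{n+j}|^2$ on $\T$ forces $q_n = q_{n+j}$.

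Consequently, $p_{n+j} = p_n$ for every $j \geq 0$. Since $p_{n+j}$ computes moments of $d\mu$ throughout $\mcL_{n+j,m}$ and these stabilize at $p_n$ of degree at most $(n,m)$, we conclude that
\[
\int z^j w^k \, d\mu = \int \frac{z^j w^k}{|p_n(z,w)|^2} d\sigma
\]
for all $j \in \Z$ and $|k| \leq m$. The main obstacle I expect is the normalization/uniqueness step that turns the identity of kernel-level data into genuine equality of the polynomials $p_N$; this requires the careful factorization lemma and the fact that stability of $q_N$ on the circle determines $q_N$ from $|q_N|^2$ up to a unimodular constant, which is where Section \ref{shiftsplits} pays off.
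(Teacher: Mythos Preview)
Your proposal is correct and follows essentially the same route as the paper: apply the preceding lemma to verify the matrix condition for each $\mcT_N$ with $N\ge n$, invoke Theorem \ref{mainthm} to produce $p_N$, normalize as $p_N=q_N g_N$, and then use Theorem \ref{aglerdecomps} at $\zeta=z\in\T$, $\eta=0$ together with the hypothesis $\mcE^2_{n,m}=\mcE^2_{n+j,m}$ to obtain $p_n(z,w)\overline{p_n(z,0)}=p_{n+j}(z,w)\overline{p_{n+j}(z,0)}$, from which $g_n=g_{n+j}$ and (by stability) $q_n=q_{n+j}$ follow. This is precisely the paper's argument, and your identification of the normalization/uniqueness step as the only delicate point matches where the paper leans on the factorization lemma of Section \ref{shiftsplits}.
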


To get the full measure characterization, we note that for a
Bernstein-Szeg\H{o} measure (and recalling $G_{\eta}$ from Section
\ref{basic})
\[
G_0(z,w) =  p(z,w) z^n\bar{p}(1/z,0)
\]
is an element of the one dimensional space $\mcH_m$ defined in \eqref{HM},
but by all of the orthogonality relations for Bernstein-Szeg\H{o}
measures it is also in $\mcH_{M}$ for $M\geq m$.
Therefore, a set of necessary conditions is 
\[
\mcH_m = \mcH_{m+j} \text{ for } j \geq 0.
\]

\begin{theorem}
Let $d\mu$ be a positive Borel measure on $\T^2$ satisfying
\[
\mcE_{n,M}^2 = \mcE_{n+j,M}^2 
\text{ and } \mcH_{m} = \mcH_{m+j}
\]
for $M\geq m-1$ and $j\geq 0$.  Then, there exists $p\in \C[z,w]$ of
degree at most $(n,m)$ with no zeros on $\T\times \cD$ such that
\[
d\mu = \frac{d\sigma}{|p(z,w)|^2}
\]
\end{theorem}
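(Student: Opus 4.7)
The plan is to apply the preceding theorem at each degree $M \geq m$ to obtain polynomials $p_M$ matching $d\mu$ on progressively wider strips, then use the hypothesis $\mcH_m = \mcH_{m+j}$ to force the $w$-degrees of these polynomials back down to $m$, and finally invoke a uniqueness argument to conclude that the resulting Bernstein-Szeg\H{o} measures coincide with $d\sigma/|p|^2$. The main obstacle is the $w$-degree reduction, since it requires carefully tracking which moments of $d\mu$ determine which subspaces.

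At $M = m$, the preceding theorem yields $p \in \C[z, w]$ of degree at most $(n, m)$ with no zeros on $\T \times \cD$ whose Bernstein-Szeg\H{o} moments agree with those of $d\mu$ on $\{(j, k) \in \Z^2 : |k| \leq m\}$. For general $M \geq m$, the hypotheses $\mcE_{n, M'}^2 = \mcE_{n+j, M'}^2$ at $M' = M, M-1$ (both $\geq m - 1$) allow the same theorem, with $m$ replaced by $M$, to produce $p_M \in \C[z, w]$ of degree at most $(n, M)$ with no zeros on $\T \times \cD$ and matching moments on the wider strip $|k| \leq M$.

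To trim the $w$-degree of $p_M$, observe that $\mcH_M \subset \mcP_{2n, M}$ is determined entirely by the inner products among elements of $\mcP_{2n, M}$, which only involve moments within the strip $|k| \leq M$; since $d\mu$ and $d\sigma/|p_M|^2$ agree on that strip, $\mcH_M$ is the same subspace of polynomials whether computed in $L^2(d\mu)$ or $L^2(d\sigma/|p_M|^2)$. Under $d\mu$ the hypothesis yields $\mcH_M = \mcH_m \subset \mcP_{2n, m}$, so every element of $\mcH_M$ has $w$-degree at most $m$. Under $d\sigma/|p_M|^2$ the distinguished nonzero basis element of $\mcH_M$ is, up to a scalar, the polynomial $p_M(z, w)\, z^n\, \bar p_M(1/z, 0)$ (the $\eta = 0$ member of the $J_\eta$ family from Section \ref{basic}, with $p$ replaced by $p_M$), whose $w$-degree equals that of $p_M$. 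Comparing the two descriptions forces $p_M$ to have $w$-degree at most $m$.

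Hence $p$ and $p_M$ both have degree at most $(n, m)$ with no zeros on $\T \times \cD$, and the measures $d\sigma/|p|^2$ and $d\sigma/|p_M|^2$ both agree with $d\mu$ on the strip $|k| \leq m$. Setting $t := |p|^2 - |p_M|^2 \in \mcL_{n, m}$, the matching yields
\[
\int z^j w^k \frac{t}{|p|^2 |p_M|^2}\, d\sigma = 0 \quad \text{for all } |j| \leq n,\ |k| \leq m,
\]
so $t \perp \mcL_{n, m}$ in $L^2(d\sigma/(|p|^2 |p_M|^2))$. Since $t$ itself lies in $\mcL_{n, m}$, we conclude $t = 0$, hence $d\sigma/|p|^2 = d\sigma/|p_M|^2$, and so $d\mu$ agrees with $d\sigma/|p|^2$ on the strip $|k| \leq M$. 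Since $M \geq m$ was arbitrary, the two measures have identical Fourier coefficients, whence $d\mu = d\sigma/|p|^2$.
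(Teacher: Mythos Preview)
Your proof is correct and follows the same overall architecture as the paper's: produce $p_M$ on wider strips via the preceding theorem, then use the $\mcH_m=\mcH_M$ hypothesis together with the fact that $G_0=p_M(z,w)z^n\bar p_M(1/z,0)$ spans $\mcH_M$ to pin things down. The difference is in the endgame. The paper normalizes $p_M=q_M(z)g_M(z,w)$ with $q_M$ stable and $g_M$ having no $z$-only factors, reads off $p_m z^n\bar p_m(1/z,0)=C\,p_M z^n\bar p_M(1/z,0)$ from $\mcH_m=\mcH_M$, and then argues that $g_m,g_M$ (hence $q_m,q_M$, by stability) are constant multiples of one another, so $|p_m|^2=|p_M|^2$ directly. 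You instead extract only the degree information $\deg_w p_M\le m$ from $\mcH_M\subset\mcP_{2n,m}$, and then run a clean orthogonality argument: $t=|p|^2-|p_M|^2\in\mcL_{n,m}$ is orthogonal to $\mcL_{n,m}$ in $L^2(d\sigma/(|p|^2|p_M|^2))$, hence vanishes. Your route avoids the $q\cdot g$ factorization lemma entirely at the cost of an extra (but elementary) uniqueness step; the paper's route is shorter once that factorization is in hand and gives the stronger conclusion that $p_M$ is a unimodular multiple of $p_m$, not just $|p_M|=|p_m|$.
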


\begin{proof}
The conditions on $\mcE_{\cdot,\cdot}^2$ imply that for each $M\geq
m$, there exists $p_M \in \C[z,w]$ of degree at most $(n,M)$ with no
zeros on $\T\times \cD$ such that the moments of $d\sigma/|p_M|^2$ match
those of $d\mu$ on the strip $\{z^jw^k: j\in \Z, |k|\leq M\}$.  We
normalize $p_M(z,w) = q_M(z)g_M(z,w)$ where $q_M$ is stable and $g_M$
has no factors involving $z$ alone.

Using the assumption $\mcH_m = \mcH_M$ for $m\geq M$, it follows that
for each $m\geq M$
\[
p_m(z,w)z^n\bar{p}_m(1/z,0) = C p_M(z,w)z^n\bar{p}_M(1/z,0)
\]
for some constant $C$.  We then must have that $g_m$ and $g_M$ are
constant multiples and then since $q_m, q_M$ are stable, they too must
be constant multiples of one another.  Therefore, $p_m$ and $p_M$ must
be constant multiples.  The constant must be unimodular since $p_m$
and $p_M$ have unit norm.  Therefore, the measures $1/|p_m|^2 d\sigma
= 1/|p_M|^2d\sigma$ match all of the moments of $d\mu$.  Hence,
$1/|p_m|^2 d\sigma = d\mu$.
\end{proof}

\subsection{Concrete expression for the full measure characterization} \label{concretefullmeasure}
The conditions
\[
\mcE_{n,M}^2 = \mcE_{n+j,M}^2 
\text{ and } \mcH_{m} = \mcH_{m+j}
\]
for $M\geq m-1$ and $j\geq 0$ given above can be written directly in
terms of the Fourier coefficients of $\mu$ 
\[
c_u = \int z^{-u} d\mu \qquad u=(u_1,u_2) \in \Z^2
\]
as follows.  Similar to Section \ref{constructp} it is useful to write
$z=(z_1,z_2)$ for an element of $\C^2$ as opposed to $(z,w)$ and we
will use multi-index notation $z^u = z_1^{u_1}z_2^{u_2}.$ Also, $[0,N]
= \{0,1,\dots,N\}$; there should be no confusing this with a closed
interval of real numbers.

Let 
\[
(\gamma^{N,M}_{u,v})_{u,v \in [0,N]\times[0,M]} = (c_{v-u})^{-1}_{u,v
  \in [0,N]\times[0,M]}.
\]
A basis for $\mcE_{N,M}^2$ consists of
\[
f^{N,M}_{j}(z) = \sum_{v \in [0,N]\times[0,M]} \gamma^{N,M}_{(0,j),v} z^v
\]
for $j=0,1,\dots, M$.  In order for $\mcE_{N+1,M}^2 = \mcE_{N,M}^2$ to
hold we need the coefficients of $z_1^{N+1}z_2^k$ for $k=0,1,\dots, M$
to vanish in $\mcE_{N+1,M}^2$.  Looking at $f^{N+1,M}_{j}$ this amounts
to
\[
\gamma^{N+1,M}_{(0,j),(N+1,k)} = 0
\]
for $j,k=0,1,\dots, M$.

Therefore, the conditions $\mcE_{n,M}^2 = \mcE_{n+j,M}^2$ for $j\geq 0$
and $M\geq m-1$ can be expressed as
\[
\gamma^{N+1,M}_{(0,j),(N+1,k)} = 0
\]
for $N\geq n$, $M\geq m-1$, $j,k = 0,1,\dots, M$.

Next we turn to the conditions 
\[
\mcH_{m} = \mcH_{m+j} \text{ for } j \geq 0.
\]
Recall
\[
\mcH_{M} = \mcP_{2n,M} \ominus \vee\{z_1^jz_2^k: 0\leq j\leq 2n, 0\leq k
\leq M, (j,k) \ne (n,0)\}
\]
so that a nonzero element of the one dimensional space $\mcH_M$ is
given by
\[
g_M(z) = \sum_{u \in [0,2n]\times [0,M]} \xi^{M}_{(n,0),u} z^u
\]
where we define
\[
\xi^{M}_{u,v} = (c_{v-u})_{u,v\in [0,2n]\times[0,M]}^{-1}.
\]

The condition $\mcH_{M} = \mcH_{M-1}$ can then be expressed as
\[
\xi^{M}_{(n,0),(j,M)} = 0 \qquad j=0,1,\dots, 2n.
\]

Let us summarize everything.

\begin{theorem}
Let $\mu$ be a positive, finite measure on $\T^2$ with moments $c_u$
for $u\in \Z^2$.  There exists a polynomial $p\in \C[z,w]$ of degree at
most $(n,m)$ with no zeros on $\T\times \cD$ such that
\[
d\mu = \frac{1}{|p(z,w)|^2} d\sigma
\]
if and only if
\begin{enumerate}
\item for all $N,M\geq 0$,
\[
\det(c_{v-u})_{u,v \in [0,N]\times[0,M]} \ne 0
\]

\item for $N\geq n$, $M\geq m-1$, $j,k = 0,1,\dots, M$
\[
\gamma^{N+1,M}_{(0,j),(N+1,k)} = 0
\]
and 
\item for $M\geq m$, $j=0,1,\dots, 2n$
\[
\xi^{M}_{(n,0),(j,M)} = 0 
\]
\end{enumerate}
where
\[
(\gamma^{N,M}_{u,v})_{u,v \in [0,N]\times[0,M]} = (c_{v-u})^{-1}_{u,v
  \in [0,N]\times[0,M]}
\]
\[
\xi^{M}_{u,v} = (c_{v-u})_{u,v\in [0,2n]\times[0,M]}^{-1}.
\]
\end{theorem}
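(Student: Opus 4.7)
The plan is to reduce this concrete statement to the abstract subspace characterization of the previous theorem by translating each subspace equality into an explicit equation on entries of the inverse moment matrices. Condition (1) is the non-degeneracy hypothesis that lets us form the finite-dimensional Hilbert spaces $\mcH_{\mcT_{N,M}}$ of polynomials in $\mcP_{N,M}$ with the inner product induced by $\mcT(z^{u-v})=c_{v-u}$; since $\mu$ is non-degenerate, all the Gram determinants $\det(c_{v-u})$ are nonzero, and conversely non-vanishing of these determinants is equivalent to non-degeneracy on polynomials, which in turn allows us to invoke the previous theorem.

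First I would verify that the polynomials
\[
f^{N,M}_{j}(z)=\sum_{v\in[0,N]\times[0,M]}\gamma^{N,M}_{(0,j),v}\,z^{v},\qquad j=0,1,\dots,M,
\]
form a basis of $\mcE^2_{N,M}$. The standard computation (modeled exactly on the orthonormal basis construction in Section \ref{constructp}) shows $\langle f^{N,M}_j, z^u\rangle = \delta_{(0,j),u}$ for $u\in[0,N]\times[0,M]$, so the $f^{N,M}_j$ span the orthogonal complement of $z\mcP_{N-1,M}=\vee\{z^u:u\in[0,N]\times[0,M],\,u_1\geq 1\}$ in $\mcP_{N,M}$, which is $\mcE^2_{N,M}$. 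Then the equality $\mcE^2_{N,M}=\mcE^2_{N+1,M}$ is equivalent to the statement that each $f^{N+1,M}_j$, for $j=0,\dots,M$, has vanishing coefficient on every monomial $z_1^{N+1}z_2^k$, $k=0,\dots,M$; this is precisely condition (2) with $j$ replaced by any single step. Iterating in $N$ (and noting that $M\ge m-1$ is free) covers all $j\ge 0$, so condition (2) is equivalent to $\mcE^2_{n,M}=\mcE^2_{n+j,M}$ for all $M\ge m-1$ and $j\ge 0$.

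Next I would do the analogous translation for the one-dimensional spaces $\mcH_M$. By the same inverse-Gram-matrix computation, a nonzero element of $\mcH_M\subset\mcP_{2n,M}$ is
\[
g_M(z)=\sum_{u\in[0,2n]\times[0,M]}\xi^{M}_{(n,0),u}\,z^{u},
\]
characterized by $\langle g_M,z^u\rangle=\delta_{(n,0),u}$ for all $u\in[0,2n]\times[0,M]$. Since $\mcH_{M-1}\subset\mcH_M$ already (every functional killing all $z^u$ with $(u_1,u_2)\ne(n,0)$ on the larger index set also does so on the smaller), the equality $\mcH_M=\mcH_{M-1}$ is equivalent to $g_M$ having no $z_2^M$ terms, i.e. $\xi^M_{(n,0),(j,M)}=0$ for $j=0,\dots,2n$. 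This is condition (3), and iterating in $M$ gives all $\mcH_m=\mcH_{m+j}$ for $j\ge 0$.

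With these translations in hand, the equivalence of the three moment conditions with the hypotheses of the previous theorem is immediate, and the previous theorem supplies the polynomial $p$. The main subtlety, and the only real obstacle, is justifying that the subspaces $\mcE^2_{N,M}$ and $\mcH_M$ used above are well defined in the $L^2(d\mu)$ sense from the beginning — this is where non-degeneracy condition (1) is essential, as it guarantees the Gram matrices $(c_{v-u})$ are invertible so that the bases $f^{N,M}_j$ and $g_M$ actually exist as elements of the Hilbert space $\mcH_{\mcT_{N,M}}$. Conversely, if $d\mu=1/|p|^2\,d\sigma$ with $p$ as in the statement, then $|p|$ is bounded above and below on $\T^2$, so $\mu$ is equivalent to $\sigma$ and is non-degenerate on polynomials, which gives condition (1); the necessity of (2) and (3) then follows from Corollary \ref{cor:neccond} and the observation (made just before the theorem) that $G_0(z,w)=p(z,w)z^n\bar p(1/z,0)$ lies in $\mcH_M$ for every $M\ge m$.
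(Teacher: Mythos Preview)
Your proposal is correct and follows essentially the same route as the paper: translate the abstract subspace conditions $\mcE^2_{n,M}=\mcE^2_{n+j,M}$ and $\mcH_m=\mcH_{m+j}$ from the preceding theorem into explicit vanishing conditions on entries of the inverse moment matrices via the basis elements $f^{N,M}_j$ and $g_M$, exactly as in Section~\ref{concretefullmeasure}. One small slip: the inclusion $\mcH_{M-1}\subset\mcH_M$ you assert is \emph{not} automatic (an element of $\mcH_{M-1}$ need not be orthogonal to the monomials $z_1^jz_2^M$, and your parenthetical actually argues in the wrong direction), but this is harmless since both spaces are one-dimensional and the equivalence you need follows directly from checking whether $g_M\in\mcP_{2n,M-1}$.
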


\section{Generalized distinguished varieties}\label{GDV}

\subsection{Construction of the sums of squares
  formula} \label{sec:kummert} Here we use Kummert's approach as in
\cite{aK89} to give a different proof of the sums of squares formula
Theorem \ref{sosthm}.  One advantage of this approach is that it works
for $p$ with no zeros on $\T\times \D$ and no factors in common with
$\rev{p}$ (rather than assuming no zeros on $\T\times \cD$).  This
approach is also useful because it shows how to compute the
reproducing kernels in the decomposition of $p$ using only one
variable theory.

\begin{theorem} \label{gensosthm}
Suppose $p \in \C[z,w]$ has degree $(n,m)$, no zeros on $\T\times \D$
and no factors in common with $\rev{p}$.  Let $n_2$ be the number of
zeros of $p(z,0)$ in $\D$ and $n_1 = n-n_2$. Then, there exist vector
polynomials $E \in \C^m[z,w], A \in \C^{n_1}[z,w], B \in
\C^{n_2}[z,w]$ such that
\begin{itemize}
\item 
\[
|p(z,w)|^2 - |\rev{p}(z,w)|^2 = (1-|w|^2)|E(z,w)|^2 +
(1-|z|^2)(|A(z,w)|^2 - |B(z,w)|^2).
\]
\item $E$ has degree at most $(n,m-1)$ and $A$ and
$B$ have degree at most $(n-1,m)$, and
\item the entries of $A$ and $B$ form a linearly independent set of
  polynomials.
\end{itemize}
\end{theorem}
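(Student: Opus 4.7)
The plan is to mimic Kummert's lurking-isometry argument \cite{aK89}, modified to produce a $J$-unitary (rather than unitary) colligation whose signature matrix $J$ has $n_2$ negative eigenvalues reflecting the $n_2$ zeros of $p(z,0)$ inside $\D$. The argument splits naturally into two halves: a one-variable Christoffel--Darboux step in $w$ producing the $(1-|w|^2)|E|^2$ term, and a global $J$-unitary extension in $z$ producing the signed term $(1-|z|^2)(|A|^2-|B|^2)$.

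First I would write $p(z,w)=\sum_{j=0}^{m} p_j(z) w^j$ and apply the classical one-variable Christoffel--Darboux formula to $p(z,\cdot)$ for each fixed $z\in\T$. Because $p$ has no zeros on $\T\times\D$, for almost every $z\in\T$ the slice $p(z,\cdot)$ has no zeros in $\cD$, so the $m\times m$ Gram matrix $M(z)$ of this kernel in the $w$-monomial basis is hermitian, polynomial in $(z,\bar z)$, and positive semidefinite on $\T$. Matrix-valued one-variable Fej\'er--Riesz then yields a matrix polynomial $F(z)$ of degree at most $n$ with $M(z)=F(z)F(z)^{*}$ on $\T$, so that
\[
E(z,w) := (1,w,\dots,w^{m-1})F(z)
\]
is a vector polynomial of bidegree at most $(n,m-1)$ satisfying $|p(z,w)|^{2}-|\rev{p}(z,w)|^{2}=(1-|w|^{2})|E(z,w)|^{2}$ on $\T\times\C$ (using that, on $\T\times\C$, $|\rev{p}|$ coincides with the modulus of the $w$-reflection of $p(z,\cdot)$ at degree $m$).

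Both sides of this identity are polynomials in $(z,\bar z,w,\bar w)$ agreeing on $\T\times\C$, so their difference is divisible by $1-z\bar z$, giving a hermitian polynomial $Q$ with
\[
|p|^{2}-|\rev{p}|^{2}-(1-|w|^{2})|E|^{2} \;=\; (1-|z|^{2})\,Q(z,\bar z,w,\bar w).
\]
The task then reduces to writing $Q=|A|^{2}-|B|^{2}$ with $\dim A=n_{1}$, $\dim B=n_{2}$, with $A,B$ of bidegree at most $(n-1,m)$ and with linearly independent entries.

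To produce the signed decomposition with exactly signature $(n_1,n_2)$, I would set up the analogous Christoffel--Darboux construction in $z$ for each fixed $w\in\cD$: since $p$ has no zeros on $\T\times\D$, the argument principle gives that $p(\cdot,w)$ has constant numbers $n_{2}$ of zeros in $\D$ and $n_{1}$ in $\C\setminus\cD$ for all $w\in\cD$, and the associated Bezoutian is hermitian of inertia $(n_{1},n_{2})$ by Schur--Cohn; this inertia is verified at the explicit slice $w=0$ via the factorization $p(z,0)=a(z)b(z)$ with $\deg b = n_{2}$. A $J$-unitary lurking isometry, with $J=I_{m+1+n_{1}}\oplus(-I_{n_{2}})$, then assembles these one-variable pieces into a global transfer-function realization of $\rev{p}/p$, and the columns of its colligation matrix yield the vector polynomials $A$ and $B$. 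The main obstacle is this $J$-unitary extension step: one must check that the Gram-matrix inertia is preserved as $w$ varies (not only at $w=0$) and that the entries of $A$ and $B$ remain linearly independent. The former is handled by the constancy of the zero-count via the argument principle, while the latter is forced by the hypothesis $\gcd(p,\rev{p})=1$, which rules out the rank collapse that would create a dependence.
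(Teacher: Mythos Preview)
Your first half---the construction of $E$ via the one-variable Christoffel--Darboux kernel in $w$ and matrix Fej\'er--Riesz---is correct and coincides exactly with the paper's first step.

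The second half has a genuine gap. You propose to obtain the signed term by running ``the analogous Christoffel--Darboux construction in $z$ for each fixed $w\in\cD$'' and invoking Schur--Cohn inertia. But the $z$-Bezoutian of $p(\cdot,w)$ at fixed $w$ produces
\[
|p(z,w)|^2-\bigl|z^{n}\,\overline{p(1/\bar z,\bar w)}\bigr|^2,
\]
i.e.\ the reflection in $z$ \emph{only}, whereas the quantity you need to decompose is governed by $\rev{p}(z,w)=z^{n}w^{m}\overline{p(1/\bar z,1/\bar w)}$, the reflection in \emph{both} variables. These do not agree, even for $w\in\T$, so the Schur--Cohn step as stated does not deliver a decomposition of your $Q$. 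Your ``$J$-unitary lurking isometry'' is then left without an identity to extend: you never write down the indefinite Gram relation that would generate it, and you do not explain how to force the entries of $A,B$ to be polynomials (rather than rational in $z$) of bidegree at most $(n-1,m)$, or why exactly $n_1$ and $n_2$ squares appear.

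The paper closes this gap differently. After building $E$, it packages the identity on $\T\times\C$ as a \emph{unitary} (not $J$-unitary) rational matrix function $U(z)$ sending $(p,\,wE)^{t}\mapsto(\rev{p},\,E)^{t}$, and then proves a structure theorem for such $U$: via the Smith Normal Form of $p_{0}(z)U(z)$ over the ring of rational functions holomorphic on $\cD$, one computes the dimensions of $\vec{H}^{2}\ominus(\vec{H}^{2}\cap U\vec{H}^{2})$ and $U\vec{H}^{2}\ominus(\vec{H}^{2}\cap U\vec{H}^{2})$ and obtains
\[
\frac{I-U(\zeta)^{*}U(z)}{1-\bar\zeta z}=F(\zeta)^{*}F(z)-G(\zeta)^{*}G(z)
\]
with $F$ of size $n_{1}\times(m+1)$ and $G$ of size $n_{2}\times(m+1)$, rows linearly independent. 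It is \emph{this} decomposition, not a slice-by-slice $z$-Bezoutian, that feeds a genuine unitary lurking-isometry giving $A=FX$, $B=GX$ (with $X=(p,wE)^{t}$). Polynomiality and the degree bounds for $A,B$ are then obtained from the resulting transfer-function representation of $\rev p/p$ and Cramer's rule; this step uses $\gcd(p,\rev p)=1$, but not in the ``rank-collapse'' way you suggest. If you want to salvage a $J$-unitary approach you would essentially need to reprove this Hardy-space/Smith-form decomposition of $U$ in indefinite language; as written, your sketch does not supply it.
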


The last two details are needed in Section \ref{detreps}.

 Consider for $z \in \T$
\[
\frac{p(z,w)\overline{p(z,\eta)} -
  \rev{p}(z,w)\overline{\rev{p}(z,\eta)}}{1-w\bar{\eta}} =
(1,\bar{\eta},\dots, \bar{\eta}^{m-1}) T(z) (1,w,\dots,  w^{m-1})^{t}
\]
where $T(z)$ is an $m\times m$ matrix valued trigonometric polynomial
which is positive definite for all but finitely many values of $z \in
\T$.  This is because $T(z)$ is positive definite for each value of
$z$ such that $p(z,\cdot)$ has no zeros in $\T$.  There can only be
finitely many zeros on $\T^2$ or else $p$ and $\rev{p}$ would have a
common factor.  Let $S = \{z\in \T: \det T(z) = 0\}$.

By the matrix Fej\'er-Riesz theorem in one variable, we may factor
\[
T(z) = E(z)^* E(z)
\]
where $E$ is an invertible matrix polynomial on $\D$ of degree at most
$n$.  Let
\[
E(z,w) = E(z) (1,w,\dots,w^{m-1})^{t}
\]
so that 
\[
p(z,w)\overline{p(z,\eta)} -
  \rev{p}(z,w)\overline{\rev{p}(z,\eta)} =
  (1-w\bar{\eta})E(z,\eta)^* E(z,w)
\]
for $z \in \T$.  We are using both the notations $E(z)$ and $E(z,w)$,
but no confusion should arise.

Then, for fixed $z \in \T\setminus S$, the map which maps
\[
\begin{pmatrix} p(z,w) \\ wE(z,w) \end{pmatrix}
\mapsto \begin{pmatrix} \rev{p}(z,w) \\ E(z,w) \end{pmatrix}
\]
extends to a unitary $U(z)$ which we can explicitly solve for.
Write \[
p(z,w) = \sum_{j=0}^{m} p_j(z)w^j, \qquad \rev{p}(z,w) =
\sum_{j=0}^{m} \rev{p}_{m-j}(z) w^j. 
\]
Then,
\[
U(z) =  \begin{pmatrix} 
\begin{matrix} \rev{p}_m(z) & \cdots & \rev{p}_1(z) \end{matrix} 
& \rev{p}_0(z) \\ 
E(z) & 0 \end{pmatrix} \begin{pmatrix} p_0(z) & \begin{matrix} p_1(z) & \cdots &
    p_m(z) \end{matrix} \\
0 & E(z) \end{pmatrix}^{-1} 
\]
which is unitary by construction for $z \in \T\setminus S$, but
clearly extends to a matrix rational function with poles in $\D$ at
the zeros in $\D$ of $p_0(z)$.  Moreover, any singularities on $\T$
must be removable because $U$ is bounded on a punctured neighborhood
in $\T$ of each singularity.

Set $n_2$ to be the number of zeros of $p(z,0)$ in $\D$ and
$n_1=n-n_2$.  Theorem \ref{Udecomp} and Section \ref{Uproperties}
below prove that
\begin{equation} \label{posnegsos}
\frac{I-U(\zeta)^* U(z)}{1-\bar{\zeta}z} = F(\zeta)^*F(z) -
G(\zeta)^*G(z)
\end{equation}
where $F$ is $n_1 \times (m+1)$ and $G$ is $n_2\times (m+1)$, and the
rows of $F$ and $G$ are linearly independent as vector functions;
meaning there is no non-zero solution $(v_1,v_2) \in \C^{n}$ to
\begin{equation} \label{detail}
v_1F(z) + v_2 G(z) \equiv 0.
\end{equation}
Accepting all of this for now, we rearrange \eqref{posnegsos} to get
\[
I +  \bar{\zeta}F(\zeta)^*zF(z) + G(\zeta)^* G(z) = U(\zeta)^* U(z) +
F(\zeta)^*F(z) + \bar{\zeta}G(\zeta)^* zG(z)
\]
and so there exists an $(m+1+n)\times (m+1+n)$ unitary (with two
indicated block decompositions)
\begin{equation} \label{Vblocks}
V
= \begin{matrix} & \begin{matrix} \C^{m+1} & \C^{n} \end{matrix} \\
\begin{matrix} \C^{m+1} \\ \C^n \end{matrix} & \begin{pmatrix} V'_1 &
  V'_2 \\ V'_3 & V'_4 \end{pmatrix} \end{matrix}= 
\begin{matrix} & \begin{matrix} \C^{1} & \C^{m+n} \end{matrix} \\
\begin{matrix} \C^{1} \\ \C^{m+n} \end{matrix} & \begin{pmatrix} V_1 &
  V_2 \\ V_3 & V_4 \end{pmatrix}
\end{matrix}
\end{equation}
 such that
\begin{equation} \label{VFG}
V\begin{pmatrix} I \\ zF(z) \\ G(z) \end{pmatrix} = \begin{pmatrix}
  U(z) \\ F(z) \\ zG(z) \end{pmatrix}.
\end{equation}
Multiplying both sides of this equation by $X(z,w) =
\begin{pmatrix} p(z,w) \\wE(z,w) \end{pmatrix}$ gives
\begin{equation} \label{V}
V \begin{pmatrix} p(z,w) \\ wE(z,w) \\ zF(z)X(z,w) \\ G(z)
  X(z,w) \end{pmatrix} 
= \begin{pmatrix} \rev{p}(z,w) \\ E(z,w) \\ F(z)X(z,w) \\ zG(z)
  X(z,w) \end{pmatrix}.
\end{equation}
Let $A(z,w) = F(z)X(z,w)$ and $B(z,w) = G(z)X(z,w)$.  The entries of
$A$ and $B$ are linearly independent, because if $v_1 \in \C^{n_1},
v_2 \in \C^{n_2}$ and
\[
0 \equiv v_1A(z,w)+v_2B(z,w) = (v_1F(z)+v_2G(z))X(z,w)
\]
then since
\[
X(z,w) = \begin{pmatrix} p_0(z) & \begin{matrix} p_1(z) & \cdots &
    p_m(z) \end{matrix} \\
0 & E(z) \end{pmatrix} \begin{pmatrix} 1 \\ w \\ \vdots
  \\ w^m \end{pmatrix}
\]
we have
\[
0 \equiv  (v_1 F(z)+v_2 G(z))\begin{pmatrix} p_0(z) & \begin{matrix} p_1(z) & \cdots &
    p_m(z) \end{matrix} \\
0 & E(z) \end{pmatrix}. 
\]
The matrix on the right is invertible in $\D$ except at possible zeros
of $p_0$, so we get $v_1 F(z)+v_2 G(z) \equiv 0$ which implies
$v_1=0$ and $v_2=0$.  

 Taking the norm squared of both sides of \eqref{V} gives the
 following formula since $V$ is a unitary
\[
\begin{aligned}
& |p(z,w)|^2 +|w|^2|E(z,w)|^2 + |z|^2|A(z,w)|^2 + |B(z,w)|^2\\
&= |\rev{p}(z,w)|^2 + |E(z,w)|^2 + |A(z,w)|^2 + |z|^2|B(z,w)|^2.
\end{aligned}
\]
If we rearrange we get the desired sum of squares formula
\[
|p(z,w)|^2 - |\rev{p}(z,w)|^2 = (1-|w|^2)|E(z,w)|^2 +
(1-|z|^2)(|A(z,w)|^2 - |B(z,w)|^2).
\]
One final technicality is that while $E$ has entries that are
polynomials, it is not clear that the same holds for $A$ and $B$.  To
show they are polynomials we go through a longer process of proving
the following ``transfer function'' representation which is
interesting in its own right.

Set
\begin{equation} \label{delgam}
\begin{aligned}
\Delta(z,w) &= \begin{pmatrix} wI_{m}& 0 & 0 \\ 0 & zI_{n_1} & 0
  \\ 0 & 0 & I_{n_2} \end{pmatrix} \\ 
\Gamma(z,w) &= \begin{pmatrix}
  I_{m}& 0 & 0 \\ 0 & I_{n_1} & 0 \\ 0 & 0 & zI_{n_2} \end{pmatrix}.
\end{aligned}
\end{equation}

\begin{theorem} Suppose $p \in \C[z,w]$ has no zeros in $\T\times\D$,
  degree $(n,m)$, and no factors in common with $\rev{p}$. Then, there
  exists a $(1+m+n)\times (1+m+n)$ unitary matrix $V$ such that
\begin{equation} \label{transferrep}
\frac{\rev{p}(z,w)}{p(z,w)} = V_{1} + V_{2} \Delta(z,w)
(\Gamma(z,w) - V_{4} \Delta(z,w))^{-1} V_{3}.
\end{equation}
Here we use the block form indicated in \eqref{Vblocks} and again
$n_2$ is the number of zeros of $p(z,0)$ in $\D$ and $n_1=n-n_2$.
\end{theorem}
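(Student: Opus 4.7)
The plan is to read off the transfer representation directly from equation \eqref{V}, which has been established in the course of proving Theorem \ref{gensosthm}. The key observation is that if I set $v(z,w) := (E(z,w), A(z,w), B(z,w))^t \in \C^{m+n}$, then the two column vectors that appear in \eqref{V} factor through $\Delta$ and $\Gamma$:
\[
\begin{pmatrix} wE \\ zA \\ B \end{pmatrix} = \Delta(z,w) v, \qquad \begin{pmatrix} E \\ A \\ zB \end{pmatrix} = \Gamma(z,w) v.
\]
Partitioning $V$ according to the $\C^1 \oplus \C^{m+n}$ block decomposition in \eqref{Vblocks}, equation \eqref{V} splits into the scalar equation
\[
\rev{p}(z,w) = V_1\, p(z,w) + V_2 \Delta(z,w) v(z,w)
\]
and the vector equation
\[
\Gamma(z,w) v(z,w) = V_3\, p(z,w) + V_4 \Delta(z,w) v(z,w),
\]
i.e.\ $(\Gamma - V_4 \Delta) v = V_3 p$.

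From the vector equation I would solve $v = (\Gamma - V_4 \Delta)^{-1} V_3 p$ (as rational functions on $\C^2$) and substitute into the scalar equation. Dividing the resulting identity $\rev{p} = (V_1 + V_2 \Delta (\Gamma - V_4 \Delta)^{-1} V_3) p$ through by $p$ gives precisely \eqref{transferrep}. Since both sides are rational in $(z,w)$, if the identity holds on a Zariski-open set it holds wherever both sides are defined.

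The main obstacle is to verify that $\det(\Gamma(z,w) - V_4 \Delta(z,w))$ does not vanish identically, so that the inverse is well defined as a matrix of rational functions. I would specialize to $(z,w) = (0,0)$: there $\Gamma(0,0) = \operatorname{diag}(I_m, I_{n_1}, 0)$ and $\Delta(0,0) = \operatorname{diag}(0, 0, I_{n_2})$, so that
\[
\Gamma(0,0) - V_4 \Delta(0,0) = \begin{pmatrix} I_m & 0 & -(V_4)_{13} \\ 0 & I_{n_1} & -(V_4)_{23} \\ 0 & 0 & -(V_4)_{33} \end{pmatrix},
\]
where $(V_4)_{ij}$ denotes the block of $V_4$ with row index $i$ and column index $j$ in the tripartition $\C^m \oplus \C^{n_1} \oplus \C^{n_2}$. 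Thus invertibility at the origin reduces to invertibility of the $n_2\times n_2$ block $(V_4)_{33}$. I would extract this from evaluating \eqref{VFG} at $z=0$: the equation $V(I, 0, G(0))^t = (U(0), F(0), 0)^t$, combined with the linear independence of the rows of $G$ and the analogous property of $F$ established in Section \ref{Uproperties}, forces $(V_4)_{33}$ to be invertible; any nonzero vector in its kernel would, after tracing through the block equations, produce a nontrivial linear dependence among the rows of $G$, contradicting \eqref{detail}.

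Finally, once \eqref{transferrep} is known as a rational identity, I would remark that it in turn proves the polynomiality of $A$ and $B$ promised in Theorem \ref{gensosthm}: the formula expresses $(E, A, B)^t = (\Gamma - V_4 \Delta)^{-1} V_3 p$, which a posteriori exhibits each entry as a rational function with no poles in $\C^2$ (since $\rev{p}/p$ has been realized with a contractive realization), hence as a polynomial. This closes the loop between the sum-of-squares formula and the transfer representation.
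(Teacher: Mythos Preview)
Your overall strategy---split \eqref{V} into a scalar and a vector equation via the $\C^1\oplus\C^{m+n}$ block structure, solve the vector equation for $(E,A,B)^t$, and substitute---is exactly what the paper does. The only substantive step is verifying that $\det(\Gamma-V_4\Delta)\not\equiv 0$, and there your argument has a gap.

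Evaluating at $(z,w)=(0,0)$ correctly reduces the question to invertibility of the $n_2\times n_2$ block $(V_4)_{33}$, but your justification for that invertibility does not go through. First, \eqref{VFG} may not be evaluable at $z=0$: $U(z)$ has poles precisely at the zeros of $p_0(z)=p(z,0)$ in $\D$, and the rows of $G$ lie in $U^t\vec{H}^2$, so $G(0)$ need not be defined. Second, even when $p(0,0)\ne 0$, the equation you obtain from the last $n_2$ rows of \eqref{VFG} at $z=0$ is $(V_3')_2+(V_4)_{33}G(0)=0$; a left null vector $v^*$ of $(V_4)_{33}$ then forces $v^*(V_3')_2=0$, which is a statement about a block of $V$, not a linear relation among the rows of $G$. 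I do not see how to extract a contradiction with \eqref{detail} from this without additional input. The paper instead shows that the coefficient of $w^m$ in the determinant---namely $\det\bigl(\operatorname{diag}(zI_{n_1},z^{-1}I_{n_2})-V_4'\bigr)$---is nonvanishing for every $z\in\T$. The key point you are missing is the use of unitarity of $V$: if that coefficient vanished at some $\zeta\in\T$, a null vector $v$ would satisfy $\|vV_4'\|=\|v\|$, forcing $vV_3'=0$, and then \eqref{VFG} yields an identity $(z\zeta-1)v_1F(z)+(\bar\zeta-z)v_2G(z)\equiv 0$, contradicting \eqref{detail}. This unitarity step, at a point of $\T$, is what makes the argument work.

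Your closing remark on polynomiality of $A,B$ is also too quick: the phrase ``contractive realization'' does not by itself rule out poles of $(\Gamma-V_4\Delta)^{-1}V_3 p$. The paper's argument is that, once \eqref{transferrep} holds and $p,\rev{p}$ share no factor, $p$ must be a constant multiple of $\det(\Gamma-V_4\Delta)$ by degree count, so Cramer's rule shows the entries of $(\Gamma-V_4\Delta)^{-1}V_3 p$ are polynomials.
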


A technicality we must address is whether the matrix we invert above
is non-degenerate.  The fact that the rows of $F$ and $G$ are linearly
independent is used to show this.

By \eqref{V}, the map sending
\[
\begin{pmatrix} p(z,w) \\ wE(z,w) \\ zA(z,w) \\  B(z,w)  \end{pmatrix}
\mapsto \begin{pmatrix} \rev{p}(z,w) \\ E(z,w) \\ A(z,w)
  \\ zB(z,w) \end{pmatrix}
\]
extends to the unitary $V$.  Then,
\[
\begin{aligned}
V_{1} p + V_{2} \Delta(z,w)
\begin{pmatrix} E \\ A
  \\ B \end{pmatrix}  &= \rev{p} \\
V_{3} p + V_{4} \Delta(z,w)
 \begin{pmatrix} E \\ A
  \\ B \end{pmatrix} &= \Gamma(z,w) \begin{pmatrix} E \\ A
  \\ B \end{pmatrix} 
\end{aligned}
\]
which implies
\[
pV_{3} = \left( \Gamma(z,w) - V_{4} \Delta(z,w)
\right) \begin{pmatrix} E \\ A \\ B \end{pmatrix}
\]
We would like to invert the matrix on the right, so we need to make
sure 
\begin{equation} \label{determinant}
\det \left(\Gamma(z,w) - V_{4} \Delta(z,w)\right)
\end{equation}
is not identically zero.  This is equivalent to 
\[
\det \left(\begin{pmatrix} wI_{m}&
  0 & 0 \\ 0 & zI_{n_1}
  & 0 \\ 0 & 0 & z^{-1} I_{n_2} \end{pmatrix} - V_{4} \right) 
\]
being non-trivial by simple matrix manipulations. The coefficient of
$w^m$ will occur as 
\[
\det\left(\begin{pmatrix} zI_{n_1} & 0 \\ 0 &
  z^{-1} I_{n_2} \end{pmatrix} - V'_4 \right)
\]
where $V_4'$ is the lower right $n\times n$ block of $V$ as in
\eqref{Vblocks}.  We shall show this determinant is non-vanishing for
$z \in \T$.  If it does vanish for some $z=\zeta \in \T$, then there
exists a nonzero $v=(v_1,v_2)\in \C^{n} =\C^{n_1+n_2}$ such that
\[
(v_1,v_2)\begin{pmatrix} \zeta I_{n_1} & 0 \\ 0 &
  \bar{\zeta} I_{n_2} \end{pmatrix}  = (v_1,v_2) V'_4.
\]
This implies that $\|v\| =\|v V'_4\|$ and since $V$ is a unitary $v
V_3' = 0$.  Then, by \eqref{VFG}
\[
(0,v_1, v_2) V \begin{pmatrix} I \\ zF(z) \\ G(z) \end{pmatrix} = 
(0,\zeta v_1, \bar{\zeta} v_2) \begin{pmatrix} I \\ zF(z)
  \\ G(z) \end{pmatrix} 
= (0,v_1,v_2) \begin{pmatrix}
  U(z) \\ F(z) \\ zG(z) \end{pmatrix} 
\]
so that
\[
v_1 \zeta zF(z)+v_2 \bar{\zeta} G(z) = v_1 F(z) + v_2 zG(z)
\]
and then
\[
(z\zeta -1)v_1F(z) + (\bar{\zeta}- z)v_2 G(z) \equiv 0.
\]
This implies 
\[
v_1 F(z) - \bar{\zeta} v_2 G(z) \equiv 0
\]
contradicting \eqref{detail}.  Therefore, the determinant in
\eqref{determinant} is not identically zero, and  
\begin{equation} \label{solving}
p(z,w) \left(\Gamma(z,w) - V_{4} \Delta(z,w) \right)^{-1} V_{3}
= \begin{pmatrix} E(z,w) \\ A(z,w) \\ B(z,w) \end{pmatrix}
\end{equation}
which in turn yields \eqref{transferrep}.  Examining
\eqref{transferrep} we see that since $p$ has degree $(n,m)$, since
\[
\det \left(\begin{pmatrix} I_{m}&
  0 & 0 \\ 0 & I_{n_1}
  & 0 \\ 0 & 0 & zI_{n_2} \end{pmatrix} - V_{4} \begin{pmatrix} wI_{m}&
  0 & 0 \\ 0 & zI_{n_1}
  & 0 \\ 0 & 0 & I_{n_2} \end{pmatrix}\right)
\]
has degree at most $(n,m)$, and since $p$ and $\rev{p}$ have no common
factors, we must have that $p$ is a constant multiple of the above
determinant else \eqref{transferrep} could be reduced further.  This
implies that the left hand side of \eqref{solving} is a vector
polynomial and finally we see that the entries of $E,A,B$ are
polynomials.  By Cramer's rule the entries of $E$ have degree at most
$(n,m-1)$ and the entries of $A$ and $B$ have degree at most
$(n-1,m)$.

\subsubsection{Unitary valued rational functions on the circle}
The following is undoubtedly well-known material from systems theory; 
however, we were unable to find a suitable reference so we include a
detailed explanation.

\begin{theorem}[Smith Normal form \cite{HK}] \label{SNF}
Let $R$ be a principal ideal domain and let $A$ be an $N\times N$
matrix with entries in $R$.  There exists a unique (up to units)
diagonal matrix $D \in R^{N\times N}$, called the Smith Normal form,
with entries $D_1 | D_2 | \dots | D_N$ such that
\[
A =  SDT
\]
where $S,T \in R^{N\times N}$ and $S^{-1}, T^{-1} \in R^{N\times N}$.
The matrix $S$ is formed through the row operations of (1) multiplying a
row by an element of $R$ and adding the result onto another row, (2)
switching two rows, and (3) multiplying a row by a unit in $R$.
 
The entries $D_j$ may also be computed as
\[
D_j = \frac{\gcd_j(A)}{\gcd_{j-1}(A)}
\]
where $\gcd_j(A)$ represents the greatest common divisor of
determinants of all $j\times j$ submatrices of $A$ ($\gcd_0:=1$).
\end{theorem}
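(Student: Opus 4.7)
The plan is to prove existence by an explicit reduction algorithm using elementary row and column operations, and to handle uniqueness via the invariance of the ideals generated by $j\times j$ minors.

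First I would recall that elementary row operations (adding a multiple of one row to another, swapping two rows, multiplying a row by a unit) correspond to left multiplication by matrices that are invertible over $R$, and analogously for column operations on the right. So it suffices to show that by a sequence of such elementary operations we can transform $A$ into a diagonal matrix $D=\mathrm{diag}(D_1,\dots,D_N)$ with $D_1\mid D_2\mid\cdots\mid D_N$.

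Next I would carry out the reduction. Since $R$ is a PID, it is in particular a UFD, so the function $\nu(r)=$ (number of prime factors of $r$ counted with multiplicity, with $\nu(0)=\infty$) is well-founded on nonzero elements up to units. Among all nonzero entries produced during the process, pick one of minimal $\nu$-value and move it to position $(1,1)$ by a row and a column swap. If some entry $a_{1j}$ in the first row is not divisible by $a_{11}$, write $d=\gcd(a_{11},a_{1j})=ua_{11}+va_{1j}$ using Bezout; the $2\times 2$ invertible matrix
\[
\begin{pmatrix} u & v \\ -a_{1j}/d & a_{11}/d \end{pmatrix}
\]
acting on columns $1$ and $j$ replaces $a_{11}$ by $d$, which has strictly smaller $\nu$-value. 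Repeating this for the first row and column, the process terminates by well-foundedness, at which point $a_{11}$ divides every entry in the first row and column and we can clear those entries to zero by further elementary operations. If there remains an entry $a_{ij}$ not divisible by $a_{11}$, add row $i$ to row $1$ and repeat the above, which again strictly decreases $\nu(a_{11})$; after finitely many rounds the $(1,1)$ entry divides \emph{every} entry of the matrix. Recurse on the lower right $(N-1)\times(N-1)$ block to obtain diagonal form with the divisibility chain.

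For the gcd formula and uniqueness, I would observe that each elementary row or column operation preserves the ideal $I_j(A)\subset R$ generated by the determinants of all $j\times j$ submatrices of $A$: this is clear for row swaps and unit scalings, and for the operation ``add a multiple of one row to another'' it follows from multilinearity and alternation of the determinant applied to any $j\times j$ submatrix. Hence $I_j(A)=I_j(D)=(D_1D_2\cdots D_j)$, so $\gcd_j(A)$ generates the same ideal as $D_1\cdots D_j$, giving $D_j=\gcd_j(A)/\gcd_{j-1}(A)$ up to units; this formula simultaneously establishes that the diagonal entries $D_j$ are uniquely determined up to units, independent of the chosen sequence of operations.

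The main obstacle is making the reduction step rigorous in a general PID where a Euclidean algorithm is not available; this is precisely what the $\nu$-valuation on a UFD plus Bezout's identity buys us, ensuring that each round of column-and-row cleaning either terminates immediately or strictly decreases $\nu(a_{11})$, so the whole procedure halts.
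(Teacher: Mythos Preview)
The paper does not prove this theorem; it is quoted from Hoffman--Kunze \cite{HK} as background and used as a black box in Section~\ref{GDV}. Your argument is the standard textbook proof and is correct for the existence of invertible $S,T$ with $A=SDT$, for the divisibility chain, and for the determinantal-ideal proof of the gcd formula and uniqueness.

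One small point worth flagging: the statement as recorded in the paper asserts that $S$ can be taken to be a product of the three listed \emph{elementary} row operations. Your reduction, however, uses the $2\times 2$ B\'ezout matrix
\[
\begin{pmatrix} u & v \\ -a_{1j}/d & a_{11}/d \end{pmatrix},
\]
and over a general PID such a matrix need not decompose into transvections, transpositions, and unit scalings (the classical counterexample being $R=\mathbb{Z}[(1+\sqrt{-19})/2]$, a non-Euclidean PID for which $SL_2(R)$ is not generated by elementary matrices). So strictly speaking your argument yields $S,T\in GL_N(R)$ but not the sharper elementary-product claim for $S$. This discrepancy is harmless for the paper's purposes: the only use of Theorem~\ref{SNF} is over the ring $R=\C[z]\mathcal{S}^{-1}$, a localization of the Euclidean domain $\C[z]$, where the Euclidean algorithm is available and the B\'ezout step can be realized by elementary operations; moreover the paper's application in Theorem~\ref{Udecomp} only needs the existence of the diagonal $D$ and the gcd characterization, not the elementary factorization of $S$.
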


Let $U \in \C(z)^{N\times N}$ be a rational $N\times N$ matrix
function of one variable which is unitary valued on the unit circle.
Let $R$ be the ring of fractions $\C[z]\mathcal{S}^{-1}$ where
$\mathcal{S}$ is the multiplicative set $\mathcal{S} = \{q \in \C[z]:
q(z) \ne 0 \text{ for } z \in \D\}$.  We may write $U = \frac{1}{q}Q$
where $q \in \C[z]$ has all zeros in $\D$ and $Q \in R^{N\times N}$.
Let $D$ be the Smith Normal form of $Q$ in $R$.  Write
\[
\frac{D_j}{q} = \frac{d_j}{q_j}
\]
in lowest terms and define
\[
N_1 = \sum_{j=1}^{N} \text{\# zeros of } d_j \text{ in } \D \text{
  counting multiplicity}
\]
\[
N_2 = \sum_{j=1}^{N} \text{\# zeros of } q_j \text{ in } \D \text{
  counting multiplicity}.
\]

\begin{theorem} \label{Udecomp} 
With $U$ as above, there exist an $N_1\times N$ matrix function $F$
and an $N_2\times N$ matrix function $G$ such that
\[
\frac{I-U(\zeta)^*U(z)}{1-z\bar{\zeta}} = F(\zeta)^*F(z) -
G(\zeta)^*G(z).
\]
The rows of $F$ and $G$ together form a linearly independent set of
vector functions on $\D$.
\end{theorem}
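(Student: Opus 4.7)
The strategy is to reduce to a diagonal situation via the Smith Normal form $U = S(D/q)T$, where $S, T, S^{-1}, T^{-1} \in R^{N\times N}$ are all regular on $\D$, so that $U$ carries a well-controlled pole structure inside $\D$. The scalar diagonal entries $u_j := d_j/q_j$ in lowest terms carry all the essential information about the signature of the kernel: each $d_j$ contributes $k_1^{(j)}$ zeros in $\D$ and each $q_j$ contributes $k_2^{(j)}$ zeros in $\D$, with $N_1 = \sum_j k_1^{(j)}$ and $N_2 = \sum_j k_2^{(j)}$. The factorization is useful precisely because the units $S, T$ introduce no new zeros or poles in $\D$ and hence preserve the signature of the kernel.

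First I would handle the scalar case. For polynomials $d, q \in \C[z]$ with $(d,q)=1$, a direct Bezoutian computation yields
\[
\overline{q(\zeta)}q(z) - \overline{d(\zeta)}d(z) = (1-z\bar\zeta)\,\mathbf{v}(\zeta)^* J\, \mathbf{v}(z),
\]
where $\mathbf{v}(z) = (1, z, \ldots, z^{n-1})^t$ with $n = \max(\deg d, \deg q)$ and $J$ is a Hermitian $n \times n$ matrix. The classical Schur--Cohn theorem (the Bezoutian/Hermite form) identifies the signature of $J$ as exactly $(k_1, k_2)$, where $k_1, k_2$ count zeros of $d, q$ inside $\D$; any zeros outside $\cD$ contribute to the nullspace of $J$. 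Diagonalizing $J$ as $P^*\operatorname{diag}(I_{k_1}, -I_{k_2}, 0)P$ and dividing by $\overline{q(\zeta)}q(z)$ produces a scalar kernel decomposition of the stated type.

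Next I would lift to the matrix case. Writing the kernel as
\[
\frac{I - U(\zeta)^*U(z)}{1-z\bar\zeta} = T(\zeta)^*\,\frac{(D/q)(\zeta)^*S(\zeta)^*S(z)(D/q)(z) - T(\zeta)^{-*}T(z)^{-1}}{1-z\bar\zeta}\,T(z)
\]
and using $U^*U = I$ on $\T$ to balance the two sides, one can decompose the middle factor as a sum of $N$ scalar kernels for the $u_j$'s conjugated by suitable rows/columns of $S, T$ and their inverses, plus cross terms that vanish in signature. Assembling the positive and negative parts yields vector polynomials $F$ with $N_1$ rows and $G$ with $N_2$ rows. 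Since $T$ and $S^{-1}$ are units in $R^{N\times N}$, the ranks $N_1, N_2$ match and the scalar entries of $D/q$ that are units in $R$ (those with no zeros or poles in $\D$) drop out entirely, in agreement with the theorem's counting convention.

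The main obstacle will be the linear independence of the combined rows of $F$ and $G$. After matrix conjugation, a priori a nontrivial dependence $v_1 F(z) + v_2 G(z) \equiv 0$ could arise from cross-cancellations between positive and negative parts coming from different scalar blocks. My plan is to argue by minimality: such a relation, when pulled back through the units $S, T$, would yield a Bezoutian decomposition of lower signature rank than $(k_1^{(j)}, k_2^{(j)})$ for some scalar entry $u_j$, contradicting the coprimality $(d_j, q_j) = 1$ forced by the lowest-terms representation and the uniqueness of the Smith Normal form. This yields the claimed independence and completes the decomposition.
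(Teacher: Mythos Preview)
Your scalar step via the Schur--Cohn Bezoutian is fine, and the overall idea of using the Smith Normal form to reduce to diagonal data is natural.  The gap is in the lifting step.  When you write
\[
\frac{I-U(\zeta)^*U(z)}{1-z\bar\zeta}
= T(\zeta)^*\,\frac{T(\zeta)^{-*}T(z)^{-1}-(D/q)(\zeta)^*S(\zeta)^*S(z)(D/q)(z)}{1-z\bar\zeta}\,T(z),
\]
the inner kernel is \emph{not} a direct sum of scalar kernels: the blocks are coupled through the non-diagonal positive kernels $S(\zeta)^*S(z)$ and $T(\zeta)^{-*}T(z)^{-1}$.  The matrices $S$ and $T$ are units in $R^{N\times N}$ but are not unitary on $\T$, so conjugation by them does not diagonalize the Hermitian kernel, and your assertion that the ``cross terms vanish in signature'' is exactly the point that needs proof.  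Without an argument that the congruence class (under multiplication by matrix functions invertible on $\D$) of the Hermitian kernel has a well-defined inertia equal to $(N_1,N_2)$, the reduction to the scalar blocks does not go through.  Your linear-independence argument inherits the same problem: pulling a relation $v_1F+v_2G\equiv 0$ back through $S,T$ does not land in a single scalar block, so there is no contradiction with the coprimality of a particular pair $(d_j,q_j)$.

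The paper avoids this obstacle by \emph{not} using the Smith form constructively.  It interprets $\frac{I-U(z)U(\zeta)^*}{1-z\bar\zeta}$ as $K_{\vec H^2}-K_{U\vec H^2}$, the difference of reproducing kernels of $\vec H^2$ and $U\vec H^2$ inside $\vec L^2$, and then cancels the common summand $K_{\vec H^2\cap U\vec H^2}$ to obtain the difference of kernels of the finite-dimensional orthogonal complements $\vec H^2\ominus(\vec H^2\cap U\vec H^2)$ and $U\vec H^2\ominus(\vec H^2\cap U\vec H^2)$.  Linear independence is then immediate, since these two complements have trivial intersection.  The Smith form enters only at the end, to compute the dimensions $N_1,N_2$ of these complements, via the purely algebraic fact that $S\vec H^2=T\vec H^2=\vec H^2$ (because $S,T,S^{-1},T^{-1}\in R^{N\times N}$), which reduces the quotient $\vec H^2/(\vec H^2\cap U\vec H^2)$ to the diagonal case.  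In effect, the Hilbert-space geometry supplies for free the ``congruence invariance of inertia'' that your approach would need to establish directly.
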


\begin{proof} 
We shall use $\vec{H}^2$ to denote the vector valued Hardy space
$H^2(\T)\otimes \C^N$ for short.  Now $U\vec{H}^2$ is a reproducing
kernel Hilbert space with point evaluations in $\D$ except at the
poles of $U$ in $\D$.  In $\vec{L}^2=L^2(\T) \otimes \C^N$, if $f \in
\vec{H}^2$ and $v \in \C^N$
\[
\ip{Uf}{ \frac{U U(\zeta)^*}{1-\cdot \bar{\zeta}}v}_{L^2} =
  \ip{f}{\frac{U(\zeta)^{*} v}{1-\cdot\bar{\zeta}}}_{L^2} =
    \ip{U(\zeta)f(\zeta)}{v}_{\C^N}
\]
 which shows $U\vec{H}^2$ has reproducing kernel 
\[
\frac{U(z)U(\zeta)^*}{1-z\bar{\zeta}}.
\]
Notice $U\vec{H}^2$ is not necessarily contained in $\vec{H}^2$. The
space $U\vec{H}^2 \vee \vec{H}^2$ is therefore a reproducing kernel
Hilbert space containing both spaces.

 Consider the kernel
\[
K(z;\zeta)= \frac{I-U(z)U(\zeta)^*}{1-z\bar{\zeta}}
\]
which is not necessarily positive definite but is rather the
difference of two reproducing kernels
\[
K = K_{\vec{H}^2} - K_{U\vec{H}^2}.
\]
We shall in general use $K_\mcH$ to denote the reproducing kernel of a
space $\mcH$ in $U\vec{H}^2\vee \vec{H}^2$.  We can decompose
$\vec{H}^2 = (\vec{H}^2\cap U\vec{H}^2) \oplus (\vec{H}^2
\ominus(\vec{H}^2\cap U\vec{H}^2))$ so that
\[
K_{\vec{H}^2} = K_{\vec{H}^2\cap U\vec{H}^2} + K_{\vec{H}^2
  \ominus(\vec{H}^2\cap U\vec{H}^2)}
\]
and similarly 
\[
K_{U\vec{H}^2} = K_{\vec{H}^2\cap U\vec{H}^2} + K_{U\vec{H}^2
  \ominus(\vec{H}^2\cap U\vec{H}^2)}.
\]
Therefore, 
\[
K = K_{\vec{H}^2 \ominus(\vec{H}^2\cap U\vec{H}^2)} - K_{U\vec{H}^2
  \ominus(\vec{H}^2\cap U\vec{H}^2)}
\]
 The spaces $\vec{H}^2 \ominus(\vec{H}^2\cap U\vec{H}^2), U\vec{H}^2
 \ominus(\vec{H}^2\cap U\vec{H}^2)$ are actually finite dimensional.
 We can compute their dimensions as follows.

Write $U = \frac{1}{q}Q$ where $q$ has all zeros in $\D$ and $Q$ has
entries in $R$.  Note that since $U$ is unitary on the circle, $U$ has
no poles on the circle ($U$ is bounded near any potential
singularities).  Therefore, the entries of $Q$ belong to the smaller
ring $R_0 = \C[z]\mathcal{S}_0^{-1}$ where $\mathcal{S}_0$ is the
multiplicative set $\mathcal{S}_0 = \{q \in \C[z]: q(z) \ne 0 \text{
  for } z \in \cD\}$.  By Theorem \ref{SNF}, we may write $Q = SDT$
where $S,T$ are matrices with entries in $R_0$ whose inverses have the
same property, and $D$ is the Smith Normal form of $Q$ in $R_0$.  The
elements $D_j\in R_0$ have no zeros on the unit circle since $\det U =
\det Q/q^N = \det S\det T \prod (D_j/q)$ has no zeros on $\T$ and $q$ has no zeros
on $\T$.  So, $Q$ has the same Smith Normal form $D$ in $R$ by the gcd
characterization of the Smith Normal form.  

 Now, $\vec{H}^2 \ominus (\vec{H}^2\cap U\vec{H}^2)$ is isomorphic as
 a vector space to the quotient $\vec{H}^2/(\vec{H}^2\cap
 U\vec{H}^2)$, and since $T\vec{H}^2 = \vec{H}^2 = S\vec{H}^2$ we see
 that
\[
\vec{H}^2/(\vec{H}^2\cap U\vec{H}^2) \cong \vec{H}^2/(\vec{H}^2 \cap
\frac{1}{q}D \vec{H}^2)
\]
which breaks up into the algebraic direct sum of the spaces 
\[
H^2/ (H^2\cap\frac{D_j}{q} H^2).
\]
  Any zeros of $D_j$ or $q$ in $\C\setminus \D$ can be
absorbed into $H^2$ so that $\frac{D_j}{q}H^2 = \frac{d_j}{q_j} H^2$
for some $d_j$ and $q_j$ with all zeros in $\D$ and no common zeros
(after canceling).  The space
\[
H^2 \cap \frac{d_j}{q_j} H^2 = d_j H^2
\]
and $H^2/d_jH^2$ has dimension equal to the number of zeros of $d_j$
in $\D$. Therefore, $\vec{H}^2/(\vec{H}^2\cap U\vec{H}^2)$ has
dimension equal to
\[
N_1 = \sum_{j=1}^{N} \text{\# zeros of } d_j \text{ in } \D \text{ counting
  multiplicity.}
\]

A similar analysis shows that $U\vec{H}^2/(\vec{H}^2\cap U\vec{H}^2)$
has dimension equal to
\[
N_2 = \sum_{j=1}^{N} \text{\# zeros of } q_j \text{ in } \D \text{
  counting multiplicity}.
\]
If $\{f_1,\dots, f_{N_1}\}$ is an orthonormal basis for
$\vec{H}^2\ominus (\vec{H}^2\cap U\vec{H}^2)$ and $\{g_1,\dots,
g_{N_2}\}$ is an orthonormal basis for $U\vec{H}^2\ominus
(\vec{H}^2\cap U\vec{H}^2)$ then
\[
K(z;\zeta) = \sum f_j(z)f_j(\zeta)^* - \sum g_j(z) g_j(\zeta)^*
\]
which if we form an $N\times N_1$ matrix $F=(f_1, \dots, f_{N_1})$ and
an $N\times N_2$ matrix $G = (g_1,\dots, g_{N_2})$ can rewrite as
\[
\frac{I-U(z)U(\zeta)^*}{1-z\bar{\zeta}} = F(z)F(\zeta)^* -
G(z)G(\zeta)^*.
\]
Since $\vec{H}^2 \ominus (\vec{H}^2\cap U\vec{H}^2)$ and $U\vec{H}^2
\ominus (\vec{H}^2\cap U\vec{H}^2)$ have trivial intersection, the
columns of $F$ and $G$ form an independent set of vector functions.

Of course, applying the above work to $U^t$ instead would yield a
formula of the form 
\[
\frac{I-U(\zeta)^*U(z)}{1-z\bar{\zeta}} = F(\zeta)^*F(z) -
G(\zeta)^*G(z)
\]
after switching $z$ and $\zeta$ and taking conjugates, where now $F$
and $G$ are $N_1\times N$ and $N_2\times N$ valued respectively. Note
we are not saying they are the same $F$ and $G$ as before, but the
dimensions $N_1$ and $N_2$ are preserved because the transpose does
not change the diagonal term in the Smith Normal form.  The rows of
$F$ and $G$ form an independent set of vector functions just as
above. 
\end{proof}

\subsubsection{A particular choice of $U$} \label{Uproperties}
Recall the matrix function $U$ from Section \ref{sec:kummert}
\[
U(z) =  \begin{pmatrix} 
\begin{matrix} \rev{p}_m(z) & \cdots & \rev{p}_1(z) \end{matrix} 
& \rev{p}_0(z) \\ 
E(z) & 0 \end{pmatrix} \begin{pmatrix} p_0(z) & \begin{matrix} p_1(z) & \cdots &
    p_m(z) \end{matrix} \\
0 & E(z) \end{pmatrix}^{-1}.
\]
Note
\[
U = \frac{1}{p_0}Q = \frac{1}{p_0} \begin{pmatrix}
\begin{matrix} \rev{p}_m & \cdots & \rev{p}_1 \end{matrix} 
& \rev{p}_0 \\ 
E & 0 \end{pmatrix}
\begin{pmatrix} 1 & -\begin{pmatrix} p_1 & \cdots &
  p_m \end{pmatrix} E^{-1} \\
0 & p_0 E^{-1} \end{pmatrix}
\]
where $Q$ has entries in $R$ since $\det E$ may have zeros on $\T$
while $p_0(z)=p(z,0)$ has no zeros on $\T$.  We now show how to
compute the Smith Normal form of $Q$ in $R$.

Now,
\[
Q = \begin{pmatrix} 1 & 0 \\ 0 &
  E \end{pmatrix} \begin{pmatrix} \begin{matrix} \rev{p}_m & \cdots
    & \rev{p}_1 \end{matrix} & \rev{p}_0 \\ I &
  0 \end{pmatrix} \begin{pmatrix} 1 & -\begin{pmatrix} p_1 & \cdots &
  p_m \end{pmatrix}  \\
  0 & p_0I \end{pmatrix} \begin{pmatrix} 1 & 0 \\ 0 & E^{-1} \end{pmatrix}.
\]
It is not hard to see that the product of the inner two matrices can
be converted to the diagonal matrix $D$ with entries $1 ,p_0,\dots,
p_0, p_0\rev{p}_0$ using row and column operations in $R$, which is
the Smith Normal form of $Q$.  The entries of $\frac{1}{p_0}D$ are
then $1/p_0,1,\dots, 1, \rev{p}_0$. 

This proves that for $n_2$ equal to the number of zeros of $p_0$ in
$\D$ and $n_1 = n-n_2$
\[
\frac{I-U(\zeta)^* U(z)}{1-\bar{\zeta}z} = F(\zeta)^*F(z) -
G(\zeta)^*G(z)
\]
where $F$ is $n_1 \times (m+1)$ and $G$ is $n_2\times (m+1)$.

\subsection{Generalized distinguished varieties and determinantal representations} \label{detreps}

Distinguished varieties are a class of curves introduced in
Agler-McCarthy \cite{AMdv} because they play a natural role in
multivariable operator theory and function theory on the bidisk (see
\cite{gKdv}, \cite{JKMpick}, \cite{AKMisopairs}).  The zero set $Z_p$
of a polynomial $p \in \C[z,w]$ is a distinguished variety if
\[
Z_p \subset \D^2 \cup \T^2 \cup \E^2
\]
where $\E = \C \setminus \cD$. Notice the curve $Z_p \cap \D^2$ exits
the boundary of $\D^2$ through the distinguished boundary $\T^2$;
hence the name \emph{distinguished} variety. This area
    is part of a larger topic of understanding algebraic curves and
    their interaction with $\T^2$.  See \cite{AMS06}, \cite{AMS08}.

The sums of squares theorem, Theorem \ref{sosthm} or Theorem
\ref{gensosthm}, naturally leads to the study of a more general class
of curves using the methods of \cite{gKdv}.  We say that the zero set
$Z_p$ of $p \in \C[z,w]$ is a \emph{generalized distinguished variety}
if it satisfies
\begin{equation} \label{weakdv}
Z_p \subset (\D\times \C) \cup \T^2 \cup (\E \times \C) \text{ or }
\end{equation}
\[
Z_p \subset (\C\times \D) \cup \T^2 \cup (\C \times \E).
\]
That is, $Z_p$ does not intersect the (relatively small) set
$(\T\times \D) \cup (\T \times \E)$ in the former case above.  We
shall show that generalized distinguished varieties share much of the
structure of distinguished varieties, and in particular they possess a
determinantal representation generalizing one of the main theorems in
\cite{AMdv}.  We use the notation \eqref{delgam} below.

\begin{theorem} \label{thm:detrep}
Suppose $p\in \C[z,w]$ has degree $(n,m)$, has no factors involving
$z$ alone, and satisfies \eqref{weakdv}. Then, there exists an
$(m+n)\times (m+n)$ unitary matrix $U$ such that $p$ is a constant
multiple of
\[
\det\left(U  \Delta(z,w)  - \Gamma(z,w)
\right),
\]
where $n_2$ is the number of zeros of $p(z,0)$ in $\D$ and $n_1 =
n-n_2$.

\end{theorem}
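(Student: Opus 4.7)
The strategy is to read the determinantal representation directly off the transfer-function construction inside the proof of Theorem \ref{gensosthm}, and then to cover polynomials sharing common factors with $\rev{p}$ by an approximation argument.

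First I would dispose of the case where $p$ and $\rev{p}$ have no common factor. Condition \eqref{weakdv} in particular forbids zeros of $p$ on $\T \times \D$, so Theorem \ref{gensosthm} applies. Its proof constructs an $(1+m+n) \times (1+m+n)$ unitary $V$, and in the paragraph following \eqref{solving} a bidegree count (using coprimality of $p$ and $\rev{p}$) shows that $p$ is a nonzero constant multiple of $\det(\Gamma(z,w) - V_4 \Delta(z,w))$, where $V_4$ is the lower-right $(m+n) \times (m+n)$ block of $V$ from the decomposition \eqref{Vblocks}. Setting $U := V_4$ and absorbing the sign $(-1)^{m+n}$ into the overall constant yields $p = c \det(U \Delta(z,w) - \Gamma(z,w))$, as required.

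For the general case I would reduce to the coprime one by approximation. The plan is to construct a sequence $\{p_k\} \subset \C[z,w]$ of bidegree-$(n,m)$ polynomials with $p_k \to p$ coefficientwise such that each $p_k$ satisfies \eqref{weakdv}, is coprime with $\rev{p_k}$, and has $p_k(z,0)$ with exactly $n_2$ zeros in $\D$. Applying the first step yields unitaries $U_k$ of size $(m+n) \times (m+n)$ and constants $c_k$ with $p_k = c_k \det(U_k \Delta - \Gamma)$. After normalizing via a fixed nonvanishing coefficient of $p$, the scalars $c_k$ stay bounded, so compactness of the unitary group produces a subsequential limit $(U_{k_j}, c_{k_j}) \to (U, c)$ with $p = c \det(U \Delta - \Gamma)$ by continuity of the determinant.

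The hard part is constructing the approximating sequence: the gdv condition is rigid. The prototypical obstacle is $p(z,w) = z - w$, for which $\rev{p} = -p$, and every small bidegree-$(1,1)$ perturbation either keeps $\rev{p_k}$ proportional to $p_k$ or introduces forbidden zeros in $\T \times \D$ or $\T \times \E$. The hypothesis that $p$ has no factor depending on $z$ alone is essential here: every irreducible factor of $p$ must genuinely involve $w$, so each common irreducible factor $q$ of $p$ and $\rev{p}$ must satisfy $q = \bar{c}\,\rev{q}$ for some $|c| = 1$. One may then either perturb within the variety of such self-reflective polynomials, or split $p = q_0 \tilde{p}$ with $q_0 := \gcd(p, \rev{p})$ and $\tilde{p}$ coprime to $\rev{\tilde{p}}$, handle $\tilde{p}$ by the coprime case, and build a direct determinantal representation for $q_0$ from its rigid algebraic structure, combining the two via a direct-sum-like construction on the unitary side.
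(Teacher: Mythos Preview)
There is a fundamental gap. The first lemma following Theorem \ref{thm:detrep} shows that \emph{every} polynomial $p$ satisfying the hypotheses (degree $(n,m)$, no factor in $z$ alone, and \eqref{weakdv}) obeys $p=\mu\rev{p}$ for some unimodular $\mu$. Consequently $p$ and $\rev{p}$ are never coprime under these hypotheses, and your ``coprime case'' is vacuous. Your approximation scheme then collapses: you cannot approximate $p$ by polynomials $p_k$ that simultaneously satisfy \eqref{weakdv} and are coprime to $\rev{p_k}$, because no such polynomials exist. Your $z-w$ example is not an obstacle to be worked around---it is the generic situation. Likewise your final alternative, splitting $p=q_0\tilde{p}$ with $q_0=\gcd(p,\rev{p})$, is circular: since $p=\mu\rev{p}$ we have $q_0=p$ (up to a unit) and $\tilde{p}$ is constant, so the entire problem is pushed into ``build a direct determinantal representation for $q_0$,'' which is what you set out to prove.

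There is a second issue even in your vacuous first step: the block $V_4$ of a unitary $V$ is only a contraction, not a unitary, so setting $U:=V_4$ does not produce a unitary $U$ in general.

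The paper's route is genuinely different. It reduces to irreducible $p=\rev{p}$ and then applies Theorem \ref{gensosthm} not to $p$ (which is impossible) but to $\rev{\partial p/\partial w}$, which \emph{is} coprime to its reflection $\partial p/\partial w$ when $p$ is irreducible. The identity $mp=\rev{\partial p/\partial w}+w\,\partial p/\partial w$ makes the resulting sum-of-squares formula collapse on $Z_p$ to a pure isometry relation among the vector polynomials $A,B,C$, and a lurking-isometry argument on $Z_p$ then produces the required unitary $U$ directly.
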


If $Z_p$ is a distinguished variety then $n_2=n$ and we get the
representation
\[
Z_p = \left\{(z,w): \det\left(U \begin{pmatrix} wI_m& 0 \\ 0 &
  I_n \end{pmatrix} - \begin{pmatrix} I_m & 0 \\ 0 &
  zI_{n} \end{pmatrix}\right)=0\right\}.
\]
If we write $U = \begin{pmatrix} A & B \\ C & D \end{pmatrix}$, then
the above zero set can be written as 
\[
\det(\Phi(w) - zI_n) = 0
\]
in terms of the matrix rational inner function
\[
\Phi(w) = D + wC(I-wA)^{-1} B
\]
at least outside of the poles of $\Phi$.  This is how the
characterization of distinguished varieties is stated in \cite{AMdv}.

\begin{lemma} 
Suppose $p\in \C[z,w]$ has degree $(n,m)$, has no factors involving
$z$ alone, and satisfies \eqref{weakdv}.  Then, $p=\mu \rev{p}$ for
some $\mu \in \T$.
\end{lemma}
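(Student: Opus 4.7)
WLOG \eqref{weakdv} holds in the first form: $Z_p\subset (\D\times\C)\cup\T^2\cup(\E\times\C)$ (the second form is symmetric in $z,w$). I claim $r:=p/\rev p\in\C(z,w)$ is a unimodular constant. The crucial step is to show that for every fixed $z_0\in\T$, the polynomials $p(z_0,\cdot)$ and $\rev p(z_0,\cdot)$ are proportional as elements of $\C[w]$. By \eqref{weakdv}, every zero of $p(z_0,\cdot)$ lies on $\T$; reflecting the hypothesis via $(z,w)\mapsto(1/\bar z,1/\bar w)$ gives the same for $\rev p(z_0,\cdot)$ in $\C\setminus\{0\}$. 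A spurious zero of $\rev p(z_0,\cdot)$ at $w=0$ is ruled out because $\rev p(z_0,0)=z_0^n\overline{p_m(z_0)}$, and a continuity-of-roots argument forces $p_m(z_0)\neq 0$ on $\T$: if $p_m(z_0)=0$, the factorization $p(z,w)=p_m(z)\prod_j(w-w_j(z))$ with $|w_j(z)|=1$ for $z\in\T$ near $z_0$ would force $p(z,w)\to 0$ uniformly on compacts, hence $p(z_0,\cdot)\equiv 0$, contradicting the no-$z$-only-factor hypothesis. Similarly $p(z_0,0)\neq 0$ since $(z_0,0)\in\T\times\D$ is excluded by \eqref{weakdv}; so both $p(z_0,\cdot)$ and $\rev p(z_0,\cdot)$ have $w$-degree exactly $m$. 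On $\T^2$ the identity $\rev p(z,w)=z^nw^m\overline{p(z,w)}$ yields $|p|=|\rev p|$; consequently the rational function $f(w):=p(z_0,w)/\rev p(z_0,w)$ has modulus $1$ on $\T$. A local analysis at each zero $w_0\in\T$ of either polynomial shows the multiplicities of $p(z_0,\cdot)$ and $\rev p(z_0,\cdot)$ at $w_0$ must coincide (otherwise $|f(w)|\sim|w-w_0|^{a-b}$ could not remain equal to $1$ nearby), so both polynomials have the same zero divisor on $\T$ and differ by a scalar $c(z_0)\in\T$.

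Second, I upgrade this pointwise identity to a global rational one. The rational function $\partial_w r\in\C(z,w)$ vanishes at $(z_0,w)$ for every $z_0\in\T$ and every $w$ outside a cofinite exceptional set; fixing such a $w$, the rational function $z\mapsto\partial_w r(z,w)$ vanishes on the infinite set $\T$ and so vanishes identically. Thus $\partial_w r\equiv 0$, and $r=R_1(z)/R_2(z)$ in lowest terms with $R_1,R_2\in\C[z]$ coprime.

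Finally, the identity $R_2(z)\,p(z,w)=R_1(z)\,\rev p(z,w)$ together with $\gcd(R_1,R_2)=1$ in $\C[z,w]$ forces $R_1\mid p$ and $R_2\mid\rev p$. By the no-$z$-only-factor hypothesis on $p$, $R_1$ is constant; and $\rev p$ inherits the same property, since a non-constant $z$-only factor of $\rev p$ would, via reflection at degree $(n,m)$ (using $\deg_z p=n$, which guarantees $\rev p(0,w)\not\equiv 0$), pull back to a $z$-only factor of $p$. Hence $R_2$ is constant as well, and $r=\mu\in\C^\times$. Evaluating $|p|=|\rev p|$ at any $(z,w)\in\T^2$ with $p(z,w)\neq 0$ gives $|\mu|=1$. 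The main obstacle is the first step: specifically, the continuity-of-roots argument ruling out $p_m(z_0)=0$ on $\T$, together with the multiplicity-matching at common zeros on $\T$ needed to conclude $p(z_0,\cdot)$ and $\rev p(z_0,\cdot)$ are literally proportional rather than merely having the same modulus on $\T$.
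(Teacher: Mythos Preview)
Your proof is correct and follows essentially the same approach as the paper: show that for each $z_0\in\T$ the polynomials $p(z_0,\cdot)$ and $\rev{p}(z_0,\cdot)$ are proportional (same roots on $\T$ with matching multiplicities, via $|p|=|\rev{p}|$ on $\T^2$), then globalize. The paper's globalization is slightly slicker---it records the proportionality as the polynomial identity $\rev{p}_m(z)\,p(z,w)=p_0(z)\,\rev{p}(z,w)$ on $\T\times\C$ (hence everywhere by analytic continuation) and concludes $p\mid\rev{p}$ and $\rev{p}\mid p$ directly from the no-$z$-only-factor hypothesis, which bypasses your rational-function and $\partial_w r$ argument and makes the continuity-of-roots step showing $p_m\neq 0$ on $\T$ unnecessary (density of the good $z_0$ suffices).
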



\begin{proof}
For each $z \in \T$, $p(z,\cdot)$ has all zeros in $\T$---as does
$\rev{p}(z,\cdot)$.  Since $z \in \T$, we see that $p(z,\cdot)$ and
$\rev{p}(z,\cdot)$ have the same roots (counting multiplicity, since
they approach zero at the same rate near a root because
$|p|=|\rev{p}|$ on $\T^2$).  Therefore, if we write 
\[
p(z,w) = \sum_{j=0}^{m} p_j(z)w^j, \qquad \rev{p}(z,w) = \sum_{j=0}^{m}
\rev{p}_{m-j}(z) w^j
\]
then for all $z \in \T$,  $\rev{p}_m(z)p(z,\cdot) =
p_0(z)\rev{p}(z,\cdot)$ since these polynomials have the same roots
and same leading coefficient.  Hence, $\rev{p}_m(z)p(z,w) = p_0(z)
\rev{p}(z,w)$ for all $(z,w) \in \C^2$.  By assumption $p$ has no
factors involving $z$ alone, and therefore $p$ divides $\rev{p}$.
Similarly $\rev{p}$ divides $p$, so that $p = C\rev{p}$ for some
constant $C$.  Since $|p| = |\rev{p}|$ on $\T^2$, $C$ must be
unimodular.
\end{proof}

Because of this lemma we can assume $p=\rev{p}$ by replacing $p$ with
an appropriate constant multiple.

\begin{lemma} 
Suppose $p=\rev{p} \in \C[z,w]$ has degree $(n,m)$ satisfies
\eqref{weakdv} and is irreducible.  Then,
\begin{itemize}
\item $mp = \rev{\frac{\partial p}{\partial w}} + w
\frac{\partial p}{\partial w}$ and
\item $\rev{\frac{\partial p}{\partial w}}$ has no zeros in $\T\times
  \D$ and no factors in common with $\frac{\partial p}{\partial w}$.
\end{itemize}
We reflect $\partial p/\partial w$ at the degree $(n,m-1)$.
\end{lemma}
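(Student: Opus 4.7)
The plan is to combine the self-reciprocal identity $p = \rev{p}$ with the one-variable Gauss--Lucas theorem applied slice-by-slice in $z \in \T$. First I would write $p(z,w) = \sum_{j=0}^m p_j(z)\, w^j$ with $\deg_z p_j \le n$. Reflecting at degree $(n,m)$ gives $\rev{p}(z,w) = \sum_{j=0}^m \rev{p_j}(z)\, w^{m-j}$, where each $p_j$ is reflected at degree $n$, so $p = \rev{p}$ is equivalent to $\rev{p_j} = p_{m-j}$ for all $j$. Reflecting $\frac{\partial p}{\partial w} = \sum_{j=1}^m j\, p_j(z)\, w^{j-1}$ at degree $(n,m-1)$ yields
\[
\rev{\frac{\partial p}{\partial w}}(z,w) = \sum_{j=1}^m j\, \rev{p_j}(z)\, w^{m-j} = \sum_{\ell=0}^{m-1}(m-\ell)\, p_\ell(z)\, w^\ell,
\]
and adding $w\, \frac{\partial p}{\partial w} = \sum_{\ell=1}^m \ell\, p_\ell(z)\, w^\ell$ telescopes to $m \sum_{\ell=0}^m p_\ell(z)\, w^\ell = mp$, which is the first bullet.

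For the first claim of the second bullet, suppose toward a contradiction that $\rev{\frac{\partial p}{\partial w}}(z_0,w_0) = 0$ for some $z_0 \in \T$, $w_0 \in \D$. If $w_0 = 0$, then the formula above evaluated at $w=0$ gives $\rev{\frac{\partial p}{\partial w}}(z_0,0) = m\, p(z_0,0)$, which is nonzero since $(z_0,0) \in \T \times \D$ cannot lie on $Z_p$ by \eqref{weakdv} (one may assume $m \ge 1$, as $m = 0$ is incompatible with irreducibility of $p$ together with \eqref{weakdv}). If $w_0 \ne 0$, then setting $q(w) := p(z_0, w)$, the reflection formula gives
\[
0 = \rev{\frac{\partial p}{\partial w}}(z_0, w_0) = z_0^n\, w_0^{m-1}\, \overline{q'(1/\bar w_0)},
\]
so $q'(1/\bar w_0) = 0$. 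But \eqref{weakdv} forces every root of $q$ to lie in $\T$ (and $q$ really does have degree $m$ since $p_m(z_0) \ne 0$, by applying $p = \rev{p}$ together with $p(z_0,0) \ne 0$), so by Gauss--Lucas every root of $q'$ lies in the convex hull of $\T$, namely $\cD$; this contradicts $1/\bar w_0 \in \E$. For the ``no common factor'' claim, any irreducible $h$ dividing both $\frac{\partial p}{\partial w}$ and $\rev{\frac{\partial p}{\partial w}}$ divides the right-hand side of $mp = \rev{\frac{\partial p}{\partial w}} + w\, \frac{\partial p}{\partial w}$, hence divides $p$; irreducibility of $p$ forces $h$ to be a constant or a unit multiple of $p$, and the latter is impossible since $\deg_w \frac{\partial p}{\partial w} = m-1 < m = \deg_w p$.

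The main obstacle is the Gauss--Lucas step. The identity $mp = \rev{\frac{\partial p}{\partial w}} + w\, \frac{\partial p}{\partial w}$ alone does not rule out a zero of $\rev{\frac{\partial p}{\partial w}}$ at some $(z_0,w_0)\in\T\times\D$: it only forces $\frac{\partial p}{\partial w}$ to vanish at the companion point $(z_0, 1/\bar w_0) \in \T\times \E$, which is not immediately absurd. Genuinely exploiting the location of that companion point in $\E$ requires the convex-hull information provided by Gauss--Lucas applied to each slice $p(z_0, \cdot)$, $z_0 \in \T$, whose roots all lie on $\T$ by virtue of \eqref{weakdv}.
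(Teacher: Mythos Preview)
Your proof is correct. The verification of the identity $mp = \rev{\partial p/\partial w} + w\,\partial p/\partial w$ and the common-factor argument match the paper's reasoning (you simply spell out the coefficient computation the paper calls ``straightforward'').

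Where you genuinely diverge is the ``no zeros in $\T\times\D$'' claim. The paper argues by deformation: for $t<1$ set $p_t(z,w)=p(z,tw)$, observe that $p_t$ has no zeros on $\T\times\cD$, and use the resulting inequality $|p_t|^2-|\rev{p_t}|^2\ge 0$ on $\T\times\cD$; differentiating in $t$ at $t=1$ produces the pointwise inequality
\[
\bigl|\rev{\tfrac{\partial p}{\partial w}}(z,w)\bigr|^2 \;\ge\; \bigl|w\,\tfrac{\partial p}{\partial w}(z,w)\bigr|^2 \qquad (z,w)\in\T\times\cD,
\]
from which any zero of $\rev{\partial p/\partial w}$ in $\T\times\D$ would force a zero of $p$ there via the identity. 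Your route instead fixes $z_0\in\T$, notes that $q(w)=p(z_0,w)$ has all $m$ of its roots on $\T$, and invokes Gauss--Lucas to confine the roots of $q'$ to $\cD$, contradicting the companion-point observation $q'(1/\bar w_0)=0$ with $1/\bar w_0\in\E$. The paper's method yields a quantitative inequality (dominance of $|\rev{\partial_w p}|$ over $|w\,\partial_w p|$ throughout $\T\times\cD$) that your argument does not, while your approach is more elementary and self-contained, needing no limiting step and no appeal to the Schur--Cohn-type inequality for $p_t$.
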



\begin{proof}
The identity $mp = \rev{\frac{\partial p}{\partial w}} + w
\frac{\partial p}{\partial w}$ is straightforward assuming $p =
\rev{p}$.

For $t<1$, let $p_t(z,w) = p(z,tw)$.  Then, $p_t$ has no
zeros in $\T\times \cD$ and
\[
|p_t(z,w)|^2 - |\rev{p_t}(z,w)|^2 \geq 0
\]
for $(z,w) \in \T\times \cD$.  Therefore,
\[
\lim_{t\nearrow 1} \frac{|p_t(z,w)|^2 - |\rev{p_t}(z,w)|^2}{1-t^2}
\geq 0
\]
but the above limit equals
\[
m|p(z,w)|^2 -2\text{Re}(w\frac{\partial p}{\partial w}
\overline{p(z,w)}) 
\geq 0
\]
for $(z,w) \in \T\times \cD$. Now, since $mp = \rev{\frac{\partial
    p}{\partial w}} + w \frac{\partial p}{\partial w}$
\[
m^2|p(z,w)|^2 -2m\text{Re}(w\frac{\partial p}{\partial w}
\overline{p(z,w)}) = |\rev{\frac{\partial p}{\partial w}}|^2 - |w
\frac{\partial p}{\partial w}|^2 \geq 0.
\]
Therefore, any zero of $\rev{\frac{\partial p}{\partial w}}$ in
$\T\times \cD$ is a zero of $w\frac{\partial p}{\partial w}$ and hence
will be a zero of $p$, which by assumption has no zeros in $\T\times
\D$.  So, $\rev{\frac{\partial p}{\partial w}}$ has no zeros in
$\T\times \D$.  

Now, $\rev{\frac{\partial p}{\partial w}}$ can have no factors in
common with $\frac{\partial p}{\partial w}$, else $w\frac{\partial
  p}{\partial w}$ and $p$ have a common factor.  As $p$ is assumed to
be irreducible, this is impossible.  
\end{proof}

\begin{proof}[Proof of Theorem \ref{thm:detrep}]
It is sufficient to prove the theorem for irreducible $p=\rev{p}$
since we can write the determinantal representation in terms of blocks
corresponding to each irreducible factor of $p$.  With this assumption
$\rev{\frac{\partial p}{\partial w}}$ has no zeros on $\T\times \D$
and no factors in common with $w\frac{\partial p}{\partial w}$, and
$mp(z,0)=\rev{\frac{\partial p}{\partial w}}(z,0)$ has $n_2$ zeros in
$\D$. The proof of Theorem \ref{gensosthm} says there are vector
polynomials $A\in \C^m[z,w],B \in \C^{n_1}[z,w],C \in \C^{n_2}[z,w]$
such that
\[
\rev{\frac{\partial p}{\partial w}}(z,w)\overline{\rev{\frac{\partial
      p}{\partial w}}(\zeta,\eta)} - w\bar{\eta} \frac{\partial
  p}{\partial w}(z,w) \overline{\frac{\partial p}{\partial
    w}(\zeta,\eta)}
\]
equals
\[
(1-w\bar{\eta})A(\zeta,\eta)^*A(z,w) +
(1-z\bar{\zeta})(B(\zeta,\eta)^*B(z,w)- C(\zeta,\eta)^*C(z,w)).
\]
By Theorem \ref{gensosthm} we can choose
$A,B,C$ so that $A$ has degree at most $(n,m-1)$ while $B,C$ have
degree at most $(n-1,m)$.  Furthermore, the entries of $B$ and $C$
together form a linearly independent set of polynomials.

The identity $mp=\rev{\frac{\partial p}{\partial w}} + w
\frac{\partial p}{\partial w}$ proves
\[
m^2p\bar{p} -m(w\frac{\partial p}{\partial w}
\bar{p})-mp\overline{\eta\frac{\partial p}{\partial w}} =
\rev{\frac{\partial p}{\partial w}}\overline{\rev{\frac{\partial
      p}{\partial w}}} - w\bar{\eta} \frac{\partial p}{\partial w}
\overline{\frac{\partial p}{\partial w}}.
\]

On the zero set $Z_p$ we get the formula
\[
0 = (1-w\bar{\eta})A(\zeta,\eta)^*A(z,w) +
(1-z\bar{\zeta})(B(\zeta,\eta)^* B(z,w) - C(\zeta,\eta)^*C(z,w)).
\]
A lurking isometry argument now produces the formulas we want.  First,
we rearrange
\[
\begin{aligned}
&w\bar{\eta} A(\zeta,\eta)^*A(z,w) + z\bar{\zeta} B(\zeta,\eta)^*
B(z,w) + C(\zeta,\eta)^* C(z,w) \\
&=  A(\zeta,\eta)^*A(z,w) + B(\zeta,\eta)^*
B(z,w) + z\bar{\zeta}C(\zeta,\eta)^* C(z,w)
\end{aligned}
\]
for $(z,w),(\zeta,\eta) \in Z_p$.  Then, the map
\[
\begin{pmatrix} w A(z,w) \\ zB(z,w) \\ C(z,w) \end{pmatrix} \mapsto 
\begin{pmatrix}  A(z,w) \\ B(z,w) \\ zC(z,w) \end{pmatrix}
\]
extends to a well-defined unitary on the span of the elements on the
left (as $(z,w)$ varies over $Z_p$) to the span of the elements on the
right.  Since the ambient spaces have the same dimension we can extend
to an $(m+n)\times (m+n)$ unitary $U$ such that
\[
U\begin{pmatrix} w A(z,w) \\ zB(z,w) \\ C(z,w) \end{pmatrix} = 
\begin{pmatrix}  A(z,w) \\ B(z,w) \\ zC(z,w) \end{pmatrix}
\]
on $Z_p$. 
 Then,
\[
\left(U \Delta(z,w) - \Gamma(z,w)\right) \begin{pmatrix} A \\ B
  \\ C \end{pmatrix} = 0
\]
and since $A,B,C$ vanish at only finitely many points in $Z_p$, we get
\begin{equation} \label{maindet}
\det\left(U \Delta(z,w) - \Gamma(z,w) \right) = 0.
\end{equation}
The polynomial on the left has degree less than or equal to that of
$p$ and vanishes on $Z_p$.  Since $p$ is irreducible, it must either
be a nonzero multiple of $p$ or it must be identically zero.

Claim: The determinant in \eqref{maindet} is not identically zero.

The explanation is similar to before.  If the determinant is
identically zero, then by simple matrix manipulations
\[
\det \left(\begin{pmatrix} wI_m& 0 & 0 \\ 0 & zI_{n_1} & 0 \\ 0 & 0 &
  z^{-1}I_{n_2} \end{pmatrix}-U\right) \equiv 0.
\]
The coefficient of $w^m$ is 
\[
\det(\begin{pmatrix} zI_{n_1} & 0 \\ 0 &
  z^{-1}I_{n_2} \end{pmatrix} - U_4 ) \equiv 0
\]
where $U_4$ is the lower-right $n\times n$ block of $U$.  The above
determinant cannot vanish for any $z=\zeta \in \T$, since if it does
there exists a non-zero vector $v=(v_1,v_2)$ such that
\[
(\zeta v_1,\bar{\zeta} v_2) = (v_1,v_2) U_4.
\]
Then, on $Z_p$
\[
(0,\zeta v_1,\bar{\zeta} v_2)  \begin{pmatrix} w A(z,w) \\ zB(z,w)
  \\ C(z,w) \end{pmatrix} = (0,v_1,v_2) U \begin{pmatrix} w A(z,w) \\ zB(z,w)
  \\ C(z,w) \end{pmatrix} = (0,v_1,v_2) \begin{pmatrix}  A(z,w) \\ B(z,w) \\ zC(z,w) \end{pmatrix} 
\]
and so $\zeta z v_1 B(z,w) + \bar{\zeta} v_2C(z,w) = v_1 B(z,w) + z
v_2C(z,w)$ which implies 
\[
(\zeta z -1)v_1 B(z,w) + (\bar{\zeta} -z)v_2 C(z,w)  = 0
\]
which in turn implies
\[
\zeta v_1 B(z,w) - v_2 C(z,w) = 0 \text{ on } Z_p
\]
since $z=\bar{\zeta}$ for finitely many $(z,w) \in Z_p$.  Since $p$ is
irreducible, $p$ divides $\zeta v_1 B - v_2 C$.  Since $B$ and $C$
have degree at most $(n-1,m)$, we see that $\zeta v_1 B - v_2 C=0$,
which is not possible unless $v_1$ and $v_2$ are zero vectors, which
they are not.

So, the determinant in \eqref{maindet} is not identically zero.  It
follows that $p$ is a multiple of the determinant in \eqref{maindet}.

\end{proof}

\begin{bibdiv}
\begin{biblist}

 \bib{AKMisopairs}{article}{
    author={Agler, Jim},
    author={Knese, Greg},
    author={McCarthy, John E.},
    title={Algebraic pairs of isometries},
    journal={J. Operator Theory},
    volume={67},
    date={2012},
    number={1},
    pages={215--236},
    issn={0379-4024},
    review={\MR{2881540 (2012m:47034)}},
 }

 \bib{AMdv}{article}{
    author={Agler, Jim},
    author={McCarthy, John E.},
    title={Distinguished varieties},
    journal={Acta Math.},
    volume={194},
    date={2005},
    number={2},
    pages={133--153},
    issn={0001-5962},
    review={\MR{2231339 (2007c:47006)}},
    doi={10.1007/BF02393219},
 }

\bib{AMS06}{article}{
   author={Agler, Jim},
   author={McCarthy, John E.},
   author={Stankus, Mark},
   title={Toral algebraic sets and function theory on polydisks},
   journal={J. Geom. Anal.},
   volume={16},
   date={2006},
   number={4},
   pages={551--562},
   issn={1050-6926},
   review={\MR{2271943 (2007j:32002)}},
   doi={10.1007/BF02922130},
}

\bib{AMS08}{article}{
   author={Agler, Jim},
   author={McCarthy, John E.},
   author={Stankus, Mark},
   title={Local geometry of zero sets of holomorphic functions near the
   torus},
   journal={New York J. Math.},
   volume={14},
   date={2008},
   pages={517--538},
   issn={1076-9803},
   review={\MR{2448658 (2010a:32014)}},
}

\bib{AMY}{article}{
   author={Agler, Jim},
   author={McCarthy, John E.},
   author={Young, N. J.},
   title={Operator monotone functions and L\"owner functions of several variables},
   journal={Ann. of Math.},
   date={2012},
note={To appear.}
}

\bib{BSV}{article}{
   author={Ball, Joseph A.},
   author={Sadosky, Cora},
   author={Vinnikov, Victor},
   title={Scattering systems with several evolutions and multidimensional
   input/state/output systems},
   journal={Integral Equations Operator Theory},
   volume={52},
   date={2005},
   number={3},
   pages={323--393},
   issn={0378-620X},
   review={\MR{2184571 (2006h:47013)}},
   doi={10.1007/s00020-005-1351-y},
}

\bib{CW99}{article}{ author={Cole, Brian J.}, author={Wermer, John},
  title={Ando's theorem and sums of squares}, journal={Indiana
    Univ. Math. J.}, volume={48}, date={1999}, number={3},
  pages={767--791}, issn={0022-2518}, review={\MR{1736979
      (2000m:47014)}}, doi={10.1512/iumj.1999.48.1716}, }

\bib{GI12}{article}{ 
author={Geronimo, Jeffrey S.}, 
author={Iliev, Plamen}, 
title={Fej\'er-Riesz factorizations and the structure of
    bivariate polynomials orthogonal on the bi-circle},
 journal={ J. Eur. Math. Soc. (JEMS)},
   date={2012},
note={To appear.},
eprint={arXiv:1206.1526},
}

\bib{GIK12}{article}{ 
author={Geronimo, Jeffrey S.}, 
author={Iliev, Plamen},
author={Knese, Greg}, 
title={Orthogonality relations for bivariate Bernstein-Szeg\H{o} measures},
date={2012},
conference={
  title={Recent Advances in Orthogonal Polynomials, Special Functions,
    and Their Applications},
},
book={
  series={Contemp. Math.},
  volume={578},
  publisher={Amer. Math. Soc.},
  place={Providence, RI},
},
pages={119--131},
review={\MR{2964142}},
eprint={arXiv:1111.5658},
}

\bib{GW04}{article}{
   author={Geronimo, Jeffrey S.},
   author={Woerdeman, Hugo J.},
   title={Positive extensions, Fej\'er-Riesz factorization and
   autoregressive filters in two variables},
   journal={Ann. of Math. (2)},
   volume={160},
   date={2004},
   number={3},
   pages={839--906},
   issn={0003-486X},
   review={\MR{2144970 (2006b:42036)}},
   doi={10.4007/annals.2004.160.839},
}
\bib{GW07}{article}{
   author={Geronimo, Jeffrey S.},
   author={Woerdeman, Hugo J.},
   title={Two variable orthogonal polynomials on the bicircle and structured
   matrices},
   journal={SIAM J. Matrix Anal. Appl.},
   volume={29},
   date={2007},
   number={3},
   pages={796--825 (electronic)},
   issn={0895-4798},
   review={\MR{2338463 (2008m:42041)}},
   doi={10.1137/060662472},
}

\bib{HL58}{article}{
   author={Helson, Henry},
   author={Lowdenslager, David},
   title={Prediction theory and Fourier series in several variables},
   journal={Acta Math.},
   volume={99},
   date={1958},
   pages={165--202},
   issn={0001-5962},
   review={\MR{0097688 (20 \#4155)}},
}


\bib{HS60}{article}{
   author={Helson, Henry},
   author={Szeg{\"o}, Gabor},
   title={A problem in prediction theory},
   journal={Ann. Mat. Pura Appl. (4)},
   volume={51},
   date={1960},
   pages={107--138},
   issn={0003-4622},
   review={\MR{0121608 (22 \#12343)}},
}
\bib{HK}{book}{
    author={Hoffman, Kenneth},
    author={Kunze, Ray},
    title={Linear algebra},
    series={Second edition},
    publisher={Prentice-Hall Inc.},
    place={Englewood Cliffs, N.J.},
    date={1971},
    pages={viii+407},
    review={\MR{0276251 (43 \#1998)}},
 }

 \bib{JKMpick}{article}{
    author={Jury, Michael T.},
    author={Knese, Greg},
    author={McCullough, Scott},
    title={Nevanlinna-Pick interpolation on distinguished varieties in the
    bidisk},
    journal={J. Funct. Anal.},
    volume={262},
    date={2012},
    number={9},
    pages={3812--3838},
    issn={0022-1236},
    review={\MR{2899979}},
    doi={10.1016/j.jfa.2012.01.028},
 }

\bib{gK08}{article}{
   author={Knese, Greg},
   title={Bernstein-Szeg\H o measures on the two dimensional torus},
   journal={Indiana Univ. Math. J.},
   volume={57},
   date={2008},
   number={3},
   pages={1353--1376},
   issn={0022-2518},
   review={\MR{2429095 (2009h:46054)}},
   doi={10.1512/iumj.2008.57.3226},
}

 \bib{gKdv}{article}{
    author={Knese, Greg},
    title={Polynomials defining distinguished varieties},
    journal={Trans. Amer. Math. Soc.},
    volume={362},
    date={2010},
    number={11},
    pages={5635--5655},
    issn={0002-9947},
    review={\MR{2661491 (2011f:47022)}},
    doi={10.1090/S0002-9947-2010-05275-4},
 }

\bib{gKAPDE}{article}{
   author={Knese, Greg},
   title={Polynomials with no zeros on the bidisk},
   journal={Anal. PDE},
   volume={3},
   date={2010},
   number={2},
   pages={109--149},
   issn={1948-206X},
   review={\MR{2657451 (2011i:42051)}},
   doi={10.2140/apde.2010.3.109},
}

\bib{aK89}{article}{
   author={Kummert, Anton},
   title={Synthesis of two-dimensional lossless $m$-ports with prescribed
   scattering matrix},
   journal={Circuits Systems Signal Process.},
   volume={8},
   date={1989},
   number={1},
   pages={97--119},
   issn={0278-081X},
   review={\MR{998029 (90e:94048)}},
   doi={10.1007/BF01598747},
}

\bib{hL87}{article}{
   author={Landau, H. J.},
   title={Maximum entropy and the moment problem},
   journal={Bull. Amer. Math. Soc. (N.S.)},
   volume={16},
   date={1987},
   number={1},
   pages={47--77},
   issn={0273-0979},
   review={\MR{866018 (88k:42010)}},
   doi={10.1090/S0273-0979-1987-15464-4},
}
\bib{LL12}{article}{
   author={Landau, H.J.},
   author={Landau, Zeph},
   title={On the trigonometric moment problem in two dimensions},
   journal={Indag. Math. (N.S.)},
   volume={23},
   date={2012},
   number={4},
   pages={1118--1128},
   issn={0019-3577},
   review={\MR{2991936}},
   doi={10.1016/j.indag.2012.09.003},
}

\bib{rosen}{book}{
   author={Rosenblatt, M.},
   title={Stationary Sequences and Random Fields},
   publisher={Birkhauser},
   place={Boston},
   date={1985},
   isbn={0-8176-3264-6}}


\bib{bS05}{book}{
   author={Simon, Barry},
   title={Orthogonal polynomials on the unit circle. Part 1},
   series={American Mathematical Society Colloquium Publications},
   volume={54},
   note={Classical theory},
   publisher={American Mathematical Society},
   place={Providence, RI},
   date={2005},
   pages={xxvi+466},
   isbn={0-8218-3446-0},
   review={\MR{2105088 (2006a:42002a)}},
}

\bib{WM57}{article}{
   author={Wiener, N.},
   author={Masani, P.},
   title={The prediction theory of multivariate stochastic processes. I. The
   regularity condition},
   journal={Acta Math.},
   volume={98},
   date={1957},
   pages={111--150},
   issn={0001-5962},
   review={\MR{0097856 (20 \#4323)}},
}

\end{biblist}
\end{bibdiv}

\end{document}